\newtheorem{theorem}{Theorem}[subsection]
\newtheorem{definition}[theorem]{Definition}
\newtheorem{definition-lemma}[theorem]{Definition/Lemma}
\newtheorem{definition-explanation}[theorem]{Definition/Explanation}
\newtheorem{explanation-definition}[theorem]{Explanation/Definition}
\newtheorem{definition-fact}[theorem]{Definition/Fact}
\newtheorem{definition-notation}[theorem]{Definition/Notation}
\newtheorem{definition-conjecture}[theorem]{Definition/Conjecture}
\newtheorem{lemma}[theorem]{Lemma}
\newtheorem{lemma-definition}[theorem]{Lemma/Definition}
\newtheorem{proposition}[theorem]{Proposition}
\newtheorem{remark}[theorem]{\it Remark}
\newtheorem{remark-notation}[theorem]{\it Remark/Notation}
\newtheorem{application-lemma}[theorem]{Application/Lemma}
\newtheorem{convention}[theorem]{\it Convention}
\newtheorem{example}[theorem]{Example}
\newtheorem{example-definition}[theorem]{Example/Definition}
\newtheorem{definition-prototype}[theorem]{Definition-Prototype}
\newtheorem{terminology}[theorem]{\it Terminology}
\newtheorem{lesson}[theorem]{Lesson}
\numberwithin{equation}{subsection}
\newtheorem{stheorem}{Theorem}[section]
\newtheorem{sdefinition-lemma}[stheorem]{Definition/Lemma}
\newtheorem{sdefinition-explanation}[stheorem]{Definition/Explanation}
\newtheorem{sexplanation-definition}[stheorem]{Explanation/Definition}
\newtheorem{sdefinition-fact}[stheorem]{Definition/Fact}
\newtheorem{sdefinition-notation}[stheorem]{Definition/Notation}
\newtheorem{sdefinition-conjecture}[stheorem]{Definition/Conjecture}
\newtheorem{slemma-definition}[stheorem]{Lemma/Definition}
\newtheorem{sremark}[stheorem]{\it Remark}
\newtheorem{sremark-notation}[stheorem]{\it Remark/Notation}
\newtheorem{sapplication-lemma}[stheorem]{Application/Lemma}
\newtheorem{sansatz}[stheorem]{Ansatz}
\newtheorem{sexample}[stheorem]{Example}
\newtheorem{sexample-definition}[stheorem]{Example/Definition}
\newtheorem{sdefinition-prototype}[stheorem]{Definition-Prototype}
\newtheorem{sstheorem}{Theorem}[subsubsection]
\newtheorem{ssdefinition}[sstheorem]{Definition}
\newtheorem{ssdefinition-lemma}[sstheorem]{Definition/Lemma}
\newtheorem{ssdefinition-explanation}[sstheorem]{Definition/Explanation}
\newtheorem{ssexplanation-definition}[sstheorem]{Explanation/Definition}
\newtheorem{ssdefinition-fact}[sstheorem]{Definition/Fact}
\newtheorem{ssdefinition-notation}[sstheorem]{Definition/Notation}
\newtheorem{ssdefinition-conjecture}[sstheorem]{Definition/Conjecture}
\newtheorem{sslemma}[sstheorem]{Lemma}
\newtheorem{sslemma-definition}[sstheorem]{Lemma/Definition}
\newtheorem{ssproposition}[sstheorem]{Proposition}
\newtheorem{ssremark}[sstheorem]{\it Remark}
\newtheorem{ssremark-notation}[sstheorem]{\it Remark/Notation}
\newtheorem{ssapplication-lemma}[sstheorem]{Application/Lemma}
\newtheorem{ssexample}[sstheorem]{Example}
\newtheorem{ssexample-definition}[sstheorem]{Example/Definition}
\newtheorem{ssnotation}[sstheorem]{Notation}
\newtheorem{ssdefinition-prototype}[sstheorem]{Definition-Prototype}
\newcommand{\Br}{\mbox{\it Br}\,}
\newcommand{\Coh}{\mbox{\it Coh}\,}
\newcommand{\Coker}{\mbox{\it Coker}\,}
\newcommand{\Der}{\mbox{\it Der}\,}
\newcommand{\End}{\mbox{\it End}\,}
 \newcommand{\smallEnd}{\mbox{\small\it End}\,}
 \newcommand{\scriptsizeEnd}{\mbox{\scriptsize\it End}\,}
\newcommand{\Endsheaf}{\mbox{\it ${\cal E}\!$nd}\,}
\newcommand{\Ext}{\mbox{\rm Ext}\,}
\newcommand{\GL}{\mbox{\it GL}}
\newcommand{\Homsheaf}{\mbox{\it ${\cal H}$om}\,}
\newcommand{\Id}{\mbox{\it Id}\,}
\newcommand{\Image}{\mbox{\it Im}\,}
\newcommand{\Inn}{\mbox{\it Inn}\,}
\newcommand{\Innsheaf}{\mbox{\it ${\cal I}\!$nn}\,}
\newcommand{\Ker}{\mbox{\it Ker}\,}
\newcommand{\Map}{\mbox{\it Map}\,}
\newcommand{\ModCategory}{\mbox{\it ${\cal M}$\!od}\,}
\newcommand{\Quot}{\mbox{\it Quot}\,}
\newcommand{\Real}{\mbox{\it Re}\,}
\newcommand{\slLie}{\mbox{\it sl}\,}
\newcommand{\Space}{\mbox{\it Space}\,}
\newcommand{\Spec}{\mbox{\it Spec}\,}
 \newcommand{\boldSpec}{\mbox{\it\bf Spec}\,}
 \newcommand{\smallSpec}{\mbox{\small\it Spec}\,}
\newcommand{\Supp}{\mbox{\it Supp}\,}
\newcommand{\Sym}{\mbox{\it Sym}}
 \newcommand{\scriptsizeSym}{\mbox{\scriptsize\it Sym}}
 \newcommand{\scriptsizeZss}{\mbox{\scriptsize\it $Z$-$ss$}\,}
 \newcommand{\tinyZss}{\mbox{\tiny\it $Z$-$ss$}\,}
\newcommand{\scriptsizedegree}{\mbox{\scriptsize\it deg}\,}
\newcommand{\determinant}{\mbox{\it det}\,}
\newcommand{\dimm}{\mbox{\it dim}\,}
\newcommand{\scriptsizenoncommutative}{\mbox{\scriptsize\rm
                                             noncommutative}}
\newcommand{\pr}{\mbox{\it pr}}
\newcommand{\redscriptsize}{\mbox{\scriptsize\rm red}\,}
\newcommand{\redtiny}{\mbox{\tiny\rm red}\,}
\newcommand{\smoothscriptsize}{\mbox{\scriptsize\it smooth}\,}
\newcommand{\longrightaarrow}{\longrightarrow\hspace{-3ex}\longrightarrow}
\begin{document}

\enlargethispage{24cm}

\begin{titlepage}

$ $

\vspace{-1.5cm} 

\noindent\hspace{-1cm}
\parbox{6cm}{\small April 2014}\
   \hspace{6cm}\
   \parbox[t]{6cm}{yymm.nnnn [math.DG] \\
                D(11.1): D-brane, matrix dg,  \\
                $\mbox{\hspace{3.8em}} $ 
			 	 map from matrix-brane
				}

\vspace{2cm}

\centerline{\large\bf
  D-branes and Azumaya/matrix noncommutative differential geometry,}
\vspace{1ex}
\centerline{\large\bf
 I: D-branes as fundamental objects in string theory  and differentiable maps}
\vspace{1ex}
\centerline{\large\bf
 from Azumaya/matrix manifolds with a fundamental module to real manifolds}

\bigskip

\centerline{({\it
  In memory of Professor William P.\ Thurston})}

\vspace{2.4em}

\centerline{\large
  Chien-Hao Liu    
            \hspace{1ex} and \hspace{1ex}
  Shing-Tung Yau
}

\vspace{3em}

\begin{quotation}
\centerline{\bf Abstract}

\vspace{0.3cm}

\baselineskip 12pt  
{\small
 We consider D-branes in string theory and address the issue of 
   how to describe them mathematically as a fundamental object (as opposed to a solitonic object)
   of string theory in the realm in differential and symplectic geometry. 
 The notion of continuous maps, $k$-times differentiable maps, and smooth maps
    from an Azumaya/matrix manifold with a fundamental module  to a (commutative) real manifold $Y$
  is developed.
 Such maps are meant to describe D-branes or matrix branes in string theory
   when these branes are light and soft with only small enough or even zero brane-tension.
  When $Y$ is a symplectic manifold
   (resp.\  a Calabi-Yau manifold; a $7$-manifold with $G_2$-holonomy;
                   a manifold with an almost complex structure $J$), 	
   the corresponding notion of
      Lagrangian maps
    (resp.\ special Lagrangian maps; associative maps, coassociative maps; $J$-holomorphic maps)	
	 are introduced.
 Indicative examples linking to symplectic geometry and string theory are given.
 This provides us with a language and part of the foundation required
  to study themes, new or old, in symplectic geometry and string theory, including
  (1) $J$-holomorphic D-curves (with or without boundary),
  (2) quantization and dynamics of D-branes in string theory,
  (3) a definition of Fukaya category guided by Lagrangian maps from Azumaya manifolds
	         with  a fundamental module with a connection,   
  (4) a theory of fundamental matrix strings or D-strings, and
  (5) the nature of Ramond-Ramond fields in a space-time.
  The current note D(11.1) is the symplectic/differential-geometric counterpart
    of the more algebraic-geometry-oriented first two notes
	  D(1) ([L-Y1]) (arXiv:0709.1515 [math.AG])  and
	  D(2) ([L-L-S-Y], with Si Li and Ruifang Song) (arXiv:0809.2121 [math.AG])
	 in this project.
 } 
\end{quotation}

\bigskip

\baselineskip 12pt
{\footnotesize
\noindent
{\bf Key words:} \parbox[t]{14cm}{D-brane; 
      $C^k$-scheme; Azumaya manifold, matrix manifold;  
      $C^k$-map; special Lagrangian map;\\
	  Chan-Paton sheaf, fundamental module; connection; push-forward.
 }} 

 \bigskip

\noindent {\small MSC number 2010: 58A40, 14A22, 81T30; 51K10, 16S50, 46L87.
} 

\bigskip

\baselineskip 10pt
{\scriptsize
\noindent{\bf Acknowledgements.}
We thank
 Cumrun Vafa for stringy consultations in the early stage of the work;
 Eduardo J.\ Dubuc, Dominic Joyce, Anders Kock, Ieke Moerdijk, Gonzalo E.\ Reyes
     for their works on synthetic differential geometry and $C^{\infty}$-algebraic geometry  and
 Michel Dubois-Violette, Richard Kerner, John Madore, Thierry Masson, Emmanuel S\'{e}ri\'{e}
     for their works on noncommutative differential geometry on endomorphism algebras
   that together provide the two beginning building blocks of this note (cf.\ Sec.~2 and Sec.~4).	
C.-H.L.\ thanks in addition
 Yng-Ing Lee, Katrin Wehrheim
   for discussions on related symplectic issues during the long brewing years of the current work;
 Siu-Cheong Lau
   for topic courses in SYZ mirror symmetry and symplectic geometry, spring 2013 and spring 2014,
   and the in-or-after-class discussions that tremendously influenced his understanding;
 Girma Hailu, Pei-Ming Ho, Shiraz Minwalla
   for discussions/literature guide related to open D-strings;
 Si Li, Robert Myers, Eric Sharpe
   for communications;
 Yaiza Canzani, Girma Hailu, Hiro Lee Tanaka and
         Mboyo Esole, Stefan Patrikis, Mathew Reece
   for other topic courses,  fall 2013 and spring 2014;		
 Ling-Miao Chou for moral support and comments that improve the illustrations.
The project is supported by NSF grants DMS-9803347 and DMS-0074329.
} 

\end{titlepage}

\newpage

\begin{titlepage}

$ $

\vspace{2em}

\centerline{\small\it
 Chien-Hao Liu dedicates this note to the memory of}
\centerline{\small\it
 his adviser Prof.\ William P.\ Thurston (1946--2012)}
\centerline{\small\it during his Princeton and Berkeley years.}

\vspace{3.5em}

\baselineskip 11pt

{\footnotesize
\noindent
 (From C.H.L.)
 In retrospect, I went to Princeton University at late 1980s completely in the wrong mind set.
 After the depression due to my father's passing away in my senior year at college and
     then the break of study due to the two-year military service,
   I arrived there practically with an empty soul and an empty mind.
 In my senior year in college I suddenly felt so unbearable for pure mathematics:
 There are so many sufferings in this world;
  how could one keep one's cool and address problems like,
    ``Are there integer solutions to the equation $x^n+y^n=z^n$ for $n \ge 3$?"
    while keeping a humane heart.	
 Something between mathematics and physics should give one
    more meaning to devote one's life to:
   at least it's more connecting to the real world.
 Yet I had no idea what specific field I wanted to be in.  	
 I went to Professor {\it Neil Turok}'s course on cosmology
    and got extremely fascinated by his view of the universe
	  and the way he did research:
	  a combination of mathematics, theoretical physics, observed data,
	    and computer simulations
  --- what could be more perfect than such a combination?
 Despite his being extremely friendly to me,
    I felt there was too much to catch up if I were to follow him.
 I also went to other topic courses in mathematics and physics,
   though clearly most faculty at that time seemed more like
     communicating to themselves rather than to their audience.
 After all, this is Princeton; you are expected to be a genius 
   and  a genius doesn't require anyone to teach him/her anything.
 Yet, I am not a genius.   
  
 I was in such an embarrassing situation
   when stepping into Prof.\ {\it Thurston}'s topic course
   on pseudo-Anosov flow and train tracks on surfaces
       and later on geometry and topology of $3$-manifolds and orbifolds.
 His lively lectures make doing mathematics almost like entertaining.	
 His air, not as a Fields medalist but rather as a big boy who is curious about everything,
   stood out very uniquely even at Princeton.
 My research result in the first year with him began with a conversation in his office.
 After my explaining to him Prof.\ John A.\ Wheeler's interpretation of
   Einstein's vision of quantum gravity via geometrodynamics,
 Prof.\ Thurston looked struck with something and then started to explain to me on the blackboard
   Gromov's $\varepsilon$-approximation topology between metric spaces
      (i.e.\ topological spaces with a distance function)
   -- originally developed in part for the study hyperbolic groups and their limits.
 For example, on the space of isometry classes $[(S, ds^2)]]$
    of orientable Riemannian surfaces --- without fixing the genus --- equipped with such topology,
   a neighborhood of an $[(S_0, ds^2_0)]$ will contain $[(S,ds^2)]$'s of any higher genus,
   resembling various quantum fluctuations of the topology(!) of $S_0$.	
 With a few more discussions with him,  I was finally able to prove that
 
   \begin{itemize}
     \item[$\cdot$] \parbox[t]{14.6cm}{\it
	  {\bf [conformal deformation]} \hspace{1em}
	   Let $M$ be a closed smooth $n$-manifold   and
    	   $ds_0$, $ds_1$ be two Riemannian metrics on $M$.
       Let $\varepsilon_0>0$ be any positive number.
	   Then there exists a smooth function $f$ on $M$ such that $(M,ds^2_0)$
	     and $M,e^{2f} ds^2_1)$ are $\varepsilon_0$-close to each other.	
	  }
	
	\smallskip
	
	\item[$\cdot$] \parbox[t]{14.6cm}{\it
	   {\bf [conformal class is dense]} \hspace{1em}
         Let $M$ be a closed smooth $n$-manifold. Then each conformal class is dense
         in the space of isometry classes of of Riemannian metrics on $M$,
		 endowed with the Gromov's $\varepsilon$-approximation topology.		
	 }
   \end{itemize}
  
  \noindent
  The second statement is a re-statement of the first.
  I would expect a possibility to have joint advisors at Princeton for my graduate study:
    Prof.\ Thurston from mathematics side and Prof.~Turok from the physics side
  as Prof.~Turok happened to be exploring applications of hyperbolic geometry to cosmology
    at that time.
  Unfortunately, as if life has its own way/plan/destiny, I didn't have that luck. 	
  
  A year after Prof.\ Thurston took me as one of his students, he moved to M.S.R.I.\ at Berkeley.
  Life completely changed and he became extremely busy with his duty as the director
   and we became detached.
  Yet, without my slightest anticipation of it when preparing for the move,
    it is a whole miraculous world out there waiting for me at Berkeley from other sources.
  But that's for another story.
 
  Most people we came across in life
       will be eventually just passers-by to us,
               leaving at best only some vague and shallow memories.
  Only a very limited few left some imprint on us throughout our life,
    whether we think about them or keep in touch with them or whether they still exist.  	
  Going to Princeton is my choice and also my honor to be chosen.
  Like it or not,
  the picturesque campus and surroundings and
    the surreal feeling of walking the path Einstein may have walked between GC and IAS
    are unique enough to deserve it.
 Transforming to Berkeley is not my choice; yet it is there I found a world belonging to me.
 Somehow, Prof.~Thurston, a unique mathematician, happened to bring good to his student
  in such a unique, unconventional, and quite unexpected way!
     
 Prof.\ Thurston is a very visionary mathematician, who sees pictures in his mind
   before stating them or proving them;
 this note D(11.1) is thus made as pictorial and diagrammatic as can be for a dedication to him.
 } 

\end{titlepage}


\newpage
$ $

\vspace{-3em}

\centerline{\sc
 D-Brane and NCDG I: Differentiable Maps from Azumaya/Matrix Manifolds
 } %

\vspace{2em}


\begin{flushleft}
{\Large\bf 0. Introduction and outline}
\end{flushleft}
A {\it D-brane}, in full name: {\it Dirichlet brane} , in string theory is by definition
(i.e.\ by the very word `Dirichlet') a boundary condition for the end-points of open strings.
{From} the viewpoint of the field theory on the open-string world-sheet aspect,
 it is a boundary state in the $2$-dimensional conformal field theory with boundary.
{From} the viewpoint of open string target space(-time) $Y$,
 it is a cycle or a union of submanifolds $Z$ in $Y$ with a gauge bundle(on $Z$)
  that carries the Chan-Paton index for the end-points of open strings.
For the second viewpoint, Polchinski recognized in 1995 in [Pol2]
  that a D-brane is indeed a source of the Ramond-Ramond fields on $Y$
    created by the oscillations of closed superstrings in $Y$.
In particular,
  in the region of the Wilson's theory-space for string theory
   where the tension $g_s$ of the fundamental string is small and the tension of D-branes is large,
 D-branes can be identified with the solitonic/black branes studied earlier\footnote{See
                                                [D-K-L] for a review and more references.}
 in supergravity and (target) space-time aspect of superstrings.
This recognition is so fundamental that it gave rise to the second revolution of string theory.

However, in the region of Wilson's theory-space of string theory where the D-brane tension is small,
 D-branes stand in an equal footing with strings as fundamental objects.
In this region, they are soft and can move around and vibrate, just like a fundamental string can,
  in the space-time $Y$.
Thus, a D-brane in this case is better described as a map from the D-brane world-volume $X$ to $Y$.
Such non-solitonic aspect was already taken in the original works,
  [P-C] (1989) of {\it Joseph Polchinski} and {\it Yunhai Cai}  and
  [D-L-P]  (1989) of {\it Jin Dai},  {\it Robert Leigh}, and {\it Joseph Polchinski},
 that introduced the notion of D-branes to string theory.

Something novel and mysterious at the first sight happens
 when a collection of D-branes in space-time coincide:
 (cf.\ [Pol3], [Pol4], [Wi6])
\begin{itemize}
 \item[$\cdot$] \parbox[t]{37.4em}{\it
  {\bf [enhancement of scalar field on D-brane world-volume]}\hspace{1em}
   When a collection of D-branes in space-time coincide,
    the open-string-induced massless spectrum on the world-volume of the D-brane
	 is enhanced.
  In particular,
    the gauge field is enhanced to one with a larger gauge group  and
    the scalar field that describes the deformations of the brane in space-time
     is enhanced to one that is matrix-valued.}
\end{itemize}
This leads to the following question:
 \begin{itemize}
  \item[{\bf Q.}] \parbox[t]{37.4em}{\it {\bf [D-brane]}\hspace{1em}
   What is a D-brane as a fundamental object (as opposed to a solitonic object) in string theory?}
 \end{itemize}
In other words, what is the intrinsic definition of D-branes
 so that by itself it can produce the properties of D-branes
 that are consistent with, governed by, or originally produced by open strings as well?
This is the guiding question of the whole project.

It is clear from the behavior under coincidence
that one cannot expect to have a good answer to Question [D-brane]
 without bringing some noncommutative geometry into the intrinsic definition of D-branes.
It turns out that
 Polchinski's {\it description of deformations of stacked D-branes} in [Pol3] and [Pol4] together with
 Grothendieck's {\it local equivalence of rings and spaces/geometries}
 implies immediately:
 \begin{itemize}
  \item[$\cdot$]
  \parbox[t]{37.4em}{\it
  {\bf Ansatz [D-brane: matrix noncommutativity on brane]}\hspace{1em}
   The world-volume of a D-brane carries a noncommutative structure locally associated to
     a function ring of the form $M_{r\times r}(R)$,
    i.e., the $r\times r$ matrix-ring over a ring $R$ for some $r\in {\Bbb Z}_{\ge 1}$.}
  \end{itemize}
This is an observation that had been already made in the work
 [Ho-W] (1996) of {\it Pei-Ming Ho} and {\it Yong-Shi Wu} from their study of D-branes
  and repicked up in [L-Y1] (D(1), 2007).
This brings us to a technical world in mathematics: noncommutative geometry.
In particular,
 \begin{itemize}
   \item[(1)] \parbox[t]{37.4em}{\it {\bf [local]}\hspace{1em}
    What is the noncommutative space $U^{nc}_{\alpha}$
	  associated to matrix rings $R_{\alpha}$?}
   
   \item[(2)] \parbox[t]{37.4em}{\it {\bf [local to global from gluing]}\hspace{1em}
    How to glue the local noncommutative spaces $U^{nc}_{\alpha}$ in (1)
     to a global noncommutative space $X^{nc}$?}
	
   \item[(3)]	\parbox[t]{37.4em}{\it
   {\bf [map from noncommutative space to space-time]}\hspace{1em}
    What is the notion of maps  $\varphi:X^{nc}\rightarrow Y$
  	that fits the study of D-branes in the space-time $Y$?}
 \end{itemize}
This project aims to answer the above questions   and then
 apply the result to themes in mathematics motivated by string theory and D-branes
   and themes in string theory in its own right.
See [L-Y1: Introduction] (D(1))
 for more details of the string-theory background on D-branes that motivated us the whole project.
 
\bigskip
 
Influenced by the setup of modern algebraic geometry through Grothendieck's language of schemes,
 so far in this project we focused more on the algebro-geometric side,
   cf.\ [L-Y1] (D(1)), [L-L-S-Y] (D(2)), [L-Y2] (D(3)), [L-Y3] (D(4)), [L-Y4] (D(5)),
         [L-Y8] (D(9.1)), [L-Y9] (D(10.1)), and [L-Y10] (D(10.2)),
 and less on the differential-and-symplectic-geometry side,
   cf.\ [L-Y5] (D(6)), [L-Y6] (D(7)), and [L-Y7] (D(8.1)).
{To} remedy this and
 to fill in the missing foundation --- as [L-Y1] (D(1)) and [L-L-S-Y] (D(2)) to the whole project  ---
     as a preparation for further works,
in the current note
 we take the attitude of re-starting everything from the ground floor again and
 reconsider D-branes in string theory and address the issue of
 how to describe them mathematically as a fundamental object
 (as opposed to a solitonic object) in string theory in the realm in differential topology and geometry.
 
After a terse review
  of how the Azumaya (i.e.\ matrix-)type noncommutative structure occurs
     on the world-volume of stacked D-branes from [L-Y1] (D(1))  and
  of the relevant background on $C^k$-algebraic geometry and matrix differential geometry from literature,
 we develop the notion of continuous maps, $k$-times differentiable maps, and smooth maps
   from an Azumaya/matrix manifold with a fundamental module  to a (commutative) real manifold $Y$.
Such maps are meant to describe D-branes or matrix branes in string theory
  when these branes are light and soft with only small enough or even zero brane-tension.
When $Y$ is a symplectic manifold
  (resp.\  a Calabi-Yau manifold; a $7$-manifold with $G_2$-holonomy;
                   a manifold with an almost complex structure $J$), 	
  the corresponding notion of
      Lagrangian maps
    (resp.\ special Lagrangian maps; associative maps, coassociative maps; $J$-holomorphic maps)
	 are also introduced.
Indicative examples linking to symplectic geometry and string theory are given.
This provides us with a language and part of the foundation required
 to study themes, new or old, in symplectic geometry and string theory, including
	\begin{itemize}
	 \item[(1)]
      $J$-holomorphic D-curves (with or without boundary),
	  as opposed to $J$-holomorphic curve in symplectic (open or closed) Gromov-Witten theory,
	
	 \item[(2)]
   	  quantization and dynamics of D-branes (in particular, A-branes) in string theory,
	
     \item[(3)]
 	  a definition of Fukaya category guided by Lagrangian maps from Azumaya manifolds
	    with  a fundamental module with a connection,
	   as opposed to the various existing settings in the study of homological mirror symmetry,    
				
	 \item[(4)]
   	  a theory of fundamental matrix strings or D-strings,
	  as opposed to the by-now-standard theory of ordinary fundamental strings, and
	
	 \item[(5)]
      the nature of Ramond-Ramond fields in a space-time, which on the one hand is generated
	  by excitations of closed fundamental superstrings and on the other hand is sourced by D-branes.
    \end{itemize}
The current note D(11.1) is the symplectic/differential-geometric counterpart
    of the algebraic-geometry-oriented first two notes
	  [L-Y1] (D(1)) (arXiv:0709.1515 [math.AG])  and
	  [L-L-S-Y] (D(2), with Si Li and Ruifang Song) (arXiv:0809.2121 [math.AG])
	 in this project.

\bigskip
 
\bigskip

\noindent
{\bf Convention.}
 References for standard notations, terminology, operations and facts are\\
  (1) Azumaya/matrix  algebra: [Ar], [Az], [A-N-T], [Wed]; \hspace{3.1em}
  (2) analysis: [Ap], [Br], [Mal]; \hspace{1em}\\			
  (3) sheaves and bundles: [Dim], [Hu-L], [Kob], [Kas-S], [Ste];  \hspace{1em}
  (4) algebraic geometry: [E-H], [F-G-I-K-N-V], [G-H], [Hart]; \hspace{2em}
  (5) differential topology; [B-T], [G-G], [Gu-P], [Hir], [Wa]; \hspace{1em}\\
  (6) differential geometry: [Hic], [Joy1], [K-N]; \hspace{2.6em}
  (7) symplectic geometry: [McD-S1]; \hspace{1em}\\
  (8) calibrated geometry:  [Harv], [Ha-L], [McL]; \hspace{1em}
  (9) synthetic geometry and $C^{\infty}$-algebraic geometry:
              [Du1],  [Joy3], [Koc], [M-R], [NG-SdS]; \hspace{1em}
  (10) noncommutative differential geometry: [Co1], [Co2], [GB-V-F], [Mad]; \hspace{1em}
  (11) string theory: [B-B-Sc], [G-S-W], [Pol4], [Zw]; \hspace{1em}
  (12) D-branes: [As], [A-B-C-D-G-K-M-S-S-W], [Bac], [Joh], [H-K-K-P-T-V-V-Z], [Pol3], [Sh3], [Sz].
 \begin{itemize}
  \item[$\cdot$]
   For clarity, the {\it real line} as a real $1$-dimensional manifold is denoted by ${\Bbb R}^1$,
    while the {\it field of real numbers} is denoted by ${\Bbb R}$.
   Similarly, the {\it complex line} as a complex $1$-dimensional manifold is denoted by ${\Bbb C}^1$,
    while the {\it field of complex numbers} is denoted by ${\Bbb C}$.
	
  \item[$\cdot$]	
  The inclusion `${\Bbb R}\hookrightarrow{\Bbb C}$' is referred to the {\it field extension
   of ${\Bbb R}$ to ${\Bbb C}$} by adding $\sqrt{-1}$, unless otherwise noted.

 \item[$\cdot$]	
  The {\it real $n$-dimensional vector spaces} ${\Bbb R}^{\oplus n}$
      vs.\ the {\it real $n$-manifold} $\,{\Bbb R}^n$; \\
  similarly, the {\it complex $r$-dimensional vector space ${\Bbb C}^{\oplus r}$}
     vs.\ the {\it complex $r$-fold} $\,{\Bbb C}^r$.
   
 \item[$\cdot$]
  All manifolds are paracompact, Hausdorff, and admitting a (locally finite) partition of unity.
  We adopt the {\it index convention for tensors} from differential geometry.
   In particular, the tuple coordinate functions on an $n$-manifold is denoted by, for example,
   $(y^1,\,\cdots\,y^n)$.
  However, no up-low index summation convention is used.
   
  
  \item[$\cdot$]
   `{\it differentiable}' = $k$-times differentiable (i.e.\ $C^k$)
         for some $k\in{\Bbb Z}_{\ge 1}\cup{\infty}$;
   `{\it smooth}' $=C^{\infty}$;
   $C^0$ = {\it continuous} by standard convention.

  \item[$\cdot$]
   $\Spec R $ ($:=\{\mbox{prime ideals of $R$}\}$)
         of a commutative Noetherian ring $R$  in algebraic geometry\\
   vs.\ $\Spec R$ of a $C^k$-ring $R$
  ($:=\Spec^{\Bbb R}R :=\{\mbox{$C^k$-ring homomorphisms $R\rightarrow {\Bbb R}$}\}$).

  \item[$\cdot$]
  {\it morphism} between schemes in algebraic geometry
    vs.\ {\it $C^k$-map} between $C^k$-manifolds or $C^k$-schemes
         	in differential topology and geometry or $C^k$-algebraic geometry.
   
  \item[$\cdot$]
   The `{\it support}' $\Supp({\cal F})$
    of a quasi-coherent sheaf ${\cal F}$ on a scheme $Y$ in algebraic geometry
     	or on a $C^k$-scheme in $C^k$-algebraic geometry
    means the {\it scheme-theoretical support} of ${\cal F}$
   unless otherwise noted;
    ${\cal I}_Z$ denotes the {\it ideal sheaf} of
    a (resp.\ $C^k$-)subscheme of $Z$ of a (resp.\ $C^k$-)scheme $Y$;
    $l({\cal F})$ denotes the {\it length} of a coherent sheaf ${\cal F}$ of dimension $0$.

  \item[$\cdot$]
   {\it coordinate-function index}, e.g.\ $(y^1,\,\cdots\,,\, y^n)$ for a real manifold
      vs.\  the {\it exponent of a power},
	  e.g.\  $a_0y^r+a_1y^{r-1}+\,\cdots\,+a_{r-1}y+a_r\in {\Bbb R}[y]$.
	     	
  \item[$\cdot$]	
 {\it cotangent sheaf} $\, \Omega^1_Y$ of a manifold $Y$
     vs.\ {\it holomorphic $n$-form} $\Omega$ on a Calabi-Yau $n$-fold.
 
  \item[$\cdot$]
    {\it global section functor} $\varGamma (\,\cdot\,)$ on sheaves
	   vs.\  {\it graph} $\varGamma_f$ of a function $f$.

  \item[$\cdot$]
    {\it algebraic operation} $\mu$ of a ring on  a module
	   vs.\ {\it Maslov index} $\mu$ of a loop in a symplectic group
	   vs.\ {\it dummy indices} in  e.g.\   space(-time) coordinates $y^{\mu},\, y^{\nu}$.
   
  \item[$\cdot$]
   `{\it d-manifold}$\,$' in the sense of `{\it d}erived manifold'
    vs.\  `{\it D-manifold}$\,$' in the sense of `{\it D}(irichlet)-brane that is supported on a manifold' 
	vs.\  `{\it D-manifold}$\,$' in the sense of works [B-V-S1] and [B-V-S2]
	   of Michael Bershadsky, Cumrun Vafa and Vladimir Sadov.
   
  \item[$\cdot$]
   The current Note D(11.1) continues the study in
	  \begin{itemize}
	   \item[$\cdot$] [L-Y1]  (arXiv:0709.1515 [math.AG], D(1)) and		
	   \item[$\cdot$] [L-L-S-Y] (arXiv:0809.2121 [math.AG], D(2), with Si Li and Ruifang Song),
	  \end{itemize}  	
	 with the direction toward D-branes in the realm of differential/symplectic topology and geometry.
   It was hinted at in
	   \begin{itemize}
	    \item[$\cdot$] [L-Y5] (arXiv:1003.1178 [math.SG], D(6)),
	   \end{itemize}
	pushed along in
	   \begin{itemize}
	    \item[$\cdot$] [L-Y6] (arXiv:1012.0525 [math.SG] , D(7))  and
		\item[$\cdot$] [L-Y7] (arXiv:1109.1878 [math.DG], D(8.1)),
	   \end{itemize}
    and recently re-motivated partially but strongly by the construction in progress
	   of a symplectic theory of both closed D-string world-sheet instantons
	      and open D-string world-sheet instantons
	   that can be paired to the algebraic theory of D-string world-sheet instantons
	     that is partially studied in
	  \begin{itemize}
	   \item[$\cdot$] [L-Y9] (arXiv:1302.2054 [math.AG] D(10.1)) and
	   \item[$\cdot$] [L-Y10] (arXiv:1310.5195 [math.AG] D(10.2)).
	  \end{itemize}
   A partial review of D-branes and Azumaya noncommutative geometry is given in
	  \begin{itemize}
	   \item[$\cdot$] [L-Y5] (arXiv:1003.1178 [math.SG], D(6)) and
       \item[$\cdot$] [Liu1] (arXiv:1112.4317 [math.AG]); see also [Liu2] and [Liu3].
      \end{itemize}
   Notations and conventions follow these earlier works when applicable.
 \end{itemize}

\bigskip

\newpage
   
\begin{flushleft}
{\bf Outline}
\end{flushleft}
\nopagebreak
{\small
\baselineskip 12pt  
\begin{itemize}
 \item[0.]
  Introduction.
   
 \item[1.]
 D-branes as fundamental objects in string theory and their matrix-type noncommutativity.
  \vspace{-.6ex}
  \begin{itemize}
   \item[$\cdot$]
    What is a D-brane?
	
   \item[$\cdot$]
    Azumaya/matrix noncommutative structures on a D-brane world-volume.
	
   \item[$\cdot$]
    What is the mathematical notion of `maps' from Azumaya/matrix spaces
	 to a space-time that can describe D-branes in string theiry correctly?
	
   \item[$\cdot$]
    New directions in geometry motivated by D-branes as maps from Azumaya spaces.
  \end{itemize}

 \item[2.]
 Algebraic aspect of differentiable maps, and $C^k$-algebraic geometry.
   \vspace{-.6ex}
   \begin{itemize}
    \item[2.1]
     Algebraic aspect of differentiable maps between differentiable manifolds.
    
	\item[2.2]
	 Sheaves in differential topology and geometry with input from algebraic geometry\\
     -- with a view toward the Chan-Paton sheaf on a D-brane.					
   \end{itemize}
   
 \item[3.]
  The case of D0-branes on ${\Bbb R}^n$:
  Differentiable maps from an Azumaya point with a fundamental module
  to ${\Bbb R}^n$ as a differentiable manifold.
  \vspace{-.6ex}
  \begin{itemize}
	\item[3.1]
	 Warm-up:
      Morphisms from an Azumaya point with a fundamental module
	  to the real affine space ${\Bbb A}^n_{\Bbb R}$ in algebraic geometry.
	
	\item[3.2]
	 Smooth maps from an Azumaya point with a fundamental module to ${\Bbb R}^n$ as a smooth manifold.	 
	\begin{itemize}
     \item[3.2.1]
	  Smooth maps from an Azumaya point with a fundamental module to ${\Bbb R}^1$.
		
	 \item[3.2.2]
	  Smooth maps from an Azumaya point with a fundamental module to ${\Bbb R}^n$.
    \end{itemize}
	 	
	\item[3.3]	
	 Continuous maps from an Azumaya point with a fundamental module
	 to ${\Bbb R}^n$ as a topological manifold.
		
	\item[3.4]
     $C^k$-maps from an Azumaya point with a fundamental module
	 to ${\Bbb R}^n$ as a $C^k$-manifold.  		 	
	
    \item[3.5]
	Lessons from the case study.
  \end{itemize}
  
 \item[4.]
  Differential calculus and geometry of Azumaya manifolds with a fundamental module.
  \vspace{-.6ex}
  \begin{itemize}   	
   \item[4.1]
   	Differential calculus on noncommutative rings with center a $C^k$-ring.
   
   \item[4.2]
    Differential calculus on Azumaya differentiable manifolds with a fundamental module.

   \item[4.3]
    Metric structures on an Azumaya differentiable manifold with a fundamental module.
  \end{itemize}
  
 \item[5.]
  Differentiable maps from an Azumaya/matrix manifold with a fundamental module to a real\\
  manifold.
  \vspace{-.6ex}
  \begin{itemize}	
   \item[5.1]
	A generalization of Sec.\ 2.1 and Sec.\ 3 to Azumaya/matrix manifolds with a fundamental module
	-- local study.
		
   \item[5.2]
    The role of the bundle $E$: The Chan-Paton bundle on a D-brane (continuing Sec.~2.2).
   
   \item[5.3]
    Differentiable maps from an Azumaya manifold with a fundamental module to a real manifold.
	\begin{itemize}
	  \item[5.3.1]
	   Aspect I [fundamental]: Maps from gluing systems of ring-homomorphisms.
	
	  \item[5.3.2]
      Aspect II: The graph of a differentiable map.
	
      \item[5.3.3]
      Aspect III: From maps to the stack of D0-branes.
	
	  \item[5.3.4]
	  Aspect IV: From associated $\GL_r({\Bbb C})$-equivariant maps.
	  %
	\end{itemize}
  \end{itemize}	
   
 \item[6.]
  Push-pulls and differentiable maps adapted to additional structures on the target-manifold
  \vspace{-.6ex}
  \begin{itemize}
   \item[6.1]
   The induced map on derivations, differentials, and tensors.
 
   \item[6.2]
   Remarks on stringy regularizations of the push-forward of a sheaf under a differentiable map.

   \item[6.3]
   Differentiable maps adapted to additional structures on the target manifold.
  \end{itemize}
  
 \item[7.]
  Examples of differentiable maps from Azumaya manifolds with a fundamental module.
  \vspace{-.6ex}
  \begin{itemize}
   \item[7.1]
    Examples generated from branched coverings of manifolds.
	
   \item[7.2]	
    {From} an immersed special Lagragian brane with a flat bundle to a fuzzy special Lagrangian brane
    with a flat bundle in the Calabi-Yau $1$-fold ${\Bbb C}^1$.
  \end{itemize}
      
 %
 %
 %
 
\end{itemize}
} 

\newpage

\section{D-branes as fundamental objects in string theory and their matrix-type noncommutativity }

In this first section of the current note, which points to new directions of the project,
 we rerun what motivated us in the year 2007.
As this note is the symplectic/differential-geometric counterpart
    of the more algebraic-geometry-oriented first two notes
	   [L-Y1] (D(1))  and
	   [L-L-S-Y] (D(2), with Si Li and Ruifang Song)
	 from the project,
 this section serves to make the current D(11.1) and its sequels more conceptually self-contained as well.

\bigskip

\begin{flushleft}
{\bf What is a D-brane?}
\end{flushleft}
A {\it D-brane} (in full name,{\it Dirichlet brane}) is meant to be a boundary condition for open strings
  in whatever form it may take, depending on where we are in the related Wilson's theory-space.
A realization of D-branes that is most related to the current work is
  an embedding $f:X\rightarrow Y$ of a manifold $X$ into the open-string target space-time $Y$
  with the end-points of open strings being required to lie in $f(X)$.
This sets up a $2$-dimensional Dirichlet boundary-value problem
  for the field theory on the world-sheet of open strings.
Oscillations of open strings with end-points in $f(X)$ then create a mass-tower of various fields on $X$,
 whose dynamics is governed by open string theory.
This is parallel to the mechanism that oscillations of closed strings create fields in space-time $Y$,
 whose dynamics is governed by closed string theory.
Cf.~{\sc Figure}~1-1.
%

\begin{figure}[htbp]
 \bigskip
  \centering
  \includegraphics[width=0.80\textwidth]{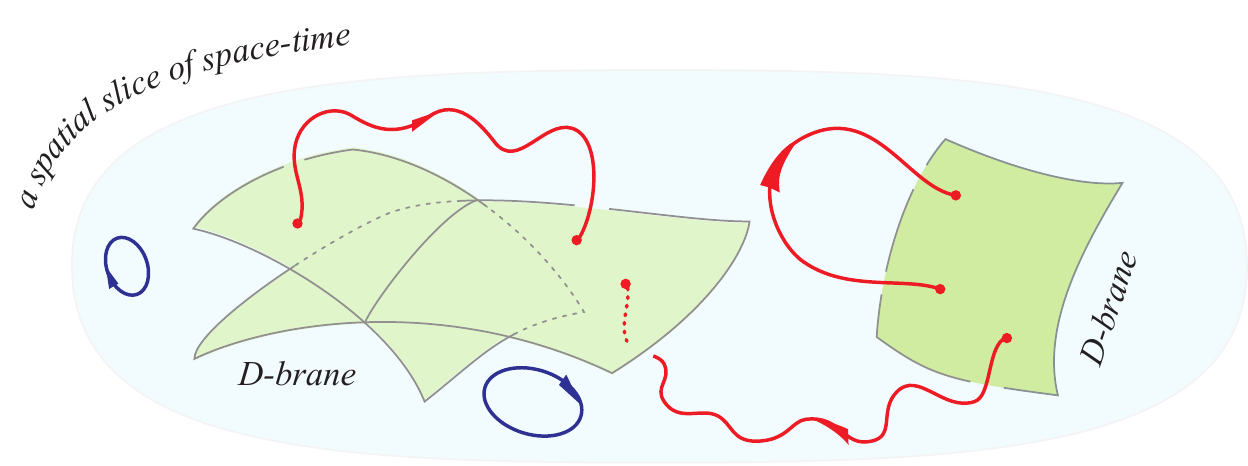}
 
  \bigskip
  \bigskip
 \centerline{\parbox{13cm}{\small\baselineskip 12pt
  {\sc Figure}~1-1.
  D-branes as boundary conditions for open strings in space-time.
  This gives rise to interactions of D-brane world-volumes with open strings.
  Properties of D-branes,
     including the quantum field theory on their world-volume and deformations of such,
   are governed by open strings via this interaction.
  Both oriented open (resp.\ closed) strings and a D-brane configuration are shown.
  }}
\end{figure}

\noindent
Let $\xi:=(\xi^a)_a$ be local coordinates on $X$ and
  $\Phi:=(\Phi^a;\Phi^{\mu})_{a,\mu}$ be local coordinates on $Y$
  such that the embedding $f:X\hookrightarrow Y$ is locally expressed as
  $$
   \Phi\; =\; \Phi(\xi)\; =\; (\Phi^a(\xi); \Phi^{\mu}(\xi))_{a,\mu}\;
   =\; (\xi^a,\Phi^{\mu}(\xi))_{a,\mu}\,;
  $$
 i.e., $\Phi^a$'s (resp.\ $\Phi^{\mu}$'s) are local coordinates along
       (resp.\ transverse to) $f(X)$ in $Y$.
This choice of local coordinates removes redundant degrees of freedom of the map $f$, and
$\Phi^{\mu}=\Phi^{\mu}(\xi)$ can be regarded as (scalar) fields on $X$
 that collectively describes the postions/shapes/fluctuations of $X$ in $Y$ locally.
Here, both $\xi^a$'s, $\Phi^a$'s, and $\Phi^{\mu}$'s are ${\Bbb R}$-valued.
The open-string-induced gauge field on $X$ is locally given
 by the connection $1$-form $A=\sum_a A_a(\xi)d\xi^a$ of a $U(1)$-bundle on $X$.

When $r$-many such D-branes $X$ are coincident/stacked,
  from the associated massless spectrum of (oriented) open strings with both end-points on $f(X)$
 one can draw the conclusion that
 \begin{itemize}
  \item[(1)]
   The gauge field $A=\sum_a A_a(\xi)d\xi^a$ on $X$ is enhanced to $u(r)$-valued.

  \item[(2)]
   Each scalar field $\Phi^{\mu}(\xi)$ on $X$ is also enhanced to matrix-valued.
 \end{itemize}
Property (1) says that there is now a $U(r)$-bundle on $X$.
But
 \begin{itemize}
  \item[$\cdot$]
   {\bf Q.}\ {\it What is the meaning of Property (2)?}
 \end{itemize}
{For} this, Polchinski remarks that:
 (Note:
  Polchinski's $X^{\mu}$ and $n$ $\;=\;$ our $\Phi^{\mu}$ and $r$.)
\begin{itemize}
 \item[$\cdot$] \parbox[t]{37.4em}{\it
 {\rm [quote from [Pol4: vol.~I, Sec.~8.7, p.~272]]}\hspace{1em}
 ``{\it
  For the collective coordinate $X^{\mu}$, however, the meaning is mysterious:
   the collective coordinates for the embedding of $n$ D-branes in space-time
     are now enlarged to $n\times n$ matrices.
  This `noncommutative geometry' has proven to play a key role in the dynamics of D-branes,
   and there are conjectures that it is an important hint about the nature of space-time.}"}
\end{itemize}
(See also comments in
    [Joh: Sec.~4.10 (p.\ 125)] and [A-B-C-D-G-K-M-S-S-W: Sec.~3.5.2.9].)
{From} the mathematical/geometric perspective,
 \begin{itemize}
  \item[$\cdot$]
  Property (2) of D-branes, the above question, and Polchinski's remark
 \end{itemize}
 can be incorporated into the following single guiding question:
 \begin{itemize}
  \item[{\bf Q.}]
  {\bf [D-brane]}$\;$ {\it What is a D-brane intrinsically?}
 \end{itemize}
In other words, what is the {\it intrinsic} nature/definition of D-branes so that {\it by itself}
 it can produce the properties of D-branes (e.g.\ Property (1) and Property (2) above)
 that are consistent with, governed by, or originally produced by open strings as well?

\bigskip

\begin{flushleft}
{\bf Azumaya/matrix noncommutative structures on D-brane world-volume}
\end{flushleft}
{To} understand Property (2), one has two perspectives:
 \begin{itemize}
  \item[(A1)]
   [{\it coordinate tuple as point}]\hspace{1em}
   A tuple $(\xi^a)_a$ (resp.\ $(\Phi^a; \Phi^{\mu})_{a,\mu}$)
    represents a point on the world-volume $X$ of the D-brane (resp.\ on the target space-time $Y$).

  \item[(A2)]
   [{\it local coordinates as generating set of local functions}]\hspace{1em}
   Each local coordinate $\xi^a$ of $X$ (resp.\ $\Phi^a$, $\Phi^{\mu}$ of $Y$)
     is a local function on $X$ (resp.\ on $Y$)  and
     the local coordinates $\xi^a$'s (resp.\ $\Phi^a$'s and $\Phi^{\mu}$'s) together
   form a generating set of local functions on the world-volume $X$ of the D-brane
   (resp.\ on the target space-time $Y$).
 \end{itemize}
While Aspect (A1) leads one to the anticipation of a noncommutative space
  from a noncommutatization of the target space-time $Y$ when probed by coincident D-branes,
Aspect (A2) of Grothendieck leads one to a different -- seemingly dual but not quite -- conclusion:
  a noncommutative space from a noncommutatization of the world-volume $X$ of coincident D-branes,
 as follows.

Denote by ${\Bbb R}\langle \xi^a\rangle_{a}$
  (resp.\ ${\Bbb R}\langle \Phi^a; \Phi^{\mu}\rangle_{a, \mu}$)
 the local function ring on the associated local coordinate chart on $X$ (resp.\ on $Y$).
Then the embedding $f:X\rightarrow Y$,
 locally expressed as
  $\Phi=\Phi(\xi)
     =(\Phi^a(\xi); \Phi^{\mu}(\xi))_{a,\mu}=(\xi^a; \Phi^{\mu}(\xi))$,
 is locally contravariantly equivalent to a ring-homomorphism\footnote{For
                                           string-theorists:
                                            I.e.\ pull-back of functions from the target-space $Y$
                                            to the domain-space $X$ via $f$.}
 $$
  f^{\sharp}\;:\;
   {\Bbb R}\langle \Phi^a; \Phi^{\mu}\rangle_{a, \mu}\;
   \longrightarrow\; {\Bbb R}\langle \xi^a\rangle_{a}\,,
   \hspace{1em}\mbox{generated by}\hspace{1em}
  \Phi^a\;\longmapsto\; \xi^a\,,\;
  \Phi^{\mu}\;\longmapsto\;\Phi^{\mu}(\xi)\,.
 $$
When $r$-many such D-branes are coincident,
 $\Phi^{\mu}(\xi)$'s become $M_{r\times r}({\Bbb C})$-valued.
Thus, $f^{\sharp}$ is promoted to a new local ring-homomorphism:
 $$
  \hat{f}^{\sharp}\;:\;
   {\Bbb R}\langle \Phi^a; \Phi^{\mu}\rangle_{a, \mu}\;
   \longrightarrow\; M_{r\times r}({\Bbb C}\langle \xi^a\rangle_{a})\,,
  \hspace{1em}\mbox{generated by}\hspace{1em}
  \Phi^a\;\longmapsto\; \xi^a\cdot{\mathbf 1}\,,\;
  \Phi^{\mu}\;\longmapsto\;\Phi^{\mu}(\xi)\,.
 $$
Under Grothendieck's contravariant local equivalence of function rings
 and spaces, $\hat{f}^{\sharp}$ is equivalent to saying that we have
 now a map $\hat{f}: X_{\scriptsizenoncommutative}\rightarrow Y$,
  where $X_{\scriptsizenoncommutative}$ is the new domain-space,
   associated now to the enhanced function-ring
   $M_{r\times r}({\Bbb C}\langle \xi^a\rangle_{a})$.
Thus, the D-brane-related noncommutativity in Polchinski's treatise
 [Pol4], as recalled above, implies the following ansatz
 when it is re-read from the viewpoint of Grothendieck:

\bigskip

\begin{sansatz} {\bf [D-brane: Azumaya/matrix-type noncommutativity].}
 The world-volume of a D-brane carries a noncommutative structure locally associated to
  a function ring of the form $M_{r\times r}(R)$,   where
  $r\in {\Bbb Z}_{\ge 1}$ and $M_{r\times r}(R)$ is the $r\times r$ matrix ring over $R$.
\end{sansatz}

\bigskip

We call a geometry associated to a local function-rings of matrix-type
 {\it Azumaya-type noncommutative geometry};
cf.~[Ar], [Az], and Remark~1.2 (3) below.

Note that
 when the closed-string-created $B$-field on the open-string target space(-time) $Y$ is turned on,
 $R$ in the Ansatz can become noncommutative itself;
cf.\ for example, [S-W] and see [L-Y4: Sec.~5.1] (D(5)) for such a case.

\bigskip

\begin{sremark} $[$Azumaya/matrix noncommutative structure on D-brane world-volume$]$. \\{\rm
 (1)
 Ansatz~1.1 was originally observed by {\it Pei-Ming Ho} and {\it Yong-Shi Wu}
   from their study of D-branes [Ho-W: Sec.~5 `{\it D-branes as quantum spaces}'] (1996).
 {From} the physical aspect, it can be perceived also from a comparison with quantum mechanics.
 In quantum mechanics,
  when a particle moving in a space-time with spatial coordinates collectively denoted by $y$,
    $y$ becomes operator-valued.
 There we don't take the attitude that
   just because $y$ becomes operator-valued, the nature of the space-time is changed.
 Rather, we say that {\it the particle is quantized but the space-time remains classical.}
 In other words, it is the nature of the particle that is changed, {\it not} the space-time.
 Replacing the word `{\it quantized}' by `{\it matrix/Azumaya noncommutatized}',
  one concludes that this matrix/Azumaya-noncommutativity happens on D-branes,
  {\it not (immediately on) the space-time}.
  
  \medskip
  
  (2)
 {From} the mathematical/Grothendieck aspect, the function ring $R$ is more fundamental than
   the topological space $\Space(R)$ associated to it, if definable.
 A morphism
     $$
      \varphi\; :\;  \Space(R)\; \longrightarrow\; \Space(S)
     $$
     is specified {\it contravariantly} by a ring-homomorphism
     $$
      \varphi^{\sharp}\; :\; S\; \longrightarrow\; R\,.
     $$
   If the function ring $R$ of the domain space $\Space(R)$
     is commutative, then $\varphi^{\sharp}$ factors through
     a  ring-homomorphism $\bar{\varphi}^{\sharp}:S/[S,S]\rightarrow R$,
     $$
      \xymatrix{
        R  &&  S\ar[ll]_-{\varphi^{\sharp}} \ar@{->>}[d]^-{\pi_{S/[S,S]}}
                \ar@{}[dl]|{\mbox{\raisebox{2.4ex}{$\circ$}}}   \\
           &&  S/[S,S]  \ar[llu]^-{\bar{\varphi}^{\sharp}}   &.
      }
     $$
   Here,
    $[S,S]$, the {\it commutator} of $S$, is the bi-ideal of $S$
     generated by elements of the form $s_1s_2-s_2s_1$
     for some $s_1,\,s_2\in S$;  and
    $S/[S,S]$ is the commutatization of $S$.
   It follows that
     $$
      \xymatrix{
       \Space(R)\ar[rr]^-{\varphi} \ar[rrd]_-{\bar{\varphi}}
        &&  \Space(S)\ar@{}[dl]|{\mbox{\raisebox{2.4ex}{$\circ$}}}\\
        &&  \Space(S/[S,S])\rule{0ex}{2.4ex} \ar@{^{(}->}[u]_-{\iota}   &.
      }
     $$
   In other words,
    \begin{itemize}
     \item[$\cdot$] \parbox[t]{37.4em}{\it
	 If the function ring on the D-brane world-volume is only commutative,
      then it won't be able to detect the noncommutativity, if any,
      of the open-string target-space!}
    \end{itemize}
	
 \medskip
 
 (3) For a reference to string-theorists,
 the name `{\it Azumaya algebra}' is due to the introduction of the notion of  `{\it maximally central algebra}
   by Goro Azumaya in [Az] (1951), which later came to be called ``Azumaya algebra" , cf.\ [Ar],
 This is the same abstraction of the matrix algebras as `$r$-dimensional vector space over $k$' vs.\ $k^{\oplus r }$.
 The study of such algebras from the viewpoint of algebras and representation theory
   is a classical mathematical subject.
 However, the investigation of it as a geometric object started only much later,
   cf.\ related reference in
                        [L-Y1] (D(1)), [L-Y2] (D(3)), and [L-Y3] (D(4)).
 The full richness of Azumaya geometry remains to be explored.
 Cf.~{\sc Figure}~1-2.
 %

\begin{figure}[htbp]
 \bigskip
 \centering
  \includegraphics[width=0.80\textwidth]{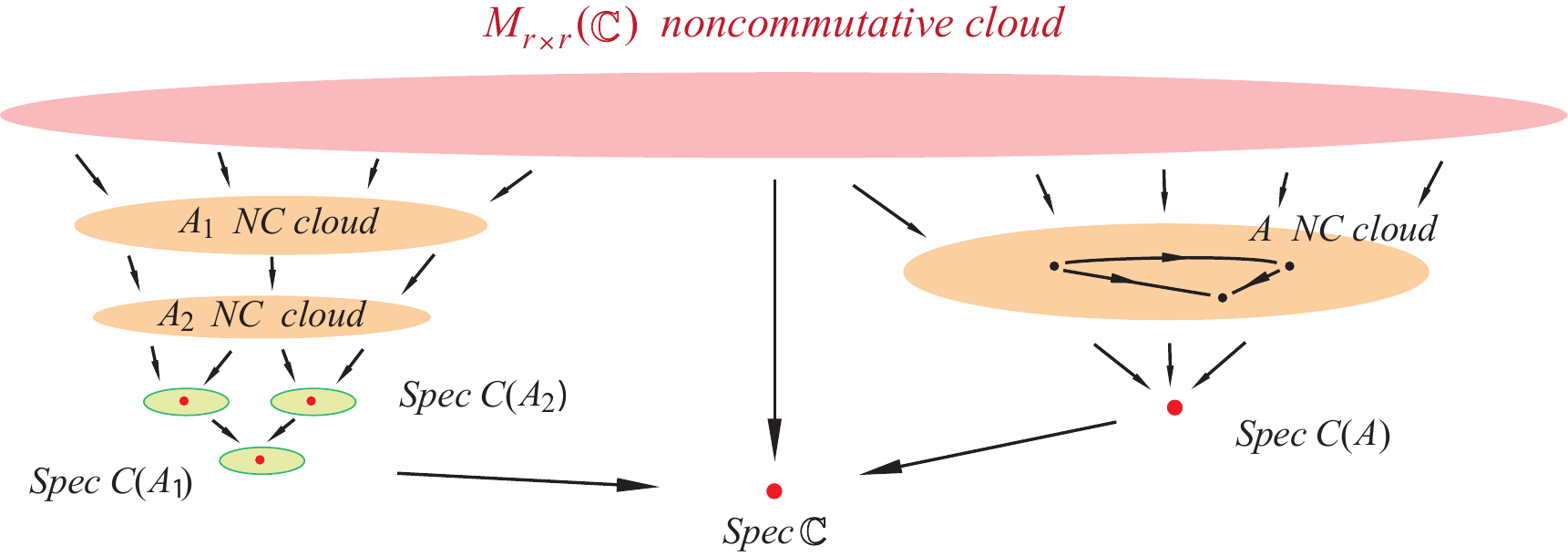}

 \bigskip
 \bigskip
 \centerline{\parbox{13cm}{\small\baselineskip 13pt
  {\sc Figure}~1-2.
  An Azumaya scheme contains a very rich amount of geometry,
   revealed via its surrogates;
   cf.\ [L-L-S-Y: {\sc Figure}~1-3] (D(2)).
  Indicated here is the geometry of an Azumaya point
   $p^{A\!z} := (\smallSpec{\Bbb C}, M_{r\times r}({\Bbb C}))$.
  Here, $A_i$ are ${\Bbb C}$-subalgebras of $M_{r\times r}({\Bbb C})$
    and $C(A_i)$ is the center of $A_i$ with
   $$
     \begin{array}{cccccccl}
      M_{r\times r}({\Bbb C}) & \supset  & A_1  & \supset  &  A_2
                    &\supset   & \cdots \\
       \cup  && \cup && \cup \\
     {\Bbb C}\cdot {\mathbf 1} & \subset  & C(A_1)
                    & \subset  & C(A_2)   & \subset  & \cdots & .
     \end{array}
   $$
  According to Ansatz~1.1,
   a D$0$-brane can be modelled prototypically
   by an Azumaya point with a fundamental module of type $r$,
    $(\smallSpec{\Bbb C},\smallEnd({\Bbb C}^{\oplus r}),{\Bbb C}^{\oplus r})$.
  When the target space $Y$ is commutative,
   the surrogates involved are commutative ${\Bbb C}$-sub-algebras
    of the matrix algebra $M_{r\times r}({\Bbb C})=\End({\Bbb C}^{\oplus r})$.
  This part already contains an equal amount of
   information/richness/complexity
   as the moduli space of $0$-dimensional coherent sheaves
   of length $r$.
  When the target space is noncommutative,
   more surrogates to the Azumaya point will be involved.
  Allowing $r$ to go to $\infty$ enables Azumaya points to probe
   ``infinitesimally nearby points" to points on a scheme
   to arbitrary level/order/depth.
  In (commutative) algebraic geometry,
   a resolution of a scheme $Y$ comes from a blow-up.
  In other words,
   a resolution of a singularity $p$ of $Y$ is achieved
   by adding an appropriate family of
   infinitesimally nearby points to $p$.
  Since D-branes with an Azumaya-type structure
   are able to ``see" these infinitesimally nearby points
    via morphisms therefrom to $Y$,
   they can be used to resolve singularities of $Y$.
  Thus, from the viewpoint of Ansatz~1.1,
   the Azumaya-type structure on D-branes is
   why D-branes have the power to ``see" a singularity
   of a scheme not just as a point,
   but rather as a partial or complete resolution of it.
  Such effect should be regarded as a generalization of
   the standard technique in algebraic geometry
   of probing a singularity of a scheme by arcs
   of the form $\Spec({\Bbb C}[\varepsilon]/(\varepsilon^r))$,
   which leads to the notion of jet-schemes
   in the study of singularity and birational geometry.}}
\end{figure}
 %
 %
}\end{sremark}

\bigskip

\begin{sremark} $[$D-brane as fundamental object in string theory$\,]$.   {\rm
 When D-branes are taken as fundamental objects as strings,
   it is no longer in existence as solitonic objects from string theory.
 Furthermore,
  we no longer want to think of their properties as derived from open strings.
 Rather, D-branes should have their own intrinsic nature in discard of open strings.
 Only that when D-branes co-exist with open strings in space-time,
   their nature has to be compatible/consistent with the originally-open-string-induced properties thereon.
 It is in this sense that we think of a D-brane world-volume
   as an Azumaya-type noncommutative space, following the Ansatz,
   on which other additional compatible structures
   -- in particular, a Chan-Paton sheaf -- are defined.
}\end{sremark}

\bigskip

\begin{flushleft}
{\bf What is the mathematical notion of `maps' from Azumaya/matrix spaces
	 to\\  a space-time that can describe D-branes in string theory correctly?}
\end{flushleft}
Having {\it derived} that D-brane world-volume is an Azumaya/matrix-type noncomutative space,
 our next question is
 \begin{itemize}
  \item[{\bf Q.}]  \parbox[t]{37.4em}{\it {\bf [map from Azumaya space].}
   What is the mathematical notion of `maps' from Azumaya/matrix spaces to a space-time
   that can describe D-branes in string theory correctly?}
 \end{itemize}
{To} be precise, we now confine ourselves
  to the realm of (commutative or noncommutative) algebraic geometry for the rest of the theme.
  
Our candidate for a D-brane (or a Wick-rotated D-brane world-volume) in the realm of algebraic geometry
  is an {\it Azumaya/matrix scheme} obtained from gluing central localizations of matrix rings.
As such, it is an equivalence class of gluing systems of matrix rings which can be compactly
 realized as a ringed-space
 $$
  (X^{A\!z},{\cal O})\;
   :=\;   (X,{\cal O}_X^{A\!z}:=\Endsheaf_{{\cal O}_X}({\cal E}),{\cal E})\,.
 $$
 Here,
   $X:=(X,{\cal O}_X)$ is the ordinary (commutative, Noetherian) scheme over ${\Bbb C}$
    (e.g.\ [E-H] and [Hart]) and
   ${\cal E}$ is a locally free ${\cal O}_X$-module, say of rank $r$.
 $X^{\!A\!z}$ has the same underlying topology as $X$ but with the noncommutative structure sheaf
  ${\cal O}_X^{A\!z}$, the sheaf of endomorphism algebras of ${\cal E}$,
		that contains ${\cal O}_X$  as its sheaf of central elements.
 
However, it turns out that to reflect the behavior of D-branes correctly,
 the notion of a {\it morphism}
  $$
     \varphi\; :\; (X^{\!A\!z},{\cal E})\; \longrightarrow\;  (Y,{\cal O}_Y)
  $$
    to a (commutative or noncommutative) ringed-space $(Y,{\cal O}_Y)$
  is not defined ordinarily as a morphism between ringed-spaces as in, e.g.\ [E-H] and [Hart].
Rather, it is only {\it defined contravariantly as an equivalence class of gluing systems of ring-homomorphisms},
 in notation
 $$
   \varphi^{\sharp}\; :\; {\cal O}_Y\; \longrightarrow\; {\cal O}_X^{A\!z}\,,
 $$
 without specifying in company any map $X\rightarrow Y$ between the underlying topological spaces.
Indeed, for general $\varphi$ there is no map $X\rightarrow Y$ that can be assigned naturally/functorially.
Despite being nonconventional,
  the built-in/fundamental (left) ${\cal O }_X^{A\!z}$-module ${\cal E}$ is realized
  as an ${\cal O}_Y$-module through $\varphi^{\sharp}$.
This defines the push-forward $\varphi_{\ast}{\cal E}$ on $Y$.

%
%
  
\bigskip
 
When the target space $Y$ is a commutative scheme
   and ${\cal E}$ is locally free ${\cal O}_X$-module,
 then
  associated to a morphism
   $\varphi :(X^{\!A\!z},{\cal E})\rightarrow (Y,{\cal O}_Y)$
   is the following diagram
   $$
    \xymatrix{
    {\cal O}_X^{A\!z}= \Endsheaf_{{\cal O}_X}({\cal E})\\
       {\cal A}_{\varphi}\;:=\, \rule{0ex}{3ex}
         		{\cal O}_X\langle\Image\varphi^{\sharp}\rangle \ar@{^{(}->}[u]
                    				&&& {\cal O}_Y\ar[lll]_-{\varphi^{\sharp}}\\
    {\cal O}_X\rule{0ex}{3ex}  \ar@{^{(}->}[u]								&&&&,
    }
  $$
 where ${\cal O}_X\langle\Image\varphi^{\sharp}\rangle$
    is the sheaf of ${\cal O}_X$-subalgebras of ${\cal O}_X^{A\!z}$
	generated by $\Image\varphi^{\sharp}$.
Note that ${\cal A}_{\varphi}$ is a sheaf of commutative ${\cal O}_X$-algebras
    and that ${\cal E}$ is naturally realized as an ${\cal A}_{\varphi}$-module as well,
	in notation $_{{\cal A}_{\varphi}}{\cal E}$.
Let $X_{\varphi}:= \boldSpec{\cal A}_{\varphi}$ be the associated scheme. 	
Then, by tautology,  $_{{\cal A}_{\varphi}}{\cal E}$
    is an ${\cal O}_{X_{\varphi}}$-module  and
  one has the corresponding diagram of spaces:
   $$
    \xymatrix{
     &   X^{\!A\!z}\ar[rrrd]^-{\varphi}\ar@{->>}[d]   \\
     &  X_{\varphi}\ar[rrr]_-{f_{\varphi}}  \ar@{->>}[d]^-{\pi_{\varphi}} &&& Y \\
     & X      &&&&.
      }
   $$
   
Observe that  $X_{\varphi}$ can be canonically embedded in $X\times Y$
   through $(\pi_{\varphi},f_{\varphi})$.
 The built-in ${\cal O}_{X_{\varphi}}$-module
   $_{{\cal A}_{\varphi}}{\cal E}$ can then be pushed forward to
   an ${\cal O}_{X\times Y}$-module
    $\tilde{\cal E}_{\varphi}
	    := (\pi_{\varphi},f_{\varphi})_{\ast}
		                                   (_{{\cal A}_{\varphi}}{\cal E})$
    that is supported on $(\pi_{\varphi},f_{\varphi})(X_{\varphi})$.	
 $\tilde{\cal E}_{\varphi}$ is called the {\it graph} of  the morphism $\varphi$.
 It is a coherent sheaf on $X\times Y$
  that is flat over $X$, of relative dimension $0$.
Conversely, given such a coherent sheaf $\tilde{\cal E}$ on $X\times Y$,
  a morphism
    $\varphi_{\tilde{\cal E}}:(X,{\cal O}_X^{A\!z}, {\cal E})\rightarrow Y$
 can be constructed from $\tilde{\cal E}$ by taking
 \begin{itemize}
  \item[$\cdot$]
    ${\cal E}=\pr_{1\,\ast}\tilde{\cal E}$,
	
  \item[$\cdot$] 	
   ${\cal O}_X^{A\!z}=\Endsheaf_{{\cal O}_X}({\cal E})$,  and

  \item[$\cdot$]
   $\varphi_{\tilde{\cal E}}^{\;\sharp}:
     {\cal O}_Y\rightarrow {\cal O}_X^{A\!z}\,$
   is defined by the composition
   $$
     {\cal O}_Y
        \xrightarrow{\hspace{1em}pr_2^{\sharp}\hspace{1em}}
      {\cal O}_{X\times Y}
        \xrightarrow{\hspace{1.2em}\iota^{\sharp}\hspace{1.2em}}
		{\cal O}_{Supp(\tilde{\cal E})}\;
		\hookrightarrow\; {\cal O}_X^{A\!z}\,.
   $$
 \end{itemize}
Here,
  $X\xleftarrow{\pr_1} X\times Y \xrightarrow{\pr_2} $
      are the projection maps,
  $\iota:\Supp(\tilde{\cal E})\rightarrow X\times Y$	
    is the embedding of the subscheme,   and
note that $\Supp(\tilde{\cal E})$ is affine over $X$.
Treating $\tilde{\cal E}$ as an object in the bounded derived category
 $D^b(\Coh(X\times Y))$ of coherent sheaves on $X\times Y$,
 $\tilde{\cal E}$ defines a Fourier-Mukai transform
 $\Phi_{\tilde{\cal F}} : D^b(\Coh(X))\rightarrow D^b(\Coh(Y))$,
 in short name, a {\it Fourier-Mukai transform from $X$ to $Y$}.
In this way, the data that specifies a morphism
 $\varphi:(X,{\cal O}_X^{A\!z}, {\cal E})\rightarrow Y$
 is matched to a data that specifies a special kind of Fourier-Mukai transform.
 
\bigskip

While our notion of a morphism $\varphi:(X^{\!A\!z},{\cal E})\rightarrow (Y,{\cal O}_Y)$
 may look odd at the first sight,
it is tested on a few important situations of D-branes in string theory.  Thus, to conclude,

\bigskip

\begin{sdefinition-prototype} {\bf [D-brane as morphism from Azumaya/matrix space].} {\rm
 In the realm of algebraic geometry,  a {\it D-brane on $Y$}  is a {\it morphism}
  $\varphi:(X^{\!A\!z},{\cal E})\rightarrow Y$, defined contravariantly
  by an equivalence class $\varphi^{\sharp}:{\cal O}_Y\rightarrow {\cal O}_X^{A\!z}$
   of gluing systems of ring-homomorphisms.
}\end{sdefinition-prototype}

\bigskip

See [L-Y1] (D(1)), [L-L-S-Y] (D(2)) for foundations and
 [L-Y2] (D(3)), [L-Y3] (D(4)),  [L-Y4] (D(5)), [L-Y5] (D(6)), [L-Y6] (D(7))
  for tests/applications in various situations.

\bigskip

\begin{flushleft}
{\bf New directions in geometry motivated by D-branes as maps from Azumay spaces}
\end{flushleft}
This notion of D-branes in line with Definition-Prototype~1.4
  is not just a formal mathematical game without real consequences.
{\sc Figure}~1-3 illustrate
 how it motivates/drives new directions in mathematics motivated by string theory.
%
%
\begin{figure}[htbp]
 \bigskip
  \centering
  \includegraphics[width=0.60\textwidth]{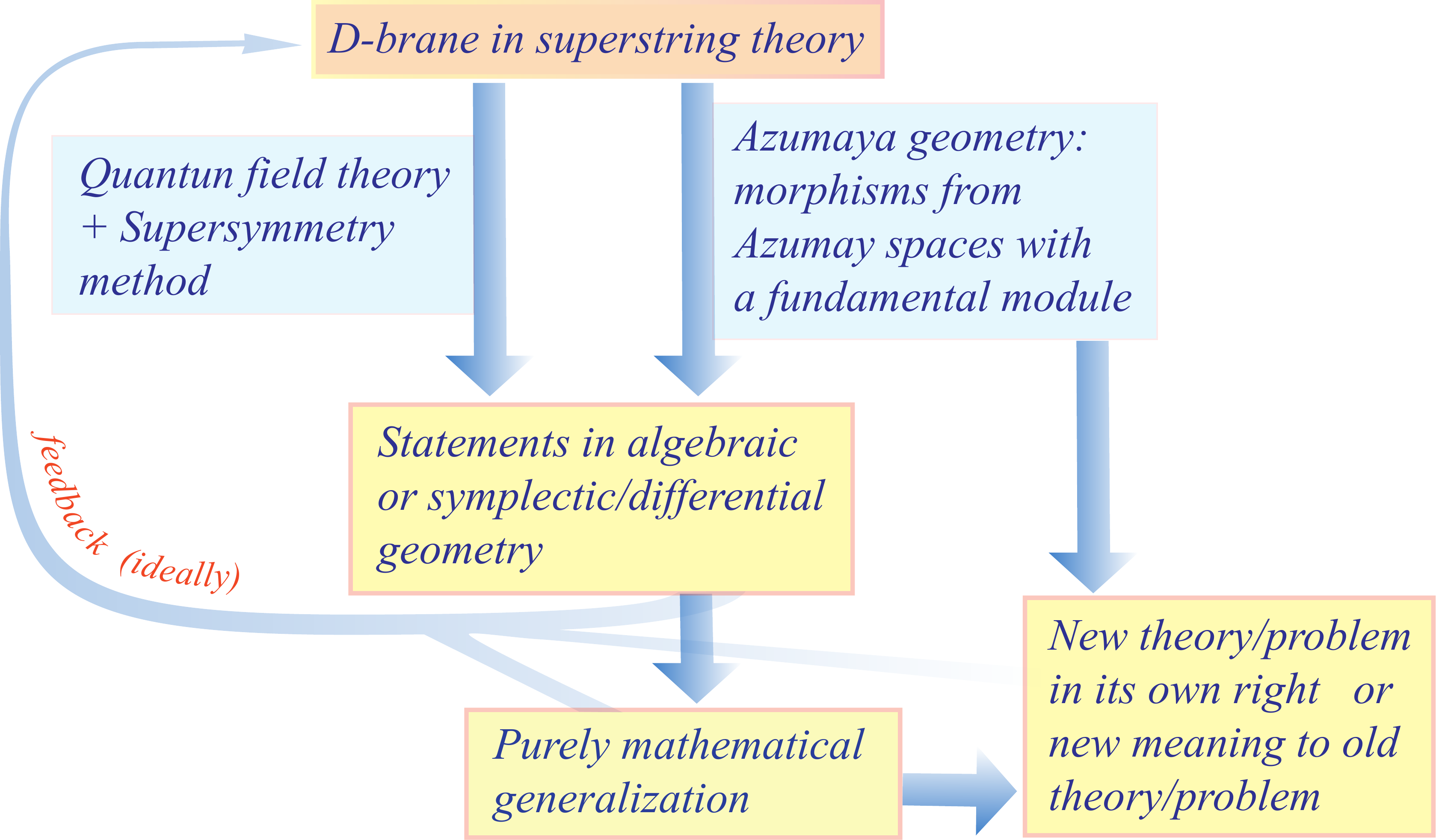}
  
   \bigskip
 \centerline{\parbox{13cm}{\small\baselineskip 12pt
  {\sc Figure}~1-3. From D-branes in string theory to new directions in geometry.
  }}
\end{figure}
Two exmaples in the realm of algebraic geometry are given below.

\bigskip

\begin{sexample} {\bf [D-brane resolution of singularity].} {\rm
 {From} the study of D-brane probe resolution of singularities of space-time, beginning with [Do-M]
 is extracted a conjecture:
 \begin{itemize}
  \item[$\cdot$] \parbox[t]{37.4em}{\it {\bf Conjecture [D0-brane resolution of singularity]}
    \hspace{1em}  {\rm ([L-Y8: Conjecture 1.5 and Conjecture 1.6] (D(9.1) with Baosen Wu).)}
   Let $Y$ be a projective variety over ${\Bbb C}$ and
         $f:Y^{\prime}\rightarrow Y$ be a birational morphism from a projective variety $Y^{\prime}$ to $Y$.	
   Then, $f$ factors through an embedding
	  $\tilde{f}:Y^{\prime}
	       \hookrightarrow{\frak M}_r^{0^{Az^f}_{\;p}}\!\!(Y) $, for some $r$.
    I.e.\ 		     		
	  $$
	    \xymatrix{
         &&  {\frak M}^{0^{Az^f}_{\;p}}_r\!\!(Y) \ar[d]^-{\pi_Y} \\
        Y^{\prime}\hspace{1ex}\ar @{^{(}->}[rru]^-{\tilde{f}} \ar[rr]^-f
         && \hspace{1ex}Y\hspace{1ex}  &\hspace{-4em}.
        }
      $$
     In particular, any resolution $\rho:\widetilde{Y}\rightarrow Y$ of $Y$ factors
    	 through some ${\frak M}_r^{0^{Az^f}_{\;p}}\!\!(Y)$.		
 Here, ${\frak M}_r^{0^{Az^f}_{\;p}}\!\!(Y)$
   is the stack of punctual D0-branes of rank $r$ on $Y$ in the sense of Definition-Prototype~1.4.
  }\end{itemize}
  The conjecture is true for $Y$ a reduced curve over ${\Bbb C}$
   ([L-Y8: Proposition~2.1] (D(9.1)).
}\end{sexample}

\bigskip

\begin{sexample} {\bf [D-string world-sheet instanton].} {\rm
 A mathematical theory of D-string world-sheet instantons for genus $g\ge 2$ was developed
      in [L-Y8] (D(10.1)) and [L-Y9] (D(10.2))
   in line with Definition-Prototype~1.4.
 This is a counter theory for D-strings as algebraic Gromov-Witten theory for fundamental strings
   and one has a similar compactness result:
   \begin{itemize}
    \item[$\cdot$] \parbox[t]{37.4em}{\it
  {\bf [${\frak M}_{Az^{\!f}\!(g;r,\chi)}^{\scriptsizeZss}(Y;\beta,c)$ compact]}
     \hspace{1em} {\rm ([L-Y10: Theorem~4.0] (D(10.2)).)}
    The stack ${\frak M}_{Az^{\!f}\!(g;r,\chi)}^{\tinyZss}(Y; \beta, c)$
      of $Z$-semistable morphisms from Azumaya nodal curves with a fundamental module
      to the Calabi-Yau $3$-fold $Y$ of type $(g;r,\chi;\beta,c)$
     is bounded and complete (i.e.\ compact).}
   \end{itemize}
 See ibidem for details.
}\end{sexample}	
	
\bigskip

Our path leading to Definition-Prototype~1.4
  naturally confines us a little bit to the realm of algebraic geometry.	
Yet,  on the string-theory side
  the occurrence of Azumaya/matrix-type noncommutative structure on D-brane world-volume
   is indifferent to whether one thinks of them mathematically in the realm of algebraic geometry or
   of differential or symplectic geometry;
on the mathematical side,
 the notion of encoding a topology space contravariantly by its function ring
  dated much earlier to, for example,
  the work [G-N] (1943) of Israel Gelfand and Mark  Na\u{i}mark
  and beyond algebraic geometry (or perhaps even to Karl Weierstrass in the19th century).
Several major topics motivated by string theory in differential or symplectic geometry
 require us to go beyond algebraic geometry as well.
This leads us to the current note.

\bigskip

\section{Algebraic aspect of differentiable maps, and $C^k$-algebraic\\ geometry}

As already being hinted at in, e.g.,
   the works [Vafa1] , [Vafa2] of Cumrun Vafa in studying D0-brane moduli space
      and the need to go from the classical moduli space to a quamtum moduli space
  and later being pointed out explicitly in the work [G-S] of T\'{o}mas G\'{o}mez and Eric Sharpe
 (cf.\ [L-Y5: Sec.~2.4, Item (4) ] (D(6))
 (and we'll see more from Sec.~3, Sec.~5.2, Sec.~7.2),
 even if we begin with a collection of simple D-branes (i.e.\ without any multiplicity),
  the support of Chan-Paton sheaf from deformations of branes should in general has a structure sheaf
  that contains nilpotent elements,
  reflecting its partial remembrance of the infinitesimal information of the deformation process.
This is a nature of D-branes themselves in string theory
  and has nothing to do with whether they are studied mathematically
   in the context of algebraic geometry or differential/symplectic geometry.
However, on the algebraic-geometry side,
 Grothendieck's theory of schemes and coherent sheaves take care of this information
    automatically inside the theory
  and is itself a standard general language in (commutative)  algebraic geometry.
In contrast, while there were efforts to merge Grothendieck's language to differential geometry
 under the names `{\it $C^{\infty}$-schemes}' and `{\it synthetic differential geometry}'
 (e.g.\ [Du1],  [Joy3], [Koc], [M-R], [NG-SdS]) by enlarging function rings in the realm of differential geometry
    to contain also nilpotent elements,
 the latter mathematical language are confined more to a subcommunity of differential geometers.
Thus,
 to fix some terminology/notation we need and to illuminate why we are doing this or that later,
 we recall in this section notions and objects in synthetic differential topology/geometry
  or $C^k$-algebraic geometry that are most relevant to this Note D(11.1)	
   in the spirit of a recent work [Joy3] of Dominic Joyce,
    who studied $C^{\infty}$-algebraic geometry for another reason
      (but could be relevant to us in the future
         when we come to the moduli problems in the current context).
The setting in this section follows [Joy3] essentially.
Some notational changes are made to fit in our works in the series.
Some additional terminologies are introduced for our later purpose.
With the current section as a preliminary,
we generalize in Sec.~5.1 the notion of differentiable maps between differentiable manifolds
 to the case that involves Azumaya algebras,
 which gives us the foundation toward the notion of a differentiable map
  from an Azumaya manifold with a fundamental module to a differentiable manifold in Sec.~5.3.
As explained in [L-Y1] (D(1)) and further illuminated in [Liu1],
   such a map is a proto-model to describe a D-brane in string theory
    in the regime where their tension is weak and hence are soft and flexible.

\bigskip
	
\subsection{Algebraic aspect of differentiable maps between differentiable manifolds}	

First, a guiding question from synthetic differential topology/geometry and
 $C^{\infty}$-algebraic geometry that is most relevant to us:
   \begin{itemize}
     \item[{\Large $\cdot$}]
	   Given a differentiable map $f:X\rightarrow Y$ between $C^k$-manifolds,
	   this induces an ${\Bbb R}$-algebra-homomorphism
	    $f^{\sharp}:C^k(Y)\rightarrow C^k(X)$.
		\begin{itemize}
           \item[{\bf Q.}]
		   {\it What is special about $f^{\sharp}$ as an algebra-homomorphism?\\[.6ex]
		           What feature of $f^{\sharp}$, $C^k(Y)$, and $C^k(X)$
			    	  makes $f$ recoverable from $f^{\sharp}$?}
		\end{itemize}   	
   \end{itemize}
One feature immediately catches the eye:
    \begin{itemize}
	 \item[{\Large $\cdot$}]
	  Let $g:{\Bbb R}^m\rightarrow {\Bbb R}$
	     be a $C^k$-function on ${\Bbb R}^m$  and
		 $h_1,\, \cdots\,,\, h_m\in C^k(Y)$,
	    then $g(h_1,\,\cdots\,,\, h_m)\in C^k(Y)$.
	  Furthermore,
	    $$
		  \begin{array}{ccl}
		   f^{\sharp}(g(h_1,\,\cdots\,,\, h_m))
		     & =   &  g(h_1,\,\cdots\,,\, h_m)\circ f \hspace{1em}\mbox{by definition,} \\[1.2ex]
           & =  & g(h_1\circ f,\,\cdots\,,\,h_m\circ f)\;\;\;
		                =\;\;\;  g(f^{\sharp}(h_1),\,\cdots\,,\, f^{\sharp}(h_m))\,.
		   \end{array}
         $$		   	
	\end{itemize}
  In other words, $f^{\sharp}$ satisfies many many more relations
     than just those that are required to be an ${\Bbb R}$-algebra-homomorphism.
These generally uncountably-many relations are essentially all non-algebraic. 	
This motivates a path to do or extend differential topology and geometry
with input from algebraic geometry.

\bigskip

\begin{flushleft}
{\bf Basic definitions in $C^k$-commutative algebra}
\end{flushleft}
The following definitions in the $C^{\infty}$ case are in the literature.
One can form the general $C^k$ version as well.
	 	
\bigskip

\begin{definition} {\bf [$C^k$-ring].} {\rm
 (Cf.\ [Joy: Definition 2.1, Definition 2.6].)
  Let $k\in {\Bbb Z}_{\ge 0}\cup\{\infty\}$.
  A {\it $C^k$-ring} is a set $R$ together with operations
   $$
    \Xi_f\;:\;
	   R^{\times n}\;
	     :=\,  \underbrace{R\times\,\cdots\,\times R}_{\mbox{\scriptsize $n$ copies}} \;
	   \longrightarrow\;  R                  	
   $$
   for all $n\ge 0$ and $C^k$-maps $f:{\Bbb R}^n\rightarrow {\Bbb R}$,
    where by convention  $R^{\times 0}:=\{\emptyset\}$.	
 These operations must satisfy the following relations:
   \begin{itemize}
    \item[(1)]
	  For $m$, $n\ge 0$  and
	    $f_i:{\Bbb R}^n\rightarrow {\Bbb R}$, $i=1,\,\ldots\,,\,m$,
		  and $g:{\Bbb R}^m\rightarrow {\Bbb R}$ $C^k$-functions,
		define a $C^k$-function $h:{\Bbb R}^n\rightarrow {\Bbb R}$ by
        $$
          h(x_1,\,\cdots\,,\, x_n)\;
		   =\;  g(f_1(x_1,\,\cdots\,,\, x_n)\,,\, \cdots\,,\, f_m(x_1,\,\cdots\,,\, x_n))\,,
        $$	
	 	for all $(x_1,\,\cdots\,,\, x_n)\in {\Bbb R}^n$.
	  Then
	    $$
		  \Xi_h(r_1,\, \cdots\,,\, r_n)\;
		    =\;  \Xi_g(\Xi_{f_1}(r_1,\, \cdots\,,\, r_n)\,,\, \cdots\,,\,
			                        \Xi_{f_m}(r_1,\, \cdots\,,\, r_n))
		$$
        for all $(r_1,\, \cdots\,,\, r_n)\in R^{\times n}$.
		
    \item[(2)]
	 For all $1\le j\le n$,
	 define $\pi_j:{\Bbb R}^n\rightarrow{\Bbb R}$
	   by $\pi_j:(x_1,\,\cdots\,,\,x_n)\mapsto x_j$.
	 Then
      $$
        \Xi_{\pi_j}(c_1,\,\cdots\,,\,c_n)\;=\; c_j
	  $$	
      for all $(c_1,\,\cdots\,,c_n)\in R^{\times n}$. 	 			
   \end{itemize}
 For brevity of notation, we may denote a $C^k$-ring
   $(R,(\Xi_f)_{f\in\cup_{n\ge 0}C^k({\Bbb R}^n)})$
   simply as $R$.
   
 The set $R$ is then equipped with a naturally induced commutative
  ${\Bbb R}$-algebra structure: \\
  (Below, $r_1$, $r_2$, $r\in R$ and $a\in {\Bbb R}$.)
  \begin{itemize}
    \item[(1)] ({\it addition}$\,$) \hspace{1em}
	  $r_1+r_2:= \Phi_f(r_1,r_2)$,
	   where $f:{\Bbb R}^2\rightarrow {\Bbb R}$ with $f(x_1,x_2)=x_1+x_2$.
	
    \item[(2)] ({\it multiplication}$\,$) \hspace{1em}
	  $r_1\cdot r_2:= \Phi_f(r_1,r_2)$,
	   where $f:{\Bbb R}^2\rightarrow {\Bbb R}$ with $f(x_1,x_2)=x_1x_2$.
	
	\item[(3)] ({\it scalar multiplication}$\,$) \hspace{1em}
      $a r:= \Phi_f(r)$,
	   where $f:{\Bbb R}\rightarrow {\Bbb R}$ with $f(x)=ax$.	
	
	\item[(4)] ({\it identity elements}$\,$) \hspace{1em}
	 The additive identity element $0:= \Phi_{0^{\prime}}(\emptyset)$,
	  where $0^{\prime}:{\Bbb R}^0\rightarrow {\Bbb R}$ with $\emptyset\mapsto 0$;
	 and the multiplicative identity element $1:= \Phi_{1^{\prime}}(\emptyset)$,
	  where $1^{\prime}:{\Bbb R}^0\rightarrow {\Bbb R}$ with $\emptyset\mapsto 1$.
  \end{itemize}
} \end{definition}

\bigskip

\begin{definition} {\bf [$C^k$-ring homomorphism].} {\rm  (Cf.\ [Joy: Definition 2.1].)
   A {\it $C^k$-ring homomorphism} between $C^k$-rings
    $(R, (\Xi_f)_{f\in \cup_{n\ge 0}C^k({\Bbb R}^n)})$ and
	$(S, (\Upsilon_f)_{f\in \cup_{n\ge 0}C^k({\Bbb R}^n)})$
	is a map $\psi:R\rightarrow S$ such that
	$$
	    \Upsilon_f(\psi(r_1),\,\cdots\,,\,\psi(r_n))\;
		 =\;  \psi(\Xi_f(c_1,\,\cdots\,,\,c_n))
	$$
	 for all $C^k$-functions $f:{\Bbb R}^n\rightarrow {\Bbb R}$,
	  $c_1,\,\cdots\,,\,c_r\in R$, and $n\ge 0$.
  Note that $\psi$ is then automatically an ${\Bbb R}$-algebra homomorphism from $R$ to $S$.
}\end{definition}

\bigskip

\begin{definition}
{\bf [(algebraic) ideal, $C^k$-normal (algebraic)  ideal,
               and quotient, $C^k$-normal quotient of $C^k$-ring].} {\rm
 An {\it ideal}   (or {\it algebraic ideal} to distinguish with Definition~2.1.4 next) $I$
    in a $C^k$-ring $R$
  is an ideal $I\subset R$ as a commutative ${\Bbb R}$-algebra.
 $I$ is called {\it $C^k$-normal}
  if  the $C^k$-ring structure on the quotient commutative ${\Bbb R}$-algebra $R/I$
  defined by $\Xi^I_f:(R/I)^{\times n}\rightarrow R/I$ with
  $$
    (\Xi^I_f(r_1+I,\,\cdots\,,\,r_n+I))\;
	   =\; \Xi_f(r_1,\,\cdots\,,\,r_n)\,+\, I\,,
  $$
   where $f:{\Bbb R}^n\rightarrow {\Bbb R}$ are $C^k$-functions,
   is independent of the choices of the representative $r_i$ for $r_i+I$.
 In this case, $R/I$   is called a {\it $C^k$-normal quotient} of $R$.
 Note that for $I$ $C^k$-normal, the quotient $C^k$-ring structure on $R/I$
    is compatible with the quotient ${\Bbb R}$-algebra structure
}\end{definition}

\bigskip

\begin{definition}{\bf [$C^k$-ideal].} {\rm
 An ideal $I$ of a $C^k$-ring $R$, in the sense of Definition~2.1.3 previously,  is called
  a {\it $C^k$-ideal} if the following additional condition is satisfied:
  \begin{itemize}
   \item[{\Large $\cdot$}]
     If $r_1,\,\cdots\,,\,r_n\in I$,
	 then $\Xi_f(r_1,\,\cdots\,,\,r_n)\in I$
	     for all $f\in C^k({\Bbb R}^n)$ such that $f(0,\,\cdots\,,\,0)=0$.	
  \end{itemize}
 If $I$ is also $C^k$-normal, then it is called a {\it $C^k$-normal $C^k$-ideal}.
}\end{definition}

\bigskip

An algebraic ideal in general may not be a $C^k$-ideal.
The notion of the latter kind of ideals of a $C^k$-ring is motivated by the following observation,
 whose proof follows by definition:

\bigskip

\begin{lemma}
 {\bf [kernel is $C^k$].}
  Let $\psi:R\rightarrow S$ be a $C^k$-ring homomorphism.
  Then its kernel $\Ker(\psi)$ is a $C^k$-ideal of $R$.
\end{lemma}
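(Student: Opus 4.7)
The plan is to unfold the $C^k$-ideal axiom of Definition~2.1.4 and reduce it, via the defining relation of a $C^k$-ring homomorphism in Definition~2.1.2, to showing that the ``constant'' operation $\Upsilon_f(0,\ldots,0)$ in $S$ equals $0_S$ whenever $f(0,\ldots,0)=0$ in ${\Bbb R}$. That $\Ker(\psi)$ is an algebraic ideal of $R$ is automatic from $\psi$ being an ${\Bbb R}$-algebra homomorphism (as noted at the end of Definition~2.1.2), so only the extra $C^k$-closure condition needs verification.

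First I would fix $r_1,\ldots,r_n\in\Ker(\psi)$ and any $C^k$-function $f:{\Bbb R}^n\rightarrow{\Bbb R}$ with $f(0,\ldots,0)=0$, and compute
\[
 \psi\bigl(\Xi_f(r_1,\ldots,r_n)\bigr)\;
   =\;\Upsilon_f\bigl(\psi(r_1),\ldots,\psi(r_n)\bigr)\;
   =\;\Upsilon_f(0_S,\ldots,0_S)
\]
using the compatibility relation of Definition~2.1.2. So the task reduces to showing $\Upsilon_f(0_S,\ldots,0_S)=0_S$ for any $f\in C^k({\Bbb R}^n)$ vanishing at the origin.

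For this step, I would invoke Relation~(1) of Definition~2.1.1 in the case $m=n$, taking $g:=f$ and $f_i:{\Bbb R}^0\rightarrow{\Bbb R}$ to be the constant map $\emptyset\mapsto 0$ for each $i$. The composite $h:{\Bbb R}^0\rightarrow{\Bbb R}$ is then the constant map $\emptyset\mapsto f(0,\ldots,0)=0$. By Definition~2.1.1, item (4) (and the identification of $\Upsilon_{0'}(\emptyset)$ with $0_S$ baked into the induced ${\Bbb R}$-algebra structure), $\Upsilon_h(\emptyset)=0_S$. On the other hand, by Relation~(1),
\[
 \Upsilon_h(\emptyset)\;
  =\;\Upsilon_f\bigl(\Upsilon_{f_1}(\emptyset),\ldots,\Upsilon_{f_n}(\emptyset)\bigr)\;
  =\;\Upsilon_f(0_S,\ldots,0_S)\,,
\]
which gives the desired vanishing.

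Combining the two computations yields $\psi(\Xi_f(r_1,\ldots,r_n))=0_S$, i.e.\ $\Xi_f(r_1,\ldots,r_n)\in\Ker(\psi)$, establishing the $C^k$-ideal property. The only conceptual point that requires care -- and the ``main obstacle'' insofar as there is one -- is the last reduction: one must resist treating $\Upsilon_f(0_S,\ldots,0_S)$ as a purely algebraic expression and instead exploit the $C^k$-ring composition axiom applied to the nullary operations, which is precisely what converts the numerical condition $f(0,\ldots,0)=0$ into the ring-theoretic identity $\Upsilon_f(0_S,\ldots,0_S)=0_S$.
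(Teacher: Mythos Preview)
Your proof is correct and is precisely the unfolding of definitions that the paper has in mind; the paper does not give an explicit argument but simply remarks that the lemma ``follows by definition,'' and your write-up carries out exactly that verification.
One minor notational remark: when you invoke Relation~(1) of Definition~2.1.1 ``in the case $m=n$,'' you mean that the $m$ of that definition is taken to be the arity $n$ of the given $f$, while the $n$ of that definition is taken to be $0$ (so that the $f_i$ are the nullary constants $\emptyset\mapsto 0$); this is clear from context but worth stating to avoid confusion between the two uses of the symbol $n$.
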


\bigskip

\begin{remark}
$[\,$$C^k$-ring as ${\Bbb R}$-algebra$\,]$.  {\rm
 Note that every $C^k$-ring has a built-in ${\Bbb R}$-algebra structure,
   with its ideal an ${\Bbb R}$-vector subspace of the underlying ${\Bbb R}$-algebra.
}\end{remark}

\bigskip

\begin{definition}
{\bf [nilradical, reduced $C^k$-ring].} {\rm
 The {\it nilradical} of a $C^k$-ring $R$ is the $C^k$-ideal $N$ that is $C^k$-generated
   by all nilpotent elements of $R$.
 If it is normal,
  then $R/N$ with the quotient $C^k$-ring structure is called
   the {\it reduced $C^k$-ring} associated to $R$.
} \end{definition}

\bigskip

\begin{definition} {\bf [generators of $C^k$-ring, finite generatedness].}  {\rm
 Let $R$ be a $C^k$-ring.
  A set of elements $\{\hat{r}_{\beta}\}_{\beta\in B}\subset I$
      is called a set of {\it ($C^k$-)generators} of $R$
  if for every $r\in I$, there exist
    $\hat{r}_{\beta_1},\,\cdots\,,\, \hat{r}_{\beta_n}$ from the set  and
    an $f\in C^k({\Bbb R}^n)$,
	for some $n\in{\Bbb Z}_{\ge1}$,
  such that 	$r=\Xi_f(\hat{r}_{\beta_1},\,\cdots\,,\,\hat{r}_{\beta_n})$.		
 Furthermore,
  if $B$ is a finite set, then $R$ is said to a {\it ($C^k$-)finitely generated $C^k$-ring}.
}\end{definition}

\bigskip

\begin{definition}{\bf [generators of ideal, finite generatedness].} {\rm
 Given an (algebraic) ideal $I$ of a $C^k$-ring $R$,
   a set of elements $\{\hat{r}_{\alpha}\}_{\alpha\in A}\subset I$
   is called a set of {\it generators} of $I$
  if for every $r\in I$, there exists $r_{\alpha_1},\,\cdots\,,\, r_{\alpha_n}\in R$,
     $n\in {\Bbb Z}_{\ge 1}$,
   such that
    $r = r_{\alpha_1}\hat{r}_{\alpha_1}
	         +\,\cdots\,+r_{\alpha_n}\hat{r}_{\alpha_n}$.
 Furthermore,
   if $A$  is a finite set, then  $I$ is said to be a {\it finitely generated ideal} of $R$.			
			
 If  $I$ is in addition a $C^k$-ideal, then the above set of elements in $I$ will also be called
  a set of {\it algebraic generators} of $I$  when in need of distinction.
 In this case,
   a set of elements $\{\hat{r}_{\beta}\}_{\beta\in B}\subset I$
      is called a set of {\it $C^k$-generators} of $I$
  if for every $r\in I$, there exist
    $\hat{r}_{\beta_1},\,\cdots\,,\, \hat{r}_{\beta_n}$ from the set  and
    an $f\in C^k({\Bbb R}^n)$ with $f(0,\,\cdots\,,\,0)=0$,
	for some $n\in{\Bbb Z}_{\ge1}$,
  such that 	$r=\Xi_f(\hat{r}_{\beta_1},\,\cdots\,,\,\hat{r}_{\beta_n})$.		
 Furthermore,
  if $B$ is a finite set, then $I$ is said to a {\it $C^k$-finitely generated $C^k$-ideal} of $R$.
   
 {To} distinguish, we'll write
  $I=\langle  r_{\alpha}\,:\,\alpha\in A \rangle$ for the former case  and
  $I=\langle  \hat{r}_{\beta}\,:\, \beta\in B \rangle_{C^k} $ for the latter case.
}\end{definition}

\bigskip

A set of algebraic generators of a $C^k$-ideal is automatically a set of $C^k$-generators;
 but the converse is in general not true.

\bigskip

\begin{definition}
 {\bf [localization of $C^k$-ring].} {\rm  ([Joy3: Sec.~2.3], [M-R: Sec.~1.1].)
  Let $R$ be a $C^k$-ring and $S$ a subset of $R$.
  The {\it localization} of  $R$ at $S$, denoted by $R[S^{-1}]$, is the $C^k$-ring
   with a $C^k$-ring homomorphism $\zeta: R\rightarrow R[S^{-1}]$
   that is characterized by the following two properties:
	\begin{itemize}
	 \item[(1)]
	   $\zeta(s)$ is invertible in $R[S^{-1}]$ for all $s\in S$.
	
	 \item[(2)]
	  If $R^{\prime}$ is a $C^k$-ring and
	     $\psi:R \rightarrow R^{\prime}$ is a $C^k$-ring homomorphism
	    such that $\psi(s)$	is invertible for all $s\in S$,
	   then there exists a unique $C^k$-ring homomorphism
	     $\psi^{\prime}:R[S^{-1}]\rightarrow R^{\prime}$
         with $\psi=\psi^{\prime}\circ \zeta$.		
	  $$
	    \xymatrix{
		 & R \ar[rr]^-{\zeta} \ar[dr]_-{\psi}
		    && R[S^{-1}]    \ar @{.>} [dl]^-{\psi^{\prime}}\\
		 &&    R^{\prime}&&.		  		
		}
	  $$			
	\end{itemize}
}\end{definition}

\bigskip

\begin{definition} {\bf [${\Bbb R}$-point of  $C^k$-ring].}
{\rm (Cf.\ [Joy3: p.10]; see also [M-R].)
 Let $R$ be a $C^k$-ring.
 An {\it ${\Bbb R}$-point} of $R$ is a $C^k$-ring homomorphism $p:R\rightarrow {\Bbb R}$,
  where ${\Bbb R}$ is regarded as the $C^k$-ring $C^k(\{0\})$.
}\end{definition}

\bigskip

\begin{definition} {\bf  [localization at ${\Bbb R}$-point].} {\rm
 Let $R$ be a $C^k$-ring and $p:R\rightarrow {\Bbb R}$ be an ${\Bbb R}$-point of $R$.
 Define the {\it localization of $R$ at $p$}, in notation $R_{(p)}$,
   to be $R[ (p^{-1}({\Bbb R}-\{0\}))^{-1} ]$.
 It is equipped with a built-in $C^k$-ring homomorphism
  $\zeta_{(p)}:R\rightarrow R_{(p)}$.
 (Later when we introduce the ringed topological space $\Spec R$,
     whose points are ${\Bbb R}$-points of $R$, one should think of
	 $\zeta_{(p)}$ as the restriction map to a neighborhood of $p$.)
}\end{definition}

%
%
%
%
%

\bigskip

\begin{flushleft}
{\bf Affine $C^k$-schemes in $C^k$-algebraic geometry}
\end{flushleft}
Readers are referred to, e.g.,
  [B-T: {\S}10], [Dim: Chapter 2], [G-H: 0.3], [Ha: II.1], [K-S: Chapter~II], and [Wa: 5.1 -- 5.15]
 for basic definitions and facts concerning presheaves and sheaves in topology and geometry.

\bigskip
   
\begin{definition}{\bf [affine $C^k$-scheme $(X,{\cal O}_X)$].}   {\rm
(Cf.\ [Joy3: Definition 4.12]; see also [Du1], [Hart], [M-vQ-R].)
 Let $R$ be a $C^k$-ring.
 Define $X:= \Spec(R)$ to be the set of all ${\Bbb R}$-points $p$ of $R$.
 Then each $r\in R$ defines a map
   $r_{\ast}:X \rightarrow {\Bbb R}$ by setting $p\mapsto p(r)$.
 Equip $X$ with the smallest topology ${\cal T}_R$
   such that $r_{\ast}$ is continuous for all $r\in R$.
   (That is, ${\cal T}_R$ is generated by the open sets $r_{\ast}^{-1}(U)$
          for all $r\in R$ and $U\subset {\Bbb R}$ open.)
 Then note that $(X,{\cal T}_R)$ is a Hausdorff topology space. 		
 
 For each open set $U\subset X$,
  define ${\cal O}_X(U)$ to be the $C^{\infty}$-ring below:
   \begin{itemize}
     \item[{\Large $\cdot$}]
	  ${\cal O}_X(U)$ is the set of functions
	   $$
	      s\; :\;   U\;  \longrightarrow\;  \coprod_{p\in U}\,R_{(p)}
	   $$
	   such that
         \begin{itemize}
		   \item[{\Large $\cdot$}]
		     $\; s(p)\in R_{(p)}\;\;$ for all $p\in U$;
			
		   \item[{\Large $\cdot$}]	
  		     $U$ can be covered by open sets $\{V_{\alpha}\}_{\alpha\in A}$
			  such that
 			   on each $V_{\alpha}$ there exist $r$, $r^{\prime}\in R$
			      with $p(r^{\prime})\ne 0$ for all $p\in V_{\alpha}$
                      and $s(p)= \zeta_{(p)}(r)\zeta_{(p)}(r^{\prime})^{-1}$
				for all $p\in V_{\alpha}$.  			
         \end{itemize}
	
	 \item[{\Large $\cdot$}]
	  Define the $C^k$-ring structure on ${\cal O}_X(U)$
	   pointwise using the $C^k$-ring structure on $R_{(p)}$.
   \end{itemize}
   If $U_2\subset U_1\subset X$ are open sets, then the restriction map
     $\rho_{12}:{\cal O}_X(U_1)\rightarrow {\cal O}_X(U_2)$,
     	 with $s\mapsto s|_{U_2}$, is a $C^k$-ring homomorphism.
 The functor ${\cal O}_X(\,\cdot\,)$ defines a sheaf
    ${\cal O}_X$ on $X$, called the {\it structure sheaf} of $X$,
     whose stalk ${\cal O}_{X,p}$ at $p$ is $R_{(p)}$.	  	
 The locally ringed space $(X,{\cal O}_X)$   is called
    the {\it spectrum} of $R$ or the {\it  affine $C^k$-scheme} associated to $R$.
	
 By construction, there is a natural $C^k$-ring homomorphism
   $$
      R\;\longrightarrow\;  {\cal O}_X(X) \, =:\, \varGamma({\cal O}_X)\,,
   $$
  though, in this generality, it may not be an isomorphism.	
}\end{definition}

%
%
%
%
%
%
%

\bigskip

\begin{example} {\bf [smooth manifold as affine $C^{\infty}$-scheme].} {\rm
 ([Joy3: Example 2.2, Example 4.9, Example 4.14].)
 Let $X$ be a smooth manifold without boundary (i.e.\ locally modelled on some ${\Bbb R}^n$)
  and $C^{\infty}(X)$ be the ring of smooth functions on $X$.
 Then $C^{\infty}(X)$ is naturally equipped with a $C^{\infty}$-ring structure.
 Furthermore, $\Spec(C^{\infty}(X))$
   as defined in Definition~2.1.13 is canonically homeomorphic to $X$.
 And, for an open set $U\subset X$,  ${\cal O}_X(U)$ is simply the $C^{\infty}$-ring
  $C^{\infty}(U)$ of smooth functions on $U$.
 The stalk ${\cal O}_{X,p}$ of ${\cal O}_X$ at $p\in X$ is simply the $C^{\infty}$-ring of
   germs of smooth functions  at $p$.
 The natural $C^{\infty}$-ring homomorphism
   $C^{\infty}(X)\rightarrow \varGamma({\cal O}_X):= {\cal O}_X(X)$
   is an isomorphism in this case.
}\end{example}

\bigskip

\begin{example} {\bf [$C^k$-manifold as affine $C^k$-scheme].} {\rm
 In genetral,
  let $X$ be a $C^k$-manifold without boundary (i.e.\ locally modelled on some ${\Bbb R}^n$)
  and $C^k(X)$ be the ring of $C^k$ functions on $X$.
 Then $C^k(X)$ is naturally equipped with a $C^k$-ring structure.
 Furthermore, $\Spec(C^k(X))$
   as defined in Definition~2.1.13 is canonically homeomorphic to $X$.
 And, for an open set $U\subset X$,  ${\cal O}_X(U)$ is simply the $C^k$-ring $C^k(U)$
   of $C^k$ functions on $U$.
 The stalk ${\cal O}_{X,p}$ of ${\cal O}_X$ at $p\in X$ is simply the $C^k$-ring of
   germs of smooth functions  at $p$.
 The natural $C^k$-ring homomorphism
   $C^k(X)\rightarrow \varGamma({\cal O}_X):= {\cal O}_X(X)$
   is an isomorphism in this case.
}\end{example}
 
\bigskip

\noindent
In the above example, the claim that $\Spec(C^k(X))\simeq X$ follows
 the proof of Proposition~3.4.2  in Sec.~3.4.
Other claims are the consequence of the existence of a partition of unity on $X$.
 
\bigskip

\begin{remark}$[\,$on the notion of $C^k$-scheme$\,]$.  {\rm
 In this note and its follow-ups, since there seems to have no satisfying definitions for
    $\Spec$  of  a noncommutative ring and morphisms of such objects that fit well
     to describe D-branes in string theory,
   our focus are more on morphisms of function rings themselves.
  So essentially there is no need to introduce $\Spec$ of a $C^k$-ring here.
  However, as it is quite obvious when one pushes along, though not necessarily in use,
   it remains conceptually and geometrically appealing to think
    there is a topological space that realizes a given $C^k$-ring as its function ring.
 This is why we follow [Joy3]  to give Definition~2.1.13 above.
 The $\Spec$ as defined is more like the spectrum of maximal ideals of a ring
    in (commutative or noncommutative) algebraic geometry.
  With the structure sheaf removed,
    it is similar to the notion of a variety in (commutative) algebraic geometry.
 In the $C^{\infty}$ case and for smooth manifolds,
   Proposition~2.1.18 below from [Joy3] and [M-R] shows that this is a very natural definition.
 However, for a general $C^k$-ring $R$, it is not yet clear to us
   this is the definition for $\Spec R$ that fits best to describe D-branes.
 See also  [M-vQ-R].
}\end{remark}

\bigskip

\begin{remark}$[$$C^{\infty}$-ring$\,]$. {\rm
 For the case of $C^{\infty}$-rings,
  some parts of the settings are redundant.
  For example, all ideals are $C^k$-normal as a consequence of the Hadamard's Lemma.
 We refer readers to [Joy3], [M-R], [Koc] for details.
}\end{remark}

\bigskip

\begin{flushleft}
{\bf $C^k$-manifold diffeomorphism vs.\ $C^k$-function-ring homomorphism contravariantly }
\end{flushleft}
The following proposition answers the guiding question at the beginning of this subsection
 in the $C^{\infty}$ case completely.
Taking it as a guide for the general $C^k$ case gives the conceptual foundation for this note D(11.1).
It is the reason why, when one cannot directly look at the topology
  of a (noncommutative) Azumaya manifold to define maps therefrom in a way
    that does describe D-branes in string theory,
 one may turn to directly look at the (noncommutative) function ring itself of an Azumaya manifold
 and see if it then fit well for D-branes.

\bigskip

\begin{proposition}{\bf [smooth map between manifolds
                                                   vs.\ $C^{\infty}$-ring homomorphism between function rings].}
{\rm ([Joy3: Proposition~3.3], [M-R: I.1.5].)}	
 Let
  $X$ and $Y$ be smooth manifolds without boundary,
  $\Map^{C^{\infty}}(X,Y)$ be the set of all smooth maps from $X$ to $Y$, and
  $Hom^{C^{\infty}}_{\Bbb R}(C^{\infty}(Y),C^{\infty}(X))$
     be the set of all $C^{\infty}$-${\Bbb R}$-algebra homomorphisms
	 from $C^{\infty}(Y)$ to $C^{\infty}(X)$.   	
 Note that a smooth map $f:X\rightarrow Y$ induces
  a contravariant $C^{\infty}$-${\Bbb R}$-algebra homomorphism of function rings
  by pulling back:
  $$
	\begin{array}{crccc}
     & f^{\ast}\; :\; C^{\infty}(Y)    & \longrightarrow    & C^{\infty}(X) \\[1.2ex]
     &                         h\hspace{1.2ex}    & \longmapsto         &  h \circ f	                 &.
	\end{array}
  $$
 Then the correspondence
  $$
    \begin{array}{ccc}
      \Map^{C^{\infty}}(X,Y) &  \longrightarrow
	     &  Hom^{C^{\infty}}_{\Bbb R}(C^{\infty}(Y),C^{\infty}(X))\\[1.2ex]
	  f     & \longmapsto      & f^{\ast}
	\end{array}		
  $$
  is a bijection.
\end{proposition}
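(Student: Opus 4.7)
The assignment $f \mapsto f^{\ast}$ is clearly well defined: pullback of a smooth function by a smooth map is smooth, and pullback commutes with arbitrary $C^{\infty}$-operations on functions (the composition $g(h_1,\ldots,h_m) \circ f = g(h_1 \circ f, \ldots, h_m \circ f)$ is just associativity of composition). So the real content is bijectivity, and my plan is to construct a two-sided inverse
\[
  \Psi\;:\; Hom^{C^{\infty}}_{\Bbb R}(C^{\infty}(Y),C^{\infty}(X)) \; \longrightarrow \; \Map^{C^{\infty}}(X,Y)
\]
using the observation that the points of a smooth manifold can be recovered from its $C^{\infty}$-ring of functions as ${\Bbb R}$-points in the sense of Definition~2.1.10.

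\textbf{Step 1 (key lemma).} The heart of the argument is the claim: for a smooth manifold $Y$ (Hausdorff, paracompact, second countable), every $C^{\infty}$-ring homomorphism $p\colon C^{\infty}(Y)\to{\Bbb R}$ is of the form $\ev_y\colon h\mapsto h(y)$ for a \emph{unique} point $y\in Y$. Uniqueness is trivial since smooth bump functions separate points. For existence, I would pick a proper smooth function $\rho\colon Y\to{\Bbb R}$ (exists by paracompactness and a partition of unity), set $a:=p(\rho)\in{\Bbb R}$, and observe that the compact set $K:=\rho^{-1}([a-1,a+1])$ must contain the ``point'' corresponding to $p$: more precisely, for any $h\in C^{\infty}(Y)$ that vanishes on a neighborhood of $K$, one constructs $g\in C^{\infty}({\Bbb R})$ equal to $1$ near $a$ and such that $g(\rho)\cdot h=0$, from which $p(g(\rho))\cdot p(h)=0$ while $p(g(\rho))=g(a)=1$, forcing $p(h)=0$. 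Thus $p$ factors through germs on $K$. Covering $K$ by finitely many coordinate charts $(U_\alpha,(y^1_\alpha,\ldots,y^n_\alpha))$ and applying $p$ to $C^{\infty}$-polynomials in coordinates, one extracts a candidate point $y\in K$ with $p(y^i_\alpha)=y^i_\alpha(y)$; the full equality $p=\ev_y$ then follows from the fact that $p$ respects all $C^{\infty}$-compositions, including those used to express an arbitrary $h\in C^{\infty}(Y)$ in terms of coordinate functions on a chart (using cutoff functions to extend local expressions globally).

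\textbf{Step 2 (construction of $\Psi$).} Given $\varphi\colon C^{\infty}(Y)\to C^{\infty}(X)$, for each $x\in X$ the composition $\ev_x\circ\varphi\colon C^{\infty}(Y)\to{\Bbb R}$ is a $C^{\infty}$-ring homomorphism, hence by Step~1 equals $\ev_{f(x)}$ for a unique $f(x)\in Y$. This defines a set-theoretic map $f\colon X\to Y$, and by construction $h\circ f = \varphi(h)$ for every $h\in C^{\infty}(Y)$. Since for each $y_0\in Y$ and each chart $(U,(y^1,\ldots,y^n))$ around $y_0$ one can find globally defined smooth functions $\tilde{y}^i\in C^{\infty}(Y)$ agreeing with $y^i$ near $y_0$, the components $\tilde{y}^i\circ f=\varphi(\tilde{y}^i)$ are smooth on $X$; since $f$ is continuous (preimages of open sets in $Y$ are described via smooth functions, which pull back to smooth and hence continuous functions on $X$), $f$ is smooth on $f^{-1}(U)$. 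Thus $f\in\Map^{C^{\infty}}(X,Y)$, and we set $\Psi(\varphi):=f$.

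\textbf{Step 3 (inverse checks).} That $f^{\ast}=\varphi$ is built into the construction. Conversely, given $f\in\Map^{C^{\infty}}(X,Y)$, for $x\in X$ the composition $\ev_x\circ f^{\ast}$ sends $h\mapsto h(f(x))=\ev_{f(x)}(h)$, so by the uniqueness in Step~1 the map $\Psi(f^{\ast})$ agrees with $f$ pointwise. Hence $\Psi$ and $f\mapsto f^{\ast}$ are mutually inverse.

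\textbf{Main obstacle.} The essential difficulty is Step~1, i.e.\ identifying the ${\Bbb R}$-spectrum of $C^{\infty}(Y)$ with the underlying point set of $Y$. This is the step where the enhancement from ``${\Bbb R}$-algebra homomorphism'' to ``$C^{\infty}$-ring homomorphism'' is indispensable --- without it, exotic homomorphisms (e.g.\ those hiding at infinity, or those arising from free ultrafilters on discrete subsets) can and do exist for a mere algebra map. The use of a proper function together with the ability to apply $p$ to $g(\rho)$ for arbitrary $g\in C^{\infty}({\Bbb R})$ is precisely what rules out these pathologies, and it is this step that conceptually underpins the entire viewpoint of the paper: a manifold is faithfully encoded by its $C^{\infty}$-ring of functions, and morphisms can be specified contravariantly at the ring level.
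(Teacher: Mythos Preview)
The paper does not supply its own proof of this proposition; it is stated with citations to [Joy3: Proposition~3.3] and [M-R: I.1.5], and the closely related fact that $\Spec(C^k(X))\simeq X$ for a $C^k$-manifold $X$ (Examples~2.1.14--2.1.15) is deferred to the argument of Proposition~3.4.2. Your proof is correct and follows precisely the standard route taken in those references: recover $Y$ as the set of ${\Bbb R}$-points of $C^{\infty}(Y)$ via a proper-function/compactness argument (your Step~1), then build the inverse to $f\mapsto f^{\ast}$ pointwise and verify smoothness through globally extended coordinate functions (Steps~2--3).

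One small remark on your closing paragraph: for second-countable manifolds the identification of ${\Bbb R}$-algebra homomorphisms $C^{\infty}(Y)\to{\Bbb R}$ with evaluation maps already holds at the level of plain ${\Bbb R}$-algebra homomorphisms (a classical observation, e.g.\ in Milnor--Stasheff), so the $C^{\infty}$-ring structure is not strictly indispensable in that setting. That said, your use of it --- applying $p$ to $g(\rho)$ for an arbitrary $g\in C^{\infty}({\Bbb R})$ --- makes the localization to a compact set immediate, and under the paper's conventions (paracompact Hausdorff, with no explicit second-countability hypothesis) your caution about ultrafilter-type exotica on uncountable discrete pieces is in fact warranted.
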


\bigskip

\begin{remark} $[\,$comparison with algebraic geometry$\,]$. {\rm
 One should compare the above proposition with the similar statement
  in (commutative) algebraic geometry concerning the contravariant equivalence
  between the category of affine schemes and the category of (commutative) rings.
 Cf.\ [Hart: II Proposition 2.3].
} \end{remark}

\bigskip
  
\subsection{Sheaves in differential topology and geometry with input from algebraic geometry
                    -- with a view toward the Chan-Paton sheaf on a D-brane}
					
Sheaves are part of the building block of modern algebraic geometry.
On the string-theory side,  Chan-Paton bundles/sheaves/modules are part of the constituents of D-branes.
We collect in this subsection basic definitons  that are needed for this note.
The presentation here continues Sec.~2.1
   and is based on [Joy3: Sec.\ 5 and Sec.\ 6] of Dominic Joyce.

\bigskip

\begin{flushleft}
{\bf Basic definitions for modules in $C^k$-commutative algebra}
\end{flushleft}
As modules and sheaves do not seem to be a standard topic in differential topology and geometry
 (cf.\ [Hir], [Gu-P]; [K-N]) in contrast to algebraic geometry (cf.\ [Hart]),
for the clarity of the note we fix in this subsection a few notions and terminologies
 concerning sheaves on a differentiable manifold with input from algebraic geometry.

\bigskip

\begin{definition} {\bf [module].} {\rm  (Cf.\ [Joy3: Definition 5.1] ; also [A-M], [Ei].)
  Let $R$ be a $C^k$-ring.
  A {\it module $M$ over $R$}, or {\it $R$-module}, is a module over $R$
    in the ordinary sense,
	that is, with $R$ regarded as a commutative ${\Bbb R}$-algebra:
	\begin{itemize}
	  \item[{\Large $\cdot$}]
	    a vector space $M$ over ${\Bbb R}$, together with
		    an operation $\mu: R\times M \rightarrow M$ such that\\
		 $$
		   \begin{array}{ll}
    		 \mu(r, m_1 + m_2)\;=\; \mu(r,m_1)\,+\, \mu(r,m_2)\,,  \hspace{2em}
		       & \mu(r_1 + r_2, m)\;=\; \mu(r_1,m)\,+\, \mu(r_2,m)\,,  \\[1.2ex]
		     \hspace{1.7em}\mu(r_1r_2, m)\; =\;  \mu(r_1, \mu(r_2, m))\,,
		       & \hspace{2.5em}\mu(1,m)\;=\; m
		  \end{array}
		 $$
	   for all $r$, $r_1$, $r_2\in R$  and $m$, $m_1$, $m_2\in M$.    	 	
	\end{itemize}
	Most often we denote $\mu(r,m)$ simply by $\,r\cdot m\,$ or just $\,rm$.
		
  The notion of
  
      \medskip
	
    {\Large $\cdot$}
	   {\it homomorphism} $M_1 \rightarrow M_2$ of $R$-modules,
	
	{\Large $\cdot$}
       {\it submodule} $M_1 \hookrightarrow M_2$, (cf.\ {\it monomorphism}),
	
	{\Large $\cdot$}
       {\it quotient module}  $M_1 \twoheadrightarrow M_2$, (cf.\ {\it epimorphism}), 	
	
	{\Large $\cdot$}
	   {\it direct sum}  $M_1\oplus M_2$ of $R$-modules,
	
    {\Large $\cdot$}
      {\it tensor product} $M_1\otimes_RM_2$ of $R$-modules,
	
    {\Large $\cdot$}
	    {\it finitely generated}:
    		if $\oplus_lR\simeq R\otimes_{\Bbb R}{\Bbb R}^l \twoheadrightarrow M$
		     exists for some $l$,
			
    {\Large $\cdot$}
        {\it finitely presented}:
           if $R\otimes_{\Bbb R}{\Bbb R}^{l^{\prime}}
		          \rightarrow R\otimes_{\Bbb R}{\Bbb R}^l\rightarrow M\rightarrow 0$
              is exact for some $l$, $l^{\prime}$				
	
       \medskip
	
	\noindent
    are all defined in the ordinary way, as in e.g.\  [A-M], [Ei] for commutative algebras.
}\end{definition}

\bigskip

\begin{definition} {\bf [$C^k$-derivation, $C^k$-cotangent module of $C^k$-ring, $C^k$-differential].}
{\rm  (Cf.\ [Joy3: Definition 5.10].)
  Let $R$ be a $C^k$-ring and $M$ an $R$-module.
  An ${\Bbb R}$-linear map
    $$
	   d\; :\;  R\;  \rightarrow\;  M
	$$
    is called a {\it $C^k$-derivation}, $k\in{\Bbb Z}_{\ge 1}\cup{\infty}$,
  if
   $$
      d\Psi_f(r_1,\,\cdots\,,\,r_n) \;
		=\; \sum_{i=1}^n\Psi_{\partial_if}(r_1,\,\cdots,\,r_n)\cdot dr_i
   $$
   for all $f\in \cup_{n}C^k({\Bbb R}^n)$	 and $r_i\in R$.
  Here
    $\partial_if$ is the partial derivative of $f\in C^k({\Bbb R}^n)$
    with respect  to the $i$-th coordinate of ${\Bbb R}^n$.
  An $R$-module $M$ with a $C^k$-derivation $d:R\rightarrow M$
    is called the {\it $C^k$-cotangent module} of $R$
   if it satisfies the following universal property:
   \begin{itemize}
    \item[{\Large $\cdot$}]
	 For any $R$-module $M^{\prime}$  and
	   $C^k$-derivation $d^{\prime}:R\rightarrow M^{\prime}$,
	 there exists a unique homomorphism of $R$-modules $\psi:M\rightarrow M^{\prime}$
	  such that $\;d^{\prime}= \psi \circ d\,$.	
  	  $$
	    \xymatrix{
		 & R \ar[rr]^-{d} \ar[dr]_-{d^{\prime}}  && M   \ar @{.>} [dl]^-{\psi}\\
		 &&    M^{\prime}&&.		  		
		}
	  $$			
   \end{itemize}
   (Thus, $M$ is unique up to a unique $R$-module isomorphism.)
  We denote this $M$ with $d: R\rightarrow M$ by $\Omega_R$,
   with the built-in $C^k$-derivation $d:R\rightarrow \Omega_R$ understood.
}\end{definition}

\bigskip

\begin{remark}$[\,$explicit construction of $\Omega_R$$\,]$.  {\rm
(Cf.\ [Joy3: Definition 5.10].)
 The $C^k$-cotangent module $\Omega_R$ of a $C^k$-ring $R$ can be constructed explicitly
   from the $R$-module generated by the set
   $$
     \{ d(r)\,|\,  r\in R   \}\,,
   $$
   subject to the relations
   $$
    \begin{array}{llll}	
     \mbox{(${\Bbb R}$-linearity)}
	  &&& d(a_1r_1+a_2r_2)\;
	            =\; a_1\,d(r_1)\,+\,a_2\,d(r_2)\;
				=\; d(r_1)a_1\,+\, d(r_2)\,a_2 				\,,   \\[.6ex]
	 \mbox{(Leibniz rule)}
	  &&&  d(r_1r_2)\;=\;  d(r_1)\, r_2\,+\, r_1\, d(r_2) \,, \\[.6ex]
	 \mbox{($R$-commutativity)}
	  &&&  d(r_1)\,r_3\;=\; r_3\,d(r_1)
    \end{array}
   $$	
   for all $a_1,\,a_2\in {\Bbb R}$, $r_1,\,r_2,\, r_3 \in R$,  and
  $$
   \begin{array}{llll}
    \mbox{(chain rule)}\hspace{4em}
      &&&  d(h(r_1,\,\cdots\,,\,r_s))   \\
	  &&&  \hspace{1em}
                 =\;  \partial_1 h(r_1,\,\cdots\,,\,r_s)\, d(r_1)\;
	                     +\; \cdots\; +\;
			           \partial_s h(r_1,\,\cdots\,,\,r_s)\, d(r_s)	 \hspace{1em}
   \end{array}			
  $$
  for all $h\in C^k({\Bbb R}^s)$, $s\in {\Bbb Z}_{\ge 1}$, and $r_1,\,\cdots\,,\,r_s\in R$.
 Denote the image of $d(r)$ under the quotient by $dr$.
 Then, by definition, the built-in map
  $$
    \begin{array}{ccccc}
	 d & :&  R & \longrightarrow  & \Omega_R\\[.6ex]
	    &&    r  & \longmapsto        & dr
	\end{array}
  $$
  is a $C^k$-derivation from $R$ to $\Omega_R$.
}\end{remark}

\bigskip

It follows from the explicit construiction of cotangent modiles that

\bigskip

\begin{lemma}{\bf [induced map on cotangent modules].}
 Let $\rho: R\rightarrow S$ be a $C^k$-ring homomorphism.
 Then there is a canonically induced $S$-module  homomorphism
    $$
	   \rho_{\ast}\;  : \;   S\otimes_R\Omega_R\;  \longrightarrow\;   \Omega_S\,.
	$$
\end{lemma}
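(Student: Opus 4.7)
The plan is to produce $\rho_\ast$ by a two-step procedure: first construct an $R$-module map $\psi: \Omega_R \to \Omega_S$ via the universal property of $\Omega_R$, then extend scalars from $R$ to $S$.

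First I would view $\Omega_S$ as an $R$-module by restriction of scalars along $\rho$; that is, set $r \cdot \omega := \rho(r)\cdot \omega$ for $r \in R$, $\omega \in \Omega_S$. Then I consider the composition
\[
   d_S \circ \rho \; : \; R \; \longrightarrow \; \Omega_S\,,
\]
and claim that this is a $C^k$-derivation in the sense of Definition~2.2.2 when $\Omega_S$ is regarded as an $R$-module as above. The verification is a direct computation: for $f \in C^k(\mathbb{R}^n)$ and $r_1,\ldots,r_n \in R$, using that $\rho$ is a $C^k$-ring homomorphism (so $\rho \circ \Psi^R_f = \Psi^S_f \circ (\rho,\ldots,\rho)$ for all $C^k$-operations), and that $d_S$ is a $C^k$-derivation on $S$,
\[
   d_S(\rho(\Psi^R_f(r_1,\ldots,r_n)))
    \; = \; d_S(\Psi^S_f(\rho(r_1),\ldots,\rho(r_n)))
    \; = \; \sum_{i=1}^n \Psi^S_{\partial_i f}(\rho(r_1),\ldots,\rho(r_n))\cdot d_S(\rho(r_i))\,,
\]
and the right-hand side equals $\sum_i \Psi^R_{\partial_i f}(r_1,\ldots,r_n)\cdot (d_S\circ\rho)(r_i)$ under the $R$-module structure on $\Omega_S$ just defined.

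Next, applying the universal property characterizing $(\Omega_R, d_R)$ in Definition~2.2.2 to the $R$-module $\Omega_S$ and the $C^k$-derivation $d_S\circ\rho$, I obtain a unique $R$-module homomorphism
\[
   \psi \; : \; \Omega_R \; \longrightarrow \; \Omega_S
   \hspace{2em}\text{with}\hspace{2em}
   \psi \circ d_R \; = \; d_S \circ \rho\,.
\]
Finally, I extend scalars from $R$ to $S$ via the standard adjunction: since $\psi$ is $R$-linear and $\Omega_S$ carries a compatible $S$-module structure, the map $s \otimes \omega \mapsto s\cdot \psi(\omega)$ descends to a well-defined $S$-module homomorphism
\[
   \rho_\ast \; : \; S \otimes_R \Omega_R \; \longrightarrow \; \Omega_S\,,
   \hspace{2em} s \otimes dr \; \longmapsto \; s\cdot d_S(\rho(r))\,,
\]
which is canonical by the uniqueness in the universal property.

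There is no real obstacle here; the only point that requires care is the verification that $d_S\circ\rho$ is a $C^k$-derivation rather than merely an $\mathbb{R}$-linear derivation in the algebraic sense, and this is exactly where the hypothesis that $\rho$ is a $C^k$-ring homomorphism (not just an $\mathbb{R}$-algebra homomorphism) is used. Everything else is formal from the universal property and extension of scalars.
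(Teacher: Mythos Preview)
Your proof is correct and is essentially the same as the paper's approach. The paper does not give a detailed proof but simply remarks that the lemma ``follows from the explicit construction of cotangent modules'' (Remark~2.2.3); your argument via the universal property is just the repackaged version of checking that the generators-and-relations presentation of $\Omega_R$ maps compatibly into $\Omega_S$, with the chain-rule relation preserved precisely because $\rho$ is a $C^k$-ring homomorphism.
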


\bigskip

As
  the notion of $C^k$-derivations is exactly the notion of ${\Bbb R}$-derivation
    in the usual sense of differentiable topology and geometry  and
  it  is the only kind of derivation we'll use,
we'll call a $C^k$-derivation simply an {\it ${\Bbb R}$-derivation} or just {\it derivation}.
  
%
%
%
%
%

\bigskip

\begin{definition}{\bf [localization of module].}   {\rm
 (Continuing Definition~2.1.10.)
 Let
    $\zeta:R\rightarrow R[S^{-1}]$  be the localization of a $C^k$-ring $R$ at $S$   and
	$M$ be an $R$-module.
 Then, the {\it localization of $M$ at $S$}, denoted by $M[S^{-1}]$,  is the $R[S^{-1}]$-module defined by
     $$
	   M[S^{-1}]\; :=\;  R[S^{-1}]\otimes_R M\,.
	 $$
 By construction,  it is equipped with an $R$-module-homomorphism $\zeta_M: M \rightarrow M[S^{-1}]$.
}\end{definition}

\bigskip
 
$C^k$-cotangent modules behave well under localization:

\bigskip

\begin{proposition} {\bf [localization of cotangent module].} {\rm (Cf. [Joy3: Proposition 5.14].)}
 Let
  $R$ be a $C^k$-ring,  $r\in R$,  and
  $\zeta_r:R\rightarrow R[r^{-1}]$ be the localization of $R$ at $r$.
 Then the natural $R[r^{-1}]$-module homomorphism
  $\zeta_{r\ast} : R[r^{-1}]\otimes_R\Omega_R \rightarrow \Omega_{R[r^{-1}]}$
  is an isomorphism.
\end{proposition}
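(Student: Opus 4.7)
\medskip

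\noindent\textbf{Proof plan.}
The plan is to construct an explicit two-sided inverse to $\zeta_{r\ast}$ by producing a $C^{k}$-derivation
$$
  D\;:\; R[r^{-1}]\;\longrightarrow\;  R[r^{-1}]\otimes_R\Omega_R
$$
and then invoking the universal property of $\Omega_{R[r^{-1}]}$ from Definition~2.1.21.

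The natural candidate for $D$ is dictated by the Leibniz rule applied formally to $r\cdot r^{-1}=1$ inside $R[r^{-1}]$, namely
$$
  D\bigl(\zeta_r(\rho)\cdot\zeta_r(r)^{-n}\bigr)\;
   =\;\zeta_r(r)^{-n}\otimes d\rho\;
      -\;n\,\zeta_r(\rho)\,\zeta_r(r)^{-n-1}\otimes dr\,,
$$
so the first step is to verify that this prescription is well-defined on elements of $R[r^{-1}]$, i.e.\ independent of the chosen representation $\zeta_r(\rho)\zeta_r(r)^{-n}$. For this I would use that every element of $R[r^{-1}]$ admits such a presentation together with the fact that the defining relations of $R[r^{-1}]$ (as a $C^{k}$-ring, built from Definition~2.1.10) are generated by $\zeta_r(r)\cdot\iota=1$ and by the image under $\zeta_r$ of the $C^{k}$-relations already holding in $R$.


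\medskip

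The second step is to verify that the map so defined is in fact an ${\Bbb R}$-linear derivation satisfying the $C^{k}$-chain rule
$$
  D\,\Psi_{f}(\alpha_{1},\ldots,\alpha_{s})\;
   =\;\sum_{i=1}^{s}\Psi_{\partial_{i}f}(\alpha_{1},\ldots,\alpha_{s})\cdot D\alpha_{i}
$$
for every $f\in C^{k}({\Bbb R}^{s})$ and $\alpha_{1},\ldots,\alpha_{s}\in R[r^{-1}]$.
This reduces, after clearing the common denominator $\zeta_r(r)^{-N}$ for some $N$, to the chain rule for the derivation
$d:R\to\Omega_{R}$ applied to $C^{k}$-expressions in the numerators, combined with the observation that
inversion $t\mapsto t^{-1}$ on the open set $\{t\neq 0\}\subset{\Bbb R}$ is itself a $C^{k}$-operation,
so that $C^{k}$-functions of elements of $R[r^{-1}]$ can be pulled back to $C^{k}$-functions of elements of $R$
after multiplying by a suitable power of $\zeta_r(r)$. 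This is the main technical step and the one I expect to require the most care, because one must chase the chain rule through the $C^{k}$-ring structure on the localization rather than only through its underlying ${\Bbb R}$-algebra structure.

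\medskip

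Once $D$ is in hand, the universal property of $\Omega_{R[r^{-1}]}$ produces a unique $R[r^{-1}]$-module map
$\psi:\Omega_{R[r^{-1}]}\to R[r^{-1}]\otimes_{R}\Omega_{R}$ with $D=\psi\circ d_{R[r^{-1}]}$.
The final step is to check that $\psi$ and $\zeta_{r\ast}$ are mutually inverse. In one direction
$\psi\circ\zeta_{r\ast}$ sends $s\otimes d\rho$ to $s\cdot D(\zeta_{r}(\rho))=s\otimes d\rho$, so it is the identity on a generating set of $R[r^{-1}]\otimes_{R}\Omega_{R}$, hence is the identity. In the other direction $\zeta_{r\ast}\circ\psi$ equals the identity on the image of $d_{R[r^{-1}]}$ by construction, and since that image generates $\Omega_{R[r^{-1}]}$ as an $R[r^{-1}]$-module, it is the identity throughout. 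This completes the argument. The conceptual content is exactly the same as for ordinary commutative algebra, but the bookkeeping for the $C^{k}$-chain rule on the localized ring is where the differential-geometric subtlety sits and is the step I would write out most carefully.
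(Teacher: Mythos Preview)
The paper does not actually prove this proposition: it is stated with the citation ``(Cf.\ [Joy3: Proposition~5.14])'' and no argument is given in the text. So there is no in-paper proof to compare against; your proposal is being measured against the standard argument in the $C^{\infty}$-algebraic geometry literature.

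Your overall strategy---construct a $C^{k}$-derivation $D:R[r^{-1}]\to R[r^{-1}]\otimes_{R}\Omega_{R}$ and invoke the universal property---is the right one, and the final mutual-inverse check is fine. However, there is a gap in your first step. You assert that every element of $R[r^{-1}]$ can be written as $\zeta_{r}(\rho)\,\zeta_{r}(r)^{-n}$ for some $\rho\in R$ and $n\ge 0$, and you build $D$ only on such elements. For ordinary commutative rings this is true, but for $C^{k}$-rings the localization $R[r^{-1}]$ is generated as a $C^{k}$-ring (not merely as an ${\Bbb R}$-algebra) by $\zeta_{r}(R)$ together with $\iota:=\zeta_{r}(r)^{-1}$; a general element is of the form $\Xi_{f}\bigl(\zeta_{r}(\rho_{1}),\ldots,\zeta_{r}(\rho_{m}),\iota\bigr)$ for some $f\in C^{k}({\Bbb R}^{m+1})$, and there is no reason such an expression reduces to a single fraction $\rho/r^{n}$. (For instance, $\Xi_{\exp}(\iota)$ need not be of that form.) Your remark that ``$C^{k}$-functions of elements of $R[r^{-1}]$ can be pulled back to $C^{k}$-functions of elements of $R$ after multiplying by a suitable power of $\zeta_{r}(r)$'' is exactly the claim that fails.

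The fix is standard and not far from what you wrote: rather than defining $D$ on fractions, define it on the $C^{k}$-generators $\zeta_{r}(\rho)$ and $\iota$ by $D(\zeta_{r}(\rho))=1\otimes d\rho$ and $D(\iota)=-\iota^{2}\otimes dr$, and extend to all of $R[r^{-1}]$ by the chain rule. Well-definedness then amounts to checking compatibility with the relations presenting $R[r^{-1}]$ as a $C^{k}$-ring. The cleanest way to package this---and the way Joyce's argument runs---is via the square-zero extension: form the $C^{k}$-ring $S:=R[r^{-1}]\oplus\bigl(R[r^{-1}]\otimes_{R}\Omega_{R}\bigr)$ with $(a,m)(b,n)=(ab,an+bm)$ and $C^{k}$-operations $\Xi_{f}(a,m)=\bigl(\Xi_{f}(a),\sum_{i}\Xi_{\partial_{i}f}(a)\,m_{i}\bigr)$; the map $R\to S$, $\rho\mapsto(\zeta_{r}(\rho),1\otimes d\rho)$ is then a $C^{k}$-ring homomorphism sending $r$ to a unit, so the universal property of localization extends it to $R[r^{-1}]\to S$, whose second component is the desired $D$. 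After this correction your Steps~3 and~4 go through verbatim.
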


\bigskip

\begin{flushleft}
{\bf Quasi-coherent sheaves of modules on an affine $C^k$-scheme}
\end{flushleft}
Let $R$ be a $C^k$-ring.
Recall from Definition~2.1.13
  the affine $C^k$-scheme $X:=\Spec(R)$, with the structure sheaf ${\cal O}_X$.
Let $M$ be an $R$-module.
Then, the assignment  $U\mapsto   {\cal O}_X(U)\otimes_R M$,
  with the restriction map $\Id_M\times \rho_{UV}$ for $V\subset U$,
   where $\rho_{UV}:{\cal O}_X(U)\rightarrow {\cal O}_X(V)$
    is the restriction map of ${\cal O}_X$,
  is a presheaf on $X$.
 Let $M^{\sim}$ be its sheafification.
 
\bigskip

\begin{definition} {\bf [quasi-coherent sheaf and coherent sheaf on $\Spec R$].}
{\rm
 The sheaf $M^{\sim}$ of ${\cal O}_X$-modules on $X$ thus obtained
  from the $R$-module $M$ is called a {\it quasi-coherent sheaf} on $X$.
 If furthermore, $M$ is finitely presented,
  then $M^{\sim}$ is called a {\it coherent sheaf} on $X$.
}\end{definition}
 
\bigskip

\begin{convention}$[$convention from algebraic geometry$\,]$. {\rm
 Though $C^k$-algebraic geometry is meant to be differential topology and geometry
    remade in terms of  algebraic geometry,
  our notation follows the convention in algebraic geometry as much as we can.
 In particular,
 %
 \begin{itemize}
  
  \item[{\Large $\cdot$}]
   For a general  ${\cal O}_X$-module ${\cal F}$ and $Z$ is a closed $C^k$-subscheme of $X$
     described by an ideal sheaf ${\cal I}_Z$ in ${\cal O}_X$,
    we shall define
     ${\cal F}|_Z$ to be the quotient sheaf ${\cal F}/({\cal I}_Z\cdot{\cal F})$.	
      
  \item[{\Large $\cdot$}]
   The {\it stalk} of ${\cal F}$ at $p\in X$  is denoted by ${\cal F}_{(p)}$
     while the {\it fiber} ${\cal F}|_p$  of ${\cal F}$ at $p\in X$ is denoted also
	 by ${\cal F}_p$.
 \end{itemize}
}\end{convention}
 
%
%
%

\bigskip

\noindent	
Having going through this section,
one sees that
  various standard objects and notions in algebraic geometry
  have their precise counter objects and notions in differential geometry
  through the language of synthetic differential topology/geometry and $C^k$-algebraic geometry.
This is what we will adopt to understand D-branes along the line of Definition-Prototype~1.4.

%

\bigskip

\section{The case of D0-branes on ${\Bbb R}^n$:
  Differentiable maps from an Azumaya point with a fundamental module
  to the real differentiable manifold ${\Bbb R}^n$}

Let
 $(p^{A\!z},
    {\Bbb C}^{\oplus r}):=(p, \End_{\Bbb C}({\Bbb C}^{\oplus r}),
	{\Bbb C}^{\oplus r})$
   be a fixed Azumaya point  with a fundamental module   and
 ${\Bbb R}^n$ be the real $n$-dimensional Euclidean space as a $C^k$-manifold, $0\le k\le \infty$.
We address in this section the following two questions:
  \begin{itemize}
    \item[{\bf Q1.}]{\it
	   What is a $k$-times-differentiable (i.e. $C^k$-)map $\varphi$
 	   from $(p^{A\!z},{\Bbb C}^{\oplus r})$ to ${\Bbb R}^n$?}
		
   \item[{\bf Q2.}]	{\it
  	 Does this notion suit well to describe D0-branes and their deformations on ${\Bbb R}^n$?}
  \end{itemize}	
 
\smallskip

As in  [L-Y1] (D(1)),
 the intended contravariant equivalence between category of spaces
         and the category of function rings on spaces   and
  the guide from the behavior of D-branes under deformations, reviewed in Sec.~1,
  together with the lesson from $C^k$-algebraic geometry, reviewed in Sec.~2,
  suggest that:
  
\bigskip 	

\noindent
{\bf Definition 3.0.1. [$C^k$-admissible ring-homomorphism to a possibly noncommutative ring].}
 Let $R$ be a $C^k$-ring and $S$ be an associative unital ring, which may not be commutative.
 A ring-homomorphism $\psi:R\rightarrow S$ is said to be  {\it $C^k$-admissible}
 if the image $\psi(R)$, as a commutative subring of $S$, has a $C^k$-ring structure
  induced from the ring-epimorphism $\psi:R\rightarrow \psi(R)$.

\bigskip

\noindent
{\bf  Definition 3.0.2. [$C^k$-map from Azumaya/matrix point].}
 A {\it $C^k$-map}
   $$
      \varphi\; :\;
	    (p, \End_{\Bbb C}({\Bbb C}^{\oplus r}),{\Bbb C}^{\oplus r})\;
                    \longrightarrow\;  {\Bbb R}^n
   $$
  from an Azumaya point with a fundamental module to  ${\Bbb R}^n$
  is defined contravariantly by a $C^k$-admissible ring-homomorphism
  $\varphi^{\sharp}:
     C^k({\Bbb R}^n)\rightarrow \End_{\Bbb C}({\Bbb C}^{\oplus r})$
  over ${\Bbb R}\hookrightarrow{\Bbb C}$.

\bigskip

\noindent
{\bf Definition 3.0.3. [D0-brane on ${\Bbb R}^n$ as a fundamental object].}
 In the region of Wilson's theory-space for string theory where a simple D0-brane mass is relatively small,   
 a {\it D0-brane} on ${\Bbb R}^n$ is a $C^k$-map
 $\varphi:(p, \End_{\Bbb C},{\Bbb C}^{\oplus r})\rightarrow {\Bbb R}^n$
 in the sense of Definition~3.0.2, for some $0\le k\le \infty$.
  
\bigskip

This gives our proto-answer to Q1. In other words,
 \begin{itemize}
   \item[{\bf A1.}] {\bf [proto].} {\it
    Any  ring-homomorphism
         $\varphi^{\sharp}:
		   C^k({\Bbb R}^n)\rightarrow \End_{\Bbb C}({\Bbb C}^{\oplus r})$
	       over ${\Bbb R}\hookrightarrow {\Bbb C}$
	     that defines a $C^k$-ring-homomorphism
	      $C^k({\Bbb R}^n)\rightarrow \Image(\varphi^{\sharp})$
         defines  a $C^k$-map
		 $\varphi: (p^{Az},{\Bbb C}^{\oplus r})\rightarrow {\Bbb R}^n$.}
  \end{itemize}
 We shall examine
  whether
    the above proto-answer to Q1 and
	Definition~3.0.3 that defines D0-branes as fundamental objects in string theory  and
                                    	means to answer Q2 affirmatively
 really make sense
     both mathematically and from the aspect of D-branes in string theory.
 We recall first in Sec.~3.1
    the reference/warm-up case of  morphisms from an Azumaya point with a fundamental module
        to the real affine space ${\Bbb A}^n_{\Bbb R}$ in algebraic geometry,
        based on [L-Y1] (D(1)),
	and then enter the new territory
      first for the $C^{\infty}$ case in Sec.~3.2,
      then for the $C^0$ case in Sec.~3.3,  and
      finally for the general $C^k$	 case in Sec.~3.4.
 Lessons learned from this case study will guide us toward the general notion of
   differentiable maps from an Azumaya/matrix manifold with a fundamental module to a real manifold
   in Sec.~5  and
  whether such notion fits to describe general D-branes as fundamental objects in string theory.

\bigskip
  
\subsection{Warm-up:
        Morphisms from an Azumaya point with a fundamental\\ module
        to the real affine space ${\Bbb A}^n_{\Bbb R}$ in algebraic geometry}
  
Following [L-Y1](D(1)),
 a morphism
 $$
    \varphi\; :\; (p,\End_{\Bbb C}({\Bbb C}^{\oplus r}),{\Bbb C}^{\oplus r})\;
	  \longrightarrow\; {\Bbb A}^n_{\Bbb R}:=\Spec {\Bbb R}[y^1,\,\cdots\,,\,y^n])	
 $$
 from the fixed Azumaya point with a fundamental module to the real affine space ${\Bbb A}^n_{\Bbb R}$
 is defined by a ring-homomorphism
 $$
    \varphi^{\sharp}\;:\;  {\Bbb R}[y^1,\,\cdots\,,\,y^n] \; \longrightarrow\;
	  \End_{\Bbb C}({\Bbb C}^{\oplus r})
 $$
 over ${\Bbb R}\hookrightarrow {\Bbb C}$.
The fundamental  $\End_{\Bbb C}({\Bbb C}^{\oplus r})$-module
  ${\Bbb C}^{\oplus r}$ is turned to a ${\Bbb R}[y^1,\,\cdots\,,\,y^n]$-module via $\varphi^{\sharp}$.
This defines a $0$-dimensional coherent sheaf
  $\varphi_{\ast}({\Bbb C}^{\oplus r})=:\Image\varphi$
 on ${\Bbb A}^n_{\Bbb R}$ of ${\Bbb C}$-length $r$.
The scheme-theoretical support
   $\Supp(\varphi_{\ast}({\Bbb C}^{\oplus r}))$
     of $\varphi_{\ast}({\Bbb C}^{\oplus r})$
   is a $0$-dimensional subscheme of ${\Bbb A}^n_{\Bbb R}$ of ${\Bbb R}$-length $\le r$,
   defined by the ideal $\Ker(\varphi^{\sharp})$ of ${\Bbb R}[y^1,\,\cdots\,,\,y^n]$.
In general,
 $\Supp(\varphi_{\ast}({\Bbb C}^{\oplus r}))$ contains
  both ${\Bbb R}$-points and ${\Bbb C}$-points of ${\Bbb A}^n_{\Bbb R}$.

\bigskip

\begin{example} {\bf [map from $p^{Az}$ to ${\Bbb A}^1_{\Bbb R}$].} {\rm
(Cf.\ [L-Y1: Sec.~4.1] (D(1)).)
 Let ${\Bbb A}^1_{\Bbb R}:= \Spec({\Bbb R}[y])$ be the affine ${\Bbb R}$-line.
 A morphism
   $\varphi:(p,\End_{\Bbb C}({\Bbb C}^{\oplus r}),{\Bbb C}^{\oplus r})
                    \rightarrow {\Bbb A}^1_{\Bbb R}$
  is then defined by a ring-homomorphism
    $\varphi^{\sharp}:
	    {\Bbb R}[y] \rightarrow \End_{\Bbb C}({\Bbb C}^{\oplus r})$
    over ${\Bbb R}\hookrightarrow {\Bbb C}$.
 Since ${\Bbb R}[y]$ is generated by $y$ as a ring over ${\Bbb R}$,
  $\varphi$  is determined by
      $m_{\varphi}
	     := \varphi^{\sharp}(y)\in \End_{\Bbb C}({\Bbb C}^{\oplus r})$.
 Let 	
 $$
     \left[
       \begin{array}{ccc}
        A_1 &         & 0 \\
            & \ddots  &   \\
        0   &         & A_l
       \end{array}
      \right]
       \hspace{1em}
        \mbox{with each $A_i\in \End_{\Bbb C}({\Bbb C}^{r_i})$
		 of the form}\hspace{1em}
      \left[
        \begin{array}{ccc}
        J^{(\lambda_i)}_{r_{i1}} &         &  0\\
                  & \ddots  &              \\
           0     &         &  J^{(\lambda_i)}_{r_{il_i}}
       \end{array}
      \right]\,,
 $$
 where
   $J^{(\lambda)}_j\in \End_{\Bbb C}({\Bbb C}^j)$ is the matrix
   $${\scriptsize
    \left[
     \begin{array}{cccc}
      \lambda   &          &        & 0       \\
      1         & \lambda  &        &         \\
                 &  \ddots  & \ddots &         \\
      0         &          &   1    & \lambda
     \end{array}
    \right]_{j\times j}    }\,,
   $$
   be the Jordan form of $m_{\varphi}$,
     with
	  $\lambda_1,\,\cdots\,,\, \lambda_s\in{\Bbb R}$,
	  $\lambda_{s+1},\,\cdots\,,\,\lambda_k\in {\Bbb C}-{\Bbb R}$,  and
      $r_{i1}\ge \,\cdots\,\ge\,r_{il_i} $  for  $i=1,\,\ldots\,,\,l$.	
 {From} the given notation, $r=r_1+\,\cdots\,+r_l$  and
   $r_i=r_{i1}+\,\cdots\,+r_{il_i}$, for $i=1,\,\ldots,\,l$. 	
 Then
   $$
     \Ker(\varphi^{\sharp})\;
	  =\; \left(\mbox{$\prod_{i=1}^s(y-\lambda_i)^{r_{i1}}
	            \cdot
	            \prod_{j={s+1}}^l
				  (y^2 -(\lambda_j+\overline{\lambda_j})y
				           + \lambda_j\overline{\lambda_j})^{r_{j1}}$}   \right)\;
	           \in\; {\Bbb R}[y]
   $$
  and
  $\Supp(\varphi_{\ast}({\Bbb C}^{\oplus r}) ) \subset {\Bbb A}^1_{\Bbb R}$
   consists of a finite collection of $0$-dimensional subschemes associated respectively to the ideals
     $$
	    \left((y-\lambda_i)^{r_{i1}}  \right)\,,\;   i=1,\,\ldots\,,s\,,
	 $$
     and
	 $$
	    \left( (y^2 -(\lambda_j+\overline{\lambda_j})y
				           + \lambda_j\overline{\lambda_j})^{r_{j1}}  \right)\,,\;   j=s+1,\,\ldots\,,l\,.
	 $$
  Each of the former contains an ${\Bbb R}$-point,
      namely the point associated to the prime ideal  $(y-\lambda_i)$ of ${\Bbb R}[y]$,
 	  for $i=1,\,\ldots\,,s$,
  while each of the latter contains a ${\Bbb C}$-point,
	   namely the point associated to the prime ideal
	   $(y^2 -(\lambda_j+\overline{\lambda_j})y
				           + \lambda_j\overline{\lambda_j})$  of ${\Bbb R}[y]$,
		  for $j=s+1,\,\ldots\,,\,l$.
		  	
 As $m_{\varphi}$ and hence $\varphi^{\sharp}$ and $\varphi$ vary,
  the push-forward $\varphi_{\ast}({\Bbb C}^{\oplus r})$  also varies along.
 This produces a Higgsing/un-Higgsing phenomenon of D$0$-branes on ${\Bbb A}^1_{\Bbb R}$,
    realized as $\varphi_{\ast}({\Bbb C}^{\oplus r})$.
	
 Cf.~{\sc Figure}~3-4-1.
     
\noindent\hspace{15.7cm}$\square$
}\end{example}

\bigskip

\subsection{Smooth maps from an Azumaya point with a fundamental module
        to ${\Bbb R}^n$ as a smooth manifold}

We consider first in Sec.~3.2.1
  the notion of smooth maps from a fixed Azumaya point with a fundamental module
        to ${\Bbb R}^1$ as a smooth manifold  and
 then in Sec.~3.2.2 to the general ${\Bbb R}^n$.
The discussion here redo the related study in [Koc: III.5] of Anders Kock.

Before proceeding, recall the following technical lemma from calculus:

\bigskip

\noindent
{\bf Lemma 3.2.0.1. [Taylor expansion with remainder term - smooth case].} {\it
 Given an integer $s\ge 1$,
  let $f\in C^{\infty}({\Bbb R}^n)$,
  $\mbox{\boldmath $a$}=(a^1,\,\cdots\,,\,a^n)$ be a point on ${\Bbb R}^n$,  and
  $\mbox{\boldmath $y$}=(y^1,\,\cdots\,,\,y^n)$ be the tuple of coordinate-functions for ${\Bbb R}^n$.
 Then there exist $h_{i_1\,\cdots\,i_s}\in C^{\infty}({\Bbb R}^n)$,
     $1\le i_1,\,\cdots\,, i_s \le n$,
   with 	
 $$
    h_{i_1\,\cdots\, i_s}(\mbox{\boldmath $a$})
            = \frac{\partial^s\!f}{\partial y^{i_1}\,\cdots\,\partial y^{i_s}}
			      (\mbox{\boldmath $a$})\,,
	 \hspace{2em}\mbox{for all $\;\;1\le i_1,\,\cdots\,,\, i_s\le n$}\,,				
 $$
   such that
   \begin{eqnarray*}
   \lefteqn{
     f(\mbox{\boldmath $y$})\; =\; \sum_{s^{\prime}=0}^{s-1}\;
	         \sum_{1\le i_1,\,\cdots\,, i_{s^{\prime}} \le n}\;
			  \frac{1}{s^{\prime}!}\,
	           \frac{\partial^{s^{\prime}}\!\!f}
			     {\partial y^{i_1}\cdots \partial y^{i_{s^{\prime}}}}(\mbox{\boldmath $a$})\,
				     (y^{i_1}-a^{i_1})\cdots (y^{i_{s^{\prime}}}- a^{i_{s^{\prime}}}) }
					                                                                                              \\[.6ex]
      && \hspace{2em}					
            +\; \frac{1}{s!}\,
	  		         \sum_{1\le i_1,\,\cdots\,, i_s\le n}
			           h_{i_1\,\cdots\,i_s}({\mbox{\boldmath $y$}})\,
					       (y^{i_1}-a^{i_1})\cdots (y^{i_s}-a^{i_s})\,.			
			\hspace{8em}	
   \end{eqnarray*}
  Explicitly, $h_{i_1\,\cdots\,i_s}$ can be chosen to be the function defined through $f$ by
    $$
	  h_{i_1\,\cdots\,i_s}(\mbox{\boldmath $y$})\;
	  =\;
	    \left(\prod_{s^{\prime}=1}^ss^{\prime}!\right)
	    \int_0^1\cdots\int_0^1
		   \frac{\partial^s\!f}{\partial y^{i_1}\cdots\partial y^{i_s}}
			    \left(\rule{0em}{1em}
				   t_1\cdots t_s\,
				   (\mbox{\boldmath $y$}\,-\,\mbox{\boldmath $a$})\,+\, \mbox{\boldmath $a$}
		        \right)\,
			    \left(\prod_{i=1}^{s-1}t_i^{s-i}\right)dt_1\,\cdots\,\,dt_s\,.
	$$	
}

 \begin{proof}
  This follows from an iteration of the Fundamental Theorem of Calculus and the chain rule,
      together with an adjustment of the factorial factors along the way:
   $$
     g(\mbox{\boldmath $y$})-g(\mbox{\boldmath $a$})\;
	 =\; \int_0^1\frac{d}{dt}
	        g\left(\rule{0em}{1em}
			    t(\mbox{\boldmath $y$}- \mbox{\boldmath $a$})
	                                                              + \mbox{\boldmath $a$}\right) dt\;
     =\; \sum_{i=1}^n h_i({\mbox{\boldmath $y$}})(y^i-a^i)\,,
   $$
   where
   $$
      h(\mbox{\boldmath $y$})\;
	    =\; \int_0^1 \frac{\partial\!f}{\partial y^i}		
		       \left(\rule{0em}{1em}
			    t(\mbox{\boldmath $y$}- \mbox{\boldmath $a$})
	                                                              + \mbox{\boldmath $a$}
			   \right)dt\,.
   $$
   
 \end{proof}

\bigskip

\subsubsection{Smooth maps from an Azumaya point with a fundamental module to ${\Bbb R}^1$.}
 
Let $\varphi:(p,\End_{\Bbb C}({\Bbb C}^{\oplus r}),{\Bbb C}^{\oplus r})
                          \rightarrow {\Bbb R}^1$
  be a smooth map from a fixed Azumaya point with a fundamental module to ${\Bbb R}^1$
  defined by a $C^{\infty}$-admissible ring-homomorphism
  $\varphi^{\sharp}:
      C^{\infty}({\Bbb R}^1)\rightarrow \End_{\Bbb C}({\Bbb C}^{\oplus r})$.
Let $y\in C^{\infty}({\Bbb R}^1)$ be a coordinate function on ${\Bbb R}^1$   and
  $m_{\varphi}:=\varphi^{\sharp}(y)\in \End_{\Bbb C}({\Bbb C}^{\oplus r})$.
Up to a change of bases of ${\Bbb C}^{\oplus r}$, we may assume that
 $m_{\varphi}$ coincides with its Jordan form:
 $$
   m_{\varphi}\;=\;
     \left[
       \begin{array}{ccc}
        A_1 &         & 0 \\
            & \ddots  &   \\
        0   &         & A_l
       \end{array}
      \right]
       \hspace{1em}
        \mbox{with each $A_i\in \End_{\Bbb C}({\Bbb C}^{r_i})$
		 of the form}\hspace{1em}
      \left[
        \begin{array}{ccc}
        J^{(\lambda_i)}_{r_{i1}} &         &  0\\
                  & \ddots  &              \\
           0     &         &  J^{(\lambda_i)}_{r_{il_i}}
       \end{array}
      \right]\,,
 $$
 where
   $J^{(\lambda)}_j\in \End_{\Bbb C}({\Bbb C}^j)$ is the matrix
   $${\scriptsize
    \left[
     \begin{array}{cccc}
      \lambda   &          &        & 0       \\
      1         & \lambda  &        &         \\
                 &  \ddots  & \ddots &         \\
      0         &          &   1    & \lambda
     \end{array}
    \right]_{j\times j}    }\,.
   $$
 After relabelling, we may assume that
   $r_{i1}\ge \,\cdots\,\ge\,r_{il_i} $.
 {From} the given notation, $r=r_1+\,\cdots\,+r_l$  and
   $r_i=r_{i1}+\,\cdots\,+r_{il_i}$, for $i=1,\,\ldots,\,l$.
 Since $C^{\infty}({\Bbb R}^1)$ is commutative,
  the commutator
  $$
     [\, \varphi^{\sharp}(f)\,,\,m_{\varphi}\,]\; =\;   0
  $$
  for all $f\in C^{\infty}({\Bbb R}^1)$.
 It follows that $\varphi^{\sharp}(f)$ must be of the block-diagonal form
  $$
   \left[
    \begin{array}{ccc}
     B_1 &         & 0 \\
         & \ddots  &   \\
     0   &         & B_l
    \end{array}
   \right]
  $$
  with each $B_i\in \End_{\Bbb C}({\Bbb C}^{r_i})$ of the block form
   $\left[ B_{i,\alpha\beta}\right]_{l_i\times l_i}$,  $1\le \alpha,\beta\le l_i$,
   where
   $$
    B_{i,\alpha\beta}\;
	   =\; T^{(b_{i,\alpha\beta; 1}, \,\cdots\,,\, b_{i,\alpha\beta; r_{i\alpha}})}
                                                _{r_{i\alpha}\times r_{i\beta}}
     \hspace{1ex}\mbox{for $\alpha\ge \beta$},\hspace{1em}
    B_{i,\alpha\beta}\;
	   =\; T^{(b_{i,\alpha\beta; 1}, \,\cdots\,,\, b_{i,\alpha\beta; r_{i\beta}})}
                                               _{r_{i\alpha}\times r_{i\beta}}
     \hspace{1ex}\mbox{for $\alpha<\beta$,}
   $$
   with
   $${\scriptsize
    \begin{array}{c}
     \\[1ex]
    T^{(b_1,\,\cdots\,,\, b_i)}_{i\times j}\\[1em]
    (i\le j)
    \end{array}\;
    =\;
    \left[
     \begin{array}{cccccc}
       b_1    &         &         &      &         \\[.6ex]
       b_2    & b_1     &         &      &         \\
              & b_2     & \ddots  &      &     & \hspace{1em}0    \\
       \vdots & \ddots  & \ddots  & b_1  &         \\[.6ex]
       b_i    & \cdots  &         & b_2  & b_1 &
     \end{array}
    \right]_{i\times j}\,,
    \hspace{1em}
    \begin{array}{c}
     \\[1ex]
    T^{(b_1,\,\cdots\,,\, b_j)}_{i\times j}\\[1em]
    (i\ge j)
    \end{array}\;
    =\;
    \left[
     \begin{array}{ccccc}
              &         &    0                     \\[1em]
       b_1    &         &         &      &         \\[.6ex]
       b_2    & b_1     &         &      &         \\
              & b_2     & \ddots  &      &         \\
       \vdots & \ddots  & \ddots  & b_1  &         \\[.6ex]
       b_j    & \cdots  &         & b_2  & b_1
     \end{array}
    \right]_{i\times j}\,.      }
   $$
(In all the matrix forms above,  omitted entries are zero.)
 Here, all the entries of $\varphi^{\sharp}(f)$ depends on $f$.
 It follows that  $\varphi^{\sharp}$ factors to the composition of ring-homomorphisms
  (over ${\Bbb R}\hookrightarrow{\Bbb C}$ and ${\Bbb C}$ respectively)
   $$
     \xymatrix{
      C^{\infty}({\Bbb R}^1)
                   \ar[rr]^-{(\varphi^{\sharp}_1\,,\,\cdots\,,\,\varphi^{\sharp}_l)}
			   \ar @/_4ex/[rrrr]_-{\varphi^{\sharp}}
        &&  \End_{\Bbb C}({\Bbb C}^{r_1})
                 \oplus\,\cdots\, \oplus \End_{\Bbb C}({\Bbb C}^{\oplus r_l})\;
                        \ar @{^{(}->}[rr]^-{\iota}
		&&  \End_{\Bbb C}({\Bbb C}^{\oplus r})\,,
	  }
   $$
   induced by the decomposition
     ${\Bbb C}^{\oplus r}
	   ={\Bbb C}^{\oplus r_1}\oplus\,\cdots\,\oplus{\Bbb C}^{\oplus r_l}$
	 specified by $m_{\varphi}$.
   
 Suppose that $\lambda_j\in {\Bbb C}-{\Bbb R}$ for some $j\in\{1,\,\cdots\,,\,l\}$.
 Then,
   $$
	 \left((y^2 -(\lambda_j+\overline{\lambda_j})y
				           + \lambda_j\overline{\lambda_j})^{r_{j1}}\right)   \;
     \subset\;    \Ker(\varphi^{\sharp}_j)\;
	 \subset\;    C^{\infty}({\Bbb R}^1)\,.
   $$
  Since
   $y^2-(\lambda_j+\overline{\lambda_j})y+\lambda_j\overline{\lambda_j}$
	 is invertible in $C^{\infty}({\Bbb R}^1)$,
   $\left((y^2 -(\lambda_j+\overline{\lambda_j})y
				           + \lambda_j\overline{\lambda_j})^{r_{j1}}\right)
      = C^{\infty}({\Bbb R}^1)$   and, hence,
   $\varphi^{\sharp}_j$ is the zero-map and $r_j=0$.
  This contradicts with the fact that all $r_j>0$ from the setting.
  Furthermore, in the above discussion,
   only the commutativity of $C^{\infty}({\Bbb R}^1)$ is used   and
   we may replace $m_{\varphi}$ by any $\varphi^{\sharp}(g)$
   for $g\in C^{\infty}({\Bbb R}^1)$.
  This proves that

  \bigskip

\begin{sslemma} {\bf [eigenvalues all real].}
 Let $\varphi^{\sharp}:
         C^{\infty}({\Bbb R}^1)\rightarrow \End_{\Bbb C}({\Bbb C}^{\oplus r})$
   be a ring-homomorphism over ${\Bbb R}\hookrightarrow {\Bbb C}$.
 Then,  for any $g\in C^{\infty}({\Bbb R}^1)$,
    all the eigenvalues of $\varphi^{\sharp}(g)\in\End_{\Bbb C}({\Bbb C}^{\oplus r})$
   	are real.
\end{sslemma}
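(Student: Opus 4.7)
The plan is to argue by contradiction, exploiting the single feature of $C^\infty({\Bbb R}^1)$ that distinguishes it from the polynomial ring of Section~3.1: strictly positive smooth functions on ${\Bbb R}^1$ are units. Let $g\in C^\infty({\Bbb R}^1)$ be arbitrary and set $m:=\varphi^\sharp(g)\in\End_{\Bbb C}({\Bbb C}^{\oplus r})$. Assume for contradiction that some eigenvalue $\lambda$ of $m$ lies in ${\Bbb C}\setminus{\Bbb R}$, i.e.\ $\Im(\lambda)\ne 0$.

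First I would introduce the real quadratic
$$
 q(y)\;:=\; y^2-(\lambda+\overline{\lambda})\,y+\lambda\overline{\lambda}\;=\;(y-\Re\lambda)^2+(\Im\lambda)^2,
$$
which satisfies $q(y)\ge(\Im\lambda)^2>0$ for every $y\in{\Bbb R}$. Consequently the smooth function
$q(g)=g^2-(\lambda+\overline{\lambda})\,g+\lambda\overline{\lambda}\cdot\mathbf{1}\in C^\infty({\Bbb R}^1)$
is everywhere strictly positive, and therefore a unit in $C^\infty({\Bbb R}^1)$, with smooth inverse $1/q(g)$.

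Next I would apply $\varphi^\sharp$, using only that it is a unital ring-homomorphism over ${\Bbb R}\hookrightarrow{\Bbb C}$. Units are sent to units, so $\varphi^\sharp(q(g))$ is invertible in $\End_{\Bbb C}({\Bbb C}^{\oplus r})$. On the other hand, multiplicativity and ${\Bbb R}$-linearity give
$$
 \varphi^\sharp(q(g))\;=\;m^2-(\lambda+\overline{\lambda})\,m+\lambda\overline{\lambda}\,I\;=\;(m-\lambda I)(m-\overline{\lambda}\,I).
$$
But $\lambda$ being an eigenvalue of $m$ means $m-\lambda I$ has nontrivial kernel, so the product on the right is singular. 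This contradiction rules out non-real eigenvalues and proves the lemma.

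I do not foresee a genuine obstacle: the argument is essentially a one-line algebraic identity plus the observation that $q(g)$ is a positive smooth function. The only conceptual point worth flagging is the contrast with the algebraic warm-up in Example~3.1.1, where $q(y)$ is \emph{not} a unit in ${\Bbb R}[y]$ -- which is precisely why non-real Jordan blocks are permitted there but excluded in the $C^\infty$ setting. This observation is also what makes the argument work verbatim for any $g$ (not merely the coordinate $y$), so no appeal to the Jordan structure of $\varphi^\sharp(y)$ or the block-decomposition of $\varphi^\sharp$ carried out earlier is actually needed.
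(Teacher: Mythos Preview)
Your proof is correct and uses the same key idea as the paper: the real quadratic $q(y)=(y-\Re\lambda)^2+(\Im\lambda)^2$ with non-real roots is a unit in $C^\infty({\Bbb R}^1)$, forcing $\varphi^\sharp(q(g))$ to be simultaneously invertible and singular. The paper arrives at this via the Jordan-block decomposition of $\varphi^\sharp(y)$ (showing the offending block $\varphi^\sharp_j$ would have to be the zero map) and then remarks that one may replace $y$ by any $g$; your version simply applies the unit-versus-singular contradiction to $\varphi^\sharp(g)$ directly, which is cleaner and, as you note, makes no use of the block structure.
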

  
\bigskip

\noindent
It follows that
  
\bigskip
  
\begin{sslemma}
 {\bf [push-forward $\varphi_{\ast}({\Bbb C}^{\oplus r})$ on ${\Bbb R}^1$].}
 The push-forward $\varphi_{\ast}({\Bbb C}^{\oplus r})$ on ${\Bbb R}^1$ is well-defined
 (i.e.\  supported on the real line ${\Bbb R }^1$, as a $C^{\infty}$ real manifold,
   without having to add additional ${\Bbb C}$-points,
  cf.\ Example~3.1.1)
  as an $C^{\infty}({\Bbb R})^{\Bbb C}$-module of ${\Bbb C}$-length $r$.
\end{sslemma}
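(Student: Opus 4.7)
The plan is to combine the just-proved Lemma~3.2.1.1 (all eigenvalues of any $\varphi^\sharp(g)$ are real) with the Taylor-remainder formula of Lemma~3.2.0.1 in the single-variable case. Lemma~3.2.1.1 applied to the generator $m_\varphi := \varphi^\sharp(y)$ forces every eigenvalue $\lambda_i$ of $m_\varphi$ to lie in ${\Bbb R}$, and the Jordan-block analysis already carried out in the paragraphs preceding the statement then gives a canonical decomposition
$$
{\Bbb C}^{\oplus r}\;=\;V_1\oplus\cdots\oplus V_s, \qquad V_i\;=\;\ker\bigl(m_\varphi-\lambda_i\bigr)^{r_{i1}},\qquad \dim_{\Bbb C} V_i=r_i,
$$
together with a corresponding product decomposition $\varphi^\sharp=\bigoplus_i \varphi^\sharp_i$ with $\varphi^\sharp_i:C^\infty({\Bbb R}^1)\to \End_{\Bbb C}(V_i)$. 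It then suffices to prove a per-block statement: $V_i$, viewed as a $C^\infty({\Bbb R}^1)^{\Bbb C}$-module via the ${\Bbb C}$-linear extension of $\varphi^\sharp_i$, is scheme-theoretically supported at the single real point $\lambda_i\in{\Bbb R}^1$ and has length $r_i$.

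The key step applies Lemma~3.2.0.1 at $\mbox{\boldmath $a$}=\lambda_i$ with $s=r_{i1}$: any $f\in C^\infty({\Bbb R}^1)$ admits the expansion
$$
f(y)\;=\;\sum_{j=0}^{r_{i1}-1}\frac{f^{(j)}(\lambda_i)}{j!}\,(y-\lambda_i)^j\;+\;h(y)\,(y-\lambda_i)^{r_{i1}}
$$
for some $h\in C^\infty({\Bbb R}^1)$. Write $N_i:=m_\varphi|_{V_i}-\lambda_i\cdot\mathbf{1}$, which is nilpotent with $N_i^{r_{i1}}=0$ on $V_i$ by construction. Since $\varphi^\sharp_i$ is a ring-homomorphism over ${\Bbb R}\hookrightarrow{\Bbb C}$, applying it to the display annihilates the remainder term and yields
$$
\varphi^\sharp_i(f)\;=\;\sum_{j=0}^{r_{i1}-1}\frac{f^{(j)}(\lambda_i)}{j!}\,N_i^{\,j}.
$$
Thus $\varphi^\sharp_i$ factors through the finite-dimensional local $C^\infty$-ring quotient $C^\infty({\Bbb R}^1)_{(\lambda_i)}/{\frak m}_{\lambda_i}^{r_{i1}}$, whose spectrum is the fat real point $r_{i1}\cdot\{\lambda_i\}\subset{\Bbb R}^1$. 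Extending scalars to ${\Bbb C}$ preserves this localization picture because Lemma~3.2.0.1 has real coefficients, so $V_i$ really is supported at $\lambda_i\in{\Bbb R}^1$ with no spurious ${\Bbb C}$-points of the sort appearing in Example~3.1.1.

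For the length count, the descending filtration $V_i\supset N_iV_i\supset N_i^{\,2}V_i\supset\cdots\supset N_i^{\,r_{i1}}V_i=0$ has each successive quotient annihilated by ${\frak m}_{\lambda_i}$, hence a direct sum of copies of the residue field ${\Bbb C}$; the total ${\Bbb C}$-dimension of these quotients is $\dim_{\Bbb C}V_i=r_i$, so $V_i$ has length $r_i$ over the localized ring $C^\infty({\Bbb R}^1)^{\Bbb C}_{(\lambda_i)}$. Summing over $i$ gives total length $\sum_i r_i=r$, as claimed. The only non-formal ingredient is the smooth Taylor remainder (Lemma~3.2.0.1), which is what upgrades the naive polynomial Cayley--Hamilton argument of algebraic geometry (Example~3.1.1) to the $C^\infty$ setting; no genuine obstacle remains beyond carefully tracking that the ${\Bbb C}$-linear extension $\varphi^{\sharp,{\Bbb C}}$ inherits the same block decomposition, which is immediate since the decomposition is defined by the ${\Bbb C}$-linear operator $m_\varphi$.
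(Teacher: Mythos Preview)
Your argument is correct, but it does considerably more work than the paper does at this point. In the paper, Lemma~3.2.1.2 is treated as an immediate corollary of Lemma~3.2.1.1: once every $\varphi^\sharp(g)$ has only real eigenvalues, the $C^\infty({\Bbb R}^1)^{\Bbb C}$-module ${\Bbb C}^{\oplus r}$ (via $\varphi^\sharp$) is automatically supported on real points, and the length is $r$ simply because $\dim_{\Bbb C}{\Bbb C}^{\oplus r}=r$. The finer structural statements --- that the support is exactly the finite set $Z=\{\lambda_1,\ldots,\lambda_l\}$, that $\varphi^\sharp(f)$ depends only on germs at $Z$, and ultimately only on the finite Taylor jet there --- are deferred to the subsequent Lemmas~3.2.1.3, 3.2.1.4 and Proposition~3.2.1.6, where the Taylor-remainder Lemma~3.2.0.1 is first invoked.

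What you have written is essentially a direct proof of Proposition~3.2.1.6 restricted to each Jordan block, from which Lemma~3.2.1.2 falls out as a special case. Your route has the advantage of being self-contained and of exhibiting the explicit polynomial formula $\varphi^\sharp_i(f)=\sum_{j<r_{i1}}\frac{f^{(j)}(\lambda_i)}{j!}N_i^{\,j}$ immediately; the paper's route separates concerns, first establishing well-definedness by the soft eigenvalue argument and only later bringing in the analytic input of Lemma~3.2.0.1 for the sharper jet-dependence statement. Both are valid; just be aware that for the lemma as stated you are proving more than is asked.
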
		
   
\bigskip
     
Observe that
   $$
     \varphi_{\ast}({\Bbb C}^{\oplus r})\;\simeq\;
    	{\Bbb C}^{\oplus r}   \otimes_{\Bbb R}C^{\infty}({\Bbb R}^1)
		     \left /\left(
			     (\Id_r\otimes f-\varphi^{\sharp}(f)\otimes 1\,:\, f\in C^{\infty}({\Bbb R}^1))
		           \cdot  ({\Bbb C}^{\oplus r}
				   \otimes_{\Bbb R}C^{\infty}({\Bbb R}^1))\right)\right. \,.
   $$
 Let
  $$
    Z\;  := \; \{\lambda_1,\,\cdots\,,\lambda_l\}
  $$
    be the set of eigenvalues of $m_{\varphi}$   and
  $$
    U\; :=\;  {\Bbb R}^1  - \{\lambda_1,\,\cdots\,,\,\lambda_l\}
  $$
    be its open complement.
  Then	
   $\Id_r\otimes y- m_{\varphi}\otimes 1$
      is an invertible endomorphism of
	  ${\Bbb C}^{\oplus r}\otimes_{\Bbb R}C^{\infty}(U)$.
  It follows thus
     from the above presentation of $\varphi_{\ast}({\Bbb C}^{\oplus r})$
   that
    $(\varphi_{\ast}({\Bbb C}^{\oplus r}))|_U=0$.
  Together with the decomposition
    $\varphi^{\sharp}
      =\iota\circ(\varphi^{\sharp}_1,\,\cdots\,,\,\varphi^{\sharp}_l)$,
   this proves the following two lemmas:

\bigskip

\begin{sslemma}
{\bf  [dependence of  $\varphi^{\sharp}$ on germs at $Z$].}
  For $f\in C^{\infty}({\Bbb R}^1)$,
   $\varphi^{\sharp}(f)\in\End_{\Bbb C}({\Bbb C}^{\oplus r})$
   depends only on the germs of $f$ at the finite set $Z$.
\end{sslemma}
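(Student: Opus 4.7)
The plan is to reduce the statement to the following single claim: if $f \in C^{\infty}(\mathbb{R}^1)$ vanishes on some open neighborhood of $Z$, then $\varphi^{\sharp}(f) = 0$. Once this is in hand, the lemma follows, since two functions $f_1, f_2$ with the same germ at every $\lambda_j \in Z$ differ by an $f = f_1 - f_2$ that vanishes on the union of such small neighborhoods, hence by ${\Bbb R}$-linearity $\varphi^{\sharp}(f_1) = \varphi^{\sharp}(f_2)$.

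To prove the reduced claim, I would use the block decomposition $\varphi^{\sharp} = \iota \circ (\varphi^{\sharp}_1, \ldots, \varphi^{\sharp}_l)$ already established above the statement of the lemma, so that it suffices to show, for each $j \in \{1, \ldots, l\}$, that $\varphi^{\sharp}_j(f) = 0$ whenever $f$ vanishes on a neighborhood $V_j$ of $\lambda_j$. The key input is the Jordan structure: $\varphi^{\sharp}_j(y) = A_j$ has minimal polynomial $(y - \lambda_j)^{r_{j1}}$ (since $r_{j1}$ is the size of the largest Jordan block in $A_j$), so
\[
 \varphi^{\sharp}_j\!\left( (y - \lambda_j)^{r_{j1}} \right)\;
 =\; (A_j - \lambda_j \cdot \mathbf{1}_{r_j})^{r_{j1}}\;
 =\; 0 \hspace{1em}\mbox{in}\hspace{1em} \End_{\Bbb C}({\Bbb C}^{\oplus r_j})\,.
\]

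The remaining step is to exhibit a global factorization $f = (y - \lambda_j)^{r_{j1}} g$ with $g \in C^{\infty}({\Bbb R}^1)$. This is immediate: set $g(y) := f(y)/(y - \lambda_j)^{r_{j1}}$ for $y \neq \lambda_j$ and $g(\lambda_j) := 0$; on the open set $V_j$ where $f$ vanishes identically, $g \equiv 0$, so $g$ is smooth at $\lambda_j$, and on the complement of $\{\lambda_j\}$ the quotient is manifestly smooth. Applying the ring homomorphism $\varphi^{\sharp}_j$ then gives $\varphi^{\sharp}_j(f) = \varphi^{\sharp}_j((y-\lambda_j)^{r_{j1}}) \cdot \varphi^{\sharp}_j(g) = 0$, as desired.

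No real obstacle arises — the work has essentially been done in the preceding paragraphs of the subsection, where the block diagonalization of $\varphi^{\sharp}$ was extracted from the commutativity of $C^{\infty}({\Bbb R}^1)$ and the reality of the eigenvalues of $m_{\varphi}$. The only mild subtlety to be careful about is the smooth factorization step, which is a flat-function argument specific to the $C^{\infty}$ setting; note that the same proof will \emph{not} transport naively to the $C^k$ case of Sec.~3.4, where one can only factor out $(y-\lambda_j)$ up to the differentiability budget allowed by $k$, foreshadowing why the $C^k$ and $C^{\infty}$ treatments must diverge.
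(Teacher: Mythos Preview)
Your argument is correct. The reduction to functions vanishing near $Z$, the block decomposition, and the factorization $f = (y-\lambda_j)^{r_{j1}} g$ with $g$ smooth all go through cleanly, and the closing remark about the $C^k$ case is apt.

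The paper takes a somewhat different route. Rather than factoring $f$ explicitly through a power of $(y-\lambda_j)$, it argues via the push-forward module: one observes that $\Id_r \otimes y - m_{\varphi} \otimes 1$ is invertible as an endomorphism of ${\Bbb C}^{\oplus r} \otimes_{\Bbb R} C^{\infty}(U)$ on the complement $U = {\Bbb R}^1 \setminus Z$, and hence from the presentation
\[
  \varphi_{\ast}({\Bbb C}^{\oplus r})\;\simeq\;
  {\Bbb C}^{\oplus r}\otimes_{\Bbb R}C^{\infty}({\Bbb R}^1)
  \big/\big((\Id_r\otimes f - \varphi^{\sharp}(f)\otimes 1)\cdot
            ({\Bbb C}^{\oplus r}\otimes_{\Bbb R}C^{\infty}({\Bbb R}^1))\big)
\]
the restriction $(\varphi_{\ast}({\Bbb C}^{\oplus r}))|_U$ vanishes; germ-dependence at $Z$ then follows. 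Both arguments rest on the same algebraic input --- the nilpotency of $A_j - \lambda_j\cdot\mathbf{1}$ on the $j$-th block --- but yours is a direct ring-homomorphism computation, while the paper's is packaged in sheaf/module language consistent with how it later treats $\Supp(\varphi_{\ast}{\cal E})$ in the higher-dimensional and $C^k$ settings. Your approach is more self-contained for this one lemma; the paper's buys a uniform viewpoint that carries over to Lemma~3.2.1.4 and Sec.~5.2 without rework.
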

		
\bigskip
 
\begin{sslemma}
 {\bf  [$\varphi_{\ast}({\Bbb C}^{\oplus r})$ supported on $Z$].}
 (1)
    $\varphi_{\ast}({\Bbb C}^{\oplus r})$ is set-theoretically supported
    on the finite set $Z$.
 (2)
    $Z$ is naturally equipped with
	  the structure sheaf  ${\cal O}_Z$ associated to the quotient $C^{\infty}({\Bbb R}^1)$-algebra
    $C^{\infty}({\Bbb R}^1)/\Ker(\varphi^{\sharp})$,
      which, after $\otimes_{\Bbb R}{\Bbb C}$, is isomorphic to the commutative subalgebra
	${\Bbb C}\langle \Image\varphi^{\sharp} \rangle$
     	of $\End_{\Bbb C}({\Bbb C}^{\oplus r})$
	  that is generated by $\Image\varphi^{\sharp}$ and the center ${\Bbb C}\cdot\Id$
	 of $\End_{\Bbb C}({\Bbb C}^{\oplus r})$.
    By construction,
	  ${\cal O}_Z$ has ${\Bbb R}$-length $\le r$ and may contain nilpotent elements.
\end{sslemma}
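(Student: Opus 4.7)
The plan is to leverage the structural analysis already carried out just before the statement. For \textbf{part (1)}, I would recall the presentation
\[
\varphi_{\ast}({\Bbb C}^{\oplus r})\;\simeq\;\bigl({\Bbb C}^{\oplus r}\otimes_{\Bbb R}C^{\infty}({\Bbb R}^1)\bigr)\big/N\,,
\]
with $N$ the submodule generated by $\{\Id_r\otimes f-\varphi^{\sharp}(f)\otimes 1\,:\,f\in C^{\infty}({\Bbb R}^1)\}$. The relation at $f=y$ in particular forces $\Id_r\otimes y-m_{\varphi}\otimes 1=0$ in the quotient. Yet, when restricted to $U:={\Bbb R}^1-Z$ this same element is an \emph{invertible} $C^{\infty}(U)^{\Bbb C}$-module endomorphism of ${\Bbb C}^{\oplus r}\otimes_{\Bbb R}C^{\infty}(U)$, since $m_{\varphi}-c\cdot\Id_r$ is invertible in $\End_{\Bbb C}({\Bbb C}^{\oplus r})$ for every $c\notin Z$. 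Forcing an invertible endomorphism to vanish collapses the module, so $\varphi_{\ast}({\Bbb C}^{\oplus r})|_U=0$ and the set-theoretic support is contained in $Z$.

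For \textbf{part (2)}, the structure sheaf will be pinned down by the First Isomorphism Theorem applied to $\varphi^{\sharp}$, giving a canonical injection
\[
C^{\infty}({\Bbb R}^1)/\Ker(\varphi^{\sharp})\;\hookrightarrow\;\End_{\Bbb C}({\Bbb C}^{\oplus r})
\]
whose image is the commutative ${\Bbb R}$-subalgebra $\Image\varphi^{\sharp}$. Since the eigenvalues of $m_{\varphi}$ are all real (Lemma~3.2.1.1) the Jordan form of $m_{\varphi}$ has real entries, and, as shown in the next paragraph, each $\varphi^{\sharp}(f)$ is a real polynomial in $m_{\varphi}$ block-by-block; hence $\Image\varphi^{\sharp}\subset M_{r\times r}({\Bbb R})$, so $\Image\varphi^{\sharp}\cap(\sqrt{-1}\cdot\Image\varphi^{\sharp})=\{0\}$. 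Extending scalars then identifies $(C^{\infty}({\Bbb R}^1)/\Ker(\varphi^{\sharp}))\otimes_{\Bbb R}{\Bbb C}$ with the commutative ${\Bbb C}$-subalgebra ${\Bbb C}\langle\Image\varphi^{\sharp}\rangle\subset\End_{\Bbb C}({\Bbb C}^{\oplus r})$ generated by $\Image\varphi^{\sharp}$ and ${\Bbb C}\cdot\Id$, as asserted.

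For the length bound I would invoke Lemma~3.2.0.1 (Taylor with remainder). Writing $m_{\varphi,i}:=m_{\varphi}|_{{\Bbb C}^{r_i}}$ and expanding $f\in C^{\infty}({\Bbb R}^1)$ around $\lambda_i$ to order $r_{i1}$,
\[
f(y)\;=\;\sum_{s=0}^{r_{i1}-1}\frac{f^{(s)}(\lambda_i)}{s!}(y-\lambda_i)^s\,+\,(y-\lambda_i)^{r_{i1}}\,g_i(y)
\]
with $g_i$ smooth, and applying $\varphi^{\sharp}_i$, the remainder dies because $(m_{\varphi,i}-\lambda_i\Id)^{r_{i1}}=0$ (the maximal Jordan block size in the $i$-th block is $r_{i1}$). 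Thus $\varphi^{\sharp}_i(f)$ is a real polynomial of degree $<r_{i1}$ in $m_{\varphi,i}-\lambda_i\Id$, giving $\dim_{\Bbb R}\Image\varphi^{\sharp}_i\le r_{i1}$. Combining Lemma~3.2.1.3 with smooth bump functions localizing at each $\lambda_i$ -- which independently realize arbitrary Taylor jets there -- upgrades the dependence-on-germs statement into the direct-sum splitting $\Image\varphi^{\sharp}=\bigoplus_i\Image\varphi^{\sharp}_i$; hence $\dim_{\Bbb R}\Image\varphi^{\sharp}\le\sum_ir_{i1}\le\sum_ir_i=r$. Nilpotents show up whenever some $r_{i1}\ge 2$, since the class of $y-\lambda_i$ then maps to the nonzero nilpotent $m_{\varphi,i}-\lambda_i\Id$.

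The main, and really only, non-formal step is justifying the block-wise direct-sum splitting of $\Image\varphi^{\sharp}$: this is the one place where smooth bump functions -- the sole non-algebraic input -- enter the argument, and everything else is either the presentation of $\varphi_{\ast}({\Bbb C}^{\oplus r})$ already written down in the text, the First Isomorphism Theorem, or the elementary Jordan-form bookkeeping.
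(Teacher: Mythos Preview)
Your argument is correct. Part~(1) reproduces exactly the invertibility argument the paper gives in the paragraph immediately preceding the lemma. For part~(2) the paper offers essentially no proof: it simply declares that the block decomposition $\varphi^{\sharp}=\iota\circ(\varphi^{\sharp}_1,\ldots,\varphi^{\sharp}_l)$ together with the support computation ``proves the following two lemmas.'' You fill in the missing details, and in doing so invoke the Taylor-expansion formula for $\varphi^{\sharp}$ (Proposition~3.2.1.6), which appears \emph{after} this lemma; there is no circularity, since that proposition rests only on Lemma~3.2.1.3, not on 3.2.1.4. It is worth noting, though, that the length bound $\le r$ is available more cheaply and more in line with the paper's ``by construction'': the characteristic polynomial $\chi(y)=\prod_i(y-\lambda_i)^{r_i}$ of $m_\varphi$ lies in $\Ker\varphi^{\sharp}$ by Cayley--Hamilton, so ${\cal O}_Z$ is a quotient of $C^{\infty}({\Bbb R}^1)/(\chi)\cong\prod_i{\Bbb R}[y]/((y-\lambda_i)^{r_i})$, of ${\Bbb R}$-dimension $r$. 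The one place where the forward reference to Taylor is genuinely needed is the injectivity of $(C^{\infty}({\Bbb R}^1)/\Ker\varphi^{\sharp})\otimes_{\Bbb R}{\Bbb C}\to M_{r\times r}({\Bbb C})$, i.e.\ $\Image\varphi^{\sharp}\cap\sqrt{-1}\,\Image\varphi^{\sharp}=0$; your argument via real matrices in the Jordan basis is the clean way to see this.
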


\bigskip

\noindent
By construction, $\varphi_{\ast}({\Bbb C}^{\oplus r})$
 is naturally an ${\cal O}_Z^{\,\Bbb C}$ ($:= {\cal O}_Z\otimes_{\Bbb R}{\Bbb C}$)-module.
Deformations of $\varphi^{\sharp}$ (and hence $\varphi$)
 produce Higgsing/un-Higgsing phenomena of D$0$-branes on ${\Bbb R}^1$,
 realized as $\varphi_{\ast}({\Bbb C}^{\oplus r})$,
 similar to that in Example~3.1.1.

So far we have been trying to push our understanding of $\varphi$ purely algebraically.
Lemma~3.2.0.1 from calculus enables us
  to refine Lemma~3.2.1.3 further:

\bigskip

\begin{ssnotation} {\bf [Taylor expansion/jet].} {\rm
 Recall the coordinate function $y\in C^{\infty}({\Bbb R}^1)$.
 For $f\in C^{\infty}({\Bbb R}^1)$, $q\in {\Bbb R}^1$, and $d\in {\Bbb Z}_{\ge 0}$,
  let
    $$
	  T^{(y, q,d)}(f)\;
	    :=\;   \sum_{j=0}^d
				     \frac{1}{j!}\frac{\partial^j\!f}{\partial y^j}(y(q))(y-y(q))^j \;
					\in\; C^{\infty}({\Bbb R}^1)
	$$
	be the Taylor expansion of $f$ at $q$ up to order/degree $d$.
  If the value $a=y(q)$ is known, we also denote $T^{(y,q,d)}(f)$ by $T^{(y,a,d)}(f)$. 	
}\end{ssnotation}
			
\bigskip

\begin{ssproposition}
 {\bf [dependence of $\varphi^{\sharp}$ on $(r-1)$-jet at $Z$].}
  Continuing the discussion and notations.
  Let $f\in C^{\infty}({\Bbb R}^1)$.
  Then
	\begin{eqnarray*}
	  \varphi^{\sharp}(f)
	   & =
	   & \iota\circ (\varphi^{\sharp}_1(T^{(y, \lambda_1, r_{11}-1)}(f)),\,
	         \cdots\,,\,
			 \varphi^{\sharp}_l(T^{(y, \lambda_l, r_{l1}-1)}(f)))  \\
       & = 			
	   & \iota\circ (\varphi^{\sharp}_1(T^{(y, \lambda_1, r-1)}(f)),\,
	         \cdots\,,\,
			 \varphi^{\sharp}_l(T^{(y, \lambda_l, r-1)}(f)))\,.  	
    \end{eqnarray*}			
  In particular,
   $\varphi^{\sharp}:
      C^{\infty}({\Bbb R}^1)\rightarrow \End_{\Bbb C}({\Bbb C}^{\oplus r})$	
	is uniquely determined by its value $m_{\varphi}= \varphi^{\sharp}(y)$
	at the specified coordinate function $y$ of ${\Bbb R}^1$.
\end{ssproposition}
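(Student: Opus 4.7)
The plan is to combine Taylor's theorem with smooth remainder (Lemma~3.2.0.1) with the nilpotency of each Jordan-block shift $A_i-\lambda_i\Id$. From the block decomposition $\varphi^{\sharp}=\iota\circ(\varphi^{\sharp}_1,\ldots,\varphi^{\sharp}_l)$ already established, and from Lemma~3.2.1.1 which guarantees $\lambda_i\in{\Bbb R}$ (so that the Taylor expansion at $\lambda_i$ lives in $C^{\infty}({\Bbb R}^1)$), it suffices to show for each $i$ that $\varphi^{\sharp}_i(f)=\varphi^{\sharp}_i(T^{(y,\lambda_i,r_{i1}-1)}(f))$; assembling the components through $\iota$ then yields the first asserted equality.

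Fix $i$ and apply Lemma~3.2.0.1 in one variable with $a=\lambda_i$ and $s=r_{i1}$: there exists $h_i\in C^{\infty}({\Bbb R}^1)$ such that
$$f\;=\;T^{(y,\lambda_i,r_{i1}-1)}(f)\,+\,\tfrac{1}{r_{i1}!}\,h_i\cdot(y-\lambda_i)^{r_{i1}}.$$
Applying the ring-homomorphism $\varphi^{\sharp}_i$ and using $\varphi^{\sharp}_i(y)=A_i$,
$$\varphi^{\sharp}_i(f)\;=\;\varphi^{\sharp}_i(T^{(y,\lambda_i,r_{i1}-1)}(f))\,+\,\tfrac{1}{r_{i1}!}\,\varphi^{\sharp}_i(h_i)\cdot(A_i-\lambda_i\Id)^{r_{i1}}.$$
Because $A_i$ consists of Jordan blocks all with eigenvalue $\lambda_i$ and of sizes $\leq r_{i1}$, the shift $A_i-\lambda_i\Id$ is nilpotent of index $\leq r_{i1}$, so $(A_i-\lambda_i\Id)^{r_{i1}}=0$ and the remainder term vanishes, giving the first equality.

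For the second equality, the difference $T^{(y,\lambda_i,r-1)}(f)-T^{(y,\lambda_i,r_{i1}-1)}(f)=\sum_{j=r_{i1}}^{r-1}\frac{1}{j!}\frac{\partial^j\!f}{\partial y^j}(\lambda_i)(y-\lambda_i)^j$ is sent by $\varphi^{\sharp}_i$ to a ${\Bbb C}$-linear combination of $(A_i-\lambda_i\Id)^j$ with $j\geq r_{i1}$, each summand vanishing by the same nilpotency. Finally, the first equality presents $\varphi^{\sharp}(f)$ as an explicit universal expression in the derivatives of $f$ at the eigenvalues of $m_{\varphi}$ and in the matrices $A_i-\lambda_i\Id$; since $m_{\varphi}=\varphi^{\sharp}(y)$ determines both the set of $\lambda_i$'s and all the blocks $A_i$ via its Jordan canonical form, it determines $\varphi^{\sharp}$ on all of $C^{\infty}({\Bbb R}^1)$. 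The only place where smoothness rather than bare polynomial algebra enters is securing a genuinely $C^{\infty}$ remainder $h_i$, and this is precisely delivered by the integral formula in Lemma~3.2.0.1, so no essential obstacle arises.
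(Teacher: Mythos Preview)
Your proof is correct and in fact streamlines the paper's argument. Both rest on the same mechanism---Lemma~3.2.0.1 to produce a smooth remainder of the form $h\cdot(y-\lambda_i)^{r_{i1}}$, then the nilpotency $(A_i-\lambda_i\Id)^{r_{i1}}=0$ to kill it---but the paper first localises $f$ by bump functions $\rho_{\lambda_i}$ and invokes Lemma~3.2.1.3 (dependence on germs at $Z$) to see that each piece $\rho_{\lambda_i}f$ contributes only to the $i$-th block, before performing the Taylor step on the cut-off function. You bypass this detour entirely: since the block factorisation $\varphi^{\sharp}=\iota\circ(\varphi_1^{\sharp},\ldots,\varphi_l^{\sharp})$ is already in hand and each $\varphi_i^{\sharp}$ is itself a ring homomorphism, you can apply Taylor at $\lambda_i$ to the global $f$ and push through $\varphi_i^{\sharp}$ directly. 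This is cleaner; the paper's route buys nothing extra here, though it does make the intermediate Lemma~3.2.1.3 feel more integrated into the narrative.
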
			

\begin{proof}
 With the coordinate on ${\Bbb R}^1$ given by $y\in C^{\infty}({\Bbb R}^1)$,
 let $\rho_a$ be a $C^{\infty}$ bell-shaped cut-off function on ${\Bbb R}^1$
    that takes values in $[0,1]$
	with the value $1$ on $(a-\varepsilon, a+\varepsilon)$  and
	        the value $0$ on ${\Bbb R}^1-(a-2\varepsilon, a+2\varepsilon)$.
  Assume that $\varepsilon>0$ is small enough
   so that the intervals
    $[\lambda_i-2\varepsilon, \lambda_i+2\varepsilon]\subset {\Bbb R}^1$, $i=1,\,\ldots\,,\,l$,	
	are all disjoint from each other.
  It follows from
    Lemma~3.2.1.3 and the linearity of $\varphi^{\sharp}$ that
   $$
     \varphi^{\sharp}(f)\;
	   =\;  \varphi^{\sharp}(\rho_{\lambda_1}f\,  +\, \cdots\, +\, \rho_{\lambda_l}f)\;
	   =\; \varphi^{\sharp}(f_1)\,+\,\cdots\,+\,\varphi^{\sharp}(f_l)\,,
   $$
   where $f_i:=\rho_{\lambda_i}f$.
 Since $f_i\in C^{\infty}({\Bbb R}^1)$ is supported in the interval
   $(\lambda_i-2\varepsilon, \lambda_i+2\varepsilon)$
   that is disjoint from the intervals $(\lambda_j-2\varepsilon, \lambda_j+2\varepsilon)$ for all $j\ne i$,  
  $\varphi^{\sharp}(f_i)$ acts only on the direct summand
     ${\Bbb C}^{\oplus r_i}$ of ${\Bbb C}^{\oplus r}$
   specified by $m_{\varphi}$. I.e.\
  $$
   \varphi^{\sharp}(f_i)\;=\;
      \iota(0,\,\cdots\,,\,0,\,\varphi^{\sharp}_i(f_i),\,0,\,\cdots\,,0)
  $$
  and hence
  $$
   \varphi^{\sharp}(f)\;=\;
      \iota(\varphi^{\sharp}_1(f_1),\,\cdots\,,\, \varphi^{\sharp}_l(f_l))\,.
  $$
 For each $f_i$,
   consider the decomposition
   $$
     f_i\; =\;  \rho_{\lambda_i}T^{(y, \lambda_i, r_{i1}-1)}(f)
		             \,+\, h_i\,,
   $$
    in $C^{\infty}({\Bbb R}^1)$, where
	$$
	   h_i\; :=\;  f_i- \rho_{\lambda_i}T^{(y, \lambda_i, r_{i1}-1)}(f))\;
	              =\;  f_i- \rho_{\lambda_i}T^{(y, \lambda_i, r_{i1}-1)}(f_i))\,.
	$$
 By construction,
   both $\rho_{\lambda_i}T^{(y, \lambda_i, r_{i1}-1)}(f)$ and $h_i$ are supported in
	  $(\lambda_i-2\varepsilon, \lambda_i+2\varepsilon)$
	and
   $$
       \frac{\partial ^j h_i}{\partial y^j}(\lambda_i)\;=\; 0\,,
	     \hspace{2em}\mbox{for $j=0,\,\ldots\,,\,r_{i1}-1$}\,.
   $$
 Since
   $\varphi^{\sharp}_i:
      C^{\infty}({\Bbb R}^1)\rightarrow \End_{{\Bbb C}}({\Bbb C}^{r_i})$
	  is a ring-homomorphism over ${\Bbb R}\hookrightarrow {\Bbb C}$,
  it follows from
    Lemma~3.2.1.3 and
    Lemma~3.2.0.1
  that
   $$
	 \varphi^{\sharp}_i(h_i)\; =\; 0\, .
   $$			
 Consequently,
  by Lemma~3.2.1.3 again and the property of $m_{\varphi}$ read off from its Jordan form,
  $$
	\varphi^{\sharp}_i(f_i)\;
	 =\; \varphi^{\sharp}_i
	         (\rho_{\lambda_i}T^{(y, \lambda_i, r_{i1}-1)}(f_i))\;
	 =\; \varphi^{\sharp}_i(T^{(y, \lambda_i, r_{i1}-1)}(f))\;
	 =\; \varphi^{\sharp}_i(T^{(y, \lambda_i, r-1)}(f))\,.
  $$
 The proposition follows.
  
\end{proof}

\bigskip

\begin{ssremark}
{$[$algebraic nature of $C^{\infty}({\Bbb R}^1)$ under $\varphi^{\sharp}$$\,]$.}  {\rm
In other words, from the aspect of $\varphi^{\sharp}$,  $C^{\infty}({\Bbb R}^1)$
 would behave much like the polynomial ring ${\Bbb R}[y]$
 except that $C^{\infty}({\Bbb R}^1)$ contains more abundantly invertible elements,
 which renders $\varphi^{\sharp}$ more restrictive than the case in Sec.~3.1.
} \end{ssremark}

\bigskip

\subsubsection{Smooth maps from an Azumaya point with a fundamental module to ${\Bbb R}^n$}

While the study in Sec.~3.2.1 is ad hoc by hand,
 it gives us a guide to understanding
 a smooth map
  $\varphi:(p,\End_{\Bbb C}({\Bbb C}^{\oplus r}),{\Bbb C}^{\oplus r})
                   \rightarrow {\Bbb R}^n$
  from a fixed Azumaya point with a fundamental module to ${\Bbb R}^n$
  defined by a $C^{\infty}$-admissible ring-homomorphism
  $\varphi^{\sharp}:
     C^{\infty}({\Bbb R}^n)\rightarrow \End_{\Bbb C}({\Bbb C}^{\oplus r})$.

\bigskip

\begin{flushleft}
{\bf Reduction to the case
          when $\Supp(\varphi_{\ast}({\Bbb C}^{\oplus r}))$ is connected}
\end{flushleft}
Let
  $y^1,\,\cdots\,,\, y^n \in C^{\infty}({\Bbb R}^n)$
    be a set of coordinate functions on ${\Bbb R}^n$   and
  $$
     m_{\varphi}^i \;
 	 :=\;   \varphi^{\sharp}(y^i)\;  \in\;   \End_{\Bbb C}({\Bbb C}^{\oplus r})\,.
  $$
The same argument as for Lemma~3.2.1.1 implies that all the eigenvalues of  $m_{\varphi}^i$ are real.
And it follows inductively from the standard form of a  pair of communiting $r\times r$ matrices,
 used already in Sec.~3.2.1,  that
up to a change of basis of ${\Bbb C}^{\oplus r}$, there exists a decomposition
 $$
   {\Bbb C}^{\oplus r}\;
    =\; {\Bbb C}^{\oplus r_1}\oplus \,\cdots\,\oplus {\Bbb C}^{\oplus r_l}
 $$
 such that
  \begin{itemize}
   \item[(1)]
   $\varphi^{\sharp}$ factors accordingly to the composition of ring-homomorphisms
  (over ${\Bbb R}\hookrightarrow{\Bbb C}$ and ${\Bbb C}$ respectively)
   $$
     \xymatrix{
      C^{\infty}({\Bbb R}^n)
                   \ar[rr]^-{(\varphi^{\sharp}_1\,,\,\cdots\,,\,\varphi^{\sharp}_l)}
			   \ar @/_4ex/[rrrr]_-{\varphi^{\sharp}}
        &&  \End_{\Bbb C}({\Bbb C}^{\oplus r_1})
                 \oplus\,\cdots\, \oplus \End_{\Bbb C}({\Bbb C}^{\oplus r_l})\;
                        \ar @{^{(}->}[rr]^-{\iota}
		&&  \End_{\Bbb C}({\Bbb C}^{\oplus r})\,;
	  }
	$$
	
   \item[(2)]
    for each $1\le l^{\prime}\le l$,
    there exist $\lambda_{l^{\prime}}^i\in {\Bbb R}$, $1\le i\le n$,
	such that
     $\varphi^{\sharp}_{l^{\prime}}(y^i)$ has the unique eigenvalue
	 $\lambda_{l^{\prime}}^i$ of multiplicity $r_{l^{\prime}}$.
  (I.e.\
       $\left(\varphi^{\sharp}_{l^{\prime}}(y^i)
	      -\lambda_{l^{\prime}}^i\cdot\Id_{r_{l^{\prime}}\times r_{l^{\prime}}}
		  \right)^{r_{l^{\prime}}}=0$.)	
 \end{itemize}
For later use,  denote the tuple
 $$
   (\lambda^1_{l^{\prime}},\,\cdots\,,\,\lambda^n_{l^{\prime}})\;
      =:\;  \mbox{\boldmath $\lambda$}_{l^{\prime}}\,.
 $$
 
This reduces the study of $\varphi^{\sharp}$ to the study of each $\varphi_{l^{\prime}}$.
For the simplicity of notations and without loss of generality, one may assume that $l=1$,
  which corresponds to the case
            when $\Supp(\varphi_{\ast}({\Bbb C}^{\oplus r}))$ is connected,
  and drop the subscript $l^{\prime}$.

\bigskip

\begin{flushleft}
{\bf The case when $\Supp(\varphi_{\ast}({\Bbb C}^{\oplus r}))$ is connected}
\end{flushleft}
Since $C^{\infty}({\Bbb R}^n)$ is commutative,
 the set $\{\varphi^{\sharp}(f)\,|\, f\in C^{\infty}({\Bbb R}^n)\}$
 forms a family of commuting $r\times r$-matrices over ${\Bbb C}$
  and hence can be simultaneously triangulated.
Thus, subject to a change of basis of ${\Bbb C}^{\oplus r}$,
 we may assume that  all $\varphi^{\sharp}(f)$, $f\in C^{\infty}({\Bbb C}^n)$,
 are lower triangular.
In this form,
 the diagonal entries of $\varphi^{\sharp}(f)$ then correspond to
  eigenvalues of $\varphi^{\sharp}(f)$.
By the same argument as in Sec.~3.2.1, they must be all real.
Thus, after the post-composition of $\varphi^{\sharp}$ with the ${\Bbb C}$-algebra-epimorphism
  from the ring of $r\times t$-lower-triangular matrices over ${\Bbb C}$
  to the rings of $r\times r$-diagonal matrices over ${\Bbb C}$,
 one obtains a $C^{\infty}$-ring-homomorphism
 $$
   C^{\infty}({\Bbb R}^n)\; \longrightarrow \;
                           \underbrace{{\Bbb R}\times \cdots \times {\Bbb R}}
	                                                   _{\scriptsize  \mbox{($r$-many times)}}\,.
 $$
Note that the latter product ring is the $C^{\infty}$-function ring of the $0$-dimensional manifold
  that consists of  $r$-many distinct points.

Let ${\Bbb R}\hookrightarrow {\Bbb R}\times\cdots\times {\Bbb R}$
 be the diagonal ring-homomorphism.
Then, the requirement that
   $\varphi^{\sharp}:C^{\infty}({\Bbb R}^n)\rightarrow \Image\varphi^{\sharp}$
     be a $C^{\infty}$-ring-homomorphism    and
	that $\lambda^i$ be the unique eigenvalue of each $m_{\varphi}^i$
 implies that the above induced 	$C^{\infty}$-ring-homomorphism
 $C^{\infty}({\Bbb R}^n)\rightarrow {\Bbb R}\times \cdots \times {\Bbb R}$
  factors through a composition of $C^{\infty}$-ring-homomorphisms
  $$
    C^{\infty}({\Bbb R}^n)\; \longrightarrow\;  {\Bbb R}\; 	
	 \hookrightarrow\;  {\Bbb R}\times \cdots \times {\Bbb R}\,.
  $$
It follows that, for all $f\in C^{\infty}({\Bbb R}^n)$,
 $\varphi^{\sharp}(f)$ has the unique eigenvalue $f(\lambda^1,\,\cdots\,,\lambda^n)$
 with multiplicity $r$. That is,
 $$
    \left(  \varphi^{\sharp}(f)
	  - f(\lambda^1,\,\cdots\,,\,\lambda^n)\cdot\Id_{r\times r}  \right)^r\;=\;0\,.
 $$
 
Let $s\ge n(r-1)+1$ and
denote the tuple $(\lambda^1,\,\cdots,\,\,\lambda^r)$ by {\boldmath $\lambda$},
 regarded also as a point in ${\Bbb R}^n$ with coordinates
 $\mbox{\boldmath $y$} :=(y^1,\,\cdots\,,\,y^n)$.
Then, it follows from Lemma~3.2.0.1 that
 there exist $h_{i_1\,\cdots\,i_s}\in C^{\infty}({\Bbb R}^n)$,
     $1\le i_1,\,\cdots\,, i_s \le n$,
   with 	
 $$
    h_{i_1\,\cdots\, i_s}(\mbox{\boldmath $\lambda$})
            = \frac{\partial^s\!f}{\partial y^{i_1}\,\cdots\,\partial y^{i_s}}
			      (\mbox{\boldmath $\lambda$})\,,
	 \hspace{2em}\mbox{for all $\;\;1\le i_1,\,\cdots\,,\, i_s\le n$}\,,				
 $$
   such that
   \begin{eqnarray*}
   \lefteqn{
     f(\mbox{\boldmath $y$})\; =\; \sum_{s^{\prime}=0}^{s-1}\;
	         \sum_{1\le i_1,\,\cdots\,, i_{s^{\prime}} \le n}\;
			  \frac{1}{s^{\prime}!}\,
	           \frac{\partial^{s^{\prime}}\!\!f}
			     {\partial y^{i_1}\cdots \partial y^{i_{s^{\prime}}}}
				                                                                            (\mbox{\boldmath $\lambda$})\,
				     (y^{i_1}-\lambda^{i_1})
					     \cdots (y^{i_{s^{\prime}}}- \lambda^{i_{s^{\prime}}}) }
					                                                                                              \\[.6ex]
      && \hspace{2em}					
            +\; \frac{1}{s!}\,
	  		         \sum_{1\le i_1,\,\cdots\,, i_s\le n}
			           h_{i_1\,\cdots\,i_s}({\mbox{\boldmath $y$}})\,
					       (y^{i_1}-\lambda^{i_1})\cdots (y^{i_s}-\lambda^{i_s})\,.			
			\hspace{8em}	
   \end{eqnarray*}
It follows that
   \begin{eqnarray*}
    \lefteqn{
     \varphi^{\sharp}(f) \; =\; \sum_{s^{\prime}=0}^{s-1}\;
	         \sum_{1\le i_1,\,\cdots\,, i_{s^{\prime}} \le n}\;
			  \frac{1}{s^{\prime}!}\,
	           \frac{\partial^{s^{\prime}}\!\!f}
			     {\partial y^{i_1}\cdots \partial y^{i_{s^{\prime}}}}
				                                                                            (\mbox{\boldmath $\lambda$})\,
				     (m_{\varphi}^{i_1}-\lambda^{i_1}\cdot\Id_{r\times r})
					     \cdots (m_{\varphi}^{i_{s^{\prime}}}
						                      - \lambda^{i_{s^{\prime}}}\cdot\Id_{r\times r}) }
					                                                                                              \\[.6ex]
      &&   \hspace{2.4em}
  	  +\; \frac{1}{s!}\,
	  		         \sum_{1\le i_1,\,\cdots\,, i_s\le n}
			           \varphi^{\sharp}(h_{i_1\,\cdots\,i_s})\,
					       (m_{\varphi}^{i_1}-\lambda^{i_1}\cdot\Id_{r\times r})
						      \cdots (m_{\varphi}^{i_s}-\lambda^{i_s}\cdot\Id_{r\times r})\,.	
             \hspace{4em} 							
  \end{eqnarray*}
Now the product
 $$
  (m_{\varphi}^{i_1}-\lambda^{i_1}\cdot\Id_{r\times r})
						      \cdots (m_{\varphi}^{i_s}-\lambda^{i_s}\cdot\Id_{r\times r})
 $$
 in each term in the last summation
 contains $s$-many factors,
  each of the form $m_{\varphi}^i-\lambda^i\cdot\Id_{r\times r}$ for some $i=1,\,\cdots\,, n$.
Since $s\ge n(r-1)+1$, there must be at least one $i$ that occurs for  more than $(r-1)$-many
 times.
Since $(m_{\varphi}^i-\lambda^i\cdot\Id_{r\times r})^r\;=\; 0$  for all $i$,
 the last  summation must vanish.
This proves the following proposition:
 
\bigskip

\begin{ssproposition}
{\bf [dependence of $\varphi^{\sharp}$ on $n(r-1)$-jet at {\boldmath $\lambda$}].}
 $$
    \varphi^{\sharp}(f) \; =\; \sum_{s^{\prime}=0}^{n(r-1)}\;
	         \sum_{1\le i_1,\,\cdots\,, i_{s^{\prime}} \le n}\;
			  \frac{1}{s^{\prime}!}\,
	           \frac{\partial^{s^{\prime}}\!\!f}
			     {\partial y^{i_1}\cdots \partial y^{i_{s^{\prime}}}}
				                                                                            (\mbox{\boldmath $\lambda$})\,
				     (m_{\varphi}^{i_1}-\lambda^{i_1}\cdot\Id_{r\times r})
					     \cdots (m_{\varphi}^{i_{s^{\prime}}}
						                      - \lambda^{i_{s^{\prime}}}\cdot\Id_{r\times r})\,.
 $$
  In other words,
    $\varphi^{\sharp}(f)$ depends only  on
      $\varphi^{\sharp}(y^1),\,\cdots\,,\,\varphi^{\sharp}(y^n)$,  and
     the Taylor expansion of $f$ at the tuple of eigenvalues $(\lambda^1,\,\cdots\,,\,\lambda^n)$
      of $\varphi^{\sharp}(y^1),\,\cdots\,,\,\varphi^{\sharp}(y^n)$
	 up to order $n(r-1)$.
\end{ssproposition}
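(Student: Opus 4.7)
The plan is to prove the proposition by expanding $f$ as a Taylor polynomial centered at $\boldsymbol{\lambda} = (\lambda^1,\ldots,\lambda^n)$ to a sufficiently high but finite order, applying $\varphi^{\sharp}$ term by term, and showing that the integral-remainder contribution vanishes identically in $\End_{\Bbb C}({\Bbb C}^{\oplus r})$. Concretely, I would apply Lemma~3.2.0.1 with $s = n(r-1)+1$, which produces functions $h_{i_1\cdots i_s}\in C^{\infty}({\Bbb R}^n)$ interpolating between $f$ and its Taylor polynomial of degree $s-1 = n(r-1)$ at $\boldsymbol{\lambda}$, with the remainder expressed as an $R$-linear combination of the monomials $(y^{i_1}-\lambda^{i_1})\cdots(y^{i_s}-\lambda^{i_s})$.

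Next, I would push the identity through $\varphi^{\sharp}$. Since $\varphi^{\sharp}$ is ${\Bbb R}$-linear (in fact a ring homomorphism) and the partial-derivative values $\partial^{s'}f/\partial y^{i_1}\cdots\partial y^{i_{s'}}(\boldsymbol{\lambda})\in{\Bbb R}$ are scalars, the polynomial part transforms in the obvious way into the right-hand side of the claimed formula (with $m_{\varphi}^i - \lambda^i\cdot\Id_{r\times r}$ in place of $y^i-\lambda^i$). What remains is to show that, for every multi-index $(i_1,\ldots,i_s)$,
\[
\varphi^{\sharp}(h_{i_1\cdots i_s})\,(m_{\varphi}^{i_1}-\lambda^{i_1}\cdot\Id_{r\times r})\cdots(m_{\varphi}^{i_s}-\lambda^{i_s}\cdot\Id_{r\times r}) \;=\; 0.
\]
Two ingredients handle this. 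First, the reduction preceding the proposition established that each $N^i := m_{\varphi}^i - \lambda^i\cdot\Id_{r\times r}$ satisfies $(N^i)^r = 0$, since $\lambda^i$ is the unique eigenvalue of $m_{\varphi}^i$ with algebraic multiplicity $r$. Second, the family $\{m_{\varphi}^i\}_i$ pairwise commutes (as the image of the commutative ring $C^{\infty}({\Bbb R}^n)$ under $\varphi^{\sharp}$), hence so does $\{N^i\}_i$, allowing me to rearrange any product $N^{i_1}\cdots N^{i_s}$ into a power product $(N^1)^{k_1}\cdots(N^n)^{k_n}$ with $k_1+\cdots+k_n = s$. The pigeonhole principle, applied with $s = n(r-1)+1$, forces $k_i \ge r$ for some $i$, and hence the product is zero. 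This kills the remainder, giving the claimed finite-order formula.

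The only step requiring care is verifying that $(N^i)^r = 0$ under the present hypotheses; but this was already extracted from the assumption that $\varphi^{\sharp}(y^i)$ has the single real eigenvalue $\lambda^i$ of multiplicity $r$ (via the Cayley--Hamilton theorem, or directly from the Jordan normal form in the reduction to the connected case). Given this, the argument is entirely algebraic: Taylor expansion supplies the analytic input, and commutativity plus nilpotency supply the algebraic output. The ``$\varphi^{\sharp}$ is determined by $m_{\varphi} = \varphi^{\sharp}(y)$'' clause in the $n=1$ specialization follows immediately because in that case the coefficients of the formula depend only on derivatives of $f$ at the single scalar $\lambda$ together with powers of $N^1$. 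I anticipate the main conceptual (though not technical) obstacle is merely recognizing that degree $n(r-1)$ is the sharp cutoff dictated by the joint pigeonhole--nilpotency bound.
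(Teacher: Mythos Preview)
Your proposal is correct and follows essentially the same approach as the paper: apply Lemma~3.2.0.1 with $s = n(r-1)+1$, push the Taylor expansion through $\varphi^{\sharp}$, and kill the remainder by the pigeonhole argument on the commuting nilpotents $N^i = m_{\varphi}^i - \lambda^i\cdot\Id_{r\times r}$ with $(N^i)^r = 0$. The paper's presentation is slightly terser (it leaves the commutativity of the $N^i$ implicit when gathering like factors), but the logic is identical.
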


\bigskip

\begin{flushleft}
{\bf The general case}
\end{flushleft}
Resume now to the general case, i.e.\ the case when $l\ge 2$.
As in Sec.~3.2.1,
let $\mbox{\boldmath $q$} = (q^1,\,\cdots\,, q^n)$ be a point in ${\Bbb R}^n$
  with tuple of coordinate functions $\mbox{\boldmath $y$} = (y^1,\,\cdots\,y^n)$,
and denote
 $$
   T^{(\mbox{\scriptsize\boldmath $y$}, \mbox{\scriptsize\boldmath $q$}, d)}(f)\;
    :=\;  \mbox{the Taylor expansion of $f\in C^{\infty}({\Bbb R}^n)$ at {\boldmath $q$}
                             up to order/degree $d$	}\,.
 $$
Let
 $$
   Z\;  =\;
    \{\, \mbox{\boldmath $\lambda$}_1\,,\, \cdots\,,\, \mbox{\boldmath $\lambda$}_l\,\}\;
	\subset\; {\Bbb R}^n\,.
 $$
Then, it follows immediately from the above discussion that:

\bigskip

\begin{ssproposition}    {\bf [dependence of $\varphi^{\sharp}$ on $n(r-1)$-jet at $Z$].}
 Continuing the notations above and at the beginning of the current subsubsection.
 Let $f\in C^{\infty}({\Bbb R}^n)$.
 Then
   \begin{eqnarray*}
	  \varphi^{\sharp}(f)
	   & =
       & \iota \circ
	          (\varphi^{\sharp}_1(
	                 T^{(\mbox{\scriptsize\boldmath $y$},
					              \mbox{\scriptsize\boldmath $\lambda$}_1, n(r_1-1))}(f)),\,
	         \cdots\,,\,
			 \varphi^{\sharp}_l(
			         T^{(\mbox{\scriptsize\boldmath $y$},
					              \mbox{\scriptsize\boldmath $\lambda$}_l, n(r_l-1))}(f)))  \\
       & =
       & \iota \circ
	          (\varphi^{\sharp}_1(
	                 T^{(\mbox{\scriptsize\boldmath $y$},
					              \mbox{\scriptsize\boldmath $\lambda$}_1, n(r-1))}(f)),\,
	         \cdots\,,\,
			 \varphi^{\sharp}_l(
			         T^{(\mbox{\scriptsize\boldmath $y$},
					              \mbox{\scriptsize\boldmath $\lambda$}_l, n(r-1))}(f)))\,.
   \end{eqnarray*}								
 In particular,
   $\varphi^{\sharp}:C^{\infty}({\Bbb R}^n)
       \rightarrow \End_{\Bbb C}({\Bbb C}^{\oplus r})$	
	is uniquely determined by its values $m_{\varphi}^i := \varphi^{\sharp}(y^i)$
	at any specified tuple of coordinate functions $\mbox{\boldmath $y$}=(y^1,\,\cdots\,,\,y^n)$
	of ${\Bbb R}^n$.
\end{ssproposition}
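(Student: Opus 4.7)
The plan is to reduce Proposition 3.2.2.2 to the already-established connected case (Proposition 3.2.2.1) by means of the block-diagonal factorization of $\varphi^\sharp$ and a smooth partition-of-unity splitting of $f$ concentrated near each eigenvalue tuple $\mbox{\boldmath $\lambda$}_{l'}$. The key fact we exploit is that the support analysis for $\varphi_{\ast}({\Bbb C}^{\oplus r})$, carried out in Sec.~3.2.1 for ${\Bbb R}^1$, carries over verbatim to ${\Bbb R}^n$: the same argument used for Lemma~3.2.1.3 shows that $\varphi^{\sharp}(f)$ depends only on the germ of $f$ at $Z$, and that $Z$ is precisely the (finite) set $\{\mbox{\boldmath $\lambda$}_1,\ldots,\mbox{\boldmath $\lambda$}_l\}$ read off from the block decomposition.

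First, I would choose a small enough $\varepsilon>0$ so that the open balls $B(\mbox{\boldmath $\lambda$}_{l'},2\varepsilon)\subset{\Bbb R}^n$ are pairwise disjoint, and select smooth bump functions $\rho_{l'}\in C^\infty({\Bbb R}^n)$ taking the value $1$ on $B(\mbox{\boldmath $\lambda$}_{l'},\varepsilon)$ and $0$ outside $B(\mbox{\boldmath $\lambda$}_{l'},2\varepsilon)$. Writing $f_{l'}:=\rho_{l'}f$, the function $f-\sum_{l'}f_{l'}$ vanishes identically on a neighbourhood of $Z$, so by the $n$-dimensional analogue of Lemma~3.2.1.3 it lies in $\Ker(\varphi^\sharp)$. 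Hence
\[
\varphi^{\sharp}(f)\;=\;\sum_{l'=1}^{l}\varphi^{\sharp}(f_{l'})\,.
\]
Because $f_{l'}$ is supported away from all $\mbox{\boldmath $\lambda$}_{l''}$ with $l''\neq l'$, the same argument gives $\varphi^{\sharp}_{l''}(f_{l'})=0$ for $l''\neq l'$, so the block-diagonal factorization yields $\varphi^{\sharp}(f_{l'})=\iota(0,\ldots,\varphi^{\sharp}_{l'}(f_{l'}),\ldots,0)$.

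Next I would apply Proposition~3.2.2.1 to each restricted ring-homomorphism $\varphi^{\sharp}_{l'}:C^\infty({\Bbb R}^n)\to\End_{\Bbb C}({\Bbb C}^{\oplus r_{l'}})$, whose image has the single eigenvalue tuple $\mbox{\boldmath $\lambda$}_{l'}$. This gives
\[
\varphi^{\sharp}_{l'}(f_{l'})\;=\;\varphi^{\sharp}_{l'}\bigl(T^{(\mbox{\scriptsize\boldmath $y$},\mbox{\scriptsize\boldmath $\lambda$}_{l'},\,n(r_{l'}-1))}(f_{l'})\bigr)\,.
\]
Since $\rho_{l'}\equiv 1$ on a neighbourhood of $\mbox{\boldmath $\lambda$}_{l'}$, all partial derivatives of $f_{l'}$ and $f$ at $\mbox{\boldmath $\lambda$}_{l'}$ coincide, so the Taylor polynomial of $f_{l'}$ at $\mbox{\boldmath $\lambda$}_{l'}$ equals that of $f$ at $\mbox{\boldmath $\lambda$}_{l'}$ to every order. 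Summing over $l'$ yields the first claimed equality. The second equality, replacing $r_{l'}$ by $r$, then follows because $n(r-1)\ge n(r_{l'}-1)$ and the extra higher-order terms are polynomials in $(y^i-\lambda_{l'}^i)$ of degree $>n(r_{l'}-1)$, which vanish after applying $\varphi^{\sharp}_{l'}$ by the vanishing $(m^i_\varphi-\lambda^i_{l'}\cdot\Id)^{r_{l'}}=0$ on the $l'$-th block (precisely the argument in the proof of Proposition~3.2.2.1). The last statement—unique determination of $\varphi^\sharp$ by the tuple $(m^1_\varphi,\ldots,m^n_\varphi)$—then follows since the Taylor coefficients at each $\mbox{\boldmath $\lambda$}_{l'}$ are scalars and the $m^i_\varphi-\lambda^i_{l'}\cdot\Id$ are built from the $m^i_\varphi$.

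The only non-routine step is the first one, namely checking that the $n$-dimensional analogue of Lemma~3.2.1.3 genuinely holds so that the bump-function splitting is legitimate. This is where the $C^\infty$-admissibility hypothesis matters: one needs to know that $\varphi^\sharp$ annihilates smooth functions vanishing on a neighbourhood of $Z$. This is obtained by repeating the Sec.~3.2.1 argument componentwise—on the open complement $U={\Bbb R}^n\setminus Z$, each $\Id_r\otimes y^i - m_\varphi^i\otimes 1$ has real eigenvalues bounded away from $y^i$ locally, so a standard Nullstellensatz-type localization together with invertibility of non-vanishing $C^\infty$ functions forces $(\varphi_\ast{\Bbb C}^{\oplus r})|_U=0$. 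With that in hand, everything else is bookkeeping.
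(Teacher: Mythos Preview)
Your proof is correct and reaches the same conclusion as the paper, but you take a slightly longer route than necessary. The paper treats Proposition~3.2.2.2 as an immediate consequence of the block-diagonal factorization $\varphi^{\sharp}=\iota\circ(\varphi^{\sharp}_1,\ldots,\varphi^{\sharp}_l)$ together with Proposition~3.2.2.1 applied to each $\varphi^{\sharp}_{l'}$: since the factorization already holds for \emph{every} $f\in C^{\infty}({\Bbb R}^n)$ (because $\varphi^{\sharp}(f)$ commutes with all $m^i_{\varphi}$ and so preserves the generalized eigenspace decomposition), one has $\varphi^{\sharp}(f)=\iota(\varphi^{\sharp}_1(f),\ldots,\varphi^{\sharp}_l(f))$ directly, and then the connected-case formula gives $\varphi^{\sharp}_{l'}(f)=\varphi^{\sharp}_{l'}(T^{(\mbox{\scriptsize\boldmath $y$},\mbox{\scriptsize\boldmath $\lambda$}_{l'},n(r_{l'}-1))}(f))$ with no further work. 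Your bump-function splitting $f=\sum_{l'}f_{l'}+(\text{remainder})$ is the strategy from Sec.~3.2.1, and it is perfectly valid here too, but it becomes redundant once you already have the block factorization for all smooth $f$: you can skip straight to applying Proposition~3.2.2.1 blockwise. The bump-function route would be genuinely needed only if one wanted to avoid or re-derive the block decomposition, which you do not.
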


\bigskip

In the last theme
   `The meaning of a $C^k$-map
    $\varphi: (p^{A\!z},{\Bbb C}^{\oplus r}) \rightarrow {\Bbb R}^n$
    being of algebraic type'
  of Sec.~3.4,
 the algebraic nature of $C^{\infty}({\Bbb R}^n)$ under $\varphi^{\sharp}$
  the $0$-dimension sheaf $\varphi_{\ast}({\Bbb C}^{\oplus r})$  on ${\Bbb R}^n$
  as shown in this subsection will be explained more clearly.

\bigskip

\subsection{Continuous maps from an Azumaya point with a fundamental module
	 to ${\Bbb R}^n$ as a topological manifold}

Let $\varphi:(p,\End_{\Bbb C}({\Bbb C}^{\oplus r}),{\Bbb C}^{\oplus r})
           \rightarrow {\Bbb R}^n$
  be a continuous map from a fixed Azumaya point with a fundamental module to ${\Bbb R}^n$
  defined by a $C^0$-admissible ring-homomorphism
  $\varphi^{\sharp}: C^0({\Bbb R}^n)
      \rightarrow \End_{\Bbb C}({\Bbb C}^{\oplus r})$.
Note that,
  with $C^{\infty}({\Bbb R}^n)$ replaced by $C^0({\Bbb R}^n)$
          and $C^{\infty}$-ring-homomorphisms replaced by $C^0$-ring-homomorphisms,
the part of the otherwise-purely-algebraic discussion in Sec.~3.2 for the $C^{\infty}$ case
   before bringing in the notion of Taylor expansions of functions on ${\Bbb R}^n$
 remains valid for the $C^0$ case.
We recall it in the first theme below and then employ a special feature of $C^0({\Bbb R}^n)$,
     not shared by all other $C^k({\Bbb R}^n)$ with $k>0$,
  to characterize $\varphi^{\sharp}$.

\bigskip

\begin{flushleft}
{\bf  Reduction to the case
           when $\Supp(\varphi_{\ast}({\Bbb C}^{\oplus r}))$ is connected}
\end{flushleft}
Let
  $y^1,\,\cdots\,,\, y^n \in C^0({\Bbb R}^n)$
    be a set of coordinate functions on ${\Bbb R}^n$   and
  $$
     m_{\varphi}^i \;
	 :=\;   \varphi^{\sharp}(y^i)\;  \in\;   \End_{\Bbb C}({\Bbb C}^{\oplus r})\,.
  $$
Then,
 up to a change of basis of ${\Bbb C}^{\oplus r}$, there exists a decomposition
 $$
   {\Bbb C}^{\oplus r}\;
   =\; {\Bbb C}^{\oplus r_1}\oplus \,\cdots\,\oplus {\Bbb C}^{\oplus r_l}
 $$
 such that
  \begin{itemize}
   \item[(1)]
   $\varphi^{\sharp}$ factors accordingly to the composition of ring-homomorphisms
  (over ${\Bbb R}\hookrightarrow{\Bbb C}$ and ${\Bbb C}$ respectively)
   $$
     \xymatrix{
      C^0({\Bbb R}^n)
                   \ar[rr]^-{(\varphi^{\sharp}_1\,,\,\cdots\,,\,\varphi^{\sharp}_l)}
			   \ar @/_4ex/[rrrr]_-{\varphi^{\sharp}}
        &&  \End_{\Bbb C}({\Bbb C}^{\oplus r_1})
                 \oplus\,\cdots\, \oplus \End_{\Bbb C}({\Bbb C}^{\oplus r_l})\;
                        \ar @{^{(}->}[rr]^-{\iota}
		&&  \End_{\Bbb C}({\Bbb C}^{\oplus r})\,;
	  }
	$$
	
   \item[(2)]
    for each $1\le l^{\prime}\le l$,
    there exist $\lambda_{l^{\prime}}^i\in {\Bbb R}$, $1\le i\le n$,
	such that
     $\varphi^{\sharp}_{l^{\prime}}(y^i)$ has the unique eigenvalue
	 $\lambda_{l^{\prime}}^i$ of multiplicity $r_{l^{\prime}}$.
  (I.e.\
       $\left(\varphi^{\sharp}_{l^{\prime}}(y^i)
	      -\lambda_{l^{\prime}}^i\cdot\Id_{r_{l^{\prime}}\times r_{l^{\prime}}}
		  \right)^{r_{l^{\prime}}}=0$.)	
 \end{itemize}
Denote the tuple
 $$
   (\lambda^1_{l^{\prime}},\,\cdots\,,\,\lambda^n_{l^{\prime}})\;
      =:\;  \mbox{\boldmath $\lambda$}_{l^{\prime}}\,.
 $$
This reduces the study of $\varphi^{\sharp}$ to the study of each $\varphi_{l^{\prime}}$.
For the simplicity of notations and without loss of generality, one may assume that $l=1$,
  which corresponds to the case
     when $\Supp(\varphi_{\ast}({\Bbb C}^{\oplus r}))$ is connected,
  and drop the subscript $l^{\prime}$.
In this case,
 the fact that $\varphi^{\sharp}$ is $C^0$-admissible
  implies that
   $\varphi^{\sharp}(f)$ has the unique eigenvalue $f(\lambda^1,\,\cdots\,,\lambda^n)$
  with multiplicity $r$. That is,
 $$
    \left(  \varphi^{\sharp}(f)
	  - f(\lambda^1,\,\cdots\,,\,\lambda^n)\cdot\Id_{r\times r}  \right)^r\;=\;0\,.
 $$
Denote the tuple $(\lambda^1,\,\cdots\,,\,\lambda^n)$ by {\boldmath $\lambda$}.
Then, it follows that
 $\varphi^{\sharp}(f-f(\mbox{\boldmath $\lambda$}))$  is nilpotent
  in $\End_{\Bbb C}({\Bbb C}^{\oplus r})$,
 with $(\varphi^{\sharp}(f-f(\mbox{\boldmath $\lambda$})))^r=0$.

\bigskip
 
\begin{flushleft}
{\bf A special feature of $C^0({\Bbb R}^n)$  and the characterization of $\varphi^{\sharp}$}
\end{flushleft}
\begin{terminology} $[$non-negative function, $s$-root$\,]$. {\rm
 For an $h\in C^0({\Bbb R}^n)$, we say that $h$ is {\it non-negative},   denoted by
  $$
     h\;  \ge \;  0\,,
  $$
  if the value of $h$ at every point of ${\Bbb R}^n$ is greater than or equal to $0$.
 Then
   \begin{itemize}
     \item[$\cdot$]  {\bf [feature of $C^0({\Bbb R}^n)$]}\hspace{1em}
	  For any $h\in C^0({\Bbb R}^n)$ such that $h\ge 0$,
	   its {\it $s$-root} $h^{\frac{1}{s}}$,
	     i.e.\ the non-negative function on ${\Bbb R}^n$ whose $s$-power is $h$,
	   is also in $C^0({\Bbb R}^n)$,
	   for all $s\in {\Bbb Z}_{\ge 1}$.
   \end{itemize}
 Note that this is a special feature of $C^0({\Bbb R}^n)$ not shared with any other
   $C^k({\Bbb R}^n)$ with $k>0$.
}\end{terminology}

\bigskip

Continue the discussion of $\varphi^{\sharp}$.
Let
  $$
    g_{(f,\mbox{\scriptsize\boldmath $\lambda$})}\;
	  :=\;  f-f(\mbox{\boldmath $\lambda$}) \,,  \hspace{2em}
    g_{(f,\mbox{\scriptsize\boldmath $\lambda$})}^+\;
      :=\;  \max\{ g_{(f,\mbox{\scriptsize\boldmath $\lambda$})}  , 0\}\,,
	    \hspace{2em}\mbox{and}\hspace{2em}
   g_{(f,\mbox{\scriptsize\boldmath $\lambda$})}^-\;
      :=\;  \max\{ - g_{(f,\mbox{\scriptsize\boldmath $\lambda$})}, 0\}\,.
  $$
Then
 both
    $g_{(f,\mbox{\scriptsize\boldmath $\lambda$})}^+$,
	$g_{(f,\mbox{\scriptsize\boldmath $\lambda$})}^-\in C^0({\Bbb R}^n)$
	are non-negative, and
  $$
     g_{(f,\mbox{\scriptsize\boldmath $\lambda$})}\;
      =\;  g_{(f,\mbox{\scriptsize\boldmath $\lambda$})}^+
	         - g_{(f,\mbox{\scriptsize\boldmath $\lambda$})}^-
	 \hspace{2em}\mbox{with}\hspace{2em}
     g_{(f,\mbox{\scriptsize\boldmath $\lambda$})}(\mbox{\boldmath $\lambda$})\;
      =\;  g_{(f,\mbox{\scriptsize\boldmath $\lambda$})}^+(\mbox{\boldmath $\lambda$})\;
	  =\;  g_{(f,\mbox{\scriptsize\boldmath $\lambda$})}^-(\mbox{\boldmath $\lambda$})\;
      =\; 0\,.
  $$	
It follows that
  both $\varphi^{\sharp}(g_{(f,\mbox{\scriptsize\boldmath $\lambda$})}^+)$ and
	       $\varphi^{\sharp}(g_{(f,\mbox{\scriptsize\boldmath $\lambda$})}^-)$
   are also nilpotent.		
 Furthermore,
  since
    $g_{(f,\mbox{\scriptsize\boldmath $\lambda$})}^+$ and
	$g_{(f,\mbox{\scriptsize\boldmath $\lambda$})}^-$
	are non-negative,
  both of the $s$-roots
    $(g_{(f,\mbox{\scriptsize\boldmath $\lambda$})}^+)^{\frac{1}{s}}$ and
	$(g_{(f,\mbox{\scriptsize\boldmath $\lambda$})}^-)^{\frac{1}{s}}$,
       $s\in {\Bbb Z}_{\ge 1}$,
	exist in $C^0({\Bbb R}^n)$
 and they satisfy
 $$
    (g_{(f,\mbox{\scriptsize\boldmath $\lambda$})}^+)^{\frac{1}{s}}
	      (\mbox{\boldmath $\lambda$})\;
	  =\;  (g_{(f,\mbox{\scriptsize\boldmath $\lambda$})}^-)^{\frac{1}{s}}
	      (\mbox{\boldmath $\lambda$})\;
      =\; 0\,,
 $$
 for all $s\in{\Bbb Z}_{\ge 1}$.
Thus,
   both  $\varphi^{\sharp}
               ((g_{(f,\mbox{\scriptsize\boldmath $\lambda$})}^+)^{\frac{1}{s}})$  and
	  $\varphi^{\sharp}
	     ((g_{(f,\mbox{\scriptsize\boldmath $\lambda$})}^-)^{\frac{1}{s}})$
    are nilpotent as well, for all $s\in {\Bbb Z}_{\ge 1}$.			

Now let $s\ge r$.
Then
 $$
  \varphi^{\sharp}(  g_{(f,\mbox{\scriptsize\boldmath $\lambda$})}^+)\;
   =\; \left(
          \varphi^{\sharp}(
		    (g_{(f,\mbox{\scriptsize\boldmath $\lambda$})}^+)^{\frac{1}{s}})
         \right)^s\;
   =\;0
     \hspace{1em}\mbox{and}\hspace{1em}
   \varphi^{\sharp}(  g_{(f,\mbox{\scriptsize\boldmath $\lambda$})}^-)\;
   =\; \left(
          \varphi^{\sharp}(
		    (g_{(f,\mbox{\scriptsize\boldmath $\lambda$})}^-)^{\frac{1}{s}})
         \right)^s\;
   =\;0\,.
 $$
It follows that
 $$
    \varphi^{\sharp}(g_{(f,\mbox{\scriptsize\boldmath $\lambda$})})\;=\; 0
 $$
 and
 $$
   \varphi^{\sharp}(f)\;=\;  f(\mbox{\boldmath $\lambda$})\cdot\Id_{r\times r}\,.
 $$
This characterizes $\varphi^{\sharp}$  for the case when $l=1$.

Resuming the setting and notations in the beginning of the current subsection, let
 $$
     Z\; :=\;   \{\,
	     \mbox{\boldmath $\lambda$}_1\,,\,\cdots\,,\,    \mbox{\boldmath $\lambda$}_l\, \}\,.
  $$
Then the characterization of $\varphi^{\sharp}$ in the case when $l\ge 2$ follows immediately
 from that in the case when $l=1$:

\bigskip

\begin{proposition} {\bf [dependence of $\varphi^{\sharp}$ on evaluation at $Z$].}
 Let $f\in C^0({\Bbb R}^n)$.
 Then,
  $$
	  \varphi^{\sharp}(f)\;
	  =\;  \iota \circ
	          (f(\mbox{\boldmath $\lambda$}_1)\cdot\Id_{r_1\times r_1}\,,
	               \cdots\,,\,  f(\mbox{\boldmath $\lambda$}_l)\cdot\Id_{r_l\times r_l} )\,.
 $$	
 In particular,
   $\varphi^{\sharp}:
       C^0({\Bbb R}^n)\rightarrow \End_{\Bbb C}({\Bbb C}^{\oplus r})$	
	is uniquely determined by its values $m_{\varphi}^i := \varphi^{\sharp}(y^i)$
	at any specified tuple of coordinate functions $\mbox{\boldmath $y$}=(y^1,\,\cdots\,,\,y^n)$
	of ${\Bbb R}^n$.
\end{proposition}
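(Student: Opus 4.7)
The plan is to build directly on the structural analysis already carried out in the two preceding themes of Section 3.3 and then invoke the $s$-th root feature of $C^0$. First, I would reduce to the case $l=1$, where $\Supp(\varphi_{\ast}({\Bbb C}^{\oplus r}))$ is connected, using the block decomposition ${\Bbb C}^{\oplus r}={\Bbb C}^{\oplus r_1}\oplus\cdots\oplus{\Bbb C}^{\oplus r_l}$ coming from simultaneous triangulation of the commuting family $\{m_{\varphi}^i\}$ together with the splitting $\varphi^{\sharp}=\iota\circ(\varphi^{\sharp}_1,\ldots,\varphi^{\sharp}_l)$. Once each $\varphi^{\sharp}_{l'}$ is shown to be $f\mapsto f(\mbox{\boldmath $\lambda$}_{l'})\cdot\Id_{r_{l'}\times r_{l'}}$, the formula in the proposition follows by reassembling via $\iota$.

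For the case $l=1$, I would fix $f\in C^0({\Bbb R}^n)$ and set $g:=f-f(\mbox{\boldmath $\lambda$})$, which vanishes at $\mbox{\boldmath $\lambda$}$. Writing $g=g^+-g^-$ with $g^{\pm}:=\max\{\pm g,0\}\in C^0({\Bbb R}^n)_{\ge 0}$ and invoking the feature of $C^0$ singled out in the terminology paragraph, the $s$-th roots $(g^{\pm})^{1/s}$ exist inside $C^0({\Bbb R}^n)$ for every $s\ge 1$ and vanish at $\mbox{\boldmath $\lambda$}$. Because $\varphi^{\sharp}$ is $C^0$-admissible and the unique eigenvalue of $\varphi^{\sharp}(h)$ is $h(\mbox{\boldmath $\lambda$})$ for every $h$ (by the argument already given above the proposition), $\varphi^{\sharp}((g^{\pm})^{1/s})$ is nilpotent of nilpotency index at most $r$. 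Taking $s\ge r$ and using that $\varphi^{\sharp}$ is a ring-homomorphism,
\[
\varphi^{\sharp}(g^{\pm})\;=\;\bigl(\varphi^{\sharp}((g^{\pm})^{1/s})\bigr)^{s}\;=\;0\,,
\]
so $\varphi^{\sharp}(g)=0$ and hence $\varphi^{\sharp}(f)=f(\mbox{\boldmath $\lambda$})\cdot\Id_{r\times r}$.

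The last sentence of the statement then follows because the tuple $\mbox{\boldmath $\lambda$}=(\lambda^1,\ldots,\lambda^n)$ is precisely the tuple of eigenvalues of $m_{\varphi}^1=\varphi^{\sharp}(y^1),\ldots,m_{\varphi}^n=\varphi^{\sharp}(y^n)$, and the same for each $\mbox{\boldmath $\lambda$}_{l'}$; thus every ingredient on the right-hand side of the claimed formula is determined by the $m_{\varphi}^i$. The main obstacle, and the only step that genuinely uses analysis rather than commutative-algebra manipulation, is the closure of $C^0({\Bbb R}^n)$ under $s$-th roots of non-negative functions, which collapses the would-be Taylor-jet dependence seen for $C^{\infty}$ (Proposition~3.2.2.2) down to mere evaluation at the points of $Z$; this is exactly what distinguishes the $C^0$ case from all $C^k$ cases with $k\ge 1$, and it is the reason the argument terminates without any jet-expansion input.
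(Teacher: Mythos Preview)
Your proposal is correct and follows essentially the same route as the paper's own argument in Section~3.3: reduce to the connected case $l=1$ via the block decomposition $\varphi^{\sharp}=\iota\circ(\varphi^{\sharp}_1,\ldots,\varphi^{\sharp}_l)$, then for $g=f-f(\mbox{\boldmath $\lambda$})$ split $g=g^+-g^-$ into non-negative parts, take $s$-th roots inside $C^0({\Bbb R}^n)$, and use nilpotency (from the unique-eigenvalue property) with $s\ge r$ to force $\varphi^{\sharp}(g^{\pm})=0$. Your closing remark about what distinguishes the $C^0$ case from $C^k$, $k\ge 1$, matches the paper's emphasis on the $s$-root feature as the special input not available for higher $k$.
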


\bigskip

In the last theme
   `The meaning of a $C^k$-map
    $\varphi: (p^{A\!z},{\Bbb C}^{\oplus r}) \rightarrow {\Bbb R}^n$
     being of algebraic type'
 of Sec.~3.4,
 the algebraic nature of $C^0({\Bbb R}^n)$ under $\varphi^{\sharp}$
  the $0$-dimension sheaf $\varphi_{\ast}({\Bbb C}^{\oplus r})$  on ${\Bbb R}^n$
  as shown in this subsection will be explained more clearly.

\bigskip

\subsection{$C^k$-maps from an Azumaya point with a fundamental module to ${\Bbb R}^n$
                            as a $C^k$-manifold}

With lessons learned from the $C^{\infty}$ case in Sec.~3.2 and the $C^0$ case in Sec.~3.3,
 we now proceed to understand $\varphi^{\sharp}$ in the general $C^k$ case.
 
\bigskip

\begin{definition} {\bf [$C^k$-map of algebraic type].} {\rm
 A $C^k$-map
      $\varphi: (p, \End_{\Bbb C}({\Bbb C}^{\oplus r}),{\Bbb C}^{\oplus r})
          	             \rightarrow {\Bbb R}^n$,
    defined by a $C^k$-admissible ring-homomorphism
      $\varphi^{\sharp}:
	    C^k({\Bbb R}^n)\rightarrow \End_{\Bbb C}({\Bbb C}^{\oplus r})$
      over ${\Bbb R}\hookrightarrow{\Bbb C}$, 	  	
   is said to be {\it of algebraic type}
  if there exist
      a finite set of points $\{p_1,\,\cdots\,,\, p_l\}\subset {\Bbb R}^n$ for some $l$  and
	  a non-negative integer $d\le k$
   such that
     $\varphi^{\sharp}(f)$ depends only on the Taylor expansion of $f$ at $\{p_1,\,\cdots\,,\, p_l \}$
	  up to order $d$
     for all $f \in C^k({\Bbb R}^n)$.	
} \end{definition}

\bigskip

{From} the investigation in Sec.~3.2 and  Sec.~3.3, using ad hoc method in hand,
we see that
 \begin{itemize}
  \item[$\cdot$]
 {\it Every smooth (i.e. $C^{\infty}$-)map
    $\varphi:  (p,\End_{\Bbb C}({\Bbb C}^{\oplus r}),{\Bbb C}^{\oplus r})
	                     \rightarrow {\Bbb R}^n$
	is of algebraic type.}
	
  \item[$\cdot$]
 {\it Every continuous (i.e.\ $C^0$-)map
    $\varphi:(p, \End_{\Bbb C}({\Bbb C}^{\oplus r}),{\Bbb C}^{\oplus r})
	                  \rightarrow {\Bbb R}^n$
	is of algebraic type.}
 \end{itemize}
It turns that the above two extreme cases can be generalized
 to the general $k$-times-differentiable (i.e.\ $C^k$) case:

\bigskip

\begin{proposition}
{\bf [fundamental: Every differentiable map from Azumaya/matrix point is of algebraic type].}
 Every $C^k$-map
    $\varphi:(p, \End_{\Bbb C}({\Bbb C}^{\oplus r}),{\Bbb C}^{\oplus r})
	                   \rightarrow {\Bbb R}^n$
    from an Azumaya point with a fundamental module to the $C^k$-manifold	${\Bbb R}^n$
  is of algebraic type.
\end{proposition}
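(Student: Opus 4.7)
The plan is to unify and interpolate between the $C^\infty$ argument of Sec.~3.2 and the $C^0$ argument of Sec.~3.3, regarding the general $C^k$ statement as a common refinement of the two.

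First, I would carry out the reduction to connected support exactly as in the two extreme cases. The family $\{\varphi^\sharp(f):f\in C^k({\Bbb R}^n)\}$ is a commuting family of complex $r\times r$ matrices, and the argument of Lemma~3.2.1.1 depends only on the ring-homomorphism property of $\varphi^\sharp$ together with the invertibility in $C^k({\Bbb R}^n)$ of the real-irreducible quadratics $y^2-(\lambda+\bar\lambda)y+\lambda\bar\lambda$ when $\lambda\notin{\Bbb R}$; so every eigenvalue of every $\varphi^\sharp(y^i)$ is real. This produces real eigenvalue tuples $\mbox{\boldmath $\lambda$}_1,\ldots,\mbox{\boldmath $\lambda$}_l\in{\Bbb R}^n$ and a block decomposition ${\Bbb C}^{\oplus r}=\oplus_{l'}{\Bbb C}^{\oplus r_{l'}}$ realizing $\varphi^\sharp=\iota\circ(\varphi^\sharp_1,\ldots,\varphi^\sharp_l)$. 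Since algebraic type is closed under direct summands, it suffices to prove the statement under the assumption that $\Supp(\varphi_{\ast}({\Bbb C}^{\oplus r}))$ is a single point $\mbox{\boldmath $\lambda$}$.

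Second, in the connected case the commutative ${\Bbb R}$-subalgebra $A:={\Bbb R}\langle\Image\varphi^\sharp\rangle\subset\End_{\Bbb C}({\Bbb C}^{\oplus r})$ is a finite-dimensional local Artin ${\Bbb R}$-algebra with residue field ${\Bbb R}$ (evaluation at $\mbox{\boldmath $\lambda$}$) and maximal ideal $\mathfrak{m}=\{\varphi^\sharp(f)-f(\mbox{\boldmath $\lambda$})\cdot\Id:f\in C^k({\Bbb R}^n)\}$. Every element of $\mathfrak{m}$ is a nilpotent matrix of size $r$, so $\mathfrak{m}^r=0$ and $\varphi^\sharp$ annihilates the algebraic $r$-th power $I^r_{\mbox{\scriptsize\boldmath $\lambda$}}$ of $I_{\mbox{\scriptsize\boldmath $\lambda$}}=\{f\in C^k({\Bbb R}^n):f(\mbox{\boldmath $\lambda$})=0\}$. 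When $k\ge n(r-1)+1$, the argument of Proposition~3.2.2.2 transfers wholesale: Lemma~3.2.0.1 with $s=n(r-1)+1$ decomposes $f$ into its Taylor polynomial at $\mbox{\boldmath $\lambda$}$ of degree $s-1$ plus a remainder sum whose monomials $(m_\varphi^{i_1}-\lambda^{i_1}\Id)\cdots(m_\varphi^{i_s}-\lambda^{i_s}\Id)$ must, by pigeonhole, contain some factor $(m_\varphi^i-\lambda^i\Id)$ to the $r$-th power and hence vanish. This gives $d=n(r-1)$ in this regime.

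The main obstacle is the range $k<n(r-1)+1$, in which the integral Taylor remainder coefficients $h_{i_1\cdots i_s}$ have regularity only $C^{k-s}$ (possibly merely continuous, as in Sec.~3.3) and so $\varphi^\sharp(h_{i_1\cdots i_s})$ is not {\it a priori} defined. To address this I would exploit the $C^k$-admissibility of $\varphi^\sharp$ itself in order to tighten the nilpotency bound on $\mathfrak{m}$. A direct check on Weil-algebra-type formulas --- the $k=0$ special case of which, performed in Sec.~3.3, already forces $\mathfrak{m}=0$ --- should yield the refined estimate $\mathfrak{m}^{k+1}=0$, permitting the replacement of $r$ by $r':=\min(r,k+1)$ and reducing the Taylor order needed to $s'-1\le nk$, which is within the $C^k$ regularity of $f$. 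It then remains to verify that every $f\in C^k({\Bbb R}^n)$ with vanishing $k$-jet at $\mbox{\boldmath $\lambda$}$ lies in the algebraic ideal $I^{k+1}_{\mbox{\scriptsize\boldmath $\lambda$}}$, and hence in $\Ker(\varphi^\sharp)$; this last step, the technical heart of the small-$k$ case, requires rewriting the otherwise only-continuous Taylor-remainder factors as algebraic products of $C^k$-elements of $I_{\mbox{\scriptsize\boldmath $\lambda$}}$ via an iterated Hadamard argument combined with the explicit $C^k$-ring operations on $A$. Once this is done, $\varphi^\sharp(f)$ is shown to depend only on the Taylor jet of $f$ at $\{\mbox{\boldmath $\lambda$}_1,\ldots,\mbox{\boldmath $\lambda$}_l\}$ of some order $d\le\min(k,n(r-1))$, proving that $\varphi$ is of algebraic type.
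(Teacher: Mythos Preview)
Your reduction to connected support and your treatment of the regime $k\ge n(r-1)+1$ are correct and match the paper. The genuine gap is in the small-$k$ case: the assertion that every $f\in C^k({\Bbb R}^n)$ with vanishing $k$-jet at $\mbox{\boldmath $\lambda$}$ lies in the algebraic ideal $I_{\mbox{\scriptsize\boldmath $\lambda$}}^{\,k+1}$ is \emph{false}, so no ``iterated Hadamard argument'' can establish it. For a concrete counterexample take $n=1$, $k=1$, and $f(x)=x^2\log|x|$ (with $f(0)=0$): one checks $f\in C^1({\Bbb R})$ and $f(0)=f'(0)=0$, yet any finite sum $\sum g_ih_i$ with $g_i,h_i\in C^1$, $g_i(0)=h_i(0)=0$ satisfies $g_ih_i=x^2\tilde g_i\tilde h_i$ with $\tilde g_i,\tilde h_i$ continuous (Hadamard for $C^1$), hence is $O(x^2)$ near $0$, whereas $f(x)/x^2=\log|x|$ is unbounded. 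Thus $f\notin I_0^{\,2}$. Your companion claim $\mathfrak m^{k+1}=0$ is in fact true, but it is a \emph{consequence} of the proposition rather than an independent input: the $C^0$ root-extraction trick of Sec.~3.3 does not survive to $k\ge 1$, and your ``direct check on Weil-algebra-type formulas'' is not a proof.

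The paper circumvents this by never attempting to factor $f$ inside $C^k({\Bbb R}^n)$. Instead it simultaneously lower-triangularizes the commuting family $\{\varphi^\sharp(f)\}$ and analyses each sub-diagonal entry $a_{i,i-s}:C^k({\Bbb R}^n)\to{\Bbb C}$ as a linear functional. The ring-homomorphism property yields an inductive higher-order Leibniz rule for $a_{i,i-s}(fg)$ in terms of entries on shallower sub-diagonals; solving this recursion on the subring $C^l({\Bbb R}^n)\subset C^k({\Bbb R}^n)$ with $l\ge k+s$ (where the Taylor remainder of Lemma~3.4.3 is available) identifies $a_{i,i-s}$ with an explicit constant-coefficient differential operator of order $\le s$ evaluated at $\mbox{\boldmath $\lambda$}$, and a difference-quotient argument then extends this identification to all of $C^k$. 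The vanishing $\varphi^\sharp(h)=0$ for $h$ with trivial $\min\{r-1,k\}$-jet (Lemma~3.4.4) follows directly, without any ideal-membership claim.
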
	

\bigskip

\noindent
This gives a structural description of maps of class $C^k$ in question.
This subsection is mainly devoted to the proof of this fundamental proposition,
 using the more robust link between equations on functionals on $C^k({\Bbb R}^n)$
   associated to $\varphi^{\sharp}$
 and differential operators acting on $C^k({\Bbb R}^n)$.

\bigskip

\begin{flushleft}
{\bf  Proof of Proposition~3.4.2 [fundamental]}
\end{flushleft}
Let
  $\varphi^{\sharp}:C^k({\Bbb R}^n)
     \rightarrow \End_{\Bbb C}({\Bbb C}^{\oplus r})$
    be the underlying $C^k$-admissible ring-homomorphism over ${\Bbb R}\hookrightarrow {\Bbb C}$
 that defines $\varphi$.

\bigskip

\noindent
{\it $(a)$ Reduction to the case
                        when $\Supp(\varphi_{\ast}({\Bbb C}^{\oplus r}))$ is connected}

\medskip

\noindent
Exactly the same as in the $C^{\infty}$ case and the $C^0$ case,
let
  $y^1,\,\cdots\,,\, y^n \in C^k({\Bbb R}^n)$
    be a set of coordinate functions on ${\Bbb R}^n$   and
  $$
     m_{\varphi}^i \;
	 :=\;   \varphi^{\sharp}(y^i)\;  \in\;   \End_{\Bbb C}({\Bbb C}^{\oplus r})\,.
  $$
Then,
 up to a change of basis of ${\Bbb C}^{\oplus r}$, there exists a decomposition
 $$
   {\Bbb C}^{\oplus r}\;
    =\; {\Bbb C}^{\oplus r_1}\oplus \,\cdots\,\oplus {\Bbb C}^{\oplus r_l}
 $$
 such that
  \begin{itemize}
   \item[(1)]
   $\varphi^{\sharp}$ factors accordingly to the composition of ring-homomorphisms
  (over ${\Bbb R}\hookrightarrow{\Bbb C}$ and ${\Bbb C}$ respectively)
   $$
     \xymatrix{
      C^k({\Bbb R}^n)
                   \ar[rr]^-{(\varphi^{\sharp}_1\,,\,\cdots\,,\,\varphi^{\sharp}_l)}
			   \ar @/_4ex/[rrrr]_-{\varphi^{\sharp}}
        &&  \End_{\Bbb C}({\Bbb C}^{r_1})
                 \oplus\,\cdots\, \oplus \End_{\Bbb C}({\Bbb C}^{r_l})\;
                        \ar @{^{(}->}[rr]^-{\iota}
		&&  \End_{\Bbb C}({\Bbb C}^{\oplus r})\,;
	  }
	$$
	
   \item[(2)]
    for each $1\le l^{\prime}\le l$,
    there exist $\lambda_{l^{\prime}}^i\in {\Bbb R}$, $1\le i\le n$,
	such that
     $\varphi^{\sharp}_{l^{\prime}}(y^i)$ has the unique eigenvalue
	 $\lambda_{l^{\prime}}^i$ of multiplicity $r_{l^{\prime}}$.
  (I.e.\
       $\left(\varphi^{\sharp}_{l^{\prime}}(y^i)
	      -\lambda_{l^{\prime}}^i\cdot\Id_{r_{l^{\prime}}\times r_{l^{\prime}}}
		  \right)^{r_{l^{\prime}}}=0$.)	
 \end{itemize}
 
 Denote the tuple
 $$
   (\lambda^1_{l^{\prime}},\,\cdots\,,\,\lambda^n_{l^{\prime}})\;
      =:\;  \mbox{\boldmath $\lambda$}_{l^{\prime}}\,.
 $$
This reduces the study of $\varphi^{\sharp}$ to the study of each $\varphi_{l^{\prime}}$.
For the simplicity of notations and without loss of generality, one may assume that $l=1$,
  which corresponds to the case
     when $\Supp(\varphi_{\ast}({\Bbb C}^{\oplus r}))$ is connected,
  and drop the subscript $l^{\prime}$.
In this case,
 the fact that $\varphi^{\sharp}$ is $C^k$-admissible
  implies that
   $\varphi^{\sharp}(f)$ has the unique eigenvalue $f(\lambda^1,\,\cdots\,,\lambda^n)$
  with multiplicity $r$. That is,
 $$
    \left(  \varphi^{\sharp}(f)
	  - f(\lambda^1,\,\cdots\,,\,\lambda^n)\cdot\Id_{r\times r}  \right)^r\;=\;0\,.
 $$
Denote the tuple $(\lambda^1,\,\cdots\,,\,\lambda^n)$ by {\boldmath $\lambda$}.
Then, it follows that
 $\varphi^{\sharp}(f-f(\mbox{\boldmath $\lambda$}))$  is nilpotent
  in $\End_{\Bbb C}({\Bbb C}^{\oplus r})$,
 with $(\varphi^{\sharp}(f-f(\mbox{\boldmath $\lambda$})))^r=0$.
For notational simplicity, by taking $y^i-\lambda^i$, $i=1,\,\ldots\,,\,n$,
  as the new coordinate functions (still denoted by $y^i$) on ${\Bbb R}^n$,
 one may assume that $\mbox{\boldmath $\lambda$}=\mbox{\boldmath $0$}=(0,\,\cdots\,,\,0)$.
Recall also that
  in the derivation of the eigenvalues of $\varphi^{\sharp}(f)$,
  we've rendered all $\varphi^{\sharp}(f)$ lower triangular, i.e.\
  $$
   \varphi^{\sharp}(f)\;
    =\; \left(
	        \begin{array}{ccccc}
			  f({\mathbf 0})\\
			  a_{21}(f)   & f({\mathbf 0})        &&& \hspace{-6ex} \mbox{\Large $0$}\\
			  \cdot                    & \cdot     & \cdot    \\
			  \cdot                    & \cdot     & \cdot   & \cdot   \\
			  a_{r1}(f)   & \cdot     & \cdot   & a_{r,r-1}(f)    & f({\mathbf 0})
			\end{array}
          \right)\,.
  $$

 \bigskip
  
 \noindent
 {\it $(b)$
 {From} ring-homomorphism
    $C^l({\Bbb R}^n)\stackrel{\varphi^{\sharp}}{\longrightarrow} T^-_r({\Bbb C})$
  to differential operators on $C^l({\Bbb R}^n)$}
 
 \medskip
 
 \noindent
 Being a ring-homomorphism,
  $$
   \varphi^{\sharp}(\alpha f+\beta g)\;=\; \alpha \varphi^{\sharp}(f)+ \beta \varphi^{\sharp}(g)\,,
   \hspace{2em}
   \varphi^{\sharp}(fg)\;=\; \varphi^{\sharp}(f)\,\varphi^{\sharp}(g)
  $$
   for $\alpha$, $\beta\in {\Bbb R}$ and $f$, $g\in C^k({\Bbb R}^n)$;
   that is,
  $$
    \begin{array}{lcl}
	  a_{ij}(\alpha f+\beta g)\;=\;  \alpha a_{ij}(f)+\beta a_{ij}(g)
	      &\hspace{1em}& \mbox{({\it linearity})} \\[1.8ex]
	  \begin{array}{l}	
	    \hspace{-1ex}a_{ij}(fg)\\[.2ex]
	     \hspace{1em}=\;  a_{ij}(f)g({\mathbf 0})\,+\, a_{i,j+1}(f)a_{j+1,j}(g)\\[.2ex]
		 \hspace{3em}   +\,\cdots\,  +\, a_{i,j+s}(f)a_{j+s,j}(g)\,+\,\cdots \\[.2ex]
		 \hspace{3em}   +\, a_{i,i-1}(f)a_{i-1,j}(g)+ f({\mathbf 0})a_{ij}(g)
	  \end{array}	
	      && \mbox{({\it inductive higher-order Leibniz rule})}
    \end{array}
  $$
  for $1\le j<i\le r$.
 In particular, for entries $a_{i,i-1}$, $2\le i\le r$,   in the first lower sub-diagonal,
  $$
    a_{i,i-1}(fg)\;=\; a_{i,i-1}(f)g({\mathbf 0})\,+\, f({\mathbf 0})a_{i,i-1}(g)\,,	
  $$
  which is the ordinary Leibniz rule and
  determines each linear functional $a_{i,i-1}:C^k({\Bbb R}^n)\rightarrow {\Bbb C}$
   as a derivation with complex coefficients on $C^k({\Bbb R}^n)$,
   evaluated at ${\mathbf 0}\in{\Bbb R}^n$.
 For entries $a_{i,i-s}$, $s+1\le i\le r$, in the $s$-th lower sub-diagonal,
 the above inductive product rule for $a_{i,i-s}(fg)$
  determines inductively
   the linear functional $a_{i,i-s}:C^k({\Bbb R}^n)\rightarrow {\Bbb C}$ on $C^k({\Bbb R}^n)$
   as a differential operator of order $i$ with complex coefficients on $C^k({\Bbb R}^n)$,
   evaluated at ${\mathbf 0}\in{\Bbb R}^n$,
 if it is not the zero-functional.
   
{To} see explicitly what these higher-order differential operators are,   	
  let $s\ge 2$  and
  consider the restriction of $\varphi^{\sharp}$ to the $C^l$-subring
   $C^l({\Bbb R}^n)$ in $C^k({\Bbb R}^n)$, now regarded as a $C^l$-ring, for all $l\ge k+s$.
  
 \bigskip
 
 \begin{lemma} {\bf [Taylor expansion with remainder term].}
  With $l$ and $s$ as above,
   let $f\in C^l({\Bbb R}^n)$,
   $\mbox{\boldmath $a$}=(a^1,\,\cdots\,,\,a^n)$ be a point on ${\Bbb R}^n$,  and
   $\mbox{\boldmath $y$}=(y^1,\,\cdots\,,\,y^n)$ be the tuple of coordinate-functions for ${\Bbb R}^n$.
  Then there exist $h_{i_1\,\cdots\,i_s}\in C^{l-s}({\Bbb R}^n)$,
      $1\le i_1,\,\cdots\,, i_s \le n$,
    with 	
	 $$
	    h_{i_1\,\cdots\, i_s}(\mbox{\boldmath $a$})
	            = \frac{\partial^s\!f}{\partial y^{i_1}\,\cdots\,\partial y^{i_s}}
				      (\mbox{\boldmath $a$})\,,
		 \hspace{2em}\mbox{for all $\;\;1\le i_1,\,\cdots\,,\, i_s\le n$}\,,				
	 $$
    such that
    \begin{eqnarray*}
	 \lefteqn{
      f(\mbox{\boldmath $y$})\; =\; \sum_{s^{\prime}=0}^{s-1}\;
	         \sum_{1\le i_1,\,\cdots\,, i_{s^{\prime}} \le n}\;
			  \frac{1}{s^{\prime}!}\,
	           \frac{\partial^{s^{\prime}}\!\!f}
			     {\partial y^{i_1}\cdots \partial y^{i_{s^{\prime}}}}(\mbox{\boldmath $a$})\,
				     (y^{i_1}-a^{i_1})\cdots (y^{i_{s^{\prime}}}- a^{i_{s^{\prime}}}) }
					                                                                                              \\[.6ex]
      && \hspace{2em}					
            +\; \frac{1}{s!}\,
	  		         \sum_{1\le i_1,\,\cdots\,, i_s\le n}
			           h_{i_1\,\cdots\,i_s}({\mbox{\boldmath $y$}})\,
					       (y^{i_1}-a^{i_1})\cdots (y^{i_s}-a^{i_s})\,.			
			\hspace{8em}	
   \end{eqnarray*}
  Explicitly, $h_{i_1\,\cdots\,i_s}$ can be chosen to be the function defined through $f$ by
    $$
	  h_{i_1\,\cdots\,i_s}(\mbox{\boldmath $y$})\;
	  =\;
	    \left(\prod_{s^{\prime}=1}^ss^{\prime}!\right)
	    \int_0^1\cdots\int_0^1
		   \frac{\partial^s\!f}{\partial y^{i_1}\cdots\partial y^{i_s}}
			    \left(\rule{0em}{1em}
				   t_1\cdots t_s\,
				   (\mbox{\boldmath $y$}\,-\,\mbox{\boldmath $a$})\,+\, \mbox{\boldmath $a$}
		        \right)\,
			    \left(\prod_{i=1}^{s-1}t_i^{s-i}\right)dt_1\,\cdots\,\,dt_s\,.
	$$	
 \end{lemma}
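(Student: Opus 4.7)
The plan is to imitate the iteration of the Fundamental Theorem of Calculus sketched in Lemma 3.2.0.1, now carefully bookkeeping the differentiability class. First I would fix $\mathbf{y}\in\mathbb{R}^n$ and, for each $t\in[0,1]$, consider the one-variable auxiliary function
$$
 g(t)\;:=\;f\bigl(t(\mathbf{y}-\mathbf{a})+\mathbf{a}\bigr)\,,
$$
which is of class $C^l$ in $t$. Writing $f(\mathbf{y})-f(\mathbf{a})=\int_0^1 g'(t)\,dt$ and expanding $g'(t)$ by the chain rule produces the $s=1$ statement with
$$
 h_i(\mathbf{y})\;=\;\int_0^1\frac{\partial f}{\partial y^i}\bigl(t(\mathbf{y}-\mathbf{a})+\mathbf{a}\bigr)\,dt\,,
$$
which is $C^{l-1}$ on $\mathbb{R}^n$ by differentiation under the integral sign, and satisfies $h_i(\mathbf{a})=\partial f/\partial y^i(\mathbf{a})$.

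Next I would iterate the same device $s$ times. At each stage one applies the identity
$$
 h(\mathbf{y})\;=\;h(\mathbf{a})\,+\,\sum_i\widetilde{h}_i(\mathbf{y})(y^i-a^i)
$$
(with the Fundamental Theorem of Calculus and chain rule as above) to the remainder function produced in the previous stage, and then expands $h(\mathbf{a})$ as its Taylor polynomial at $\mathbf{a}$ using the iterated partial derivatives of $f$. After $s$ iterations one arrives at the claimed expansion, and unwinding the nested integrations gives precisely the closed form
$$
 h_{i_1\cdots i_s}(\mathbf{y})\;=\;\Bigl(\prod_{s'=1}^{s}s'!\Bigr)\int_0^1\!\!\cdots\!\int_0^1\frac{\partial^s f}{\partial y^{i_1}\cdots\partial y^{i_s}}\bigl(t_1\cdots t_s(\mathbf{y}-\mathbf{a})+\mathbf{a}\bigr)\prod_{i=1}^{s-1}t_i^{s-i}\,dt_1\cdots dt_s\,.
$$
The combinatorial factors $\prod s'!$ and the powers $t_i^{s-i}$ are the ones needed to convert the symmetric iterated integral into the usual Taylor coefficient $1/s!$.

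The remaining two verifications are routine: substituting $\mathbf{y}=\mathbf{a}$ into the closed-form integral collapses the integrand to the constant $\partial^s f/\partial y^{i_1}\cdots\partial y^{i_s}(\mathbf{a})$ and the $t_i$-integrals evaluate to $1/\prod s'!$, yielding $h_{i_1\cdots i_s}(\mathbf{a})=\partial^s f/\partial y^{i_1}\cdots\partial y^{i_s}(\mathbf{a})$; and the regularity claim $h_{i_1\cdots i_s}\in C^{l-s}(\mathbb{R}^n)$ follows from standard differentiation-under-the-integral-sign, since $\partial^s f/\partial y^{i_1}\cdots\partial y^{i_s}$ is $C^{l-s}$ and the substitution $\mathbf{y}\mapsto t_1\cdots t_s(\mathbf{y}-\mathbf{a})+\mathbf{a}$ is smooth. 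The only mildly delicate point -- and the main place one must exercise care -- is the combinatorial matching of factorial and $t_i^{s-i}$ weights at each inductive step; I would carry that out by a clean induction on $s$, so that the passage from the $(s-1)$-st statement to the $s$-th statement is reduced to the base identity $g(1)-g(0)=\int_0^1 g'(t)\,dt$ applied to the remainder coefficient, with the new factor $s!$ absorbing the chain-rule Jacobian and a new $t^{s-1}$ appearing when the next integration variable is rescaled. Once this bookkeeping is done the lemma follows.
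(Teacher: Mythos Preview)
Your approach is essentially identical to the paper's: the paper simply says ``this follows from the same proof as that for Lemma~3.2.0.1,'' and that proof in turn is just ``an iteration of the Fundamental Theorem of Calculus and the chain rule, together with an adjustment of the factorial factors along the way,'' displaying the same base identity $g(\mathbf{y})-g(\mathbf{a})=\int_0^1\frac{d}{dt}g(t(\mathbf{y}-\mathbf{a})+\mathbf{a})\,dt$ that you start from. Your write-up supplies considerably more detail on the bookkeeping and the regularity check than the paper does, but the strategy is the same.
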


 \begin{proof}
 This follows  from the same proof as that for Lemma~3.2.0.1 in the beginning of Sec.~3.2.
 
 \end{proof}
 
\bigskip
 
With $\mbox{\boldmath $a$}=\mbox{\boldmath $0$}$ in our disussion,
 one has thus
 \begin{eqnarray*}
  \lefteqn{a_{i,i-s}(f)}\\[1.2ex]
   && =\; \sum_{s^{\prime}=0}^{s-1}\;
	         \sum_{1\le i_1,\,\cdots\,, i_{s^{\prime}} \le n} \;
			  \frac{1}{s^{\prime}!}\,
	           \frac{\partial^{s^{\prime}}\!\!f}
			     {\partial y^{i_1}\cdots \partial y^{i_{s^{\prime}}}}({\mathbf 0})\,
				     a_{i,i-s}(y^{i_1}\cdots y^{i_{s^{\prime}}})                                           \\
       && \hspace{1.6em}
      	     +\; \frac{1}{s!}\,\sum_{1\le i_1,\,\cdots\,, i_s\le n}
		        	            a_{i,i-s}(  h_{i_1\,\cdots\,i_s}\, y^{i_1}\cdots y^{i_s})\,.
 \end{eqnarray*}
 Consider now terms in the last summation in the above expression  and
  their expansion by the inductive higher-order Leibniz rule on a product,
  \begin{eqnarray*}
   \lefteqn{
    a_{i,i-s}(h_{i_1\,\cdots\,i_s}\, y^{i_1}\,\cdots\,y^{i_s})}\\
	&&
	  =\;   a_{i,i-s}(h_{i_1\,\cdots\,i_s})
	          \cdot (y^{i_1}\,\cdots\,y^{i_s})
			  |_{\mbox{\scriptsize\boldmath $y$}=\mbox{\scriptsize\boldmath $0$}} \;
			 +\;  \sum_{s^{\prime}=1}^{s-1}
			           a_{i,i-s+s^{\prime}}(h_{i_1\,\cdots\,i_s})\,
					   a_{i-s+s^{\prime},i-s}(y^{i_1}\,\cdots\,y^{i_s})  \\
    &&	\hspace{2em}				
			+\; 	h_{i_1\,\cdots\,i_s}(\mbox{\boldmath $0$})
			          \cdot a_{i,i-s}(y^{i_1}\,\cdots\,y^{i_s})   \\
     &&  =\;  \frac{\partial^s\!f}{\partial y^{i_1}\,\cdots\,\partial y^{i_s}}
				      (\mbox{\boldmath $0$})\cdot   a_{i,i-s}(y^{i_1}\,\cdots\,y^{i_s})  \;
					 +\;   \sum_{s^{\prime}=1}^{s-1}
			                 a_{i,i-s+s^{\prime}}(h_{i_1\,\cdots\,i_s})\,
					         a_{i-s+s^{\prime},i-s}(y^{i_1}\,\cdots\,y^{i_s})\,.
  \end{eqnarray*}
 By induction,
  $a_{i,i-s+s^{\prime}}$, $1\le s^{\prime}\le s-1$, are differential operators of order $s-s^{\prime}$,
   evaluated at ${\mathbf 0}$.
 It follows that
  $$
    a_{i-s+s^{\prime},i-s}(y^{i_1}\,\cdots\,y^{i_s})\;=\;0
  $$
  and thus
  $$
     a_{i,i-s}(h_{i_1\,\cdots\,i_s}\, y^{i_1}\,\cdots\,y^{i_s})\;
	  =\;    \frac{\partial^s\!f}{\partial y^{i_1}\,\cdots\,\partial y^{i_s}}
				      (\mbox{\boldmath $0$})\cdot   a_{i,i-s}(y^{i_1}\,\cdots\,y^{i_s})\,.
  $$
      
 Consequently,
  $$
   a_{i,i-s}(f)\;
   =\; \sum_{s^{\prime}=0}^s\;
	         \sum_{1\le i_1,\,\cdots\,, i_{s^{\prime}} \le n} \;
			  \frac{1}{s^{\prime}!}\,
	           \frac{\partial^{s^{\prime}}\!\!f}
			     {\partial y^{i_1}\cdots \partial y^{i_{s^{\prime}}}}({\mathbf 0})\,
				     a_{i,i-s}(y^{i_1}\cdots y^{i_{s^{\prime}}})
  $$
  and
  \begin{eqnarray*}
    a_{i,i-s}
     & =  &   \sum_{s^{\prime}=0}^s\;
	                   \sum_{1\le i_1,\,\cdots\,, i_{s^{\prime}} \le n} \;
			            \frac{1}{s^{\prime}!}\,
			  	        a_{i,i-s}(y^{i_1}\cdots y^{i_{s^{\prime}}})\,
	                    \frac{\partial^{s^{\prime}}}
		       	         {\partial y^{i_1}\cdots \partial y^{i_{s^{\prime}}}}\,,
                      \hspace{1em}\mbox{evaluated at ${\mathbf 0}$}                            \\
	 & =  &   \sum_{s^{\prime}=1}^s\;
	                   \sum_{1\le i_1,\,\cdots\,, i_{s^{\prime}} \le n} \;
			            \frac{1}{s^{\prime}!}\,
			  	        a_{i,i-s}(y^{i_1}\cdots y^{i_{s^{\prime}}})\,
	                    \frac{\partial^{s^{\prime}}}
		       	         {\partial y^{i_1}\cdots \partial y^{i_{s^{\prime}}}}\,,
                      \hspace{1em}\mbox{evaluated at ${\mathbf 0}$}\,.                      				  
  \end{eqnarray*}	
 Note that such realization of $a_{i,i-s}$  as a differential operator, evaluated at ${\mathbf 0}$,
   is compatible with the inclusions
   $C^{l^{\prime}}({\Bbb R}^n)\subset C^l({\Bbb R}^n)$ for $l^{\prime}> l\ge k+s$.
 Furthermore,
  though we restrict to $C^l({\Bbb R}^n)$, $l\ge k+s$, to solve for
    the system of equations on the functionals  $a_{ij}$ on $C^k({\Bbb R}^n)$
	  for $2\le i\le r$ and $i-s\le j\le i-1$,
  the uniqueness of the solution for $a_{ij}$ associated to $\varphi^{\sharp}$  and
  the nature of differential operators as a limit of a composition of a rational combination of difference operators  and
  the fact that these difference operators are already defined on
     $C^0({\Bbb R}^n)$, which contains $C^k({\Bbb R}^n)$,
   imply that the differential-operator form of $a_{ij}$, as a functional, prolongs to
    on the whole $C^k({\Bbb R}^n)$ if $a_{ij}$ is not the zero-functional.
 This proves the following lemma:
 
 \bigskip
 
 \begin{lemma}
 {\bf [vanishing].}
  Let $h\in C^k({\Bbb R}^n)$ be a function that satisfies the vanishing conditions
    $$
       \frac{\partial^s h}
			     {\partial y^{i_1}\cdots \partial y^{i_s}}({\mathbf 0})\;=\; 0\,,
    $$	
    for all $\, 0\le s \le \min\{r-1,k\}$,    $1\le i_1\,, \cdots\,,\, i_s \le n$.
  Then 	
   $$
     \varphi^{\sharp}(h)\;=\; 0\,.
   $$
 \end{lemma}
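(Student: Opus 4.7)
The plan is to show that $\varphi^{\sharp}(h)$, which under the reductions preceding the lemma has been rendered lower triangular, has every entry equal to zero. Since the diagonal entries of $\varphi^{\sharp}(h)$ all equal $h({\mathbf 0})$, which vanishes by the $s=0$ case of the hypothesis, $\varphi^{\sharp}(h)$ is strictly lower triangular, and it remains to show that for each $1\le s\le r-1$ and $s+1\le i\le r$, the subdiagonal entry $a_{i,i-s}(h)$ is zero.

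Recall from the derivation preceding the lemma that, when restricted to the subspace $C^{k+s}({\Bbb R}^n)\subset C^k({\Bbb R}^n)$, the functional $a_{i,i-s}$ takes the explicit form
$$a_{i,i-s}(f)\;=\;\sum_{s'=1}^{s}\sum_{1\le i_1,\ldots,i_{s'}\le n}\frac{1}{s'!}\,a_{i,i-s}(y^{i_1}\cdots y^{i_{s'}})\,\frac{\partial^{s'}\!f}{\partial y^{i_1}\cdots\partial y^{i_{s'}}}({\mathbf 0})\,,$$
a linear combination of partial derivatives at ${\mathbf 0}$ of order $\le s$. When $s\le k$, every order $s'$ in the sum satisfies $s'\le s\le k$, so this formula extends to a well-defined functional on all of $C^k({\Bbb R}^n)$; by the uniqueness of the solution to the system of Leibniz-type equations determining the $a_{ij}$, this extension must coincide with $a_{i,i-s}|_{C^k}$. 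Applied to $h$, every partial derivative appearing has order $s'\le s\le\min\{r-1,k\}$ and therefore vanishes at ${\mathbf 0}$ by hypothesis, so $a_{i,i-s}(h)=0$ in this case.

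The remaining case is $s>k$, which arises only when $k<r-1$. Here I plan to show $a_{i,i-s}$ is identically zero on $C^k({\Bbb R}^n)$. The key observation is that if any coefficient $c_{i_1\cdots i_{s'}}^{(s)}:=a_{i,i-s}(y^{i_1}\cdots y^{i_{s'}})$ with $s'>k$ were nonzero, then the differential-operator form on $C^{k+s}$ could not extend to a well-defined functional on $C^k$, because it would require evaluating partial derivatives of order $s'>k$ of generic $C^k$-functions at ${\mathbf 0}$, which do not exist. Following the remark preceding the lemma, make this rigorous by expressing each partial derivative at ${\mathbf 0}$ as a limit of rational combinations of finite-difference operators (which are defined on $C^0\supset C^k$) and noting that $C^k$-admissibility of $\varphi^{\sharp}$ compels $a_{i,i-s}$ to respect such limits where they exist; the failure of the limit to exist on generic $C^k$-functions for orders exceeding $k$ forces the top-order coefficients to vanish. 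Iterating downward in $s'$ via the Leibniz system and uniqueness of the $a_{ij}$ then eliminates the remaining coefficients, yielding $a_{i,i-s}\equiv 0$ and in particular $a_{i,i-s}(h)=0$.

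The main obstacle is precisely the case $s>k$: rigorously establishing that $a_{i,i-s}$ with $s>k$ must be the zero functional on $C^k({\Bbb R}^n)$ requires pinning down exactly what continuity-like property the $C^k$-admissibility of $\varphi^{\sharp}$ imposes on the subdiagonal functionals. The authors' suggestion to view differential operators at a point as limits of finite-difference operators is the intended tool; carrying it out will involve exhibiting explicit approximation sequences in $C^{k+s}\subset C^k$ and using the algebraic identities forced by $\varphi^{\sharp}$ being a $C^k$-ring homomorphism (not merely a ring homomorphism) to rule out nonvanishing higher-order coefficients.
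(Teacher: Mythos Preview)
Your approach is essentially the paper's: reduce to showing each subdiagonal functional $a_{i,i-s}(h)$ vanishes, and use the differential-operator form derived on $C^l({\Bbb R}^n)$ for $l\ge k+s$ together with the finite-difference/prolongation argument to pass back to $C^k({\Bbb R}^n)$. For $s\le k$ your argument is exactly right and matches the paper.

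For $s>k$ you overshoot. You aim to prove $a_{i,i-s}\equiv 0$ on $C^k({\Bbb R}^n)$, but this is neither what the paper proves nor what the lemma needs. The paper's claim (and the correct target) is only that the differential-operator form prolongs to $C^k({\Bbb R}^n)$ when $a_{i,i-s}$ is not the zero functional --- which forces the coefficients $a_{i,i-s}(y^{i_1}\cdots y^{i_{s'}})$ with $s'>k$ to vanish, leaving an operator of effective order $\le k$. Since in this case $\min\{r-1,k\}=k$, the hypothesis already kills all derivatives of $h$ of order $\le k$ at ${\mathbf 0}$, so $a_{i,i-s}(h)=0$ follows directly. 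Your ``iterating downward via the Leibniz system'' step to eliminate the lower-order coefficients is not justified (for instance with $n=1$, $k=1$, $r=3$, the coefficient $a_{31}(y)$ is not forced to zero by the Leibniz identities alone) and, more importantly, is unnecessary: you only need $a_{i,i-s}(h)=0$, not $a_{i,i-s}\equiv 0$. Drop that step and the argument goes through as in the paper.
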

 
 \bigskip
 
 As a consequence,
 
 \bigskip
 
 \begin{lemma}
 {\bf [dependence of $\varphi^{\sharp}$ on jet at {\boldmath $0$} of order $\min\{r-1,k\}$].}
  For $f\in C^k({\Bbb R}^n)$,
    \begin{eqnarray*}
      \varphi^{\sharp}(f)
        &  = &  \sum_{s=0}^{\min\{r-1, k\}}\;
	                     \sum_{1\le i_1,\,\cdots\,, i_s \le n} \;
			              \frac{1}{s!}\,
			  	           \varphi^{\sharp}(y^{i_1}\cdots y^{i_s} )\,
	                       \frac{\partial^s\!f}
		       	            {\partial y_{i_1}\cdots \partial y_{i_s}}
						              (\mbox{\boldmath $0$})  \\
	    & = &  \hspace{1.4em}
		                 \sum_{s=0}^k\;
	                     \sum_{1\le i_1,\,\cdots\,, i_s \le n} \;
			              \frac{1}{s!}\,
			  	           \varphi^{\sharp}(y^{i_1}\cdots y^{i_s} )\,
	                       \frac{\partial^s\!f}
		       	            {\partial y_{i_1}\cdots \partial y_{i_s}}
						              (\mbox{\boldmath $0$})  \,.
    \end{eqnarray*} 								
 \end{lemma}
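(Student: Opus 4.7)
My plan is to derive Lemma 3.4.5 as a direct corollary of the vanishing criterion Lemma 3.4.4 by Taylor-expanding $f$ at $\mathbf{0}$, using that the preceding reductions have placed us in the case $\mbox{\boldmath $\lambda$}=\mathbf{0}$ and that $\varphi^{\sharp}$, being a ring-homomorphism over ${\Bbb R}\hookrightarrow{\Bbb C}$, sends a real constant $c$ to $c\cdot \Id$.

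Set $d:=\min\{r-1,k\}$. For $f\in C^k({\Bbb R}^n)$, I would split
$$f\;=\;P_f\,+\,R_f,$$
where $P_f$ is the degree-$d$ Taylor polynomial of $f$ at $\mathbf{0}$,
$$P_f(\mathbf{y})\;:=\;\sum_{s=0}^{d}\frac{1}{s!}\sum_{1\le i_1,\ldots,i_s\le n}
  \frac{\partial^{s}f}{\partial y^{i_1}\cdots\partial y^{i_s}}(\mathbf{0})\,
  y^{i_1}\cdots y^{i_s},$$
and $R_f:=f-P_f\in C^k({\Bbb R}^n)$. By the defining property of $P_f$, every partial derivative of $R_f$ of order $\le d$ vanishes at $\mathbf{0}$, so Lemma 3.4.4 yields $\varphi^{\sharp}(R_f)=0$. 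Applying $\varphi^{\sharp}$ to $P_f$ termwise using ${\Bbb R}$-linearity (and the fact that its Taylor coefficients are real scalars, which are mapped to scalar multiples of $\Id$) then produces
$$\varphi^{\sharp}(f)\;=\;\sum_{s=0}^{d}\frac{1}{s!}\sum_{1\le i_1,\ldots,i_s\le n}
   \varphi^{\sharp}(y^{i_1}\cdots y^{i_s})\,
   \frac{\partial^{s}f}{\partial y^{i_1}\cdots\partial y^{i_s}}(\mathbf{0}),$$
which is the first asserted identity.

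To pass to the second form, in which the outer sum runs up to $s=k$, I would observe that in our reduction each $\varphi^{\sharp}(y^i)=m_{\varphi}^{i}$ has unique eigenvalue $\lambda^i=0$ and all $\varphi^{\sharp}(f)$ have been simultaneously lower-triangulated; hence every $\varphi^{\sharp}(y^i)$ is \emph{strictly} lower triangular $r\times r$. Using the flag $F^j:={\Bbb C}\langle e_1,\ldots,e_j\rangle$, any product of $r$ or more strictly lower triangular matrices annihilates $F^r={\Bbb C}^{\oplus r}$; in particular $\varphi^{\sharp}(y^{i_1}\cdots y^{i_s})=\varphi^{\sharp}(y^{i_1})\cdots\varphi^{\sharp}(y^{i_s})=0$ for $s\ge r$, so all terms with $d<s\le k$ drop out silently. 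The only point demanding care is that Lemma 3.4.4 be invoked at the precise cutoff $d$ on $C^k$-regular input (rather than the $C^{k+s}$-input used in its derivation via Lemma 3.4.3); this is already secured in the paragraph preceding its statement, where each $a_{i,i-s}$ is identified as a differential operator of fixed order $s\le r-1$ realizable through difference quotients on $C^0({\Bbb R}^n)$, so its uniqueness and its vanishing on jet-matching inputs both extend without change from the dense subring $C^{k+s}({\Bbb R}^n)$ to the whole of $C^k({\Bbb R}^n)$.
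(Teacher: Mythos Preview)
Your argument is correct and follows the paper's approach exactly: decompose $f$ as its Taylor polynomial at $\mathbf{0}$ plus a remainder whose low-order jets vanish, then invoke Lemma~3.4.4. The paper obtains both displayed equalities uniformly by letting the truncation degree $d$ range over $\min\{r-1,k\}\le d\le k$ (the remainder then still satisfies the hypotheses of Lemma~3.4.4), whereas you fix $d=\min\{r-1,k\}$ and then argue separately that the additional terms with $r\le s\le k$ vanish because $\varphi^{\sharp}(y^{i_1})\cdots\varphi^{\sharp}(y^{i_s})$ is a product of $\ge r$ strictly lower-triangular $r\times r$ matrices. Both routes are fine. One small slip: the flag $F^j=\langle e_1,\ldots,e_j\rangle$ is not sent into $F^{j-1}$ by a strictly lower-triangular matrix; use $G^j=\langle e_j,\ldots,e_r\rangle$ instead (then $N\,G^j\subset G^{j+1}$), or simply cite that the strictly lower-triangular matrices form a nilpotent ideal of nilpotency index $\le r$.
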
	

 \begin{proof}
  For any $\min\{r-1,k\}\le d \le k$,
   one can decompose  $f\in C^k({\Bbb R}^n)$ into a summation
  $$
    f\;=\;  h\,+\,
	            \sum_{s=0}^d\;
	                   \sum_{1\le i_1,\,\cdots\,, i_s \le n} \;
			            \frac{1}{s!}\,			  	
	                    \frac{\partial^s\!f}
		       	         {\partial y_{i_1}\cdots \partial y_{i_s}}
						          (\mbox{\boldmath $0$})\,
                    y^{i_1}\cdots y^{i_s}\,.      								  	
  $$
  Then $h\in C^k({\Bbb R}^n)$ and satisfies the vanishing conditions in Lemma~3.4.4.
  The lemma follows.																			
 
 \end{proof}

 \bigskip
 
 \noindent
 {\it $(c)$ The complete expression of $\varphi^{\sharp}$, which implies that $\varphi$ is of algebraic type}

 \medskip
 
 \noindent
 Resume now to the general case, i.e.\ the case when $l\ge 2$.
 As in Sec.~3.2.2,
 let $\mbox{\boldmath $q$} = (q_1,\,\cdots\,, q^n)$ be a point in ${\Bbb R}^n$
   with tuple of coordinate functions $\mbox{\boldmath $y$} = (y^1,\,\cdots\,y^n)$,
 and denote
 $$
   T^{(\mbox{\scriptsize\boldmath $y$}, \mbox{\scriptsize\boldmath $q$}, d)}(f)\;
    :=\;  \mbox{the Taylor expansion of $f\in C^k({\Bbb R}^n)$ at {\boldmath $q$}
                             up to order/degree $d\le k$	}\,.
 $$
Let
 $$
   Z\;  =\;
    \{\, \mbox{\boldmath $\lambda$}_1\,,\, \cdots\,,\, \mbox{\boldmath $\lambda$}_l\,\}\;
	\subset\; {\Bbb R}^n\,.
 $$
Then, it follows immediately from the discussions in Part (a) and Part (b) that:

 \bigskip

 \begin{proposition}    {\bf [dependence of $\varphi^{\sharp}$ on $k$-jet at $Z$].}
 Continuing the notations above and at the beginning of the current subsubsection.
 Let $f\in C^k({\Bbb R}^n)$.
 Then
   \begin{eqnarray*}
	 \varphi^{\sharp}(f)
	  & =
	  &  \iota \circ
	            (\varphi^{\sharp}_1(
	                 T^{(\mbox{\scriptsize\boldmath $y$},
					              \mbox{\scriptsize\boldmath $\lambda$}_1, \min\{k, r_1-1\})}(f)),\,
	         \cdots\,,\,
			 \varphi^{\sharp}_l(
			         T^{(\mbox{\scriptsize\boldmath $y$},
					              \mbox{\scriptsize\boldmath $\lambda$}_l,  \min\{k, r_l-1\})}(f)))\\
	  & =
	  &  \iota \circ
	            (\varphi^{\sharp}_1(
	                 T^{(\mbox{\scriptsize\boldmath $y$},
					              \mbox{\scriptsize\boldmath $\lambda$}_1, k)}(f)),\,
	         \cdots\,,\,
			 \varphi^{\sharp}_l(
			         T^{(\mbox{\scriptsize\boldmath $y$},
					              \mbox{\scriptsize\boldmath $\lambda$}_l,  k)}(f)))\;.
   \end{eqnarray*}								
  In particular,
   $\varphi^{\sharp}:
      C^k({\Bbb R}^n)\rightarrow \End_{\Bbb C}({\Bbb C}^{\oplus r})$	
	is uniquely determined by its values $m_{\varphi}^i := \varphi^{\sharp}(y^i)$
	at any specified tuple of coordinate functions $\mbox{\boldmath $y$}=(y^1,\,\cdots\,,\,y^n)$
	of ${\Bbb R}^n$.
 \end{proposition}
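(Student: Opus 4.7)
The plan is to deduce Proposition 3.4.6 as an immediate corollary of the two pieces already in place: the block-decomposition of $\varphi^{\sharp}$ from Part (a) of the proof preceding Lemma 3.4.5, and Lemma 3.4.5 itself, which handles the connected-support case. No essentially new calculation is required; the work is bookkeeping.

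First I would invoke Part (a) of the preceding argument to write
\[
 \varphi^{\sharp} \;=\; \iota \circ (\varphi^{\sharp}_1 , \ldots , \varphi^{\sharp}_l),
\]
where the decomposition ${\Bbb C}^{\oplus r} = \bigoplus_{l'=1}^{l} {\Bbb C}^{\oplus r_{l'}}$ is chosen so that each factor $\varphi^{\sharp}_{l'} : C^k({\Bbb R}^n) \to \End_{\Bbb C}({\Bbb C}^{\oplus r_{l'}})$ has the property that $\varphi^{\sharp}_{l'}(y^i)$ has the unique eigenvalue $\lambda^i_{l'}$ of multiplicity $r_{l'}$ for every $i=1,\ldots,n$. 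This is exactly the hypothesis for which Lemma 3.4.5 was proved (after a harmless translation $y^i \mapsto y^i - \lambda^i_{l'}$ of coordinates, which affects neither the admissibility of $\varphi^{\sharp}_{l'}$ nor the Taylor expansion operator $T^{(\mathbf{y},\mathbf{\lambda}_{l'},d)}$).

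Next I would apply Lemma 3.4.5 to each factor $\varphi^{\sharp}_{l'}$ at its center $\mathbf{\lambda}_{l'}$, obtaining
\[
 \varphi^{\sharp}_{l'}(f) \;=\; \varphi^{\sharp}_{l'}\bigl(T^{(\mathbf{y},\mathbf{\lambda}_{l'},\min\{k,r_{l'}-1\})}(f)\bigr)
\]
for every $f \in C^k({\Bbb R}^n)$. Assembling these under $\iota$ yields the first displayed equality of the proposition. The second equality, in which the order $\min\{k,r_{l'}-1\}$ is replaced by $k$, is immediate: truncating the Taylor polynomial at any order $d' \geq \min\{k,r_{l'}-1\}$ changes the polynomial only by terms of degree strictly greater than $r_{l'}-1$ at $\mathbf{\lambda}_{l'}$, and such higher-order terms are killed by $\varphi^{\sharp}_{l'}$ by virtue of the vanishing Lemma 3.4.4 applied block-wise.

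Finally, the uniqueness assertion follows because the right-hand side of the displayed formula depends on $\varphi^{\sharp}$ only through the eigenvalue data $\mathbf{\lambda}_{l'}$, which are read off from $m_{\varphi}^i = \varphi^{\sharp}(y^i)$, and through the values $\varphi^{\sharp}(y^{i_1}\cdots y^{i_s}) = m_{\varphi}^{i_1} \cdots m_{\varphi}^{i_s}$ appearing as coefficients in each block's differential-operator expansion (Lemma 3.4.5). Hence $\varphi^{\sharp}$ is determined by the commuting tuple $(m_{\varphi}^1,\ldots,m_{\varphi}^n)$. The only mildly delicate point is checking that the block-decomposition in Part (a) is actually forced by the $C^k$-admissibility hypothesis rather than by smoothness per se; but this is already incorporated in the discussion at the beginning of Part (a), where admissibility is the sole ingredient used to conclude that $\varphi^{\sharp}(f)$ has $f(\mathbf{\lambda}_{l'})$ as its unique eigenvalue on the $l'$-th block. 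With that observation in hand the proof is complete.
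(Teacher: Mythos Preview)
Your proposal is correct and matches the paper's approach exactly: the paper states that Proposition~3.4.6 ``follows immediately from the discussions in Part~(a) and Part~(b)'', and you have simply spelled out that immediate deduction by applying the block decomposition of Part~(a) and then Lemma~3.4.5 block-wise. Your separate justification of the second displayed equality via Lemma~3.4.4 is fine but slightly redundant, since Lemma~3.4.5 already records both the $\min\{k,r-1\}$ and the $k$ versions.
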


 \bigskip
 This shows that $\varphi$ is of algebraic type.
 And we conclude the proof of Proposition~3.4.2.
                  
\noindent\hspace{40.7em}$\square$ 				

\bigskip

\bigskip

\begin{flushleft}			
{\bf The meaning of a $C^k$-map
          $\varphi: (p^{A\!z},{\Bbb C}^{\oplus r}) \rightarrow {\Bbb R}^n$
		  being of algebraic type}
\end{flushleft}	
Let
 {\boldmath$y$}$=(y^1,\,\cdots\,,\, y^n)$ be a tuple of coordinate functions on ${\Bbb R}^n$,
  $p_i:= (a_1^1,\,\cdots\,,\, a_i^n)$, $i=1,\,\ldots\,,\, l$,  be distinct points on ${\Bbb R}^n$,
  $Z:=\{p_1,\,\cdots\,,\, p_l\}\subset {\Bbb R}^n$ as an ordered set of points,  and
 $d\le k$ be a non-negative integer.
Then
 the product ring
  $$
    R^{(\mbox{\scriptsize\boldmath $y$},Z,d)}	   \;
     	   :=\;   \frac{{\Bbb R}[y^1-a_1^1,\,\cdots\,,\,  y^n-a_1^n]}
		                     {  (y^1-a_1^1,\,\cdots \,,\, y^n-a_1^n )^d   }\,
		            \times\,\cdots\, \times
		            \frac{{\Bbb R}[y^1-a_l^1,\,\cdots\,,\,  y^n-a_l^n]}
		                     {  (y^1-a_l^1,\,\cdots \,,\, y^n-a_l^n )^d   }  \\[2ex]
  $$
   is naturally a $C^k$-ring  and
 the ring-epimorphism
 $$
    \begin{array}{ccccc}	
	  T^{(\mbox{\scriptsize\boldmath $y$},Z,d)}  & :  &  C^k({\Bbb R}^n)& \longrightarrow
  	   &  R^{(\mbox{\scriptsize\boldmath $y$},Z,d)}	   \\[1.2ex]
      && f   &  \longmapsto
	    & \begin{array}{l}
		      \mbox{Taylor expansion of $f$ at $Z$ to order $d$}\\[.6ex]
			  \left(  T^{(\mbox{\scriptsize\boldmath $y$},\, p_1,\, d)}(f)\,,\,
                           \cdots\,,\, 	  T^{(\mbox{\scriptsize\boldmath $y$},\, p_l,\, d)}(f) \right)
		    \end{array}	
	\end{array}
 $$	
 is automatically a $C^k$-ring-homomorphism over ${\Bbb R}$.
A $C^k$-admissible ring-homomorphism
    $\varphi^{\sharp}:
	    C^k({\Bbb R}^n)\rightarrow \End_{\Bbb C}({\Bbb C}^{\oplus r})$
     over ${\Bbb R}\hookrightarrow{\Bbb C}$
 is of algebraic type 	
  if and only if
   it factors through $T^{(\mbox{\scriptsize\boldmath $y$},Z,d)}$
   $$
     \xymatrix{
	  & C^k({\Bbb R}^n)\ar[rrr]^-{\varphi^{\sharp}}
	      \ar[d]_-{T^{(\mbox{\tiny\boldmath $y$},Z,d)}\;\;  }
 	      &&& \End_{\Bbb C}({\Bbb C}^{\oplus r}) \\
	  & R^{(\mbox{\scriptsize\boldmath $y$},Z,d)}
	      \ar[rrru]_-{\underline{\varphi}^{\sharp}}	 	&&&& .
	  }
   $$
   
Now, $R^{(\mbox{\scriptsize\boldmath $y$},Z,d)}$
   is the function ring of a collection of fat ${\Bbb R}$-points, i.e.\ a $0$-dimensional $C^k$-scheme $\hat{Z}$,
   and the quotient $C^k$-ring-homomorphism
   $T^{(\mbox{\scriptsize\boldmath $y$},Z,d)}  :
      C^k({\Bbb R}^n)\rightarrow    R^{(\mbox{\scriptsize\boldmath $y$},Z,d)}$
  defines an embedding $\iota: \hat{Z}\rightarrow {\Bbb R}^n$
    as a $0$-dimensional subscheme of ${\Bbb R}^n$
  with $\iota(\hat{Z})_{\, \redscriptsize}=Z$.
Thus,
 the $C^k$-map
   $\varphi: (p, \End_{\Bbb C}({\Bbb C}^{\oplus r}),{\Bbb C}^{\oplus r})
                        \rightarrow {\Bbb R}^n$
    defined by $\varphi^{\sharp}$
 is of algebraic type if and only if
  $\varphi$ factors through $\iota$
  $$
   \xymatrix{
     & (p^{A\!z},{\Bbb C}^{\oplus r})\ar[rrr]^{\varphi} \ar[rrrd]_-{\underline{\varphi}}
    	 &&& {\Bbb R}^n \\
	 &  &&&	 \rule{0ex}{1.2em}\hat{Z}  \ar@{^{(}->}[u]_-{\iota}  &.
   }
  $$
Having the function ring a finite-dimensional ${\Bbb C}$- or ${\Bbb R}$-algebra,
 both $ (p^{A\!z},{\Bbb C}^{\oplus r})$ and $\hat{Z}$ are algebraic in nature;
 thus, so does the map
  $\underline{\varphi}:(p^{A\!z},{\Bbb C}^{\oplus r})\rightarrow \hat{Z}$.
This explains the name for $\varphi$ being `of algebraic type'.

Proposition~3.4.2
 now says that any $C^k$-map
     $\varphi:(p, \End_{\Bbb C}({\Bbb C}^{\oplus r}),{\Bbb C}^{\oplus r})
	                    \rightarrow {\Bbb R}^n$
  is algebraic in nature.
In particular,
 except the main difference
   that all eigenvalues of $\varphi^{\sharp}(f)\in M_{r\times r}({\Bbb C})$,
      $f\in C^k({\Bbb R}^n)$
   and that the fuzziness of $\varphi(p^{A\!z})$ is controled not only by $r$ but also by $k$,
	  the order of differentiability,
 what one learns about D0-brane in [L-Y1: Sec.~4] (D(1))
  crosses over to the current case with ${\Bbb R}^n$ target, with only mild revision if necessary.
In particular,
 the $0$-dimension push-forward sheaf
    $\varphi_{\ast}({\Bbb C}^{\oplus r})$  on ${\Bbb R}^n$
 behaves as in [L-Y1: Sec.~4] (D(1)) under deformations of $\varphi$,
 resembling the Higgsing/un-Higgsing behavior of D0-branes on ${\Bbb R}^n$.
Cf.~{\sc Figure}3-4-1.
%

 \begin{figure} [htbp]
  \bigskip
  \centering
  \includegraphics[width=0.80\textwidth]{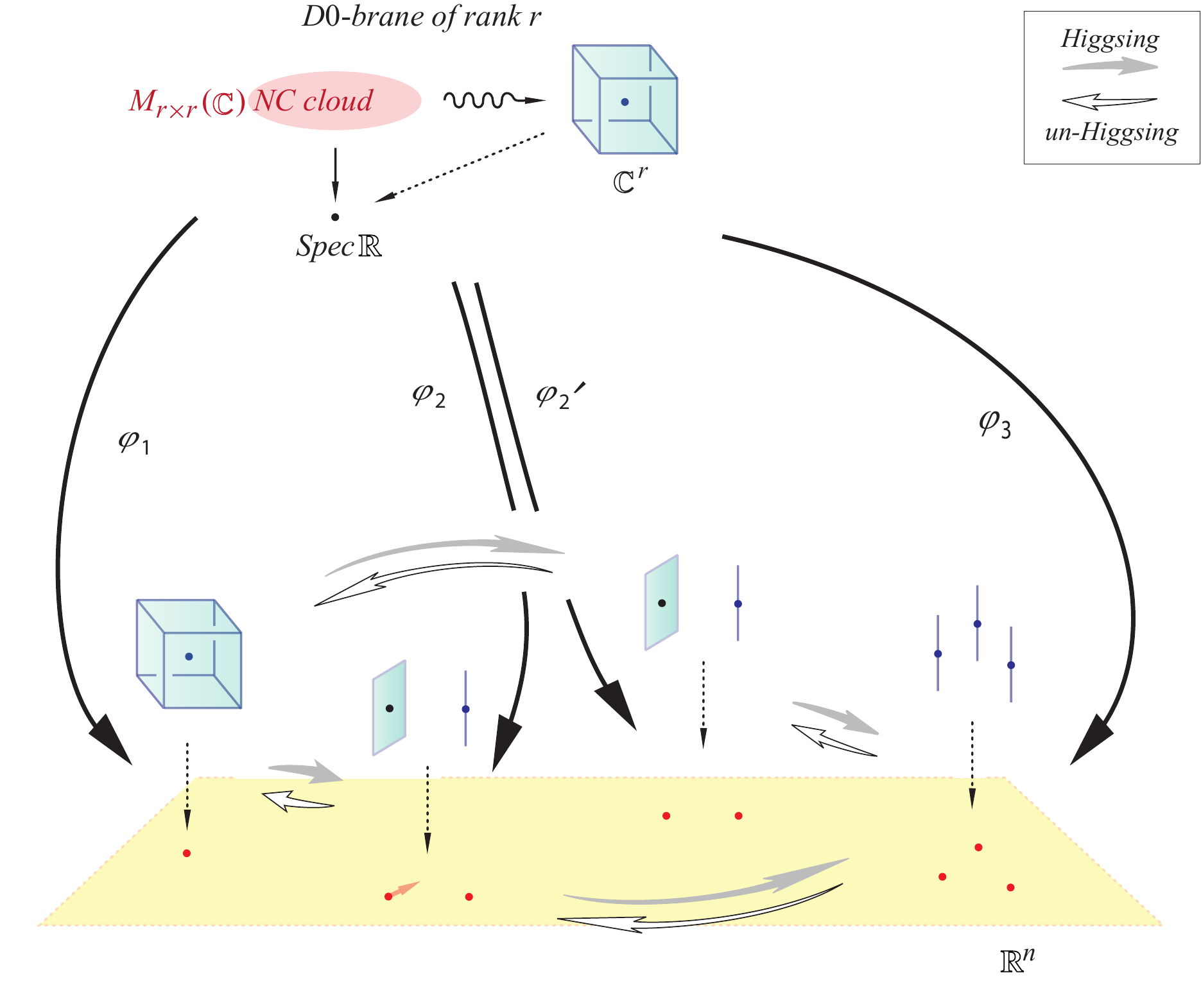}
 
  \bigskip
  \bigskip
  \centerline{\parbox{13cm}{\small\baselineskip 12pt
   {\sc Figure}~3-4-1. (Cf.\ [L-Y5: {\sc Figure}~2-1-1] (D(6)).)
   Any $C^k$-map $(p^{A\!z},{\Bbb C}^{\oplus r})\rightarrow {\Bbb R}^n$
      from an Azumaya/matrix point with a fundamental module $(p^{A\!z},{\Bbb C}^{\oplus r})$
	   to the real manifold ${\Bbb R}^n$ is of algebraic type.
   Similar to the algebraic case in [L-Y1] (D(1)),
	 despite that {\it Space}$\,M_{r\times r}({\Bbb C})$ may look only one-point-like,
     under a $C^k$-map the Azumaya/matrix  ``noncommutative cloud" $M_{r\times r}({\Bbb C})$
     over {\it Space}$\,M_{r\times r}({\Bbb C})$ can ``split and condense"
     to various image $0$-dimensional $C^k$-schemes with a rich geometry.
   The latter image $C^k$-schemes can even have more than one component.
   The Higgsing/un-Higgsing behavior of the Chan-Paton module of
     D$0$-branes on ${\Bbb R}^n$ occurs
     due to the fact that
       when a $C^k$-map
	     $\varphi: (p^{A\!z},{\Bbb C}^{\oplus r}) \rightarrow {\Bbb R}^n$ deforms,
       the corresponding push-forward $\varphi_{\ast}{\Bbb C}^{\oplus r}$
       of the fundamental module ${\Bbb C}^{\oplus r}$
  	   on $(p^{A\!z},{\Bbb C}^{\oplus r})$ can also change/deform.
   These features generalize to $C^k$-maps
     from Azumaya manifolds with a fundamental module to a real $C^k$-manifold $Y$.
   Despite its simplicity, the case study in Sec.~3 already hints at a richness of Azumaya-type noncommutative
     differential geometry (interpreted via or joined with $C^k$ algebraic geometry).
    In the figure, a module over a $C^k$-scheme is indicated by
      a dotted arrow $\xymatrix{ \ar @{.>}[r] &}$.	
       }}
  \bigskip
 \end{figure}

\bigskip

\subsection{Lessons from the case study}

Before moving on, let us recapitulate some conceptual lessons behind the messy details
 in the case study of D$0$-branes on ${\Bbb R}^n$ as maps
 from Azumaya/matrix points
 $(p^{A\!z},{\Bbb C}^{\oplus r})$, $r\in {\Bbb Z}_{\ge 1}$, to ${\Bbb R}^n$.

%
%
%
%
%
%
%
%
%
%

\bigskip

\begin{lesson} {\bf [map between spaces vs.\ homomorphism between function rings].}
{\rm
   Naively, from the notion of real schemes in (commutative) algebraic geometry as in Example~3.1.1,
   one may expect that
      Grothendieck's theory of schemes should need to be injected to the notion of (real) $C^k$-manifolds
      to create a theory  of $C^k$-schemes
    so that these ``missing ${\Bbb C}$-points" from the aspect of homomorphisms between function rings involved
	can be recovered and,
    at least locally, the notion of maps between spaces is contravariantly equivalent to the notiion of homomorphisms
    between rings.
  However, from what one learns in this section,
    this is not the case.
  $C^k({\Bbb R}^n)$	 is much larger than ${\Bbb R}[y^1,\,\cdots\,,\, y^n]$.
  This renders irreducible polynomials in any ${\Bbb R}[y^i]$ that potentially correspond to ${\Bbb C}$-points on
   $y^i$-coordinate axis invertible.
  The fact that all the eigenvalues of  $\varphi^{\sharp}(f)$ are real  for
    any admissible ring-homomorphism
	 $\varphi^{\sharp}:
	    C^k({\Bbb R}^n)\rightarrow \End_{\Bbb C}({\Bbb C}^{\oplus r})$
	   over ${\Bbb R}\hookrightarrow{\Bbb C}$
	  and $f\in C^k({\Bbb R}^1)$
  says that
   \begin{itemize}
    \item[$\cdot$] \parbox[t]{38em}{\it
 	The usual ${\Bbb R}^n$ in the context of differential topology and geometry  is enough
           to describe D0-branes on ${\Bbb R}^n$ correctly in line of [L-Y1] (D(1)).}
  \end{itemize}	
}\end{lesson}	

\bigskip

\begin{lesson} {\bf [support of sheaf].} {\rm
 The notion of {\it scheme-theoretical-like support} $\Supp({\cal F})$
     of a $C^k(N)$-module ${\cal F}$ on a $C^k$-manifold $N$
  remains definable as the complement of locus $\{p\in N: \,\mbox{stalk $F_{(p)} =0$} \}$
  with the {\it structure sheaf
       ${\cal O}_{Supp({\cal F})}
                      := C^k(N)/\Ker(C^k(N)\rightarrow \Endsheaf_{C^k(N)}({\cal F}))$}.
 While $C^k(N)$ contains no nilpotent elements,
   its quotient ${\cal O}_{Supp({\cal F})}$ can have nilpotent elements.
 The latter encodes more details of ${\cal F}$ as a $C^k(N)$-module.
 It is in this way maps from Azumaya/matrix manifolds with a fundamental module to a real manifold $Y$
   encode and store the information of D-branes on $Y$.
 In other owrds,
  \begin{itemize}
   \item[$\cdot$] \parbox[t]{38em}{\it
    Our interpretation of the `information-preserving geometry' on D-branes as explored in the work
    	  [G-S] of T\'{o}mas G\'{o}mez and Eric Sharpe
     remains valid for D-branes in the realm of differential	topology and geometry.}
  \end{itemize}
 See [L-Y5: Sec.~2.4: Item (4)] (D(6)) for more explanations.
}\end{lesson}

\bigskip

Following the philosophy
 that smearing D$0$-branes along  a $p$-cycle $Z$ in $C^k$-manifold renders $Z$ a D$p$-brane,
 once it is justified enough that D$0$-branes on the $C^k$-manifold ${\Bbb R}^n$
  are well-described
  as $C^k$-maps from Azumaya/matrix points with a fundamental module to ${\Bbb R}^n$
  defined contravariantly by $C^k$-admissible ring-homomorphisms of the function-rings involved,
one expects that
 the setting in [L-Y1] (D(1)) and [L-L-S-Y] (D(2)) of the project,
    which lay down the foundation to their sequels [L-Y2] (D(3)) -- [L-Y10] (D(10.2)),
   remains to work to describe D-branes in string theory in the realm of differential topology and geometry.
The rest of the current note is to justify this natural expectation.
   
\bigskip

\begin{remark} {$[$generalization to differentiable maps from other noncommutative points$\,]$.} {\rm
Since any finite-dimensional (associative, unital) ${\Bbb C}$-algebra
embeds in an $M_{\times r}r({\Bbb C})$
 for some $r$, Proposition~3.4.2 implies that
 %
 \begin{itemize}
  \item[$\cdot$] \parbox[t]{38em}{\it
  {\bf [every $C^k$-map from finite-type noncommutative points is of algebraic type].}
   Let
     $S$ be a finite-dimensional (associative, unital) ${\Bbb C}$-algebra   and
	 $k\in {\Bbb Z}_{\ge 0}\cup\{\infty\}$.
   Then any $C^k$-map $\varphi:(p,S)\rightarrow {\Bbb R}^n$
    (with ${\Bbb R}^n$ regarded as a $C^k$-manifold)
	defined by a $C^k$-admissible ring-homomorphism $\varphi^{\sharp}:C^k({\Bbb R}^n)\rightarrow S$
	  over ${\Bbb R}\hookrightarrow {\Bbb C}$
	is of algebraic type.}
 \end{itemize}
}\end{remark}

\bigskip

\begin{remark} {\it $[$differentiability of map vs.\ size of nilpotent cloud of image$\,]$.} {\rm
 For a first-timer, it may look a little bit odd that
   \begin{itemize}
    \item[$\cdot$] \parbox[t]{38em}{\it
     In general it is not true that
       if $\varphi:(p^{A\!z},{\Bbb C}^{\oplus r})\rightarrow {\Bbb R}^n$ is $C^k$,
       then $\varphi$ is $C^{k^{\prime}}$ for all $\,0\le k^{\prime}<k$.}
   \end{itemize}
 The order of nonreducedness of the $0$-dimensional subscheme $\varphi(p^{A\!z})$ of ${\Bbb R}^n$,
   in the sense of $C^k$-algebraic geometry, is dominated not only by $r$ but also by $k$.
 While we have no intention to elaborate it here, this phenomenon gives us an anticipation that
  \begin{itemize}
   \item[$\cdot$] \parbox[t]{38em}{{\bf
     [D-brane probe to singularity and differentiable structure]}\hspace{1em}
    In the realm of differential topology and geometry,
    D-branes can probe not only the singularities of  the target manifold-with-singularities $Y$
    --- as is well-studied since the work [Do-M] (cf.\ [L-Y8] (D(9.1))) ---	
    but also the differentiability and differentiable structures of $Y$.}
  \end{itemize}
}\end{remark}

%

\bigskip
  
\section{Differential calculus and geometry of Azumaya manifolds with a fundamental module}

Let
 \begin{itemize}
   \item[$\cdot$]
    $X$ be a (real $m$-dimensional) $C^k$-manifold, $k\in {\Bbb Z}_{\ge 0}\cup\{\infty\}$,
	
   \item[$\cdot$]	
    $E$ be a $C^k$ complex vector bundle of (complex) rank $r$ over $X$,
	
   \item[$\cdot$]	
    $E^{\vee}$ be the complex dual vector bundle of $E$,
	
   \item[$\cdot$]	
    $\End_X(E):= E\otimes_{C^k(X)^{\Bbb C}}E^{\vee}$
     	be the endomorphism bundle of $E$,

   \item[$\cdot$]		
    $C^k(E)$ be the $C^k(X)^{\Bbb C}$-module of $C^k$-sections of $E$, and

   \item[$\cdot$]
    $C^k(\End_X(E))$   be the $C^k(X)^{\Bbb C}$-algebra
	  of $C^k$-sections of $\End_X(E)$.
 \end{itemize}
Note that:
 \begin{itemize}
  \item[$\cdot$]
   $\End_X(E)$ acts on $E$ tautologically via the fundamental representation.

  \item[$\cdot$]
   By definition, $C^k(\End_X(E))$  is an Azumaya algebra over its center $C^k(X)^{\Bbb C}$.
 \end{itemize}
Equivalently,  in terms of sheaves on $X$, let
 \begin{itemize}
  \item[$\cdot$]
   ${\cal O}_X$ be the sheaf of $C^k$-functions on $X$, i.e.\ the structure sheaf of $X$,\\ and
   ${\cal O}_X^{\,\Bbb C}$ be its complexifications,
      
  \item[$\cdot$]
   ${\cal E}$ be a locally free ${\cal O}_X^{\,\Bbb C}$-module of rank $r$;
    equivalently, the sheaf of local $C^k$-section of $E$,
	
  \item[$\cdot$]
   ${\cal E}^{\vee}
      := \Homsheaf_{{\cal O}_X^{\,\Bbb C}}({\cal E},{\cal O}_X^{\,\Bbb C})$
	  be the complex dual sheaf of ${\cal E}$;
	equivalently,  the sheaf of local $C^k$-sections of $E^{\vee}$,  and
	
  \item[$\cdot$]	
   $\Endsheaf_{{\cal O}_X^{\,\Bbb C}}({\cal E})$
    be the sheaf of ${\cal O}_X^{\,\Bbb C}$-endomorphisms of ${\cal E}$; equivalently,
   the sheaf of local $C^k$-sections of $\End_X(E)$.	 	
 \end{itemize}
By construction,
 \begin{itemize}
  \item[$\cdot$]
  $\Endsheaf_{{\cal O}_X^{\,\Bbb C}}({\cal E})$ acts on ${\cal E}$ tautologically
    via the fundamental representation; and

  \item[$\cdot$]
  $\Endsheaf_{{\cal O}_X^{\,\Bbb C}}({\cal E}))$  is  a sheaf of  Azumaya algebras
   over its center ${\cal O}_X^{\,\Bbb C}$.	
 \end{itemize}
This motivates the following definition:

\bigskip

\noindent
{\bf Definition 4.0.1. [Azumaya/matrix $C^k$-manifold with a fundamental module].}  {\rm
 The $C^k$-manifold $X$ with the enhanced structure sheaf
   ${\cal O}_X^{A\!z}:= \Endsheaf_{{\cal O}_X^{\,\Bbb C}}({\cal E})$
    of noncommutative function-rings from the endomorphism algebras of ${\cal E}$
   is called a {\it  (complex-)Azumaya (real $m$-dimensional) $C^k$-manifold}\footnote{That is,
                    a real $m$-dimensional $C^k$-manifold with a complex Azymaya structure for its function ring.
                   In contrast, if one begins with a real $C^k$ vector bundle $E$ over $X$,
				    then the resulting $X^{\!A\!z}$ shall called
					a {\it real-Azumaya(real $m$-dimensional) $C^k$-manifold} over $X$.}
    over $X$; 	
  in notation, $X^{\!A\!z}\!:=(X, \Endsheaf_{{\cal O}_X^{\,\Bbb C}}({\cal E}))$.
 The triple
   $$
     (X,{\cal O}_X^{A\!z}:=\Endsheaf_{{\cal O}_X^{\,\Bbb C}}({\cal E}),{\cal E})
   $$
  is called an {\it Azumaya $C^k$-manifold with a fundamental module}.
 With respect to a local trivialization of ${\cal E}$,
  ${\cal O}_X^{\!A\!z}$  is a sheaf of $r\times r$-matrix algebras with entries
    complexified local $C^k$-functions on $X$.
 For that reason and to fit better with the terminology in quantum field and string theory,
  we shall call  	
   $(X,{\cal O}_X^{A\!z}:=\Endsheaf_{{\cal O}_X^{\,\Bbb C}}({\cal E}),{\cal E})$
   also as a {\it matrix $C^k$-manifold with a fundamental module},
   particularly in a context that is more directly related to quantum field and string theory.
}

\bigskip

\noindent
{\it Remark 4.0.2. $[$$X^{\!A\!z}$ as the space associated to a gluing system of noncommutative rings$\,]$.}
{\rm
 Following the study in [L-Y1] (D(1)), let
  \begin{itemize}
   \item[$\cdot$]
    $\{U_{\alpha}\}_{\alpha\in A}$ be an $C^k$-atlas of $X$ \hspace{2em}
	  (here, the gluings, i.e.\ transition functions,
	     $$
		   U_{\alpha_1}\; \supset\;
	       U_{\alpha_1}\cap U_{\alpha_2} =:  U_{\alpha_1\alpha_2}:\;
		   \stackrel{\sim}{\longrightarrow}\;
		   U_{\alpha_2\alpha_1}:= U_{\alpha_2}\cap U_{\alpha_1}	\;
 		      \subset\; U_{\alpha_2}
		 $$
		that satisfy the cocycle conditions are implicit in the notation);
	
  \item[]	
	passing to a refinement of the atlas, we may assume without loss of generality that\\
	the restriction of $E$ to each $U_{\alpha}$, in notation $E_{U_{\alpha}}$, is trivial and trivialized.
  \end{itemize}
  The associated gluing data of bundles
  $$
     (\{E_{U_{\alpha}}\}_{\alpha\in A}\,,\,
	      \{E_{U_{\alpha_1\alpha_2}}
		   \stackrel{\sim}{\rightarrow}
		      E_{U_{\alpha_2\alpha_1}}\}_{\alpha_1,\alpha_2\in A})\,.
  $$
  gives rise to a {\it gluing system of noncommutative rings --  in our case Azumaya/matrix algebras}
  $$
   {\cal R}\;
     :=\;  \left(\{  \End_{U_{\alpha}}(E_{U_{\alpha}})\}_{\alpha\in A}\,,\,
                 \{ \End_{U_{\alpha_1\alpha_2}}(E_{U_{\alpha_1\alpha_2}})
	                     \stackrel{\sim}{\rightarrow}
			                 \End_{U_{\alpha_2\alpha_1}}(E_{U_{\alpha_2\alpha_1}})
	                 \}_{\alpha_1,\alpha_2\in A}			  \right )\,.
  $$
 A refinement of the atlas $\{U_{\alpha}\}_{\alpha\in A}$ of $X$
  gives rise to a refinement of the gluing system ${\cal R}$ of rings
   via localizations of $\End_{U_{\alpha}}(E_{\alpha})$, $\alpha\in A$,
   using only central elements.
 This defines an equivalence relations on the set of such gluing systems of rings.
 We should regard $X^{\!A\!z}$ as the ``space" that is associated to an equivalence
  class of gluing system of rings,
   with the structure sheaf ${\cal O}_X^{A\!z}$ from central localizations of the Azumaya algebras
   in $\{\End_{U_{\alpha}}(E_{U_{\alpha}})\}_{\alpha\in A}$.
  
 Conceptually, the noncommutative space $X^{\!A\!z}$ sits over the commutative space $X$
  via a built-in surjection $X^{\!A\!z}\rightarrow X$
  that is defined through the tautological inclusion ${\cal O}_X\hookrightarrow {\cal O}_X^{A\!z}$
  of structure sheaves.
 When in need of a point-set picture, one may still think of $X^{\!A\!z}$
   as the topological space $X$ whose local function-rings are specified by ${\cal O}_X^{A\!z}$.
 However, {\sc Figure}~1-2, though only in the $0$-dimensional case, illustrates the fact
   that $X^{\!A\!z}$ can have very rich contents, despite topologically the same as $X$.
}

\bigskip
\bigskip

The purpose is this note is to study the question
  \begin{itemize}
   \item[{\bf Q.}] \parbox[t]{37em}{{\it
    What is the correct notion of a `$k$-times-differentiable map'  from such a noncommutative manifold}
	to a(n ordinary real) manifold, in particular a Calabi-Yau manifold,
  {\it that can fit in to describing D-branes as fundamental (as opposed to solitonic) objects in string theory}?}
  \end{itemize}	
Before attempting this (cf.\ Sec.~5),
it is very basic to understand the noncommutative space $X^{\!A\!z}$ itself
  in the context of differential topology and geometry first.
It turns out that, except the terminologies,
 such study of basics of matrix manifolds in the context of noncommutative differential geometry
 was already made earlier in the works
  [DV-K-M1], [DV-M], [Mas]
    of {\it Michel Dubois-Violette}, {\it Richard Kerner}, {\it John Madore}, {\it Thierry Masson}
 and  the works 	
  [S\'{e}1], [S\'{e2}]
    of {\it Emmanuel S\'{e}ri\'{e}}.
See also the related works, e.g., [Co1], [GB-V-F], [Mad], and [M-M-M].
We review the needed parts of these works in this section, with adaptations,
  as the second of the two beginning building blocks of this note.

\bigskip
					
\subsection{Differential calculus on noncommutative rings with center a $C^k$-ring}

Basic notions like derivations, differentials, and differential forms
  can be defined for a noncommutative ring purely algebraically.
When the center of the ring in question is a $C^k$-ring,
 additional related non-algebraic features should be added to the notion to correctly reflect
 the nature of the space, if any, associated to that ring.
This prepares us toward the differential calculus on Azumaya differentiable manifolds
 with a fundamental module in Sec.~4.2.

\bigskip

\begin{flushleft}
{\bf Derivations and  differentials of a noncommutative ring with center a $C^k$-ring}
\end{flushleft}

\begin{definition} {\bf [derivation on ring].} {\rm
 Let $R$ be a(n associative, unital) algebra with center $Z_R$
   over a ground ring $S$.
 (In particular, $R\supset Z_R\supset S$.)
 An {\it $S$-derivation $\Theta$} on $R$  is a map
    $$
      \Theta\;:\; R\;\longrightarrow\; R
    $$
   that satisfies
    $$
    \begin{array}{llll}	
     \mbox{($S$-linearity)}
	  &&& \Theta(a_1r_1+a_2r_2)\;
	            =\; a_1\,\Theta(r_1)\,+\,a_2\,\Theta(r_2)   \,,   \\[.6ex]
	 \mbox{(Leibniz rule)}
	  &&& \Theta(r_1r_2)\;=\;  \Theta(r_1)\, r_2\,+\, r_1\, \Theta(r_2)
    \end{array}
   $$	
   for all $a_1,\,a_2\in S$ and $r_1,\,r_2\in R$.
 When $Z_R$ is a $C^k$-ring, $k\ge 1$, and $S\subset Z_R$ a $C^k$-subring,
  we require $\Theta$ to satisfy in addition
  $$
   \begin{array}{l}
    \hspace{-1em}\mbox{(chain rule)}\\[.6ex]
    \Theta(h(r_1,\,\cdots\,,\,r_s))\;
	  =\;  \partial_1 h(r_1,\,\cdots\,,\,r_s)\,\Theta(r_1)\;
	         +\; \cdots\; +\;
			 \partial_s h(r_1,\,\cdots\,,\,r_s)\,\Theta(r_s)	
   \end{array}			
  $$
  for all $h\in C^k({\Bbb R}^s)$, $s\in {\Bbb Z}_{\ge 1}$, and $r_1,\,\cdots\,,\,r_s\in Z_R$.

 Denote by $\Der_S(R)$ the set of all $S$-derivations on $R$.
 Then,  not that  if $\Theta\in \Der_S(R)$, then so does $r\Theta=\Theta r$,
     with $(r\Theta)(\,\cdot\,) := r(\Theta(\,\cdot\,))$ and
	         $(\Theta r)(\,\cdot\,):= (\Theta(\,\cdot\,))r$,
	 for $r\in Z_R$.
 Thus,
 {\it $\Der_S(R)$ is naturally a $Z_R$-module},
 with $r\cdot \Theta := r\Theta = \Theta r =: \Theta\cdot r$ for $\Theta\in \Der_S(R)$, $r\in Z_R$.
 
 Furthermore, if $\Theta_1,\, \Theta_2\in \Der_S(R)$, then so does the {\it Lie bracket}
   $$
     [\Theta_1, \Theta_2]\;:=\; \Theta_1\Theta_2\,-\,\Theta_2\Theta_1
   $$
   of $\Theta_1$ and $\Theta_2$.
 Thus, {\it $\Der_S(R)$   is naturally a Lie $S$-algebra}.
}\end{definition}

\bigskip

\begin{example} {\bf [inner derivation of ring].} {\rm
 (Continuing Definition~4.1.1.)
 Every $r\in R$ defines an $S$-derivation $\Theta_r:R\rightarrow R$
    with $r^{\prime}\mapsto [r,r^{\prime}]:=rr^{\prime}-r^{\prime}r$.
 This defines a $Z_R$-module homomorphism $R\rightarrow \Der_S(R)$. 	
 An element in the image of this homomorphism is called an {\it inner derivation} of $R$.
}\end{example}

\bigskip

Except the Lie algebra structure, Definition~4.1.1 is a special case of the following:

\bigskip

\begin{definition}  {\bf [derivation with value in a module].} {\rm
 Let
    $R$ be a(n associative, unital) algebra with center $Z_R$ over a ground ring $S$  	
	and
	$M$ be a bi-$R$-module.
	(I.e.\ $M$ is both a left and a right $R$-module, with $rm=mr$ for all $r\in Z_R$ and $m\in M$.)
 An {\it $S$-derivation $\Theta$} from $R$ to $M$  is a map
    $$
      \Theta\;:\; R\;\longrightarrow\; M
    $$
   that satisfies
   $$
    \begin{array}{llll}	
     \mbox{($S$-linearity)}
	  &&& \Theta(a_1r_1+a_2r_2)\;
	            =\; a_1\,\Theta(r_1)\,+\,a_2\,\Theta(r_2) 	\,,   \\[.6ex]
	 \mbox{(Leibniz rule)}
	  &&& \Theta(r_1r_2)\;=\;  \Theta(r_1)\, r_2\,+\, r_1\, \Theta(r_2)
    \end{array}
   $$	
   for all $a_1,\,a_2\in S$ and $r_1,\,r_2\in R$.
 When $Z_R$ is a $C^k$-ring, $k\ge 1$, and $S\subset Z_R$ a $C^k$-subring,
 we require $\Theta$ to satisfy in addition
  $$
   \begin{array}{l}
    \hspace{-1em}\mbox{(chain rule)}\\[.6ex]
    \Theta(h(r_1,\,\cdots\,,\,r_s))\;
	  =\;  \partial_1 h(r_1,\,\cdots\,,\,r_s)\,\Theta(r_1)\;
	         +\; \cdots\; +\;
			 \partial_s h(r_1,\,\cdots\,,\,r_s)\,\Theta(r_s)	
   \end{array}			
  $$
  for all $h\in C^k({\Bbb R}^s)$, $s\in {\Bbb Z}_{\ge 1}$, and $r_1,\,\cdots\,,\,r_s\in Z$.

 Denote by $\Der_S(R,M)$ the set of all $S$-derivations from $R$ to $M$.
 Then, similar to the case of $\Der_S(R)$,
  {\it $\Der_S(R,M)$ is naturally a $Z_R$-module},
    with $r\cdot \Theta :=  r \Theta = \Theta r =: \Theta\cdot r$
   for $\Theta\in \Der_S(R,M)$, $r\in Z_R$.
}\end{definition}

\bigskip

\begin{definition} {\bf [module of differentials of ring].} {\rm
 Let $R$ be a(n associative, unital) $S$-algebra,
  with the center $Z_R$ of $R$ a $C^k$-ring that contains $S$ as a $C^k$-subring, $k\ge 1$
 Then,
  the {\it module of differentials}, denoted by $\Omega_{R/S}$, is the bi-$R$-module
   generated by the set
   $$
     \{ d(r)\,|\,  r\in R   \}
   $$
   subject to the relations
   $$
    \begin{array}{llll}	
     \mbox{($S$-linearity)}
	  &&& d(a_1r_1+a_2r_2)\;
	            =\; a_1\,d(r_1)\,+\,a_2\,d(r_2)\;
				=\; d(r_1)a_1\,+\, d(r_2)\,a_2 				\,,   \\[.6ex]
	 \mbox{(Leibniz rule)}
	  &&&  d(r_1r_2)\;=\;  d(r_1)\, r_2\,+\, r_1\, d(r_2) \,, \\[.6ex]
	 \mbox{($Z_R$-commutativity)}
	  &&&  d(r_1)\,r_3\;=\; r_3\,d(r_1)
    \end{array}
   $$	
   for all $a_1,\,a_2\in S$, $r_1,\,r_2\in R$, and $r_3\in Z_R$,  and
  $$
   \begin{array}{llll}
    \mbox{(chain rule)}\hspace{4em}
      &&&  d(h(r_1,\,\cdots\,,\,r_s))   \\
	  &&&  \hspace{1em}
                 =\;  \partial_1 h(r_1,\,\cdots\,,\,r_s)\, d(r_1)\;
	                     +\; \cdots\; +\;
			           \partial_s h(r_1,\,\cdots\,,\,r_s)\, d(r_s)	 \hspace{1em}
   \end{array}			
  $$
  for all $h\in C^k({\Bbb R}^s)$, $s\in {\Bbb Z}_{\ge 1}$, and $r_1,\,\cdots\,,\,r_s\in Z_R$.
 Denote the image of $d(r)$ under the quotient by $dr$.
 Then, by definition, the built-in map
  $$
    \begin{array}{ccccc}
	 d & :&  R & \longrightarrow  & \Omega_{R/S}\\[.6ex]
	    &&    r  & \longmapsto        & dr
	\end{array}
  $$
  is an $S$-derivation from $R$ to $\Omega_{R/S}$.
}\end{definition}

\bigskip

\noindent
Note that the Leibniz rule and the $Z_R$-commutativity imply that
 $$
   dr_3\,r_1\;=\; r_1\,dr_3  \hspace{4em}
	 \mbox{for all $r_1\in R$ and $r_3\in Z_R$.}
 $$
Note also that, using $dr_1\,r_2= d(r_1r_2)-r_1 dr_2$ for all $r_1,\,r_2\in R$,
 the bi-$R$-module $\Omega_{R/S}$ can be regarded as a left $R$-module generated by
 $\{dr\,|\,r\in R\}$.
 
By construction, $\Omega_{R/S}$ has the following universal property,
 which determines $\Omega_{R/S}$ uniquely up to a unique isomorphism:

\bigskip

\begin{lemma} {\bf [universal property of $\Omega_{R/S}$].}
 Continuing the setting in Definition~4.1.3.
 Let $\Theta: R\rightarrow M$ be an $S$-derivation from $R$ to $M$.
 Then there exists a unique bi-$R$-module homomorphism $h:\Omega_{R/S}\rightarrow M$
  such that the following diagram commutes
  $$
   \xymatrix{
    & &&& \Omega_{R/S}\ar[dd]^-{h} \\
    & R \ar[urrr]^-{d}   \ar[drrr]_-{\Theta}                \\
    & &&& M  &.
   }
  $$
\end{lemma}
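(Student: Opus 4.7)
The plan is entirely formal: the universal property is forced by the presentation of $\Omega_{R/S}$ as a bi-$R$-module by generators and relations. First I would define $h$ on the generating set $\{dr : r \in R\}$ of $\Omega_{R/S}$ by setting $h(dr) := \Theta(r)$ and extend by bi-$R$-linearity to finite sums $\sum r_i\, dr_i'\, r_i''$. Uniqueness is then immediate, since any bi-$R$-module homomorphism satisfying $h\circ d = \Theta$ must take these values on the generators.

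The substance of the argument is to verify that this definition is well-defined, i.e., that $h$ sends each of the four defining relations of $\Omega_{R/S}$ to $0$ in $M$. Each check follows directly by rewriting the relation using the corresponding defining property of $\Theta$ from Definition~4.1.3:
\begin{itemize}
  \item[(i)] $S$-linearity of $d$ corresponds to $S$-linearity of $\Theta$.
  \item[(ii)] The Leibniz relation for $d$ corresponds to the Leibniz rule for $\Theta$.
  \item[(iii)] The chain-rule relation for $d$ applied to $h_0\in C^k(\mathbb{R}^s)$ and $r_1,\ldots,r_s\in Z_R$ corresponds to the chain-rule clause in Definition~4.1.3 for $\Theta$.
  \item[(iv)] The $Z_R$-commutativity relation $dr_1\cdot r_3 = r_3\cdot dr_1$ for $r_3\in Z_R$ corresponds, under $h$, to $\Theta(r_1)\,r_3 = r_3\,\Theta(r_1)$, which holds automatically because $M$ is a bi-$R$-module with central elements acting identically from the two sides.
\end{itemize}

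The only point that requires a moment of care is (iv): it is exactly the bi-module hypothesis on $M$ built into Definition~4.1.3 (``$rm=mr$ for all $r\in Z_R$'') that makes this relation pass to $M$. Without that hypothesis the relation would obstruct the existence of $h$, which explains why the target of a derivation is taken to be a bi-$R$-module with central elements acting symmetrically.

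Finally, once well-definedness is established, the map $h$ is by construction bi-$R$-linear and satisfies $h(dr)=\Theta(r)$ for all $r\in R$, so $h\circ d=\Theta$. Combined with the uniqueness observation above, this gives the commutative triangle. The main ``obstacle'' is really just bookkeeping: one must confirm that each of the four relation types listed in Definition~4.1.4 corresponds term-by-term to a defining property of an $S$-derivation in the sense of Definition~4.1.3, so that the map descends from the free bi-$R$-module on the symbols $d(r)$ to the quotient $\Omega_{R/S}$.
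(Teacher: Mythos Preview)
Your proposal is correct and follows exactly the same approach as the paper's own proof, which simply states that the map $d(r)\mapsto \Theta(r)$ descends to the unique $h:\Omega_{R/S}\rightarrow M$ making the diagram commute. You have merely unpacked the word ``descends'' by explicitly checking the four relation types, which is a reasonable elaboration of the same argument.
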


\begin{proof}
 The map $d(r)\mapsto \Theta(r)$, for all $r\in R$,
 descends to the unique $h:\Omega_{R/S}\rightarrow M$ that makes the diagram commute.

\end{proof}

\bigskip

Taking $M$ to be $R$ as a bi-$R$-module,
 then $Hom_{\,\mbox{\scriptsize bi-}R}(\Omega_{R/S},R)$ is naturally a $Z_R$-module
  (with left and right identical)
  and one has the following identification:
  
\bigskip

\begin{lemma} {\bf [derivation vs.\ differential].}
Continuing the setting in Definition~4.1.3.
 There is a canonical isomorphism
 $$
   \Der_S(R)\; \stackrel{\sim}{\longrightarrow}\;  Hom_{\,\mbox{\scriptsize bi-}R}(\Omega_{R/S},R)
 $$
 as $Z_R$-modules.
\end{lemma}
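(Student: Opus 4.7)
The plan is to use the universal property of $\Omega_{R/S}$ established in Lemma~4.1.5 and exhibit a mutually inverse pair of $Z_R$-module homomorphisms. First, observe that $R$ itself is a bi-$R$-module via left and right multiplication, and the condition on an $S$-derivation $\Theta:R\rightarrow R$ in Definition~4.1.1 is a special instance of the condition in Definition~4.1.3 with $M=R$. Hence the universal property in Lemma~4.1.5 applies directly: to each $\Theta\in\Der_S(R)$ it associates a unique bi-$R$-module homomorphism $h_\Theta:\Omega_{R/S}\rightarrow R$ with $h_\Theta\circ d=\Theta$. Set
\[
   \Phi\;:\;\Der_S(R)\;\longrightarrow\;Hom_{\,\text{bi-}R}(\Omega_{R/S},R)\,,\qquad \Theta\;\longmapsto\;h_\Theta\,.
\]

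Next I would construct the inverse $\Psi$ by pre-composition with the canonical derivation: for a bi-$R$-module homomorphism $h:\Omega_{R/S}\rightarrow R$, set $\Psi(h):=h\circ d$. The $S$-linearity of $\Psi(h)$ follows from the $S$-linearity of $d$ and of $h$; the Leibniz rule follows from the Leibniz rule for $d$ together with the bi-$R$-module nature of $h$ (which converts $d(r_1)r_2+r_1 d(r_2)$ to $\Psi(h)(r_1)r_2+r_1\Psi(h)(r_2)$); and, in the case where $Z_R$ carries its $C^k$-ring structure, the chain rule for $\Psi(h)$ on tuples in $Z_R$ follows from the chain rule built into the defining relations of $\Omega_{R/S}$. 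Thus $\Psi(h)\in\Der_S(R)$.

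That $\Phi$ and $\Psi$ are mutually inverse is immediate: $\Psi(\Phi(\Theta))=h_\Theta\circ d=\Theta$ by the very definition of $h_\Theta$, while $\Phi(\Psi(h))=h$ by the uniqueness clause in Lemma~4.1.5 applied to the derivation $h\circ d$, whose associated bi-$R$-module homomorphism is evidently $h$ itself. Finally, for $r\in Z_R$ and $\Theta\in\Der_S(R)$, the derivation $r\cdot\Theta$ sends $r^{\prime}\in R$ to $r\,\Theta(r^{\prime})=\Theta(r^{\prime})\,r$, and the induced map on $\Omega_{R/S}$ sends $dr^{\prime}\mapsto r\,h_\Theta(dr^{\prime})=h_\Theta(dr^{\prime})\,r$, which is exactly $r\cdot h_\Theta=h_\Theta\cdot r$ in the $Z_R$-module structure on $Hom_{\,\text{bi-}R}(\Omega_{R/S},R)$; hence $\Phi$ is $Z_R$-linear.

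I do not foresee a serious obstacle: the entire statement is tautological once Lemma~4.1.5 is in hand, the only mild point requiring care being the verification that the various structures (bi-$R$-module structure on $R$, Leibniz rule, chain rule on $Z_R$, and $Z_R$-module structure on both sides) line up on the nose under the correspondence $\Theta\leftrightarrow h_\Theta$. Everything else is formal.
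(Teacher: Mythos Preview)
Your proposal is correct and follows exactly the approach the paper intends: the paper does not give a detailed proof but prefaces the lemma by remarking that one takes $M=R$ as a bi-$R$-module in Lemma~4.1.5, so the identification is just the universal property specialized to this case. Your verification of the $Z_R$-linearity and the chain-rule compatibility on $Z_R$ fills in the routine details the paper leaves implicit.
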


\bigskip

This defines a pairing
 $$
   \Der_S(R)\otimes_{Z_R} \Omega_{R/S}\; \longrightarrow\;   R\,
 $$
 that is non-degenerate in the $\Der_S(R)$-component.
By construction, it can be expressed as:

\bigskip

\begin{lemma} {\bf [$Z_R$-bilinear pairing].}
 The above pairing is identical to the following well-defined, $Z_R$-bilinear pairing
  $$
  \begin{array}{llll}
   \Der_S(R)\otimes_{Z_R} \Omega_{R/S}  &  \longrightarrow  &  \hspace{1.2em} R\\[.6ex]
    \hspace{3.1em}\Theta\otimes \sum r_1dr_2r_3               &  \longmapsto
	   &  \sum r_1\Theta(r_2)r_3\;=:\; \alpha(\Theta) &,
  \end{array}
  $$
  where $\alpha:=\sum r_1dr_2 r_3$ represents an element of $\,\Omega_{R/S}$.
\end{lemma}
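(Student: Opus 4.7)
The plan is to verify that the explicit formula $\sum r_1 dr_2 r_3 \mapsto \sum r_1 \Theta(r_2) r_3$ descends from the free bi-$R$-module on symbols $\{d(r) : r\in R\}$ to a well-defined map out of $\Omega_{R/S}$, then check $Z_R$-bilinearity in the tensor product, and finally match it with the evaluation pairing coming from Lemma 4.1.6. No deep content is needed; the work is essentially a relation-by-relation bookkeeping identical to the one constructing the universal derivation itself.

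First, I would fix $\Theta\in\Der_S(R)$ and define a tentative map $\tilde h_\Theta$ on the free bi-$R$-module generated by $\{d(r) : r\in R\}$ by $r_1\,d(r_2)\,r_3\mapsto r_1\Theta(r_2)r_3$, extended $R$-bilinearly. I would then run through the four families of defining relations for $\Omega_{R/S}$ in Definition~4.1.4 and check each one is sent to zero by $\tilde h_\Theta$: $S$-linearity of $d$ corresponds to $S$-linearity of $\Theta$; the Leibniz relation $d(r_1r_2)=d(r_1)r_2+r_1d(r_2)$ maps to $\Theta(r_1r_2)=\Theta(r_1)r_2+r_1\Theta(r_2)$, which holds by the Leibniz rule for $\Theta$; the $Z_R$-commutativity relation $d(r_1)r_3=r_3 d(r_1)$ for $r_3\in Z_R$ maps to $\Theta(r_1)r_3=r_3\Theta(r_1)$, which is automatic because $r_3$ is central in $R$; and the chain rule relation transports to the chain rule condition in Definition~4.1.1 that was imposed on $\Theta$ precisely to make this work. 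Hence $\tilde h_\Theta$ descends to a bi-$R$-module homomorphism $h_\Theta:\Omega_{R/S}\to R$.

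Next, I would establish the $Z_R$-balanced/$Z_R$-bilinear structure needed for the pairing on $\Der_S(R)\otimes_{Z_R}\Omega_{R/S}$. Recall from Definition~4.1.1 that the $Z_R$-module structure on $\Der_S(R)$ is given by $(r\Theta)(\,\cdot\,)=r\,\Theta(\,\cdot\,)=\Theta(\,\cdot\,)\,r$ for $r\in Z_R$, and on $\Omega_{R/S}$ by the central action of $Z_R\subset R$ from either side (which agree by the $Z_R$-commutativity relation). With the formula in hand, one checks directly that $(r\Theta)(\alpha)=r\cdot\Theta(\alpha)=\Theta(r\cdot\alpha)$ for every $r\in Z_R$, so the formula factors through $\otimes_{Z_R}$ and is $Z_R$-bilinear in the obvious sense.

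Finally, I would identify the resulting pairing with the one coming from Lemma 4.1.6. By the universal property in Lemma 4.1.5 the isomorphism $\Der_S(R)\stackrel{\sim}{\to}Hom_{\,\text{bi-}R}(\Omega_{R/S},R)$ of Lemma 4.1.6 sends $\Theta$ to the unique bi-$R$-module homomorphism $\bar\Theta:\Omega_{R/S}\to R$ with $\bar\Theta\circ d=\Theta$, i.e.\ $\bar\Theta(dr)=\Theta(r)$. Since both $\bar\Theta$ and the $h_\Theta$ constructed above are bi-$R$-module maps agreeing on the generators $\{dr\}$, they coincide. Unpacking $h_\Theta$ on a general element $\alpha=\sum r_1\,dr_2\,r_3$ gives $\alpha(\Theta):=\bar\Theta(\alpha)=\sum r_1\Theta(r_2)r_3$, which is exactly the claimed formula. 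The only mild obstacle is making sure the chain-rule axiom appearing in the $C^k$-refinement of Definition~4.1.1 matches precisely the chain-rule relation imposed on $\Omega_{R/S}$ in Definition~4.1.4; but these were written in parallel on purpose, so the verification is immediate.
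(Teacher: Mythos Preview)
Your proposal is correct and is precisely the unpacking of what the paper means by ``By construction'': the paper states the lemma without a detailed proof, deriving it immediately from the universal property (Lemma~4.1.5) and the isomorphism of Lemma~4.1.6, which is exactly the route you take. Your relation-by-relation verification is the natural elaboration of this, and there is nothing to add.
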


\bigskip

We will call $\alpha(\Theta)$ in the above pairing
 the {\it evaluation} of $\alpha$ at $\Theta$.
 
%
%

\bigskip

\begin{remark}  {$[$Definition~4.1.4 as a generalization from the commutative case$\,]$.} {\rm
  When $R$ is commutative,
   $\Omega_{R/S}$ in Definition~4.1.4
    reduces to the usual definition of differentials for commutative rings.
  {From} this aspect, Definition~4.1.4 is a natural extension of the notion of differentials
   from the commutative case  to the noncommutative case.
 {\it However},
  for a general $S$-algebra-homomorphism  $\rho:R \rightarrow R^{\prime}$
     with $R^{\prime}$ noncommutative,
   $\rho(Z_R)$ may not be contained in $Z_{R^{\prime}}$     and,  hence,
   the natural correspondence
     $r_1\cdot dr_2\cdot r_3 \mapsto \rho(r_1)\cdot d\rho(r_2)\cdot\rho(r_3) $
	is defined from $\Omega_{R/S}$ {\it only to a quotient of} $\Omega_{R^{\prime}/S}$
	  by enforcing the additional $\rho(Z_R)$-commutativity condition to $\Omega_{R^{\prime}/S}$.
 Cf.~Example~4.1.20 and Sec.~6.3.
}\end{remark}

\bigskip

\begin{flushleft}
{\bf Differential graded algebra associated to ring with center a $C^k$-ring}
\end{flushleft}
\begin{definition} {\bf [DG-algebra associated to ring with center $C^k$-ring].} {\rm
 Continuing the setting in Definition~4.1.1.
 The {\it differential graded algebra} (in short, {\it DG-algebra})
  associated to the ring $R/S$,
  in notation $\bigwedge^{\bullet}\Omega_{R/S}$,  consists of the following data:
  \begin{itemize}
    \item[$\cdot$]
	 $\bigwedge^0\Omega_{R/S}\;:=\; R$.
	
	\item[$\cdot$]
     $\bigwedge^1\Omega_{R/S}\;:=\; \Omega_{R/S}$.

    \item[$\cdot$]
	 For general $l\in {\Bbb Z}_{\ge 2}$,
	  define $\bigwedge^l\Omega_{R/S}$ to be the bi-$R$-module of $Z_R$-multilinear antisymmetric
	  maps,  generated by
	  $${\small
	   \begin{array}{ccccc}
	     \alpha_1\wedge\,\cdots\,\wedge \alpha_ l  & :
		    & \overbrace{\Der_S(R)\times \,\cdots\,\times \Der_S(R)}
		                                                                           ^{\mbox{\scriptsize\it $l$-times}}
            & \longrightarrow     &  R                                                                                           \\[1.2ex]
	     &&
		  (\Theta_1,\,\cdots\,,\Theta_l)
		     & \longmapsto
		     & \sum_{\sigma\in\scriptsizeSym_l }\,
		        (-1)^\sigma\,
                  \alpha_1(\Theta_{\sigma(1)})\, \cdots\,  \alpha_l(\Theta_{\sigma(l)})
        \end{array}					 } 
	  $$
	  for all $\alpha_1,\,\cdots\,,\alpha_l\in \Omega_{R/S}$.
	 Here, $\Sym_l$  is the permutation group of a set of $l$-many elements.
	
   \item[$\cdot$]
    A product
	 $$
	   \wedge\; :\;
	     \mbox{$\bigwedge^l\Omega_{R/S}$}
		   \times \mbox{$\bigwedge^{l^{\prime}}\Omega_{R/S}$}\;
		  \longrightarrow\; \mbox{$\bigwedge^{l+l^{\prime}}\Omega_{R/S}$}
	 $$
	 that is defined by extending $S$-linearly
	   the $S$-algebra structure on $R$,
	   the bi-$R$-module structure on each $\bigwedge^l\Omega_{R/S}$,  and
	   the following maps
	 $$
	   (\alpha_1\wedge\,\cdots\, \wedge \alpha_l\,,\,
	        \beta_1\wedge\,\cdots\,\wedge\beta_{l^{\prime}})\;
		 \longmapsto\;
		   \alpha_1\wedge\,\cdots\, \wedge \alpha_l
		    \wedge \beta_1\wedge\,\cdots\,\wedge\beta_{l^{\prime}}\,.
	 $$
	
   \item[$\cdot$]	
    A degree-$1$ $S$-linear map
     $$	
	   d\; :\; \mbox{$\bigwedge^{\bullet}\Omega_{R/S}$}\;
	         \longrightarrow\;  \mbox{$\bigwedge^{\bullet +1}\Omega_{R/S}$}
     $$
	 that is defined as follows:
	  \begin{itemize}
	    \item[$\cdot$]
          $d^{\,0}=d:R\rightarrow \Omega_{R/S}$
		  as defined in Definition~4.1.4.
							
	    \item[$\cdot$]
         $d^{\,1}: \Omega_{R/S}\rightarrow   \bigwedge^2\Omega_{R/S}$
		  is defined by extending $S$-linearly
		  $$
		    r^{\prime}drr^{\prime\prime}\; \longmapsto\;
			  dr^{\prime}\wedge dr r^{\prime\prime}\,-\, r^{\prime}dr dr^{\prime\prime}\,.
		  $$				
		
        \item[$\cdot$]
         The general
          $d^{\,l}: \bigwedge^l \Omega_{R/S}\rightarrow   \bigwedge^{l+1}\Omega_{R/S}$,
           $l\in {\Bbb Z}_{\ge 2}$,
		  is defined by extending $S$-linearly	
		  $$
		    \alpha_1 \wedge\, \cdots\, \wedge \alpha_l\;
			 \longmapsto\;
			  \sum_{i=1}^l\,
			    (-1)^{i+1}\,
				   \alpha_1\wedge\,\cdots\,
				   \wedge \alpha_{i-1} \wedge  d\alpha_i \wedge \alpha_{i+1}
				   \wedge\,\cdots\,\wedge \alpha_l\,.
		  $$				 		 				
	  \end{itemize}
  \end{itemize}
}\end{definition}

\bigskip

The following lemma is straightforward to check, though part of it is very tedious:

\bigskip

\begin{lemma}  {\bf [properties of $(\bigwedge^{\bullet}\Omega_{R/S}, d)$].}
 (1)
     $\omega \wedge r\omega_2= (\omega r)\wedge \omega_2$
	 for all $r\in R$ and $\omega_1,\,\omega_2\in\bigwedge^{\bullet}\Omega_{R/S}$.
	In particular, $ r \omega = \omega r $
	   for all $r\in Z_R$ and $\omega \in\bigwedge^{\bullet}\Omega_{R/S}$.

 (2)
  The graded algebra $\bigwedge^{\bullet}\Omega_{R/S}$ defined in Definition~4.1.9
   is a differential graded algebra over $S$;
   namely, it is a graded $S$-algebra that satisfies
    \begin{itemize}
      \item[$\cdot$]  {$($underlying differential complex$)$}\hspace{1em}
	   $d\circ d\;=\; 0\,$,
	
	 \item[$\cdot$]   {$($graded Leibniz rule$)$} \hspace{1em}
      $$
	    d(\omega_1 \wedge \omega_2 )\;
	     =\; (d\omega) \wedge \omega_2 + (-1)^l  \omega_1\wedge  d\omega_2
      $$		
	  for all $\omega_1\in \bigwedge^l\Omega_{R/S}$
	       and $\omega_2\in \bigwedge^{\bullet}\Omega_{R/S}$.
   \end{itemize}
 
  (3)
    The differential $d$ of the graded differential algebra $\bigwedge^{\bullet}\Omega_{R/S}$
	 has the following generator-independent intrinsic form:
	  \begin{eqnarray*}
	  (d\omega)(\Theta_1,\,\cdots\,,\Theta_{l+1})\;
	     & =
		  & \sum_{i=1}^{l+1}
		       \Theta_i \omega(\Theta_1,\,\cdots\,,\widehat{\Theta_i}\,,\,\cdots\,,\,\Theta_{l+1}) \\
	   && \hspace{2em}
              \sum_{1\le i<j\le l+1}\,(-1)^{i+j}\,
                 \omega([\Theta_i,\Theta_j]\,,\,
				           \Theta_1,\,\cdots,\,
						    \widehat{\Theta_i}\,,\, \cdots\,,\, \widehat{\Theta_j}\,,\,
							\cdots\,,\,\Theta_{l+1})\,.			  			
	  \end{eqnarray*}
	 for all $\omega\in \bigwedge^l\Omega_{R/S}$  and
	           $\Theta_1,\,\cdots\,,\, \Theta_{l+1}\in \Der_S(R)$.
	 Here, $\widehat{(\,\cdot\,)}$ indicates an omitted term.
\end{lemma}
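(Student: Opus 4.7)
\medskip

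\noindent
\textbf{Proof plan.}
The plan is to exploit the facts that $\bigwedge^{\bullet}\Omega_{R/S}$ is generated in degree zero and one by $R$ and by differentials $dr$, and that every higher-degree element is specified by its evaluation on tuples of $S$-derivations of $R$ (which, importantly, always preserve $Z_R$: for $r\in Z_R$ and $r'\in R$, $[\Theta(r),r']=\Theta([r,r'])-[r,\Theta(r')]=0$). For (1), I will unwind the definition of the wedge product as the antisymmetrized sum $\sum_{\sigma}(-1)^{\sigma}\alpha_1(\Theta_{\sigma(1)})\cdots\alpha_l(\Theta_{\sigma(l)})$ together with the evaluation rule $(\alpha r)(\Theta)=\alpha(\Theta)r$ and $(r\alpha)(\Theta)=r\,\alpha(\Theta)$ induced by the bi-$R$-module structure of $\Omega_{R/S}$. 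Checking the identity $\omega_1\wedge r\omega_2=(\omega_1 r)\wedge\omega_2$ first on generators $\alpha_1\wedge\cdots\wedge\alpha_l$ in $\bigwedge^l\Omega_{R/S}$ and $\beta_1\wedge\cdots\wedge\beta_{l'}$ in $\bigwedge^{l'}\Omega_{R/S}$ reduces it to the associativity of $R$ applied to each summand; linearity extends this to all pairs. The centrality statement for $r\in Z_R$ is then an immediate corollary, since $r$ commutes through every factor in $R$ on which a derivation of $R$ lands.

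For (2), the main technical point is the well-definedness of $d^{\,1}$. I will check that the prescription $r'\,dr\,r''\mapsto dr'\wedge dr\,r''-r'\,dr\wedge dr''$ descends through the four defining relations of $\Omega_{R/S}$: $S$-linearity, the Leibniz rule $d(r_1r_2)=dr_1\,r_2+r_1\,dr_2$, the $Z_R$-commutativity $dr\cdot r_3=r_3\cdot dr$ for $r_3\in Z_R$, and the $C^k$-chain rule on $Z_R$. The first two are formal; the third uses the preservation of $Z_R$ by derivations shown above to force antisymmetry $dr_3\wedge dr=-dr\wedge dr_3$ when $r_3\in Z_R$; the fourth uses the same preservation together with the compatibility of the wedge with the higher-order chain rule. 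Once $d^{\,1}$ is well-defined, $d^{\,1}\circ d^{\,0}=0$ follows from $d(1)=0$, applied to $dr=1\cdot dr\cdot 1$. The graded Leibniz rule for $d^{\,l+l'}$ on a product of generators $\alpha_1\wedge\cdots\wedge\alpha_l$ and $\beta_1\wedge\cdots\wedge\beta_{l'}$ is obtained by splitting the defining sum $\sum_i(-1)^{i+1}(\cdots d\alpha_i\cdots)$ at the boundary between the two blocks, the index shift producing the sign $(-1)^l$ on the second piece. Finally, $d\circ d=0$ on all of $\bigwedge^{\bullet}\Omega_{R/S}$ follows inductively from the graded Leibniz rule combined with $d\circ d=0$ on the degree-zero and degree-one generators.

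For (3), I will verify the intrinsic Cartan-type formula by induction on $l$. The base case $l=0$ is the definition $(dr)(\Theta)=\Theta(r)$. For $l=1$, I will evaluate $d(r'\,dr\,r'')=dr'\wedge dr\,r''-r'\,dr\wedge dr''$ on a pair $(\Theta_1,\Theta_2)$ using the wedge-product formula of paragraph one, and check term-by-term against the formula: the $\Theta_i\omega(\widehat{\Theta_i})$ terms reproduce the Leibniz-rule expansions of $\Theta_i(r'\Theta_j(r)r'')$, while the iterated-derivation contributions $\Theta_i\Theta_j(r)-\Theta_j\Theta_i(r)$ collect into the Lie-bracket term $\omega([\Theta_i,\Theta_j],\ldots)$. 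The inductive step from $l$ to $l+1$ uses the graded Leibniz rule of (2) applied to $\omega=\omega'\wedge\alpha$ with $\omega'$ of degree $l-1$ and $\alpha$ of degree $1$, together with the induction hypothesis and a careful reorganization of the iterated bracket sums; the centrality of $\Theta(Z_R)$ in $R$ again ensures that the rearrangements make sense without extra commutator corrections.

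The main obstacle will be the sign- and position-bookkeeping in Step~3, specifically verifying that the naive Lie-bracket terms that appear from expanding $\Theta_i\Theta_j-\Theta_j\Theta_i$ in the noncommutative setting really combine into the antisymmetric combination $\sum_{i<j}(-1)^{i+j}\omega([\Theta_i,\Theta_j],\ldots)$, with no residual curvature-like correction surviving; this is where the fact that $\Theta$ preserves $Z_R$, and therefore that $\Theta(r)$ commutes with every factor in $R$ whenever $r\in Z_R$, is used repeatedly and crucially.
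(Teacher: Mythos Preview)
Your plan is sound and supplies exactly the verification the paper omits: the paper gives no proof at all, declaring the lemma ``straightforward to check, though part of it is very tedious.'' Your observation that every $\Theta\in\Der_S(R)$ preserves the center $Z_R$ (via $[\Theta(r),r']=\Theta(rr')-r\Theta(r')-\Theta(r'r)+\Theta(r')r=0$ for $r\in Z_R$) is indeed the crucial algebraic input, and you have correctly located where it is needed: in the well-definedness of $d^{\,1}$ modulo the $Z_R$-commutativity and chain-rule relations, and in the antisymmetry $dr_3\wedge dr=-dr\wedge dr_3$ for $r_3\in Z_R$ that underlies the sign bookkeeping in (3). The reduction of (1) to associativity of $R$ on generators, and the inductive derivation of $d\circ d=0$ from the graded Leibniz rule plus the degree $\le 1$ cases, are the expected arguments.

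One small remark: the intrinsic formula in (3) as printed in the statement is missing the alternating sign $(-1)^{i+1}$ on the first sum and a $+$ before the second; your induction should of course target the corrected form, and your plan implicitly does so.
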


\bigskip

\begin{remark}  {$[$simplified expression$]$.} {\rm
  Using Lemma~4.1.10 (1) and the notes after Definition~4.1.4,
  an element in $\bigwedge^l\Omega_{R/S}$ can be expressed as an $S$-linear combination of  forms
    $r_0dr_1\wedge\,\cdots\,\wedge dr_l$, where $r_0,\, r_1,\, \cdots\,,\, r_l\in R$.		
}\end{remark}

\bigskip

\begin{remark} {$[$on graded commutativity$]$.} {\rm
  For $R$ noncommutative, there is no universal statement relating
     $\omega_1\wedge\omega_2$ and $\omega_2\wedge\omega_1$
	 for $\omega_1$, $\omega_2$ of pure degrees.
  However,
   \begin{itemize}
    \item[$\cdot$]
	{\bf $[$induced graded commutativity from $Z_R$]} \hspace{1em} {\it
    for $\alpha$	of pure degree and
    in a $Z_R$-linear combination of forms
	$dr_1\wedge\,\cdots\,\wedge dr_l$ with $r_1,\,\cdots\,,\, r_l\in Z_R$,
   one has the graded commutativity property:
   $$
     \alpha \wedge\omega \;
	  =\; (-1)^{\scriptsizedegree(\alpha)\,\scriptsizedegree(\omega)}\,\omega\wedge\alpha
   $$
    for all $\omega\in\bigwedge^{\bullet}\Omega_{R/S} $  of pure degree.}
  \end{itemize}
 In particular, when $R$ is commutative,
  $(\bigwedge^{\bullet}\Omega_{R/S}, d)$ as defined is a differential graded commutative $S$-algebra.
}\end{remark}

\bigskip

\begin{remark} {$[$permutation of derivations vs.\ permutation of differentials$\,]$.} {\rm
 For $R$ noncommutative and general $\alpha_1,\,\cdots\,,\,\alpha_l\in\Omega_{R/S}$,
  $\alpha_1\wedge\,\cdots\,\wedge \alpha_l$
   not the same as
  $\sum_{\sigma\in\scriptsizeSym_l}
       \alpha_{\sigma(1)}\otimes\,\cdots\,\otimes\alpha_{\sigma(l)}$.
 Indeed, in general the latter does not define an antisymmetric map from
  $\Der_S(R)\times\,\cdots\,\times\Der_S(R)$ to $R$ through the natural evaluation.	
}\end{remark}

\bigskip

\begin{remark} {$[$algebraic vs.\ geometric notions$\,]$.} {\rm
 Though the above algebraic formulation is unavoidable, it is instructive to
 keep the following correspondence in mind:
  $$
   \begin{array}{clcccll}
     \mbox{\boldmath $\cdot$}
	  & \parbox[t]{11em}{\it ${\Bbb R}$-algebra $R$}
	     && \Longleftrightarrow &&  \parbox[t]{9em}{\it space $X$} \\[.6ex]		
	\mbox{\boldmath $\cdot$}
      & \parbox[t]{11em}{{\it derivation} on $R$}
		&&   \Longleftrightarrow
        && \parbox[t]{9em}{{\it vector field} on $X$}    \\[.6ex]
	\mbox{\boldmath $\cdot$}
	 & \parbox[t]{11em}{{\it differential} of $R$}
	    && \Longleftrightarrow &&  \parbox[t]{9em}{{\it $1$-form} on $X$}\\[.6ex]
     \mbox{\boldmath $\cdot$}
	 & \parbox[t]{11em}{{\it degree-$l$ differential} of $R$}
	    && \Longleftrightarrow &&  \parbox[t]{9em}{{\it $l$-form} on $X$}	 &.
   \end{array}
   $$
 However, it should be noted that
  when $R$ is noncommutative, $\Der_{\Bbb R}(R)$ is only a $Z_R$-module.
 Let $X_{Z_R}$ be the space associated to the center $Z_R$ of $R$.
 Then, a derivation $\Theta\in\Der_{\Bbb R}(R)$ is more adequately thought of
  as  an $R$-valued vector field on $X_{Z_R}$
  acting on a sheaf of ${\cal O}_{X_{Z_R}}$-algebras associated to $R$.
}\end{remark}

\bigskip

\begin{flushleft}
{\bf Differential calculus on Azumaya/matrix points, push-pulls under $C^k$-maps to ${\Bbb R}^n$,
         and infinitesimal deformations of such maps}
\end{flushleft}
The function-ring of an Azumaya/matrix point $p^{A\!z}$ is isomorphic to the matrix algebra
  $\End_{\Bbb C}({\Bbb C}^{\oplus r})$
     $= M_{r\times r}({\Bbb C})$ over ${\Bbb C}$ for some $r$,
 whose center is given by the subalgebra ${\Bbb C}\cdot\Id_{r\times r}\simeq {\Bbb C}$.
Up to a relabelling,
 the differential calculus on $p^{A\!z}$ is by definition the differential calculus on the matrix ring
  $M_{r\times r}({\Bbb C})$.
We review it in the following two examples to illustrate the notions introduced in this subsection and
  to prepare for Sec.~4.2.
    
\bigskip

\begin{example} {\bf [derivations \& differentials of Azumaya/matrix point].} {\rm
 Up to a relabelling, derivations and differentials of $p^{A\!z}$ are then defined to be
  derivations and differentials of the noncommutative algebra $M_{r\times r}({\Bbb C})$
  over ${\Bbb C}$.
 Let $(e_1,\,\cdots\,,\,e_r)$ be the standard basis of ${\Bbb C}^{\oplus r}$ and
     $(e^1,\,\cdots\,,\,e^r)$ be the dual basis for $({\Bbb C}^{\oplus r})^{\vee}$.
 
 All derivations on $M_{r\times r}({\Bbb C})$ over ${\Bbb C}$ are inner.
 Thus,
  $$
   \Der_{\Bbb C}(M_{r\times r}({\Bbb C})) \;
    \simeq\; M_{r\times r}({\Bbb C})/({\Bbb C}\cdot\Id_{r\times r})\;
    \simeq\; \slLie_r({\Bbb C})\; \subset\; M_{r\times r}({\Bbb C})
  $$
  as Lie algebras over ${\Bbb C}$.
 In particular, $\dimm_{\Bbb C}(\Der_{\Bbb C}(M_{r\times r}({\Bbb C})))=r^2-1$.
   
 For differentials, let $e_i\!\,^j:=e_i\otimes e^j$, $1\le i,\,j\le r$,
  be the $r\times r$-matrix with $1$ for the $(i,j)$-entry and $0$ elsewhere.
 Then,
  $$
    e_i\!\,^j\,e_{i^{\prime}}\!\,^{j^{\prime}}\;
	 =\; \delta^j_{i^{\prime}}\, e_i\!\,^{j^{\prime}}
  $$
 and, hence,
  $$
   de_i\!\,^j\,e_{i^{\prime}}\!\,^{j^{\prime}}\;
	 =\; -\, e_i\!\,^j\,de_{i^{\prime}}\!\,^{j^{\prime}}\,
	     +\, \delta^j_{i^{\prime}}\, de_i\!\,^{j^{\prime}}\,.
  $$
 Thus, the bi-$M_{r\times R}({\Bbb C})$-module $\Omega_{M_{r\times r}({\Bbb C})/{\Bbb C}}$
  can be converted naturally to a purely left $M_{r\times r}({\Bbb C})$-module,
  generated by $de_i\!\,^j$, $1\le i,\,j\le r$, with the relation
  $de_1\!\,^1+\,\cdots\,+ de_i\!\,^i+\,\cdots\,+ de_r\!\,^r\,=\,0\,$.
 It follows from
  the pairing
   $\Der_{\Bbb C}(M_{r\times r}({\Bbb C}))
     \otimes_{\Bbb C}\Omega_{M_{r\times r}({\Bbb C})/{\Bbb C}}
	  \rightarrow M_{r\times r}({\Bbb C})$
  and a dimension count that
  $$
   \Omega_{M_{r\times r}({\Bbb C})/{\Bbb C}}\;
    \stackrel{\sim}{\longrightarrow}\;
	M_{r\times r}({\Bbb C})\otimes_{\Bbb C} (\slLie_r({\Bbb C}))^{\vee}
  $$
  (i.e.\ $M_{r\times r}({\Bbb C})$-valued ${\Bbb C}$-linear functional on $\slLie_r({\Bbb C})$ )
  with
  $$
    m_0\,dm_1\;\; \longmapsto\;\;  (\,\cdot\;\mapsto\; m_0\, [\,m_1,\,\cdot\,] )\,, \hspace{2em}
	  \mbox{for $m_0$, $m_1\in M_{r\times r}({\Bbb C})$}\,.
 $$
 Here,
   $(\slLie_r({\Bbb C}))^{\vee}$ is the dual ${\Bbb C}$-vector space of
      $\slLie_r({\Bbb C})$ as a ${\Bbb C}$-vector space    and
  the bi-$M_{r\times r}({\Bbb C})$-module structure
   on  $M_{r\times r}({\Bbb C})\otimes_{\Bbb C}(\slLie_r({\Bbb C}))^{\vee}$
   is given by $m_1\cdot (m\otimes f )\cdot m_2 := (m_1mm_2)\otimes f$.
 The built-in derivation
   $d:M_{r\times r}({\Bbb C})\rightarrow\Omega_{M_{r\times r}({\Bbb C})/{\Bbb C}}$
   is realized as
   $$
    \begin{array}{ccccc}
     d  & :     &  M_{r\times r}({\Bbb C})  &  \longrightarrow
         & M_{r\times r}({\Bbb C})\otimes_{\Bbb C} (\slLie_r({\Bbb C}))^{\vee}\\[.6ex]
     &&  m   &  \longmapsto    &  (\,\cdot\;\mapsto\; [\,m,\, \cdot\,] )\,.
   \end{array}
  $$
\noindent\hspace{15.7cm}$\square$
}\end{example}

\bigskip

\begin{example}
{\bf [differential graded algebra of Azumaya/matrix point].} {\rm
 It follows from Example~4.1.15 that
 the differential graded algebra
  $\bigwedge^{\bullet}\Omega_{M_{r\times r}({\Bbb C})/{\Bbb C}}$
  of the matrix ring $M_{r\times r}({\Bbb C})$ over ${\Bbb C}$
  is given by
  $$
    \mbox{$\bigwedge^{\bullet}\Omega_{M_{r\times r}({\Bbb C})/{\Bbb C}}$}\;
     \simeq\; M_{r\times r}({\Bbb C})
	                     \otimes_{\Bbb C}\mbox{$\bigwedge^{\bullet}(\slLie_r({\Bbb C}))^{\vee}$}\,.
  $$
 Here,
  \begin{itemize}
    \item[$\cdot$]
	  the bi-$M_{r\times r}({\Bbb C})$-module structure of
	  $M_{r\times r}({\Bbb C})
	                     \otimes_{\Bbb C}\bigwedge^{\bullet}(\slLie_r({\Bbb C}))^{\vee}$
	    comes from the $M_{r\times r}({\Bbb C})$-factor as in Example~4.1.15,
																	
    \item[$\cdot$]
     $\bigwedge^{\bullet}(\slLie_r({\Bbb C}))^{\vee}$	
	   is the exterior algebra (e.g.\ [Gu-P] and [Wa]) of
	   $(\slLie_r({\Bbb C}))^{\vee}$ as a ${\Bbb C}$-vector space,
	
    \item[$\cdot$]	
     $d: \bigwedge^{\bullet}\Omega_{M_{r\times r}(\Bbb C)/{\Bbb C}}
	        \rightarrow  \bigwedge^{\bullet+1}\Omega_{M_{r\times r}({\Bbb C})/{\Bbb C}}$
     is realized on
	 $M_{r\times r}({\Bbb C})
	                     \otimes_{\Bbb C}\bigwedge^{\bullet}(\slLie_r({\Bbb C}))^{\vee}$
	as the ${\Bbb C}$-linear extension of
     $$
        d(m\otimes  \omega)\;=\;  dm\wedge \omega
     $$	 	
	 for all $m\in M_{r\times r}({\Bbb C})$ and
	           $\omega\in\bigwedge^{\bullet}(\slLie_r({\Bbb C}))^{\vee}$,
	      with $dm$ as defined in Example~4.1.15.
  \end{itemize}		
\noindent\hspace{15.7cm}$\square$
}\end{example}

\bigskip

Combined now with the study in Sec.~3,
let
 $$
   \varphi\; :\; (p,\End_{\Bbb C}({\Bbb C}^{\oplus r}),{\Bbb C^{\oplus r}})\;
                            \longrightarrow\; {\Bbb R}^n
 $$
   be a $C^k$-map, $k\ge 1$, from a fixed Azumaya point with a fundamental module to ${\Bbb R}^n$
   defined by a $C^k$-admissible ring-homomorphism
   $\varphi^{\sharp}:C^k({\Bbb R}^n)
      \rightarrow \End_{\Bbb C}({\Bbb C}^{\oplus r})=M_{r\times r}({\Bbb C})$
   over ${\Bbb R}\hookrightarrow{\Bbb C}$.
  
\bigskip

\begin{example} {\bf [push-forward of derivation under $\varphi$].} {\rm
 Let $\Theta\in \Der_{\Bbb C}(M_{r\times r}({\Bbb C}))$
  be a derivation on $M_{r\times r}({\Bbb C})$ over ${\Bbb C}$.
 Then $\Theta$ acts on $C^k({\Bbb R}^n)$ via
   $$
     (\varphi_{\ast}\Theta)(f)\; :=\; \Theta(\varphi^{\sharp}(f))\,.
   $$
 Which satisfies a Leibniz rule of the form
  $$
   (\varphi_{\ast}\Theta)(fg)\;
     =\;  (\varphi_{\ast}\Theta)(f)\,\varphi^{\sharp}(g)\,
	          +\, \varphi^{\sharp}(f)\,(\varphi_{\ast}\Theta)(g) \,.
  $$
 Thus it makes sense to regard
   $\varphi_{\ast}\Theta$ thus defined
   as a {\it
     derivation on $C^k({\Bbb R}^n)$ with values in $M_{r\times r}({\Bbb C})$ through $\varphi$},
 though {\it caution} that,
    under the ring-homomorphism $\varphi^{\sharp}$,
     $M_{r\times r}({\Bbb C})$ as a left-$C^k({\Bbb R}^n)$-module
      is not identical to $M_{r\times r}({\Bbb C})$ as a right-$C^k({\Bbb C}^n)$   and, hence,
   that {\it there is no map
     $\Omega_{C^k({\Bbb R}^n)/{\Bbb R}}\rightarrow M_{r\times r}({\Bbb C})$
	 that makes the following diagram commute}
	 $$
       \xymatrix{
        & &&& \Omega_{C^k({\Bbb R}^n)/{\Bbb R}}\ar[dd] \\
        & C^k({\Bbb R}^n) \ar[urrr]^-{d}   \ar[drrr]_-{\varphi_{\ast}\Theta}                \\
        & &&& M_{r\times r}({\Bbb C})  &.
       }
     $$
 Define the ${\Bbb C}$-module
  $$
    \varphi^{\ast}\Der_{\Bbb R}(C^k({\Bbb R}^n))\;
	:=\;  \left\{\left.\!\begin{array}{l}
			      \mbox{${\Bbb R}$-linear map} \\[.6ex]
			      \Xi: C^k({\Bbb R}^n)\rightarrow M_{r\times r}({\Bbb C})			
				                 \end{array}
			 \right|\;
			 \Xi(fg)\;=\; \Xi(f)\,\varphi^{\sharp}(g)\,+\, \varphi^{\sharp}(f)\,\Xi(g)\,
			\right\}
  $$ 	
 Then, on the domain-of-$\varphi$ side,
  the correspondence $\Theta \mapsto \varphi_{\ast}\Theta $ defines a homomorphism
  $$
    \varphi_{\ast}\; :\;
	  \Der_{\Bbb C}(M_{r\times r}({\Bbb C}))\;
	   \longrightarrow\;
	   \varphi^{\ast}\Der_{\Bbb R}(C^k({\Bbb R}^n))  		
  $$
  as ${\Bbb C}\,$($=Z_{M_{r\times r}({\Bbb C})}$)-modules
  
\noindent\hspace{15.7cm}$\square$
}\end{example}

\bigskip

\begin{remark}
 $[ M_{r\times r}({\Bbb C})
	   \otimes_{\varphi^{\sharp}, C^k({\Bbb R}^n)}\Der_{\Bbb R}(C^k({\Bbb R}^n)) ]$.
{\rm
 Caution	that, unlike in the commutative case, in general and as ${\Bbb C}$-modules,
  $$
    \varphi^{\ast}\Der_{\Bbb R}(C^k({\Bbb R}^n))\;
     \not\simeq\;	
     M_{r\times r}({\Bbb C})
	   \otimes_{\varphi^{\sharp}, C^k({\Bbb R}^n)}\Der_{\Bbb R}(C^k({\Bbb R}^n))
  $$
  for $\varphi$ with $\varphi^{\sharp}(C^k({\Bbb R}^n))$ not contained in the center
  ${\Bbb C}\cdot\Id_{r\times r}$ of $M_{r\times r}({\Bbb C})$.
}\end{remark}		

\bigskip

\begin{remark} {$[$infinitesimal deformation of $C^k$-map$\,]$.} {\rm
 Given a real $1$-parameter family
  $$
   \varphi_t\;:\; (p,\End_{\Bbb C}({\Bbb C}^{\oplus r}),{\Bbb C}^{\oplus r})\;
    \longrightarrow\; {\Bbb R}^n
  $$
  of $C^k$-maps that are defined by a real $1$-parameter family of $C^k$-admissible ring-homomorphisms
  $\varphi^{\sharp}_t:C^k({\Bbb R}^n)\rightarrow M_{r\times r}({\Bbb C})$,
  $t\in (-\varepsilon, \varepsilon)$,
 the {\it infinitesimal deformation of  $\varphi_t$ at $t=0$}
  $$
    \left. \frac{d}{dt}\right|_{t=0}\varphi^{\sharp}\;:
	  C^k({\Bbb R}^n)\; \longrightarrow\; M_{r\times r}({\Bbb C})
  $$
  defines a $M_{r\times r}({\Bbb C})$-valued derivation
  $\Xi   \in   \varphi^{\ast}\Der_{\Bbb R}(C^k({\Bbb R}^n))$
  of $C^k({\Bbb R}^n)$ in the sense of Example~4.1.17.
 The converse statement
  \begin{itemize}
   \item[$\cdot$] \parbox[t]{38em}{\it
    {\bf [derivation vs.\ infinitesimal deformation].}
     Let
       $\varphi:(p,\End_{\Bbb C}({\Bbb C}^{\oplus r}),{\Bbb C}^{\oplus r})
	         \rightarrow {\Bbb R}^n$
	  	be a $C^k$-map  and
       $\Xi   \in  \varphi^{\ast}\Der_{\Bbb R}(C^k({\Bbb R}^n))$					
        be an $M_{r\times r}({\Bbb C})$-valued derivation on $C^k({\Bbb R}^n)$ through $\varphi$.
     Then there exists a $1$-parameter family of  $C^k$-admissible ring-homomorphisms
       $\varphi^{\sharp}_t:C^k({\Bbb R}^n)\rightarrow M_{r\times r}({\Bbb C})$,
        $t\in (-\varepsilon, \varepsilon)$,
	   over ${\Bbb R}\hookrightarrow{\Bbb C}$
       with $\varphi^{\sharp}_0=\varphi^{\sharp}$
      such that  $\Xi=\left.\frac{d}{dt}\right|_{t=0}\varphi^{\sharp}_t$.
	  }
  \end{itemize}
 should also be true, following a matrix exponential-map argument.
 But we'll leave this topic for another work.
}\end{remark}

\bigskip

\begin{example}
{\bf [pull-back of differential under $\varphi$].} {\rm
 (Cf.\ Remark~4.1.8; and recall some notation from Example~4.1.15.)
 {To} be concrete,  consider the $C^k$-map
   $\varphi:(p.\End_{\Bbb C}({\Bbb C}^2),{\Bbb C}^4)\rightarrow {\Bbb R}^1$
   defines by
   $$
    \begin{array}{ccccccl}
     & \varphi^{\sharp} &: & C^k({\Bbb R}^1)   & \longrightarrow  & M_{2\times 2}({\Bbb C})\\[.6ex]
	 &  && y   & \longmapsto   & e_2\!\,^1    &.
	\end{array}
   $$
 Then,  {\it naively but naturally},   $\varphi$ induces a map
  $$
    \begin{array}{ccccccl}
     & \varphi^{\sharp}_{\ast}=\varphi^{\ast} &:    & \Omega_{C^k({\Bbb R}^1)/{\Bbb R}}
 	 & \longrightarrow     & \Omega_{M_{2\times 2}({\Bbb C})/{\Bbb C}}                    \\[.6ex]
	 &  && dy   & \longmapsto   & de_2\!\,^1     &.
	\end{array}
  $$
 Which would imply, for example,
  $$
    \begin{array}{cl}
      \varphi^{\ast}(y\,dy)  & =\; e_2\!\,^1\,de_2\!\,^1  \\[.2ex]
	   \|    \\
	  \varphi^{\ast}(dy\,y)  & =\; de_2\!\,^1\, e_2\!\,^1\;  =\;  -\, e_2\!\,^1\,de_2\!\,^1\, .
	\end{array}
  $$
 That is, $e_2\!\,^1\,de_2\!\,^1=0$ in $\Omega_{M_{2\times 2}({\Bbb C})/{\Bbb C}}$,
   which is a contradiction.
 Thus, {\it
    there is no natural map
    	from $\Omega_{C^k({\Bbb R}^1)/{\Bbb R}}$
		to $\Omega_{M_{2\times 2}({\Bbb C})/{\Bbb C}}$ over $\varphi^{\sharp}$,
    or from $M_{2\times 2}({\Bbb C})
	                   \otimes_{\varphi^{\sharp},C^k({\Bbb R}^1)}\Omega_{C^k({\Bbb R}^1)/{\Bbb R}}$
        to $\Omega_{M_{2\times 2}({\Bbb C})/{\Bbb C}}$
		   as bi-$M_{2\times 2}({\Bbb C})$-modules}.
 This illustrates the fact that
    in general a $C^k$-map
     $\varphi:(p,\End_{\Bbb C}({\Bbb C}^{\oplus r}),{\Bbb C}^{\oplus r})
	     \rightarrow {\Bbb R}^n$
    does not induce naturally a map
    	from $\Omega_{C^k({\Bbb R}^n)/{\Bbb R}}$
		to $\Omega_{M_{r\times r}({\Bbb C})/{\Bbb C}}$ over $\varphi^{\sharp}$,\\
    or from $M_{r\times r}({\Bbb C})
	                   \otimes_{\varphi^{\sharp},C^k({\Bbb R}^n)}\Omega_{C^k({\Bbb R}^n)/{\Bbb R}}$
        to $\Omega_{M_{r\times r}({\Bbb C})/{\Bbb C}}$
		   as bi-$M_{r\times r}({\Bbb C})$-modules.
 This is a general phenomenon when defining a map $\varphi$  from a noncommutative space to a commutative space
    via its contravariant notion of a ring-homomorphism $\varphi^{\sharp}$
	 from the function-ring of the target of $\varphi$ to the function-ring of the domain of $\varphi$.\footnote{Alert
	                                       readers may ask legitimately,
										   ``{\it Is there something wrong  in either our notion of $C^k$-map $\varphi$
	         									  or our definition of the module $\Omega_{R/S}$ of differentials
										          that causes such a consequence?" }
                                           Indeed, it is very true that the issue would be gone if in defining the notion of $\varphi$
										      via $\varphi^{\sharp}$, we require that {\it Im}$\,\varphi^{\sharp}$
											  lie in the center of the ring in question.
										   However, as already explained in [L-Y1] (D(1)) and tested against string-theory literature,
 										    cf.\ [L-Y1] (D(1)), [L-Y2] (D(3)), [L-Y3] (D(4)), and [L-Y5] (D(6)),
                                            such more restrictive notion of morphisms, though mathematically acceptable, is not adequate
                                              to describe D-branes in full.
										  Thus, we choose to stay on our string-theory-oriented-'n-tested notion of morphisms
										    and try to let other accompanying notions fall into it. 	
										  This leads us to the second part of the question concerning the defintion of
     										 $\Omega_{R/S}$ in Definition~4.1.4.
                                          															
                                          {To} see clearer that this is really an issue more on how $\Omega_{R/S}$
  										    is defined when $R$ is commutative, rather than when $R$ is noncommutative,
										  let us resume the algebraic case of module of K\"{a}hler differentials and note 	
										   that in this case one couid define $\Omega_{R/S}$ as follows:
										   ($R$ being commutative or noncommutative)
											\begin{itemize}
											 \item[$\cdot$] \parbox[t]{14.2cm}{{\bf
											  Definition~4.1.4{\boldmath $^{\prime}$}.
											    [module of (K\"{a}hler) differentials of ring].}
                                              Let $R$ be a(n associative, unital) ring over a commutative ring $S\subset Z_R$.
                                              Then,
                                                 the {\it module of (K\"{a}hler) differentials}, denoted by $\hat{\Omega}_{R/S}$,
												 is the bi-$R$-module generated by the set
                                                  $$
                                                     \{ d(r)\,|\,  r\in R   \}
                                                  $$
                                                 subject to the relations
                                                 $$
                                                   \begin{array}{llll}	
                                                    \mbox{($S$-linearity)}
	                                                 &&& d(a_1r_1+a_2r_2)\;
	                                                         =\; a_1\,d(r_1)\,+\,a_2\,d(r_2)\;
				                                             =\; d(r_1)a_1\,+\, d(r_2)\,a_2 				\,,   \\[.6ex]
	                                                \mbox{(Leibniz rule)}
	                                                 &&&  d(r_1r_2)\;=\;  d(r_1)\, r_2\,+\, r_1\, d(r_2)
                                                   \end{array}
                                                 $$	
                                                 for all $a_1,\,a_2\in S$ and $r_1,\,r_2\in R$.												 
                                              Denote the image of $d(r)$ under the quotient by $dr$.
                                              Then, by definition, the built-in map
                                                $$
                                                  \begin{array}{ccccc}
	                                               d & :&  R & \longrightarrow  & \hat{\Omega}_{R/S}\\[.6ex]
	                                               &&    r  & \longmapsto        & dr
	                                              \end{array}
                                                $$
                                               is an $S$-derivation from $R$ to $\hat{\Omega}_{R/S}$.
                                              }
											\end{itemize}
                                          With this definition,
   									       let $R$, $R^{\prime}$ be $S$-algebras with
										      $S\hookrightarrow Z_R$ and $S\hookrightarrow Z_{R^{\prime}}$.
                                          Then any $S$-algebra-homomorphism $\rho:R\rightarrow R^{\prime}$
										   with $\rho(S)\subset Z_{R^{\prime}}$ induces canonically a map
										   $\rho_{\ast}: \hat{\Omega}_{R/S}
										                                 \rightarrow \hat{\Omega}_{R^{\prime}/S}$
										   with $r_1drr_2\mapsto \rho(r_1)d\rho(r)\rho(r_2)$.
										
                                         An issue now comes in when $R$ is {\it commutative}.
										 There in the standard treatment
										   $\Omega_{R/S}$ is defined as in Definition~4.1.4$^{\prime}$
                                           but subject to {\it an additional set of relations}
                                           $$
                                             \begin{array} {llll}
                                               \mbox{(commutativity)}
                                                &&& 	 r_1\, d(r_2)\; =\;  d(r_2)\, r_1 \hspace{16.6em}
                                             \end{array}
                                           $$
                                           for all $r_1,\, r_2\in R$.										   								
                                         By definition, there is a built-in quotient bi-$R$-module epimorphism
                                          $$
                                            \hat{\Omega}_{R/S}\;\longrightarrow\; \Omega_{R/S}\,.
                                          $$
                                        (Here, the right $R$-module structure on $\Omega_{R/S}$ is, by definition, identical
                                            to the left $R$-module structure on $\Omega_{R/S}$.)
                                         In this case when $\rho:R\rightarrow R^{\prime}$ is as above
										     with $R$ commutative and $R^{\prime}$ noncommutative,
										   one has the natural map
										    $\hat{\Omega}_{R/S}\rightarrow \hat{\Omega}_{R^{\prime}/S}$ as before
											but in general it doesn't descend to a map
											from $\Omega_{R/S}$ to $\hat{\Omega}_{R^{\prime}/S}$.
                                         This is the fundamental/underlying/built-in reason
										   our $\varphi$ doesn't pull back differentials in general.
										
										 Our definition of $\Omega_{R/S}$ in Definition~4.1.4
										  is guided by and designed to make $\Omega_{R/S}$ as a module of functionals
  										   on {\it Der}$_S(R)$.
										 Since the latter is canonically a $Z_R$-module with identical left- and right-module structure,
   										   $Z_R$-commutativity relations are added in.
                                         With this, it naturally resumes the usual definition of $\Omega_{R/S}$ when $R$ is commutative.
                                         However, it doesn't resolve the issue of the existence of a canonically induced map on differentials
										   from a ring-homomorphism.
                                         We simply have to live with this ``defect" (from the aspect of commutative geometry)
										  and find other means when we do need such a notion.
                                         Very fortunately, as we will see in Sec.~6.3,
										  passing to surrogates patches up this defect in a natural way:
										 {\it While in general
										    differentials of a commutative space cannot be pulled back to a noncomutative space,
										     it can still be pulled back to commutative surrogates of that noncommutative space.}
 										 This solves the problem as long as the goal of this project is concerned. 	
												 } 
 
 In Sec.~6.3, we will discuss how the notion of `pull-back of differentials or tensors under $\varphi$'
  can still be naturally and usefully formulated to accommodate the notion of $C^k$-map $\varphi$ in our study
  by passing to the `surrogate' of the noncommutative space in question under $\varphi$,
  cf.~Lemma/Definition~5.3.1.7.
		
\noindent\hspace{15.7cm}$\square$
}\end{example}

\bigskip
					
\subsection{Differential calculus on Azumaya differentiable manifolds with a fundamental module}
 
Recall the introduction of  the current Sec.~4 and
let
 $$
  (X^{\!A\!z}, {\cal E})\;
	 :=\;  (X,{\cal O}_X^{A\!z}:=\Endsheaf_{{\cal O}_X^{\,\Bbb C}}({\cal E}),{\cal E})
 $$
 be an Azumaya $C^k$-manifold with a fundamental module,  as introduced in Definition~4.0.1.
The general setting and differential-calculus-related notions in Sec.~4.1 apply then
  to Azumaya/matrix algebras over a $C^k$-ring in our situation.
A local-to-global gluing of related modules gives rise to respective associated sheaves on $X$ or $X^{\!A\!z}$.
This provides a basis for the differential calculus on $(X^{\!A\!z},{\cal E})$.
The study in this subsection can be regarded also
 as a central extension of the differential calculus on Azumaya points  $p^{A\!z}$,
 as summarized/reviewed in Example~4.1.15 and Example~4.1.16.
This subsection is an adaptation of relevant parts of  the work [DV-M] of Michel Dubois-Violette and Thierry Masson,
  making the terminology and notation more amicable to our project  and
   bringing out more to-the-front of the $C^k$-ring structure of the center of the noncommutative rings involved.
Readers are referred to ibidem and references therein for more details and other pre-1995, physical motivations
 that lead these and other authors to study calculus on matrix differentiable manifolds as well.

\bigskip

\begin{flushleft}
{\bf Basic objects in differential calculus on Azumaya manifolds with a fundamental module}
\end{flushleft}
The following lemma follows from standard differential topology:

\bigskip

\begin{lemma} {\bf [differential calculus on $C^k(U)$ for open set of $C^k$-manifold].}
 For the $C^k$-ring $C^k(U)$, where $U$ is an open set of a $C^k$-manifold,
 $Der_{\Bbb R}(C^k(U))$,
 $\Omega_{C^k(U)/{\Bbb R}}$,
  and $\bigwedge^{\bullet}\Omega_{C^k(U)/{\Bbb R}}$
    as defined in Sec.~4.1
  coincide with the usual $C^k(U)$-module of
    $C^k$ tangent vector fields (cf.~$C^k(T_{\ast}U)$),
    $C^k$ $1$-forms (cf.~$C^k(T^{\ast}U)$),     and
    $C^k$ differential forms  (cf.~$C^k(\bigwedge^{\bullet}T^{\ast}U)$)
  on $U$, as defined in differential topology.
\end{lemma}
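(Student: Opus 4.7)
The strategy is to establish the three stated identifications in sequence, leveraging the fact that $R:=C^k(U)$ is commutative so that $Z_R=R$, and that the chain-rule clause in Definition~4.1.1, Definition~4.1.3, and Definition~4.1.4 forces all of the algebraically defined objects to be \emph{local} and therefore expressible in local coordinates. Throughout, one uses the partition-of-unity property of $C^k$-manifolds (with boundary or without) to patch local statements into global ones.

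First I would handle $\Der_{\Bbb R}(C^k(U))$. In one direction, a $C^k$ vector field $\Theta$ on $U$ acts on $C^k(U)$ by directional differentiation and satisfies the ${\Bbb R}$-linearity, Leibniz rule, and the chain rule for $h\in C^k({\Bbb R}^s)$ by the classical chain rule of calculus, so it is an element of $\Der_{\Bbb R}(C^k(U))$. In the other direction, given $\Theta\in\Der_{\Bbb R}(C^k(U))$, I would first establish \emph{locality}: if $f\in C^k(U)$ vanishes on an open subset $V\subset U$, then $\Theta(f)$ vanishes on $V$. This uses a bump function $\rho\in C^k(U)$ with $\rho\equiv 1$ on a neighborhood of a given point $p\in V$ and $\rho\equiv 0$ outside a compact subset of $V$; writing $\rho = h(\rho)\cdot\rho$ for a suitable $h\in C^k({\Bbb R})$ and invoking the chain rule clause yields $\Theta(\rho)(p)=0$, and the Leibniz rule then propagates the vanishing to $\Theta(f)$. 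Once locality is in hand, I restrict to a coordinate chart $(V;y^1,\,\cdots\,,y^m)\subset U$ and apply Lemma~3.2.0.1 to write any $f\in C^k(V)$ in Taylor form at $p\in V$; the chain rule clause of Definition~4.1.1 applied to the smooth remainder terms and to the coordinates $y^i$ shows that $\Theta|_V = \sum_i \Theta(y^i)\,\partial/\partial y^i$, which is a $C^k$ vector field on $V$. A standard partition-of-unity argument glues these local fields into a global $C^k$ vector field on $U$, and the two correspondences are mutually inverse.

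For $\Omega_{C^k(U)/{\Bbb R}}$, I would use Lemma~4.1.5 together with the preceding identification. Classical $C^k$ $1$-forms on $U$ carry a $C^k(U)$-module structure and there is a pairing with $C^k$ vector fields that is perfect (reflexive) because $T^*U$ and $T_*U$ are $C^k(U)$-locally-free duals of each other. Combining the universal property of $\Omega_{C^k(U)/{\Bbb R}}$ with the fact, just established, that $\Der_{\Bbb R}(C^k(U))=C^k(T_*U)$, gives a canonical $C^k(U)$-module map from $\Omega_{C^k(U)/{\Bbb R}}$ to $C^k(T^*U)$ sending $f_0\,df_1$ to $f_0\,df_1$ in the classical sense. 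Surjectivity follows because locally every $C^k$ $1$-form equals $\sum_i g_i\,dy^i$; injectivity follows from the non-degeneracy of the pairing with $\Der_{\Bbb R}(C^k(U))$ in Lemma~4.1.6 together with locality/partition of unity. The identification of $\bigwedge^{\bullet}\Omega_{C^k(U)/{\Bbb R}}$ with the classical $C^k$ differential forms $C^k(\bigwedge^{\bullet}T^*U)$ then follows from Definition~4.1.9 (which \emph{defines} degree-$l$ elements as $C^k(U)$-multilinear antisymmetric maps on $\Der_{\Bbb R}(C^k(U))^{\times l}$) and the classical identification of such multilinear antisymmetric maps with sections of $\bigwedge^l T^*U$; the compatibility of the differential is immediate from Lemma~4.1.10(3), which reduces to the Cartan formula for $d$ on ordinary differential forms.

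The main obstacle is the proof of locality of derivations, i.e.\ that the chain-rule clause of Definition~4.1.1 forces $\Theta(f)(p)$ to depend only on the germ of $f$ at $p$. In the $C^{\infty}$ case this is folklore (and is essentially the content of Hadamard's lemma together with the bump-function argument), but for finite $k$ some care is needed to maintain $C^k$-regularity of all auxiliary functions used (cutoff functions $\rho$, composition with $h\in C^k({\Bbb R}^s)$, etc.), so that every appeal to the chain rule clause is legitimate. Everything else is then a routine passage from local coordinate statements to global ones via partition of unity.
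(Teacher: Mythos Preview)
The paper does not supply a proof of this lemma; it simply prefaces it with ``The following lemma follows from standard differential topology.'' Your proposal therefore fills in what the paper leaves to the reader, and the overall architecture --- establish locality of derivations, identify them in coordinates via the chain-rule clause, then deduce the statements for $\Omega$ and $\bigwedge^{\bullet}\Omega$ by universal properties and the definition in Sec.~4.1 --- is the standard route and is correct.

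One point to clean up: your locality argument as written (``writing $\rho = h(\rho)\cdot\rho$ \ldots\ yields $\Theta(\rho)(p)=0$'') does not actually go through; if you unwind it you get the tautology $\Theta(\rho)(p)=\Theta(\rho)(p)$. The standard argument is simpler and needs only Leibniz, not the chain rule: with $f\equiv 0$ on $V$ and $\rho$ supported in $V$ with $\rho(p)=1$, one has $\rho f\equiv 0$ on $U$, hence $0=\Theta(\rho f)=\Theta(\rho)f+\rho\,\Theta(f)$, and evaluating at $p$ gives $\Theta(f)(p)=0$. Also, once locality is known, you can skip the Taylor remainder entirely: in a chart, any $f\in C^k(V)$ is (after a bump-function extension) of the form $h(y^1,\ldots,y^m)$ with $h\in C^k({\Bbb R}^m)$, and the chain-rule clause of Definition~4.1.1 gives $\Theta(f)=\sum_i(\partial_ih)(y)\,\Theta(y^i)$ directly. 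This sidesteps the regularity issue you flag for finite $k$, since the auxiliary $h$ is already $C^k$ and no $C^{k-1}$ remainder functions enter.
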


\bigskip

\begin{definition} {\bf [tangent sheaf, cotangent sheaf, and sheaf of $l$-forms].}  {\rm
 Recall the notation from the introduction of the current Sec.~4.
 (1)
 The presheaf on $X$  that associates to an open set $U\subset X$
  the $C^k(U)^{\Bbb C}$-module $\Der_{\Bbb C}(\End_U(E_U))$ is a sheaf on $X$,
  denoted by ${\cal T}_{\ast}X^{\!A\!z}$
   and called interchangeably the {\it tangent sheaf} of $X^{\!A\!z}$ or
    the {\it sheaf of derivations on ${\cal O}_X^{A\!z}$}.
 Caution that ${\cal T}_{\ast}X^{\!A\!z}$ is an ${\cal O}_X$-module,
  but not an ${\cal O}_X^{A\!z}$-module.
  
 (2)
 The presheaf on $X$  that associates to an open set $U\subset X$
  the bi-$\End_U(E_U)$-module $\Omega_{\scriptsizeEnd_U(E_U)/{\Bbb C}}$ is a sheaf on $X$,
  denoted by ${\cal T}^{\ast}X^{\!A\!z}$
   and called interchangeably the {\it cotangent sheaf} of $X^{\!A\!z}$ or
    the {\it sheaf of differentials of ${\cal O}_X^{A\!z}$}.
 Note that ${\cal T}^{\ast}X^{\!A\!z}$ is a bi-${\cal O}_X^{A\!z}$-module.
 By construction, there is a canonical map
  $d:{\cal O}_X^{A\!z}\rightarrow {\cal T}^{\ast}X^{\!A\!z}$
   as sheaves of ${\Bbb C}$-vector spaces on $X$.
 
 (3)
 The presheaf on $X$  that associates to an open set $U\subset X$
  the bi-$\End_U(E_U)$-module $\bigwedge^l\Omega_{\scriptsizeEnd_U(E_U)/{\Bbb C}}$
   is a sheaf on $X$,
  denoted by $\bigwedge^l{\cal T}^{\ast}X^{\!A\!z}$
   and called
    the {\it sheaf of $l$-forms on $X^{\!A\!z}$}.
 Note that $\bigwedge^l{\cal T}^{\ast}X^{\!A\!z}$ is a bi-${\cal O}_X^{A\!z}$-module.
 
 (4) Continuing (3),
 the differential graded algebras
  $(\bigwedge^{\bullet}\Omega_{\scriptsizeEnd_U(E_U)/{\Bbb C}}, d)$,
    $U\stackrel{\mbox{\it\tiny open}}{\hookrightarrow}X$, 	
   as defined in Definition~4.1.9,
  glue to a {\it sheaf of differential graded algebras}
  $(\bigwedge^{\bullet} {\cal T}^{\ast}X^{\!A\!z}, d)$ on $X$.		
}\end{definition}
		
\bigskip

Let $U\subset X$ be an open set
 on which the complex vector bundle $E$ of rank-$r$ is trivial and is trivialized
 $$
   E_U\;\simeq\; U\times {\Bbb C}^r\,.
 $$
With respect the trivialization, one has
 $$
   {\cal O}_X^{A\!z}(U)\;\simeq\;  M_{r\times r}(C^k(U)^{\Bbb C})
 $$
 the ${\Bbb C}$-algebra of $r\times r$-matrices with entries ${\Bbb C}$-valued $C^k$-functions on $U$.
A central extension of the discussion and results in Example~4.1.15 and Example~4.1.16
 from ${\Bbb C}$ ($=C^k(p)$) to $C^k(U)^{\Bbb C}$
 gives the local expression for
   ${\cal T}_{\ast}X^{\!A\!z}$,   ${\cal T}^{\ast}X^{\!A\!z}$,    and
   $\bigwedge^l{\cal T}^{\ast}X^{\!A\!z}$, and
   $(\bigwedge^{\bullet}{\cal T}^{\ast}X^{\!A\!z}, d)$
 as shown in the following lemma:
 
\bigskip

\begin{lemma} {\bf [expression under local trivialization of ${\cal E}$].}
 {\rm (Cf.\ [DV-M: Sec.~1.3].)}
 When restricted to $U$,
 \begin{itemize}
  \item[$\cdot$]   $\hspace{3em}$\\[-6.8ex]
  \item[]
   $\begin{array}{llll}
     {\cal T}_{\ast}X^{\!A\!z}(U)
          & \simeq 	
		  &   \Der_{\Bbb C}(M_{r\times r}(C^k(U)^{\Bbb C}))\;\;
		         \simeq\;\;
     				 \Der_{\Bbb C}\left(C^k(U)^{\Bbb C}
				                    \otimes_{\Bbb C}M_{r\times r}({\Bbb C})\right) \\[.2ex]
	 & \simeq
	   &   \Der_{\Bbb C}(C^k(U)^{\Bbb C})\otimes_{\Bbb C}\Id_{r\times r}\,
	          \oplus\,  C^k(U)^{\Bbb C}
			                     \otimes_{\Bbb C}\Der_{\Bbb C}(M_{r\times r}({\Bbb C}))       \\[.2ex]
       & \simeq        &     \Der_{\Bbb C}(C^k(U)^{\Bbb C})\otimes_{\Bbb C}\Id_{r\times r}\,
	                                     \oplus\,  C^k(U)^{\Bbb C}\otimes_{\Bbb C} \slLie_r({\Bbb C})   &;
	 \end{array}$
 \end{itemize}
 \begin{itemize}
  \item[$\cdot$]  $\hspace{3em}$\\[-6.8ex]
  \item[]
   $\begin{array}{llll}
      {\cal T}^{\ast}X^{\!A\!z}(U)
	   & \simeq
	   &\Omega_{C^k(U)^{\Bbb C}/{\Bbb C}}
		                     \otimes_{\Bbb C}M_{r\times r}({\Bbb C})\,
						  \oplus\, C^k(U)^{\Bbb C}\otimes_{\Bbb C}
						                   \Omega_{M_{r\times r}({\Bbb C})/{\Bbb C}}\\[.2ex]
	 &  \simeq
       & \Omega_{C^k(U)^{\Bbb C}/{\Bbb C}}
		                     \otimes_{\Bbb C}M_{r\times r}({\Bbb C})\,
						  \oplus\, C^k(U)^{\Bbb C}\otimes_{\Bbb C}(\slLie_r({\Bbb C}))^{\vee}&;
     \end{array}$
 \end{itemize}
\begin{itemize}
  \item[$\cdot$]  $\hspace{3em}$\\[-6.8ex]
  \item[]
   $\begin{array}{llll}
      (\bigwedge^{\bullet}{\cal T}^{\ast}X^{\!A\!z}(U),d)
	     & \simeq
		 & (\bigwedge^{\bullet}\Omega_{C^k(U)^{\Bbb C}/{\Bbb C}},d^{\prime})
		         \otimes_{\Bbb C}
		          (\bigwedge^{\bullet}\Omega_{M_{r\times r}(\Bbb C)/{\Bbb C}},
		                                            d^{\prime\prime}) \\[.2ex]
   	  & \simeq
		 & (\bigwedge^{\bullet}\Omega_{C^k(U)^{\Bbb C}/{\Bbb C}},d^{\prime})
		         \otimes_{\Bbb C}
		          (\bigwedge^{\bullet} (\slLie_r({\Bbb C}))^{\vee},
		                                            d^{\prime\prime}) \\[.2ex]											
     \end{array}$\\[1.2ex]
	with $d=d^{\prime}\otimes\Id + 1\otimes d^{\prime\prime}$.
	Here,
	 $d^{\prime}$, $d^{\prime\prime}$
	   are the degree-$1$ map $d$ in the respective graded differentioal algebras  for the purpose of distinguishing  and
     $\Id$ is the identity map.	
	In particular,
	  $\bigwedge^l{\cal T}^{\ast}X^{\!A\!z}(U)
	     \simeq
   		  \sum_{i=0}^l
		    (\bigwedge^i\Omega_{C^k(U)^{\Bbb C}/{\Bbb C}}
		         \otimes_{\Bbb C}
		            \bigwedge^{l-i} (\slLie_r({\Bbb C}))^{\vee})$.
 \end{itemize}
\end{lemma}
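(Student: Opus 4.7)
The plan is to exploit the isomorphism $M_{r\times r}(C^k(U)^{\Bbb C})\simeq C^k(U)^{\Bbb C}\otimes_{\Bbb C} M_{r\times r}({\Bbb C})$ and decompose every derivation or differential of this algebra into a ``horizontal'' piece coming from $A:=C^k(U)^{\Bbb C}$ and a ``vertical'' piece coming from $M:=M_{r\times r}({\Bbb C})$, then assemble the three assertions from this splitting. The center of $A\otimes M$ is $A\cdot\Id\simeq A$, and Examples~4.1.15 and 4.1.16 already describe $\Der_{\Bbb C}(M)$, $\Omega_{M/{\Bbb C}}$ and $\bigwedge^{\bullet}\Omega_{M/{\Bbb C}}$.

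\textbf{Step 1 (tangent sheaf).} Given $\Theta\in\Der_{\Bbb C}(A\otimes M)$, I first show $\Theta$ preserves the center: for $a\in A$ and any $x\in A\otimes M$, $0=\Theta([a\Id,x])=[\Theta(a\Id),x]+[a\Id,\Theta(x)]=[\Theta(a\Id),x]$, so $\Theta(a\Id)\in A\cdot\Id$. Define $\delta\colon A\to A$ by $\delta(a)\Id=\Theta(a\Id)$; the Leibniz rule and the chain rule for $\Theta$ force $\delta$ to be an ${\Bbb R}$-derivation of the $C^k$-ring $A$. Extend entry-wise to $\tilde\delta:=\delta\otimes\Id_M\in\Der_{\Bbb C}(A\otimes M)$, and set $\Theta_1:=\Theta-\tilde\delta$. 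A direct Leibniz-rule computation gives $\Theta_1(am)=a\Theta_1(m)$, so $\Theta_1$ is $A$-linear, and by construction $\Theta_1|_{A\cdot\Id}=0$. The map $\Theta\mapsto(\delta,\Theta_1)$ is readily inverted by $(\delta,\Theta_1)\mapsto\tilde\delta+\Theta_1$, giving the claimed direct-sum decomposition once I identify $A$-linear derivations of $A\otimes M$ vanishing on the center with $A\otimes_{\Bbb C}\slLie_r({\Bbb C})$ (Step 2).

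\textbf{Step 2 (Skolem--Noether input).} Because $A\otimes M$ is an Azumaya $A$-algebra, every $A$-linear derivation is inner; writing it as $[T,\cdot\,]$ with $T\in A\otimes M$, the condition that it vanish on $A\cdot\Id$ is automatic (since $A\cdot\Id$ is central), and normalizing $T$ to be trace-free identifies the space of such derivations with $A\otimes_{\Bbb C}\slLie_r({\Bbb C})\simeq A\otimes_{\Bbb C}\Der_{\Bbb C}(M)$ by Example~4.1.15. If I prefer to avoid citing Skolem--Noether, this can be done by direct computation on the matrix-unit basis $\{e_i{}^j\}$: an $A$-linear derivation is determined by its values on $\{e_i{}^j\}$, and the relations $e_i{}^je_{i'}{}^{j'}=\delta^j_{i'}e_i{}^{j'}$ reduce the solution to a trace-free matrix with $A$-valued entries.

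\textbf{Step 3 (cotangent sheaf and DG-algebra).} For the cotangent sheaf I argue by universality. The ${\Bbb C}$-linear map $\psi\colon A\otimes M\to\Omega_{A/{\Bbb C}}\otimes M\oplus A\otimes\Omega_{M/{\Bbb C}}$ sending $a\otimes m\mapsto da\otimes m+a\otimes dm$ is an ${\Bbb R}$-derivation (checked by Leibniz on generators), and its image satisfies the $Z_{A\otimes M}$-commutativity relation because $A\cdot\Id$ is central. The universal property of $\Omega_{A\otimes M/{\Bbb C}}$ (Lemma~4.1.5) yields a bi-$(A\otimes M)$-module map $\Omega_{A\otimes M/{\Bbb C}}\to\Omega_{A/{\Bbb C}}\otimes M\oplus A\otimes\Omega_{M/{\Bbb C}}$; its inverse is defined on generators by $da\otimes m\mapsto d(am)-a\,dm$ and $a\otimes d m'\mapsto a\,dm'$, with $Z$-commutativity again guaranteeing well-definedness. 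Pairing with the tangent sheaf via Lemma~4.1.6 provides an independent cross-check. The DG-algebra formula then follows: the exterior powers of the direct sum decompose as the tensor product of the exterior algebras of the two summands, and the differential $d=d'\otimes\Id+1\otimes d''$ drops out of the graded Leibniz rule (Lemma~4.1.10(2)) applied to a factored form $\omega'\otimes\omega''$.

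\textbf{Expected main obstacle.} Step 2 is the conceptual heart: one needs either to invoke a Skolem--Noether-type statement for Azumaya algebras over the $C^k$-ring $A$, or to replace it by an explicit matrix-unit argument that identifies $\Der_A(A\otimes M)$ with $A\otimes_{\Bbb C}\slLie_r({\Bbb C})$ without sign or normalization errors. A secondary, subtler point is verifying in Step 1 that the induced $\delta\colon A\to A$ is a genuine $C^k$-derivation, not merely an ${\Bbb R}$-derivation of the underlying commutative ${\Bbb C}$-algebra; this is precisely the place where the chain-rule clause in Definition~4.1.1 for $\Theta$ is indispensable. Once both points are settled, the cotangent and DG-algebra assertions are formal consequences.
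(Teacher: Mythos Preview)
Your proposal is correct and is precisely the detailed elaboration of what the paper only sketches: the paper offers no proof beyond the sentence preceding the lemma, which says the result follows from a ``central extension of the discussion and results in Example~4.1.15 and Example~4.1.16 from ${\Bbb C}$ ($=C^k(p)$) to $C^k(U)^{\Bbb C}$'' together with the reference [DV-M: Sec.~1.3]. Your Steps~1--2 (split off the central part of a derivation, then identify the $A$-linear remainder as inner via Skolem--Noether or matrix-units) and Step~3 (universal-property argument for differentials, then tensor product of DG-algebras) are exactly how one makes that central-extension remark precise, and your flagging of the chain-rule clause in Definition~4.1.1 is the one genuinely $C^k$-specific ingredient.
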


\bigskip

\begin{flushleft}
{\bf Connections on ${\cal E}$ and the structure of the tangent sheaf ${\cal T}_{\ast}X^{\!A\!z}$}
\end{flushleft}
We now take a closer look at the tangent sheaf ${\cal T}_{\ast}X^{\!A\!z}$
  of the Azumaya $C^k$-manifold $X^{\!A\!z}$.
Recall first that
 a connection
  $\nabla:{\cal E}\rightarrow {\cal T}^{\ast}X\otimes_{{\cal O}_X} {\cal E}$ on ${\cal E}$
   induces a connection
    $^{\vee}\nabla:{\cal E}^{\vee}
	     \rightarrow{\cal T}^{\ast}X\otimes_{{\cal O}_X^{\,\Bbb C}} {\cal E}^{\vee}$
		 on ${\cal E}^{\vee}$
   and a connection
    $^{A\!z}\!\nabla:{\cal O}_X^{A\!z}
         \rightarrow {\cal T}^{\ast}X\otimes_{{\cal O}_X^{\,\Bbb C}} {\cal O}_X^{A\!z}$
	   on ${\cal O}_X^{A\!z}:=\Endsheaf_{{\cal O}_X^{\,\Bbb C}}({\cal E})$
	  from the canonical isomorphism
	   $\Endsheaf_{{\cal O}_X}({\cal E})
	      \simeq {\cal E}\otimes_{{\cal O}_X}{\cal E}^{\vee}$,
  e.g.\ [Hu-L] and [Kob].

\bigskip

\begin{lemma} {\bf [naturalness of $^{A\!z\!}\nabla$ from $\nabla$: Leibniz rule].}
  Let
    $\;\cdot:{\cal O}_X^{A\!z}\times_X{\cal O}_X^{A\!z} \rightarrow {\cal O}_X^{A\!z} $
     with $(\alpha,\beta)\mapsto \alpha\cdot\beta=:\alpha\beta$
	  be the multiplication map from the sheaf-of-rings structure of ${\cal O}_X^{A\!z}$   and
   ${\cal O}_X^{A\!z}\times_X{\cal E}\rightarrow {\cal E}$
    with $(\alpha, s)\mapsto \alpha(s)$
     be the multiplication map from the ${\cal O}_X^{A\!z}$-module structure of ${\cal E}$.
 Then
  $$
   ^{A\!z}\!\nabla(\alpha\cdot \beta)\;
    =\; ^{A\!z}\!\nabla\alpha \cdot  \beta\, +\, \alpha \cdot  ^{A\!z}\!\nabla\beta
	\hspace{2em}\mbox{and}\hspace{2em}
  \nabla(\alpha(s))\;
    =\;  (^{A\!z}\!\nabla\alpha)(s)\,+\,\alpha(\nabla s)\,.
  $$
\end{lemma}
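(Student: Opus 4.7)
The plan is to verify both Leibniz-type identities by working through the canonical isomorphism $\Endsheaf_{{\cal O}_X^{\,\Bbb C}}({\cal E}) \simeq {\cal E} \otimes_{{\cal O}_X^{\,\Bbb C}} {\cal E}^{\vee}$, under which $^{A\!z}\!\nabla$ is by definition the tensor product connection $\nabla\otimes\Id + \Id\otimes {}^{\vee}\!\nabla$. First I would recall that $^{\vee}\!\nabla$ is characterized by the compatibility with the natural pairing, namely that for any local sections $\xi$ of ${\cal E}^{\vee}$ and $t$ of ${\cal E}$
$$
 d\langle\xi,t\rangle \;=\; \langle{}^{\vee}\!\nabla\xi,\, t\rangle \,+\, \langle\xi,\,\nabla t\rangle\,.
$$
This identity, together with the Leibniz rule for $\nabla$ applied to the product of a function and a section of ${\cal E}$, is the only non-formal input needed.

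Next I would establish the second identity $\nabla(\alpha(s)) = ({}^{A\!z}\!\nabla\alpha)(s) + \alpha(\nabla s)$. Both sides are ${\Bbb C}$-linear in $\alpha$, so it suffices to check it for decomposable $\alpha = t\otimes\xi$, which acts on $s\in{\cal E}$ by $\alpha(s) = \langle\xi,s\rangle\cdot t$. Applying $\nabla$ and expanding via Leibniz gives
$$
 \nabla(\alpha(s))\;=\;d\langle\xi,s\rangle\otimes t \,+\, \langle\xi,s\rangle\,\nabla t\,.
$$
On the other side, the tensor product formula yields ${}^{A\!z}\!\nabla\alpha = \nabla t\otimes \xi + t\otimes {}^{\vee}\!\nabla\xi$, whose action on $s$ is $\langle\xi,s\rangle\,\nabla t + \langle{}^{\vee}\!\nabla\xi,s\rangle\,t$, while $\alpha(\nabla s) = \langle\xi,\nabla s\rangle\,t$. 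The characterization of ${}^{\vee}\!\nabla$ displayed above matches the two sides term by term.

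For the first identity, the cleanest route is to apply both sides to an arbitrary local section $s$ of ${\cal E}$ and to appeal to the second identity twice. The associativity of the ${\cal O}_X^{A\!z}$-action gives $\nabla((\alpha\cdot\beta)(s)) = \nabla(\alpha(\beta(s)))$; applying the second identity to the outer action and then to the inner action expresses this as $({}^{A\!z}\!\nabla\alpha)(\beta(s)) + \alpha(({}^{A\!z}\!\nabla\beta)(s)) + \alpha(\beta(\nabla s))$. This is precisely the value on $s$ of $({}^{A\!z}\!\nabla\alpha\cdot\beta + \alpha\cdot {}^{A\!z}\!\nabla\beta)(s) + (\alpha\cdot\beta)(\nabla s)$, once one recognizes that the right-action of $\beta$ on ${}^{A\!z}\!\nabla\alpha$ followed by evaluation at $s$ is the same as first evaluating $\beta$ at $s$ and then applying ${}^{A\!z}\!\nabla\alpha$. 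Because ${\cal E}$ is locally free and the second identity already fixes ${}^{A\!z}\!\nabla\gamma$ through its action on all local $s$, equality as sections of ${\cal T}^{\ast}X^{\!A\!z}\otimes {\cal E}$ forces equality in ${\cal T}^{\ast}X^{\!A\!z}$ of endomorphism sections.

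The calculation is routine once the tensor product connection is set up; the only bookkeeping hurdle I anticipate is making sense of the right multiplication ${}^{A\!z}\!\nabla\alpha\cdot\beta$, since ${}^{A\!z}\!\nabla\alpha$ takes values in the bi-${\cal O}_X^{A\!z}$-module ${\cal T}^{\ast}X^{\!A\!z}\otimes_{{\cal O}_X^{\,\Bbb C}}{\cal O}_X^{A\!z}$ and the product must be taken on the ${\cal O}_X^{A\!z}$-factor. I would verify that this is well-defined globally by checking it against the local trivialization expression from Lemma~4.2.3, where $^{A\!z}\!\nabla$ splits as an ordinary connection on the $M_{r\times r}({\Bbb C})$-valued functions part, and then invoke the universality of the derivation $d:{\cal O}_X^{A\!z}\to {\cal T}^{\ast}X^{\!A\!z}$ built into Definition~4.1.4 to confirm that the two Leibniz identities are intrinsic, independent of trivialization.
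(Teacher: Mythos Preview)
Your proposal is correct and is precisely the standard verification the paper has in mind; the paper itself offers no argument beyond the sentence ``The proof is straightforward,'' so your write-up simply makes that explicit. Your order of attack (second identity first via decomposable tensors, then first identity by faithfulness of the action on the locally free ${\cal E}$) is the natural one.
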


\bigskip

The proof is straightforward. This is the basis of the following splitting of ${\cal T}_{\ast}X^{\!A\!z}$:

\bigskip

\begin{lemma} {\bf [splitting of ${\cal T}_{\ast}X^{\!A\!z}$].}
{\rm ([DV-M: Proposition 3].)}
 Let $\Innsheaf({\cal O}_X^{A\!z})$ be the sheaf of inner derivations of ${\cal O}_X^{A\!z}$.
 Then there is a natural exact sequence of ${\cal O}_X^{\,\Bbb C}$-modules
  $$
    0\; \longrightarrow\; \Innsheaf({\cal O}_X^{A\!z})\;
	     \longrightarrow \; {\cal T}_{\ast}X^{\!A\!z}\;
         \longrightarrow\; {\cal T}_{\ast}X^{\Bbb C}\;  \longrightarrow\; 0\,.
  $$
 Furthermore, any connection
  $\nabla:{\cal E}\rightarrow {\cal T}_{\ast}X\otimes_{{\cal O}_X^{\,\Bbb C}}{\cal E}$
  on ${\cal E}$ induces an embedding
  $$
    \xymatrix{
	 \iota^{\nabla}\;:\; {\cal T}_{\ast}X^{\Bbb C}\; \ar@{^{(}->}[r]    & \;{\cal T}_{\ast}X^{\!A\!z}
	 }
  $$
  as ${\cal O}_X^{\,\Bbb C}$-modules,
   with  $\xi\mapsto ^{A\!z}\!\nabla_{\xi}$,
  that splits the above short exact sequence.
\end{lemma}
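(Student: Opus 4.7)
The strategy is to construct the three maps in the short exact sequence one at a time, identify the kernel locally using the trivialization of Lemma~4.2.3, and then exhibit the splitting directly from the formula ${}^{A\!z}\!\nabla_{\xi}\alpha = \nabla_{\xi}\circ \alpha - \alpha\circ\nabla_{\xi}$ of Lemma~4.2.4.

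First I would define the surjection $\rho : {\cal T}_{\ast}X^{\!A\!z} \to {\cal T}_{\ast}X^{\Bbb C}$ by restriction to the center. This requires checking that any ${\Bbb C}$-derivation $\Theta$ of ${\cal O}_X^{A\!z}$ preserves the center ${\cal O}_X^{\,\Bbb C}$: if $z$ is central and $\alpha$ is arbitrary, then differentiating $[z,\alpha]=0$ by the Leibniz rule yields $[\Theta(z),\alpha] + [z,\Theta(\alpha)] = 0$; the second bracket vanishes because $z$ is central, hence $\Theta(z)$ commutes with every $\alpha$ and is itself central. Thus $\Theta|_{{\cal O}_X^{\,\Bbb C}}$ is a well-defined ${\Bbb C}$-derivation of ${\cal O}_X^{\,\Bbb C}$, which by Lemma~4.2.1 is a local section of ${\cal T}_{\ast}X^{\Bbb C}$. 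That $\Innsheaf({\cal O}_X^{A\!z})$ sits inside $\Ker(\rho)$ is immediate, since the commutator with a central element vanishes.

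Next I would establish the reverse inclusion $\Ker(\rho)\subset \Innsheaf({\cal O}_X^{A\!z})$, which is the heart of the argument. A derivation killing the center is ${\cal O}_X^{\,\Bbb C}$-linear, so locally on a trivializing open set $U\subset X$ it corresponds, via Lemma~4.2.3, to an element of $C^k(U)^{\Bbb C}\otimes_{\Bbb C}\Der_{\Bbb C}(M_{r\times r}({\Bbb C}))$. By Example~4.1.15 every derivation of $M_{r\times r}({\Bbb C})$ is inner, so the local derivation is of the form $[m_U,\,\cdot\,]$ for some $m_U\in {\cal O}_X^{A\!z}(U)$, unique modulo ${\cal O}_X^{\,\Bbb C}(U)\cdot\Id$. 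These local ``Hamiltonians'' $m_U$ patch to give a well-defined global section of ${\cal O}_X^{A\!z}/{\cal O}_X^{\,\Bbb C}$ whose image in $\Innsheaf({\cal O}_X^{A\!z})$ is the original derivation. I expect this step (and the associated vanishing $HH^1(M_{r\times r}({\Bbb C}),M_{r\times r}({\Bbb C}))=0$ packaged into a sheaf statement over $X$) to be the main obstacle, though all the sheafification is straightforward once the pointwise/local picture is pinned down.

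Finally I would construct $\iota^{\nabla}$. Given a connection $\nabla$ on ${\cal E}$ and a local vector field $\xi$ on $X^{\Bbb C}$, set $\iota^{\nabla}(\xi):={}^{A\!z}\!\nabla_{\xi}$, where ${}^{A\!z}\!\nabla$ is the connection on ${\cal O}_X^{A\!z}$ induced by $\nabla$ as in Lemma~4.2.4. The Leibniz rule ${}^{A\!z}\!\nabla(\alpha\cdot\beta)={}^{A\!z}\!\nabla\alpha\cdot\beta + \alpha\cdot{}^{A\!z}\!\nabla\beta$ of Lemma~4.2.4 shows that ${}^{A\!z}\!\nabla_{\xi}$ is a ${\Bbb C}$-derivation of ${\cal O}_X^{A\!z}$, so $\iota^{\nabla}$ lands in ${\cal T}_{\ast}X^{\!A\!z}$; tensoriality of $\nabla$ in the direction of differentiation gives ${\cal O}_X^{\,\Bbb C}$-linearity of $\iota^{\nabla}$. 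To see it splits $\rho$, evaluate on a central element $f\cdot\Id_{\cal E}$: for any local section $s$ of ${\cal E}$,
\[
  ({}^{A\!z}\!\nabla_{\xi}(f\Id))(s)\;
   =\; \nabla_{\xi}(fs)\,-\, f\nabla_{\xi}s\;
   =\; \xi(f)\,s\,,
\]
so $\rho\circ\iota^{\nabla}=\Id_{{\cal T}_{\ast}X^{\Bbb C}}$. Since connections on ${\cal E}$ exist (partition of unity on the paracompact $C^k$-manifold $X$), $\rho$ admits a global right-inverse and hence is surjective, completing the exactness of the sequence.
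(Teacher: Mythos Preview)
Your proposal is correct and follows essentially the same architecture as the paper's proof: define the restriction-to-center map, identify its kernel with inner derivations locally, and split via $\xi\mapsto{}^{A\!z}\!\nabla_{\xi}$ using Lemma~4.2.4, deducing surjectivity from the existence of connections. The only difference is one of phrasing in the kernel step: the paper argues directly that $\varsigma(\Theta)=0$ forces $\Theta(\alpha)(u)$ to depend only on $\alpha(u)$ and not on its $1$-jet, hence $\Theta$ acts fiberwise and is inner; you instead observe that $\Theta$ becomes ${\cal O}_X^{\,\Bbb C}$-linear and invoke the tensor decomposition of Lemma~4.2.3 together with Example~4.1.15. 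These are the same argument in different clothing. Your remark about patching the local $m_U$'s into a global section of ${\cal O}_X^{A\!z}/{\cal O}_X^{\,\Bbb C}$ is correct but unnecessary for the exactness of the sheaf sequence, which is a stalk-local statement; the paper accordingly omits it.
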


\bigskip

Note that the local statement of the above short exact sequence and the splitting from a flat connection
 are already manifest in Lemma~4.2.3.
Let us now turn to the proof of the lemma.

\begin{proof}		
 Since both the statements follow from the study in the local, we only need to investigate what happens locally.
 Let $U\subset X$ be an open set and consider
   the Azumaya algebra $C^k(\End_U(E_U))$ over $C^k(U)^{\Bbb C}$.
 Let $\Theta\in \Der_{\Bbb C}(C^k(\End_U(E_U)))$ and
  $f\in C^k(U)^{\Bbb C}\subset C^k(\End_U(E_U))$ as the center.
 Then $\Theta(f)\in C^k(U)^{\Bbb C}$ as well.
 Thus there is a tautological homomorphism of $C^k(U)^{\Bbb C}$-modules
   $\varsigma: \Der_{\Bbb C}(C^k(\End_U(E)))
                           \rightarrow \Der_{\Bbb C}(C^k(U)^{\Bbb C})$.
 
 If $\varsigma(\Theta)=0$,
  then, for any $\alpha\in C^k(\End_U(E_U))$,
   the value $\Theta(\alpha)(u)$ of $\Theta(\alpha)$ at each $u\in U$
   depends only on the value $\alpha(u)$ of $\alpha$ at $u$, rather than the $1$-jet of $\alpha$ at $u$.
 It follows that
  $\Theta$ is defined through derivations on each fiber Azumaya algebra of $\End_U(E_U)$ over $U$.
 This implies that $\Theta$ must be an inner derivation of $C^k(\End_U(E_U))$.
 This proves the existence of the natural sequence of $C^k(U)^{\Bbb C}$-module-homomorphisms
  $$
    0\; \longrightarrow\;  \Inn(C^k(\End_U(E_U)))\;
	      \longrightarrow\;  \Der_{\Bbb C}(C^k(\End_U(E_U)))\;
		  \stackrel{\varsigma}{\longrightarrow}\;  \Der_{\Bbb C}(C^k(U)^{\Bbb C})\,.
  $$
 
 Now bring into consideration the given connection
    $\nabla:{\cal E}\rightarrow
	  {\cal T}^{\ast}X\otimes_{{\cal O}_X^{\,\Bbb C}}{\cal E}$ on ${\cal E}$.
 Let $\xi\in\Der_{\Bbb C}(C^k(U)^{\Bbb C})$ be a derivation on $C^k(U)$,
   realized as a complex-valued $C^k$ vector field on $U$.
 Then it follows from Lemma~4.2.4
  that $^{A\!z}\!\nabla_{\xi}\in \Der_{\Bbb C}(C^k(\End_U(E_U)))$.
 Since $^{A\!z}\!\nabla_{\xi}$ is non-zero unless $\xi=0$,
 this gives an embedding
    $\iota^{\nabla}: \Der_{\Bbb C}(C^k(U)^{\Bbb C})
	    \rightarrow \Der_{\Bbb C}(C^k(\End_U(E_U)))$
	of $C^k(U)^{\Bbb C}$-modules.
 By construction, $\varsigma \circ\iota^{\nabla}$
      is the identity map on $\Der_{\Bbb C}(C^k(U)^{\Bbb C})$.
 Since a connection on ${\cal E}$ always exists,
  this proves that $\varsigma$ is surjective  and, furthermore,
  that $\iota^{\nabla}$ gives a splitting of the now short exact sequence
  $$
     0\; \longrightarrow\;  \Inn(C^k(\End_U(E_U)))\;
	      \longrightarrow\;  \Der_{\Bbb C}(C^k(\End_U(E_U)))\;
		  \stackrel{\varsigma}{\longrightarrow}\;  \Der_{\Bbb C}(C^k(U)^{\Bbb C})\;
		  \longrightarrow\; 0\,.
  $$
  
 This proves the lemma.
 									
\end{proof}
									
%
%

\bigskip
 
\subsection{Metric structures on an Azumaya differentiable manifold with a fundamental module}

The notion of a metric structure in noncommutative geometry generalizing that in commutative differential geometry
 is not unique.
It is thus a key question as to which one is most compatible or useful as long as D-branes are concerned.
For the current note D(11.1) we do not yet need this notion.
However, for the conceptual completeness,
 we introduce in this subsection a weakest such notion, adapted from the works
 [S\'{e}1: Sec.~4.2] and [S\'{e}2: Sec.~3.1.1] of Emmanuel S\'{e}ri\'{e},
 that is in line with the standard setting in (commutative) differential geometry
 and let future work to decide what revision, if any, is required for the study of dynamics of D-branes in a space-time.
Readers are referred also to [Mad: Sec.~3.4] of John Madore for a dual approach that favors instead
  the cotangent sheaf of the noncommutative space in question.
 
\bigskip

\begin{definition} {\bf [metric tensor].} {\rm
 (Cf.\ [S\'{e}1: Definition:4.2.1], [S\'{e}2: Sec.~3.1.1: Definition 3.1].)
 A {\it metric tensor} on the Azumaya $C^k$-manifold $X^{\!A\!z}:=(X,{\cal O}_X^{A\!z})$
  is a nondegenerate symmetric ${\cal O}_X^{\,\Bbb C}$-bilinear map
  $$
    g\;:\; {\cal T}_{\ast}X^{\!A\!z}
	            \otimes_{{\cal O}_X^{\,\Bbb C}}{\cal T}_{\ast}X^{\!A\!z}\;
				\longrightarrow\; {\cal O}_X^{\,\Bbb C}\,.
  $$
 Here, we say that $g$ is {\it nondegenerate}  if
  \begin{itemize}
   \item[(1)]
    the induced module-homomorphism
	  $\hat{g}:{\cal T}_{\ast}X^{\!A\!z}\rightarrow  {\cal T}^{\ast}X^{\!A\!z}$
	  over the built-in ${\cal O}_X\hookrightarrow {\cal O}_X^{A\!z}$,
	 which sends a local section $\Theta$ to the functional $\,\cdot\,\mapsto\,g(\Theta,\,\cdot\,)$,
	is injective and
    	
   \item[(2)]
    ${\cal T}^{\ast}X^{\!A\!z}$ is generated by $\Image\hat{g}$
	both as a left-${\cal O}_X^{A\!z}$-module and as a right-${\cal O}_X^{A\!z}$-module.
 \end{itemize}
}\end{definition}

\bigskip

Recall from Lemma~4.2.5 that
  ${\cal T}_{\ast}X^{\!A\!z}
         \simeq \Innsheaf({\cal O}_X^{A\!z})\oplus {\cal T}_{\ast}X^{\Bbb C}$.
Thus, $g$ has a noncanonical $2\times 2$ block-decomposition
 with the diagonal blocks
   a metric tensor on the $\slLie_r({\Bbb C})$-sheaf $\Innsheaf({\cal O}_X^{A\!z})$ over $X$
   and a metric tensor on the complexified tangent sheaf  ${\cal T}_{\ast}X^{\Bbb C}$ of $X$ respectively.

As in the case of commutative differential geometry,
the metric tensor $g$ on ${\cal T}_{\ast}X^{\!A\!z}$ induces
 \begin{itemize}
  \item[$\cdot$]
  a  {\it pairing}
     ${\cal T}^{\ast}X^{\!A\!z}
	     \otimes_{{\cal O}_X^{A\!z}}{\cal T}^{\ast}X^{\!A\!z}
		  \rightarrow {\cal O}_X^{A\!z}$, \hspace{2em}
	 which extends to
	
 \item[$\cdot$]	
  a {\it pairing}		
    $\bigwedge^{\bullet}{\cal T}^{\ast}X^{\!A\!z}
	    \otimes_{{\cal O}_X^{A\!z}}\bigwedge^{\bullet}{\cal T}^{\ast}X^{\!A\!z}
		  \rightarrow {\cal O}_X^{A\!z}$.
 \end{itemize}
The notion of
 \begin{itemize}
  \item[$\cdot$]
   the {\it integration over $X^{\!A\!z}$} and
	
  \item[$\cdot$]	
   the {\it Hodge $\ast$-operator on $\bigwedge^{\bullet}{\cal T}^{\ast}X^{\!A\!z}$ associated to $g$}
 \end{itemize}
 can also be defined; cf.\ [S\'{e}2].
We will come back to these themes for a more detailed examination
 when we study in a sequel
 the dynamics of D-branes along our line of maps from Azumaya manifolds with a fundamental module.

\bigskip
 
With Sec.~1 -- Sec.~4 as motivations,  preliminaries, and guides,
 we are now finally ready to address the main theme of the current note D(11.1).

\bigskip
  
\section{Differentiable maps from an Azumaya/matrix manifold with a fundamental module
                    to a real manifold}

In this section we generalize the notion of a $k$-times-differentiable (i.e.\  $C^k$-)map
  $$
     \varphi\; :\; (p,\End_{\Bbb C}({\Bbb C}^{\oplus}),{\Bbb C}^{\oplus r})\;
	   \longrightarrow\;  {\Bbb R}^n
  $$
  studied in Sec.~3
  to the notion of $k$-times-differentiable (i.e.\ $C^k$-)map
  $$
     \varphi\; :\; (X,{\cal O}_X^{A\!z}:=\Endsheaf_{{\cal O}_X}({\cal E}), {\cal E})\;
      \longrightarrow\;  (Y,{\cal O}_Y)
  $$
 from an Azumaya manifold with a fundamental module to a real manifold $Y$.

\bigskip

\subsection{A generalization of Sec.\ 2.1 and Sec.\ 3 to Azumaya/matrix manifolds
         with a fundamental module  -- local study}
		
Having seen in Sec.~2.1
   how differential topology and geometry can proceed in the spirit of algebraic geometry
   once the notion of $C^k$-rings to capture the non-algebraic structure of the ring $C^k(Y)$
    of $C^k$-functions of a manifold $Y$ is introduced,
 we would like to generalize it to the case that involves Azumaya algebras as well.
However, the latter type of algebras are noncommutative
   and hence cannot be made $C^k$-rings directly.
{To} deal with this issue,
  we look deeperly into the meaning of a `differentiable map' in terms of the function ring
 of its graph and compare that with the setting in [L-Y1] (D(1)) and [L-L-S-Y] (D(2))
 to find a mathematically sound setup for the case that involves Azumaya algebras,
 which is the focus of this subsection,
 {\it and then} check against concrete examples to make certain that the setup matches with
 the behavior of D-branes in string theory,
  which is the focus of Sec.\ 7.2 and sequels to the current note.

\bigskip

\begin{flushleft}
{\bf Differentiable map and the function-ring of its graph}
\end{flushleft}
Continuing the discussion of the previous subsection.
Another aspect  of a $C^k$-map $f:U\rightarrow V$ is that
 it defines a $C^k$-submanifold $\tilde{f}: \varGamma_f\hookrightarrow U\times V$
  that makes the following diagram commute:
 $$
   \xymatrix{
     \varGamma_f    \ar @{^{(}->}[dr]^(.55){\tilde{f}}
	                                \ar @/^{2ex}/ [drrrr]^-{f}
	                                \ar@/_{2ex}/  [ddr]_-{\pi_f}                      \\
     &  U\times V      \ar [rrr]_{pr_V} \ar [d]^-{pr_U}
     &&& V \\	
     &  U
   	}
 $$
 with all arrows $C^k$-maps of $C^k$-manifolds  and
    the arrow $\varGamma_f\rightarrow U$  from the composition $\pr_U\circ\tilde{f}$
	a $C^k$-isomorphism.
 Here, $\pr_U:U\times V\rightarrow U$ and $\pr_V:U\times V\rightarrow V$
   are the projection maps to each factor.
 In terms of $C^k$-function rings, this is the following diagram
 $$
   \xymatrix{
      C^k(\varGamma_f)\\
     &  C^k(U\times V)\ar @{->>}[ul]_(.45){\tilde{f}^{\sharp}}
     &&& \rule{0ex}{3ex}\hspace{1ex}
               C^k(V)\ar @/_{2ex}/[ullll]_-{f^{\sharp}}
                                \ar @{_{(}->}[lll]^{pr_V^{\sharp}}\\	
     &  \rule{0ex}{3ex}
      C^k(U)\ar @{^{(}->}[u]_-{pr_U^{\sharp}}
                       \ar@/^{2ex}/ [uul]^-{\pi_f^{\sharp}}
   	}
 $$
 of $C^k$-ring-homomorphisms.
 
Before attempting to generalize this to a notion of a $C^k$-map
  from a $C^k$-Azumaya manifold, in notation $\varphi: U^{\!A\!z}\rightarrow V$,
 the above diagrams already suggest some immediate generalizations
  by replacing the requirement
      that $\pr_U\circ\tilde{f}:\varGamma_f\rightarrow V$ be a $C^k$-isomorphism:
 \begin{itemize}
  \item[{\Large $\cdot$}]
   $[\,${\it multi-valued $C^k$-map}$\,]$ \hspace{2em}
   If we replace it by the requirement that
	  {\it $\pr_U\circ\tilde{f}$ be a $C^k$-surjection of $C^k$-manifolds
	            of the same dimension},
	then $\varGamma_f$ defines a multiple-valued $C^k$-map $f:U\rightarrow V$.
  This generalizes the notion of a $C^k$-map from single-valued to multi-valued. 	

 \item[{\Large $\cdot$}]	
  $[\,${\it $C^k$-correspondence}$\,]$ \hspace{2em}
  If we replace it by the requirement that
    {\it $\tilde{f}:\varGamma_f\rightarrow U\times V$
	        be a $C^k$-submanifold map},
    then $\varGamma_f$ defines a $C^k$-correspondence $f:U\rightarrow V$.
  This covers the previous case.	
	
 \item[{\Large $\cdot$}]	
   $[\,${\it more general $C^k$-correspondence}$\,]$ \hspace{2em}
	Once the notion of $C^k$-schemes and $C^k$-subschemes of a $C^k$-scheme are defined
  	in a natural/functorial way, then we may require
	 $\tilde{f}$ realize $\varGamma_f$ as a $C^k$-subscheme of $U\times V$
	 (as a $C^k$-scheme),
	then $\varGamma_f$ defines an even more general correspondence $f:U\rightarrow V$.
   This covers all the previous cases.
 \end{itemize}

While a noncommutative algebra $A$ itself cannot be made a $C^k$-ring
  and hence there is no way to define the notion of a $C^k$-ring-homomorphism
  $C^k(V)\rightarrow A$,
 commutative subalgebras of $A$ may.
Combined with the above generalizations of the notion of  a `map' between $C^k$-manifolds
   and lessons learned from the case study of D$0$-branes on ${\Bbb R}^n$ in Sec.~3,
 this gives us a hint of what we should do in our case that involves Azumaya algebras.

\bigskip
 
\begin{flushleft}
{\bf A generalization to ring-homomorphisms to Azumaya/matrix algebras}
\end{flushleft}
Let
  $U\subset {\Bbb R}^m$ and $V\subset {\Bbb R}^n$ be open subsets  and
  $E$ be a complex $C^k$ vector bundle of rank $r$ on $U$.
Let
  $E^{\vee}$ be the dual complex vector bundle to $E$ on $U$  and
  $\End_U(E):= E\otimes E^{\vee}$
     be the complex endomorphism bundle of $E$.
Each fiber of $\End_U(E)$ is isomorphic to the Azumaya algebra
 $\End_{\Bbb C}({\Bbb C}^{\oplus r})$
   (i.e.\ the matrix algebra $M_{r\times r}({\Bbb C})$ of rank $r$) over ${\Bbb C}$.
Let $C^k(\End_U(E))$ be the $C^{k}(U)^{\Bbb C}$-algebra
  of  $C^k$-sections of $\End_U(E)$.
It is the function ring of the noncommutative $C^k$-manifold $U^{\!A\!z}$.
	
Let $C^{-{\infty}}(\End_U(E))$ be the ${\Bbb C}$-algebra of sections
  of the endomorphism-bundle $\End_U(E)\rightarrow U$ as a map between sets.
Then, $C^{-{\infty}}(\End_U(E))\supset C^k(\End_U(E))$
  for all $k\in {\Bbb Z}_{\ge 0}\cup\{\infty\}$.
Let
  $$
   \xymatrix{
     C^k(\End_U(E)) &&  C^k(V)\ar[ll]_-{\varphi^{\sharp}}
	}
  $$
  be a ring-homomorphism over ${\Bbb C}\hookleftarrow{\Bbb R}$.
Then $\varphi^{\sharp}$ extends canonically to a ring-homomorphism
  $$
   \xymatrix{
     C^{-{\infty}}(\End_U(E))
	    &&  C^k(U\times V)\ar[ll]_-{\tilde{\varphi}^{\sharp}}  \\	
	   \rule{0ex}{2.6ex}	
	  C^k(\End_U(E))\ar  @{^{(}->}   [u]
	    &&  \rule{0ex}{2.6ex}
		       C^k(V)\ar[ll]_-{\varphi^{\sharp}}\ar @{^{(}->} [u]
	}
  $$
  over ${\Bbb C}\hookleftarrow{\Bbb R}$,
    where both vertical inclusions are tautological,
  as follows:
 \begin{itemize}
  \item[{\Large $\cdot$}]
    Associated to each $f\in C^k(U\times V)$ is the subset
 	$$
	   U^{f;1}\; := \;
	       \{(u, f|_{\{u\}\times V})\,:\, u\in U\}\; \subset\; U\times C^k(V)\,.
    $$
	
  \item[{\Large $\cdot$}]	
   The map
      $\Id_U\times \varphi^{\sharp}:
	       U\times C^k(V)\rightarrow U\times C^k(\End_U(E))$
	sends $U^{f;1}$ to the subset
	$$
	  U^{f;2}\;=\;
	     \{(u,   \varphi^{\sharp}(f|_{\{u\}\times V}))\,:\, u\in U\}\;
     		 \subset\; U\times C^k(\End_U(E))\,.
	$$
	
  \item[{\Large $\cdot$}]	
    Which produces a section of $\End_U(E)\rightarrow U$ as a map between sets:
	 $$
	   s_f\; =\;
	      \{(u,
		        (\varphi^{\sharp}(f|_{\{u\}\times V}))|_{\End_u(E_u)})\,:\, u\in U\}\;
     		 \in\; C^{-\infty}(\End_U(E))\,.
     $$
	
  \item[{\Large $\cdot$}]
   $\tilde{\varphi}^{\sharp}:C^k(U\times V)\rightarrow C^{-\infty}(\End_U(E))\,$
     is now defined by $\,f\mapsto s_f\,$.
 \end{itemize}
By construction,
 $\tilde{\varphi}^{\sharp}$  is a ring-homomorphism
   over ${\Bbb R}\hookrightarrow{\Bbb C}$   and
 it makes the following diagram of ring-homomorphisms commute:
 $$
   \xymatrix{
    \hspace{1ex}
     C^{-\infty}(\End_U(E))
	     &  \hspace{1ex}C^k(\End_U(E))\ar @{_{(}->}[l]   \\
	& &  C^k(U\times V)\ar @/^2ex /  [ull]^(.6){\tilde{\varphi}^{\sharp}}
	   &&& \rule{0ex}{3ex}\hspace{1ex}
	            C^k(V)\ar @/_{2ex}/[ullll]_-{\varphi^{\sharp}}
	                            \ar @{_{(}->}[lll]^{pr_V^{\sharp}}       \; , \\	
    & &  \rule{0ex}{3ex}
	     C^k(U)\ar @{^{(}->}[u]_-{pr_U^{\sharp}}
		                   \ar@/^{2ex}/ @{_{(}->}[uul]
	}
  $$
  where $\pr_U: U\times V\rightarrow U$ and $\pr_V: U\times V\rightarrow V$
    are the projection maps and
  $C^k(U)\hookrightarrow C^k(\End_U(E))$
    is the inclusion of the center of $C^k(\End_U(E))$.

\bigskip

\begin{definition}
 {\bf [canonical extension $\tilde{\varphi}^{\sharp}$ of $\varphi^{\sharp}$].} {\rm
   The ring-homomorphism  $\tilde{\varphi}^{\sharp}$ defined above
     is called the {\it canonical extension} of $\varphi^{\sharp}$.	
}\end{definition}	
	
\bigskip	
	
The notion of a $C^k$-map from an Azumaya point in Sec.\ 3
  motivates and is generalized to the following three key definitions of this note.

\bigskip

\begin{definition}{\bf [admissible homomorphism from $C^k$-ring to Azumaya algebra].}
{\rm
 With the above notation,
   let
     $$
      \xymatrix{
        C^k(\End_U(E)) &&  C^k(V)\ar[ll]_-{\varphi^{\sharp}}
    	}
     $$
    be a ring-homomorphism over ${\Bbb C}\hookleftarrow{\Bbb R}$.
 Then $\varphi^{\sharp}$ is said to be {\it $C^k$-admissible}
  if the following two conditions are satisfied:
   \begin{itemize}
     \item[(1)]
      Its canonical extension
       $$
        \xymatrix{
          C^{-\infty}(\End_U(E))
		  &&  C^k(U\times V)\ar[ll]_-{\tilde{\varphi}^{\sharp}}
         }
      $$
	   has image $\Image\tilde{\varphi}^{\sharp}$ contained in $C^k(\End_U(E))$.

     \item[(2)]
	  Under (1), the quotient map
	   $\tilde{\varphi}^{\sharp}:C^k(U\times Y)
	        \rightarrow \Image\tilde{\varphi}^{\sharp}$
	   realizes $\Image\tilde{\varphi}^{\sharp}$ as a $C^k$-normal quotient of $C^k(U\times Y)$,
	   with the induced $C^k$-ring structure.
	
	 \item[(3)]
      Except the arrow
	    $C^k(\End_U(E))\; \hookleftarrow\; \Image\tilde{\varphi}^{\sharp}$, 	
	  the arrows of the induced diagram under (1)
	  $$
        \xymatrix{
          **[l]C^k(\End_U(E))\; \hookleftarrow\; \Image\tilde{\varphi}^{\sharp}\\
	      &  C^k(U\times V)\ar[ul]_(.45){\tilde{\varphi}^{\sharp}}
	      &&& \rule{0ex}{3ex}\hspace{1ex}
	                C^k(V)\ar @/_{2ex}/[ullll]_-{f_{\varphi}^{\sharp}}
	                                 \ar @{_{(}->}[lll]^{pr_V^{\sharp}}\\	
         &  \rule{0ex}{3ex}
	         C^k(U)\ar @{^{(}->}[u]_-{pr_U^{\sharp}}
		                       \ar@/^{2ex}/ @{_{(}->}[uul]^-{\pi_{\varphi}^{\sharp}}
    	}
      $$
	  are all $C^k$-ring homomorphisms.
	Here,
	  $f_{\varphi}^{\sharp}$ is the same as $\varphi^{\sharp}$
	     but regarded now as a ring-homomorphism to the (commutative) subring
		   $\Image\tilde{\varphi}^{\sharp}$ of $C^k(\End_U(E))$,
	  $C^k(U)$, $C^k(V)$, and $C^k(U\times V)$ are with their built-in $C^k$-ring structure
	   and
	  $\Image\tilde{\varphi}^{\sharp}$ is equipped with the quotient $C^k$-ring structure
        from the ${\Bbb R}$-algebra-epimorphism
       $\tilde{\varphi}^{\sharp}:C^k(U\times V)
	     \longrightaarrow \Image\tilde{\varphi}^{\sharp}$.	
   \end{itemize}
   %
   When $\varphi^{\sharp}$ is $C^k$-admissible,
     denote also
	  $$
	    C^k(U)\langle\Image\varphi^{\sharp}\rangle\;
		  :=\;    \mbox{the image $\Image\tilde{\varphi}^{\sharp}$
                      of $\tilde{\varphi}^{\sharp}$ in $C^k(\End_U(E))$}
     $$
  and call it {\it the $C^k$-subalgebra of $C^k(\End_U(E))$
    generated by $\Image\varphi^{\sharp}$ over $C^k(U)$}.
}\end{definition}

\bigskip
  
\begin{definition}
{\bf [local model of maps from Azumaya manifolds in differential topology/geometry].} {\rm
 Continuing the discussion.
 A {\it $C^k$-map}
  $$
     \varphi\; :\; U^{\!A\!z}\;  \longrightarrow\; V
  $$
  is defined contravariantly in terms of function-rings in the following diagram
  $$
   \xymatrix{
    & C^k(\End_U(E))   \\
    &    {\cal A}_{\varphi}\;:=\,\rule{0ex}{3ex}
         		C^k(U)\langle\Image\varphi^{\sharp}\rangle \ar@{^{(}->}[u]
                   				&&& C^k(V)\ar[lllu]_-{\varphi^{\sharp}}
								                           \ar[lll]^-{f_{\varphi}^{\sharp}}\\
    &  C^k(U)\rule{0ex}{3ex}  \ar@{^{(}->}[u]_-{\pi_{\varphi}^{\sharp}}
         &&&&,
   }
  $$
  where
   $\varphi^{\sharp}$ is $C^k$-admissible.
 By definition,
	  ${\cal A}_{\varphi}\;:=\, C^k(U)\langle\Image\varphi^{\sharp}\rangle$
	     is a $C^k$-ring with a built-in $C^k$-ring-epimorphism
		$\tilde{\varphi}^{\sharp}: C^k(U\times V)\longrightaarrow {\cal A}_{\varphi}$.	
} \end{definition}

\bigskip

\begin{definition} {\bf [surrogate of $U^{\!A\!z}$ specified by $\varphi$].} {\rm
  Continuing the discussion.
  the (commutative) $C^k$-scheme
  $U_{\varphi}:=\Spec({\cal A}_{\varphi})$,
  with the built-in $C^k$-maps
  $$
   \xymatrix{
       U^{\!A\!z}\ar[rrrd]^-{\varphi}\ar@{->>}[d]   \\
       U_{\varphi}\ar[rrr]_-{f_{\varphi}}  \ar@{->>}[d]^-{\pi_{\varphi}} &&& V \\
	   U
    }
  $$
  is called the {\it surrogate of} (the noncommutative) $U^{\!A\!z}$
   {\it specified by $\varphi:U^{\!A\!z}\rightarrow V$}.
 In this diagram, only the maps $f_{\varphi}$ and $\pi_{\varphi}$
    have the interpretation as morphisms between ringed spaces in the usual sense;
  the `maps'  $\varphi$ and $U^{\!A\!z}\twoheadrightarrow U_{\varphi}$ are
  defined only through $\varphi^{\sharp}$ and
    ${\cal A}_{\varphi}\hookrightarrow C^k(\End_U(E))$ respectively.
 Caution that in general there is {\it no} $C^k$-map $U\rightarrow V$	
   that makes the diagram commute.
} \end{definition}

\bigskip

\begin{remark}$[$algebraic geometry vs.\ $C^k$-algebraic geometry$\,]$.  {\rm
 Compared with [L-Y1] (D(1)) and [L-L-S-Y] (D(2)), highlighted in Sec.~1,
 this is exactly the same diagram of spaces there, with $X$ replaced by $U$ and $Y$ replaced by $V$,
  {\it except that}:
  \begin{itemize}
   \item[{\Large $\cdot$}]
    $\varphi^{\sharp}: C^k(V)\rightarrow \End_U(E)$
	  cannot be just arbitrary ring-homomorphism
      over ${\Bbb R}\hookrightarrow{\Bbb C}$.
    Rather, $\varphi^{\sharp}$ 	 has to, in addition, be compatible
	 with the $C^k$-ring structures involved in order to be adequately regarded as
	 defining a $C^k$-map $\varphi:U^{\!A\!z}\rightarrow V$.
  \end{itemize}
 This is the main $C^k$-revised building block that is needed to convert
   the essentially algebraic-geometry-oriented discussions/settings
     in [L-Y] (D(1)) and [L-L-S-Y] (D(2))
   to the situations of D-branes that lie in the realm of differential/symplectic topology/geometry.
 Beyond this, all are essentially the same at the formal level only that one has to adjust the various parts
  accordingly to fit in $C^k$-manifold and $C^k$-algebraic geometry language.
 For details, however, as $C^k(U\times V)$ is no longer Noetherian,
  statements in algebraic geometry for the case of Noetherian schemes, e.g., [Ha],
  cannot be copied.
 All has to be re-done, re-checked, and/or re-formatted properly.
}\end{remark}

\bigskip

\begin{remark} $[\,$$k=\infty$$\,]$. {\rm
 For the case $k=\infty$, Condition (2)  in Definition~5.1.2 is redundant.
}\end{remark}

\bigskip

\begin{remark}$[\,$the role of $E$$\,]$. {\rm
 The vector bundle $E$ plays also an important role
   in realizing $\varphi$ as a model for a D-brane in string theory.
 It is the Chan-Paton bundle on a D-brane that encodes nondynamical degrees of freedom
  at an end-point of an open string that is attached to and moving inside that D-brane.
 We'll illuminate the role of $E$ in the next section,
     based on the review of sheaves in $C^k$-algebraic geometry in Sec.~2.2.
}\end{remark}

%
%
%
%
%
%
%
%

\bigskip

\subsection{The role of the bundle $E$: The Chan-Paton bundle on a D-brane\\(continuing Sec.\ 2.2)}

When D-branes are stacked together, the enhancement of its Chan-Paton bundle
  and the enhancement of the matrix-type noncommutative structure thereupon go hand in hand.
The former is modeled by the vector bundle $E$ on the manifold $U$ (the world-volume of D-brane)
  and the latter is modeled by the endomorphism bundle $\End_U(E)$,
  whose ring of sections -- with a $C^k$-ring structure on its center --
   give the new noncommutative function-ring on $U$, enhancing it to the noncommutative $U^{\!A\!z}$.
In Sec.~5.1, we have focused on this noncommutative ``space'' $U^{\!A\!z}$
  and the notion of differentiable maps $\varphi$ from $U^{\!A\!z}$ to a manifold $V$.
In this subsection we continue the local study in Sec.~5.1
   but now focus on the complex vector bundle $E$ over $U$,
 from the aspect of modules and sheaves in $C^k$-algebraic geometry.

\bigskip

\begin{flushleft}
{\bf From the aspect of function-rings and modules}
\end{flushleft}
Let $C^k(E)$ be the ${\Bbb C}$-vector space of $C^k$-sections
 of the vector bundle $E$ over $U$.
Then $C^k(E)$ is tautologically and simultaneously
   \begin{itemize}
    \item[$\cdot$]
     a $C^k(U)$-module, denoted by $C^k(E)$ itself,

    \item[$\cdot$]	
     an ${\cal A}_{\varphi}$-module, denoted by $_{{\cal A}_{\varphi}}C^k(E)$,
       and
	
	\item[$\cdot$]
     a $C^k(\End_U(E))$-module,
	  denoted by $_{C^k(\scriptsizeEnd_U(E))}C^k(E)$,
   \end{itemize}
  in a way that is consistent with the built-in inclusions
  $C^k(U)\hookrightarrow {\cal A}_{\varphi}\hookrightarrow C^k(\End_U(E))$;
  that is,
  \begin{itemize}
    \item[(1)]
	 $_{C^k(\scriptsizeEnd_U(E))}C^k(E)$ as an ${\cal A}_{\varphi}$-module
		through ${\cal A}_{\varphi}\hookrightarrow C^k(\End_U(E))$
	  is canonically isomorphic to $_{{\cal A}_{\varphi}}C^k(E)$;
	 in notation,
       $$
                _{{\cal A}_{\varphi}}(_{C^k(\scriptsizeEnd_U(E))}C^k(E))    \;
				=\;  _{{\cal A}_{\varphi}}C^k(E)\,;
       $$	
	
    \item[(2)]	
	 $_{C^k(\scriptsizeEnd_U(E))}C^k(E)$ as a $C^k(U)$-module
		through $C^k(U)\hookrightarrow C^k(\End_U(E))$
	  is canonically isomorphic to $C^k(E)$; in notation,
	   $$
	        _{C^k(U)}(_{C^k(\scriptsizeEnd_U(E))}C^k(E))\;=\; C^k(E)\,;
	   $$
	
	\item[(3)]
     $_{{\cal A}_{\varphi}}C^k(E)$ as a $C^k(U)$-module
	  through $C^k(U)\hookrightarrow {\cal A}_{\varphi}$
	  is canonically isomorphic to $C^k(E)$; in notation,
	   $$
	        _{C^k(U)}(_{{\cal A}_{\varphi}}C^k(E))\; =\;  C^k(E)\,.
	   $$
  \end{itemize}
 Let us  indicate this in the following commutative diagram,
  which also summarizes the construction in Sec.\ 5.1:
  (Here, $\varphi^{\sharp}$ is $C^k$-admissible.)
  $$
   \xymatrix{
    & C^k(E)\\
	&& C^k(\End_U(E))  \ar@{~>}[lu] \\
    &&    {\cal A}_{\varphi}\,:=\,\rule{0ex}{3ex}
         		C^k(U)\langle\Image\varphi^{\sharp}\rangle
				      \ar@{^{(}->}[u]  \ar@{~>}@/^2ex/[luu]
                   				&&& C^k(V)\ar[lllu]_-{\varphi^{\sharp}}
								                           \ar[lll]^-{f_{\varphi}^{\sharp}}
														   \ar@{_{(}->}[d]^-{pr_V^{\sharp}}\\
    &&  \;\;C^k(U)\;\; \rule{0ex}{3ex}  \ar@{^{(}->}[u]_-{\pi_{\varphi}^{\sharp}}
	                                                                 \ar@{~>}@/^5ex/[luuu]
																	 \ar@{^{(}->}[rrr]_-{pr_U^{\sharp}}
         &&&   C^k(U\times V)\ar@{->>}[ulll]^-{\tilde{\varphi}^{\sharp}}&.
   }
  $$
  
Through the above commutative diagram, one observes further that:
  \begin{itemize}
   \item[(4)]{\it
    Through the $C^k$-admissible $\varphi^{\sharp}$, which gives
	 $\tilde{\varphi}^{\sharp}: C^k(U\times V)\twoheadrightarrow {\cal A}_{\varphi}$
	 in the diagram,
	 $C^k(E)$ is also a $C^k(U\times V)$-module,
	 denoted by $_{C^k(U\times V)}C^k(E)$.\footnote{The
	                             $C^k(U\times V)$-module $_{C^k(U\times V)}C^k(E)$								 
								 encodes simultaneously the data of $C^k(E)$
								 and $\varphi$. Thus, mathematically it is the central object to study.}
   Through the inclusion $\pr_U^{\sharp}:C^k(U)\hookrightarrow C^k(U\times V)$,
     one has the canonical exact sequence of $C^k(U\times V)$-modules:}
     $$
	   C^k(E)\otimes_{C^k(U)}C^k(U\times V) \;
	    \longrightarrow\;     _{C^k(U\times V)}C^k(E)\;\longrightarrow\; 0\,.
	 $$								
	  	
   \item[(5)]{\it
    Through $\pr_U^{\sharp}:C^k(U)\hookrightarrow C^k(U\times V)$,
     $_{C^k(U\times V)}C^k(E)$ is realized as a $C^k(U)$-module,
        denoted by $_{C^k(U)}(_{C^k(U\times V)}C^k(E))$, 	
	 which is canonically isomorphic to $C^k(E)$ itself.
	In notation,}
     $$
	   _{C^k(U)}(_{C^k(U\times V)}C^k(E))\;  =\;  C^k(E)\,.
	 $$
	
   \item[(6)]{\it
    Through $\varphi^{\sharp}:C^k(V)\rightarrow C^k(\End_U(E))$,
	  $_{C^k(\scriptsizeEnd_U(E))}C^k(E)$ is realized as a $C^k(V)$-module,
	   denoted by $_{C^k(V)}C^k(E)$.\footnote{The
	                        $C^k(V)$-module $_{C^k(V)}C^k(E)$
                     	   is meant to be the module of $C^k$-sections of a sheaf on $V$
                            that is directly interacting with open strings.
						   Thus, string-theoretically it is the second most important object to study
		                    next to the fundamental module $C^k(E)$ itself.}
	On the other hand,
	  through $f_{\varphi}^{\sharp}:C^k(V)\rightarrow {\cal A}_{\varphi}$,
	    $_{{\cal A}_{\varphi}}C^k(E)$ is realized as another $C^k(V)$-module,
		denoted by $_{C^k(V)}(_{{\cal A}_{\varphi}}C^k(E))$; 	
	    and through $\pr_V^{\sharp}:C^k(V)\hookrightarrow C^k(U\times V)$,
	  $_{C^k(U\times V)}C^k(E)$ is realized as yet another $C^k(V)$-module,
	  denoted by $_{C^k(V)}(_{C^k(U\times V)}C^k(E))$.
     These three $C^k(V)$-modules are 	canonically isomorphic.
     In notation, }
	  $$
	   _{C^k(V)}C^k(E)\;
	     =\; _{C^k(V)}(_{{\cal A}_{\varphi}}C^k(E))\;
	     =\;  _{C^k(V)}(_{C^k(U\times V)}C^k(E))\,.
	  $$
  \end{itemize}
  
These observations serve the basis to understand
 $_{C^k(U\times V)}C^k(E)$ and $_{C^k(V)}C^k(E)\,$:
  
\bigskip

\begin{lemma} {\bf [finite generatedness of modules].}
 Assume that $C^k(E)$ is a finitely generated $C^k(U)$-module. Then,
  \begin{itemize}
   \item[(7)]
     $_{C^k(U\times V)}C^k(E)$ is a finitely generated $C^k(U\times V)$-module.
	
   \item[(8)] 	
    In general, $_{C^k(V)}C^k(E)$ may still not be a finitely generated $C^k(V)$-module.
    However, if in addition ${\cal A}_{\varphi}$ is also finitely generated as a $C^k(V)$-module
	    through $f_{\varphi}^{\sharp}:C^k(V)\rightarrow {\cal A}_{\varphi}$,
    then $_{C^k(V)}C^k(E)$ is a finitely generated $C^k(V)$-module.
  \end{itemize}	
\end{lemma}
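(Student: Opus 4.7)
For part (7), my plan is to exploit the canonical surjection recorded in observation~(4):
$$
  C^k(E)\otimes_{C^k(U)}C^k(U\times V)\; \longrightarrow\;
  {}_{C^k(U\times V)}C^k(E)\; \longrightarrow\; 0\,.
$$
Choose a finite set of $C^k(U)$-module generators $s_1,\,\ldots\,,\,s_l$ of $C^k(E)$, which exists by hypothesis. Then the simple tensors $s_1\otimes 1,\,\ldots\,,\,s_l\otimes 1$ generate the source as a $C^k(U\times V)$-module, using only that tensor product is functorial and that $C^k(U)\hookrightarrow C^k(U\times V)$ via $pr_U^{\sharp}$ is a ring map. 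Pushing these generators through the surjection yields a finite generating set of ${}_{C^k(U\times V)}C^k(E)$. No analytic input beyond the hypothesis is needed.

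For part (8), my plan is to factor the $C^k(V)$-module structure through ${\cal A}_{\varphi}$ using observation~(6), which gives the identification
$$
  {}_{C^k(V)}C^k(E)\; =\; {}_{C^k(V)}\bigl({}_{{\cal A}_{\varphi}}C^k(E)\bigr)\,,
$$
with the first ${\cal A}_{\varphi}$-action coming from the inclusion ${\cal A}_{\varphi}\hookrightarrow C^k(\End_U(E))$ and the $C^k(V)$-action on ${\cal A}_{\varphi}$ coming from $f_{\varphi}^{\sharp}:C^k(V)\to{\cal A}_{\varphi}$. The strategy is then a two-step finiteness argument:
\begin{itemize}
 \item[(i)] First show that ${}_{{\cal A}_{\varphi}}C^k(E)$ is finitely generated as an ${\cal A}_{\varphi}$-module. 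Since $C^k(U)\hookrightarrow{\cal A}_{\varphi}$ (the inclusion $\pi_{\varphi}^{\sharp}$) and $C^k(E)$ is finitely generated over $C^k(U)$, the same generators $s_1,\,\ldots\,,\,s_l$ serve to generate ${}_{{\cal A}_{\varphi}}C^k(E)$ over the larger ring ${\cal A}_{\varphi}$; the extra consistency requirements in observations (1)--(3) ensure these actions are compatible.
 \item[(ii)] Under the additional hypothesis that ${\cal A}_{\varphi}$ is finitely generated as a $C^k(V)$-module via $f_{\varphi}^{\sharp}$, say by $a_1,\,\ldots\,,\,a_m$, combine with (i): the finite family $\{a_j\cdot s_i\}_{i,j}$ generates ${}_{C^k(V)}C^k(E)$ over $C^k(V)$.
\end{itemize}

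For the negative assertion in (8), I would sketch a minimal counterexample rather than prove a general statement. Take $U=V={\Bbb R}^1$, $E=U\times{\Bbb C}$ of rank $1$, and $\varphi$ defined by $\varphi^{\sharp}:C^k({\Bbb R}^1)\to C^k({\Bbb R}^1)$, $f\mapsto f\circ\sin$. Then ${\cal A}_{\varphi}$ is the $C^k$-subring of $C^k(U)$ consisting of functions factoring through $\sin$; this is not finitely generated over $C^k(V)$, and ${}_{C^k(V)}C^k(E)$ fails to be finitely generated because over any point in the open image $(-1,1)$ the fiber carries infinitely many preimage-sheeted data. The hard part here is not the algebra but being careful that the chosen $\varphi^{\sharp}$ is genuinely $C^k$-admissible in the sense of Definition~5.1.2, so that the object ${\cal A}_{\varphi}$ is well-defined; this should follow from the smoothness of $\sin$ and the fact that pull-back by a smooth map preserves $C^k$-ring structures. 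Overall the main obstacle is exclusively (8)(ii)'s bookkeeping of the three different module structures on the same underlying abelian group $C^k(E)$ and tracking which ring-homomorphism supplies each scalar action; the finite-generation chain itself is formal.
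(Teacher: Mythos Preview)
Your proposal is correct and follows exactly the paper's approach: the paper's proof simply cites the surjection in observation~(4) for part~(7) and the canonical isomorphism ${}_{C^k(V)}C^k(E) = {}_{C^k(V)}({}_{{\cal A}_{\varphi}}C^k(E))$ from observation~(6) for part~(8), and you have unpacked both of these into the explicit generator-chasing arguments they stand for. Your counterexample sketch for the negative assertion in~(8) is a welcome addition, as the paper gives no justification for that claim at all.
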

 
\begin{proof}
 Property(7) is a consequence of the canonical exact sequence of $C^k(U\times V)$-modules
    $$
	   C^k(E)\otimes_{C^k(U)}C^k(U\times V) \;
	    \longrightarrow\;     _{C^k(U\times V)}C^k(E)\;\longrightarrow\; 0
	$$
   in Property (4).	
 Property (8) is a consequence of the canonical isomorphisms:
    $$
	   _{C^k(V)}C^k(E)\;=\; _{C^k(V)}(_{{\cal A}_{\varphi}}C^k(E))
	$$
   in Property (6).
   
\end{proof}

\bigskip

\begin{flushleft}
{\bf The $C^k(U\times V)$-module $_{C^k(U\times V)}C^k(E)$ made concrete}
\end{flushleft}
The $C^k(U\times V)$-module $_{C^k(U\times V)}C^k(E)$ play an important role
 to our understanding of the $C^k$-map $\varphi$ from the Azumaya $U^{\!A\!z}$
 with a fundamental module $E$ to $V$.
In this theme, we will try to make $_{C^k(U\times V)}C^k(E)$ as concrete as can be
 in a setting that remains general enough for our later study of global objects from gluing local objects.

Let
 $U$ be $C^k$-diffeomorphic to ${\Bbb R}^m$,  with coordinates $x=(x^1,\,\cdots\,,\,x^m)$,
   and
 $V$ be $C^k$-diffeomorphic to ${\Bbb R}^n$,  with coordinates $y=(y^1,\,\cdots\,,\,y^n)$.	
Assume that
 $$
   E\;  \simeq\;  U\times{\Bbb C}^r
 $$
  is a trivialized trivial bundle of rank $r$ over $U$.
Then
 $$
   C^k(\End_U(E))\;
    \simeq\;  \End_{\Bbb C}({\Bbb C}^{\oplus r})\otimes_{\Bbb R}C^k(U)
 $$
 the Azumaya algebra over $C^k(U)$  of rank $r$ and
 $$
   C^k(E)\otimes_{C^k(U)}C^k(U\times V)\;
     \simeq\;  {\Bbb C}^{\oplus r}\otimes_{\Bbb R}C^k(U\times V)
 $$
 the free $C^k(U\times V)^{\Bbb C}$-module of rank $r$.
With the identification of $y_i$ as in $C^k(U\times V)$ through the inclusion
   $\pr_V^{\ast}:C^k(V)\hookrightarrow C^k(U\times V)$,
consider the $C^k$-ideal $I_0$ of $C^k(U\times V)$ with its $C^k$-generators indicated:

\bigskip

\begin{definition}  {\bf [characteristic $C^k$-ideal with respect to coordinate functions on target].} {\rm
 Define the {\it characteristic $C^k$-ideal of $\varphi^{\sharp}$ in $C^k(U\times V)$
    with respect to the coordinate functions $y^1\,,,\,\cdots\,,\,y^n\in C^k(V)$}
  to be
  $$
     I_0\;  :=\;
	    \langle  \determinant( \Id_{r\times r}\otimes y^i - \varphi^{\sharp}(y^i)\otimes 1 )\,
	                            :\, i=1,\,\ldots\,,\,n  \rangle_{C^k}\,.
  $$
}\end{definition}

\bigskip
 
\noindent
Then it follows from the previous discussion and linear algebra that one has the following
  commutative diagram of short exact sequences of $C^k(U\times V)$-modules:
 $$
   \xymatrix{
    \;\;\; 0\;\;  \ar[r]
	  & \;\; M_0\raisebox{-1ex}{\rule{0ex}{2ex}}\;\;  \ar[r]^-{\iota}   \ar @{^{(}->} [d]
      & \;\;{\Bbb C}^{\oplus r}\otimes_{\Bbb R}C^k(U\times V)\;\;	\ar@{=}[d] \ar[r]	
	  & \;\;\Coker(\iota)\;\; \ar@{->>}  [d]    \ar[r]     & \;\; 0\;\;\;    \\
    \;\;\; 0\;\; \ar[r]   & \;\;\Ker(\alpha)\;\; \ar[r]
	  & \;\; C^k(E)\otimes_{C^k(U)}C^k(U\times V)\;\;  \ar[r]^-{\alpha}
	  & \;\; _{C^k(U\times V)}C^k(E)\;\; \ar[r]    & \;\; 0\;\; ,
   }
 $$
 where
 $$
     M_0\;=\;
	    I_0\cdot ({\Bbb C}^{\oplus r}\otimes_{\Bbb R}C^k(U\times V))\;
	 \simeq\;  {\Bbb C}^{\oplus r}\otimes_{\Bbb R}I_0
 $$
 and, hence,
 $$
   \Coker(\iota)\;\simeq \;
     {\Bbb C}^{\oplus r}\otimes_{\Bbb R}\left(  C^k(U\times V)/I_0 \right)\,.
 $$
Note that
  the element $\determinant( \Id_{r\times r}\otimes y^i - \varphi^{\sharp}(y^i)\otimes 1 )$
    in the $C^k$-generating set of $I_0$
  lies in the polynomial ring $C^{k}(U)^{\Bbb C}[y^i]\subset C^k(U\times V)^{\Bbb C}$
   and has degree $r$ in $y^i$.
As is shown in Sec.\ 3, for any $p\in U$,
  all these polynomials in ${\Bbb C}[y^i]$  have only real roots.
With this and after a translation to the language of sheaves in the next theme,
  $_{C^k(U\times V)}C^k(E)$ is geometrically a sheaf $\tilde{\cal E}_{\varphi}$ supported
  within the zero-locus $Z_0\subset U\times V$ described by $I_0$.
As $Z_0$  is finite over $U$, so must be the support $\Supp(\tilde{\cal E}_{\varphi})$
 of $\tilde{\cal E}_{\varphi}$.

\bigskip

\begin{flushleft}
{\bf From the aspect of schemes and sheaves}
\end{flushleft}
All the above discussions/statements have an equivalent and yet  more geometrical description
 in terms of schemes and sheaves,  which we now present.
 
Introduce first the following basic objects:
 \begin{itemize}
   \item[{\Large $\cdot$}]
    ${\cal O}_U$ is the sheaf of germs of $C^k$-functions of $U$;
    denote its complexification ${\cal O}_U\otimes_{\Bbb R}{\Bbb C}$
    	by ${\cal O}_U^{\,\Bbb C}$.
	
   \item[{\Large $\cdot$}]
    ${\cal E}$ is the sheaf of germs of $C^k$-sections
	   of the complex vector bundle $E$ of rank $r$,
	 it is a locally free sheaf of ${\cal O}_U^{\,\Bbb C}$-modules of rank $r$.

   \item[{\Large $\cdot$}]	
    ${\cal O}_U^{A\!z}:=\Endsheaf_{{\cal O}_U}({\cal E})$
	  is the sheaf of endomorphisms of ${\cal E}$ as an ${\cal O}_U^{\,\Bbb C}$-module,
	${\cal O}_U^{A\!z}$ is identical to
	   the sheaf of germs of $C^k$-sections of $\End_U(E)$.
	 It naturally acts on ${\cal E}$,
	    realizing the latter as the fundamental ${\cal O}_U^{A\!z}$-module.
	
   \item[{\Large $\cdot$}]	
    The ringed space $U^{\!A\!z}:= (U,{\cal O}_U^{A\!z})$
	  is a $C^k$-manifold with the sheaf ${\cal O}_U^{A\!z}$ of Azumaya algebras
	   as its structure sheaf.
	 This is a noncommutative space that serves
	   as a local chart for an Azumaya noncommutative manifold.
     Since there is the tautological inclusion
    	 ${\cal O}_U^{\,\Bbb C}\hookrightarrow {\cal O}_U^{A\!z}$,
      which realizes ${\cal O}_U$ as the sheaf of centers of ${\cal O}_U^{A\!z}$,
	 we shall think of an un-spelled-out dominant map
	  $$
	     \pi_U\;:\;  U^{A\!z}\; \longrightarrow\;  U
	  $$
	   if that helps or is conceptually needed. 	
 \end{itemize}

Given a $C^k$-map $\varphi:U^{A\!z}\rightarrow V$ via
   $$
   \xymatrix{
    & C^k(\End_U(E))   \\
    &    {\cal A}_{\varphi}\;:=\,\rule{0ex}{3ex}
         		C^k(U)\langle\Image\varphi^{\sharp}\rangle \ar@{^{(}->}[u]
                   				&&& C^k(V)\ar[lllu]_-{\varphi^{\sharp}}
								                           \ar[lll]^-{f_{\varphi}^{\sharp}}\\
    &  C^k(U)\rule{0ex}{3ex}  \ar@{^{(}->}[u]_-{\pi_{\varphi}^{\sharp}}
         &&&&,
   }
  $$
  as in Definition~5.1.3 in Sec.~5.1,
 one has the corresponding diagram of maps between spaces:	
  $$
   \xymatrix{
    &  U^{\!A\!z}\ar[rrrd]^-{\varphi}\ar@{->>}[d]^-{\sigma_{\varphi}}   \\
    &  U_{\varphi}\ar[rrr]_-{f_{\varphi}}  \ar@{->>}[d]^-{\pi_{\varphi}} &&& V \\
	&  U   &&&&;
    }
  $$	
 cf.\ Definition~5.1.4.
The locally free sheaf ${\cal E}$ as the  fundamental ${\cal O}_U^{A\!z}$-module
  now  fits in to the following extended diagram of maps between spaces:
  $$
   \xymatrix{
    & {\cal E} \ar@{.>}[rd]     \ar@{.>}@/_1ex/[rdd]    \ar@{.>}@/_2ex/[rddd]   \\
    &  & U^{\!A\!z}\ar[rrrd]^-{\varphi}\ar@{->>}[d]^-{\sigma_{\varphi}}   \\
    &  & U_{\varphi}\ar[rrr]_-{f_{\varphi}}
	                                   \ar@{_{(}->} [rrrd]_{\tilde{\varphi}}
                                	   \ar@{->>}[d]^-{\pi_{\varphi}} &&& V \\
	&  & U     &&& U\times V \ar@{->>}[u]_-{pr_V} \ar@{->>}[lll]^-{pr_U}  &.
    }
  $$
Which translates the diagram
  $$
   \xymatrix{
     C^k(E)\\
	& C^k(\End_U(E))  \ar@{~>}[lu] \\
    &    {\cal A}_{\varphi}\,:=\,\rule{0ex}{3ex}
         		C^k(U)\langle\Image\varphi^{\sharp}\rangle
				      \ar@{^{(}->}[u]  \ar@{~>}@/^2ex/[luu]
                   				&&& C^k(V)\ar[lllu]_-{\varphi^{\sharp}}
								                           \ar[lll]^-{f_{\varphi}^{\sharp}}
														   \ar@{_{(}->}[d]^-{pr_V^{\sharp}}\\
    &  \;\;C^k(U)\;\; \rule{0ex}{3ex}  \ar@{^{(}->}[u]_-{\pi_{\varphi}^{\sharp}}
	                                                                 \ar@{~>}@/^5ex/[luuu]
																	 \ar@{^{(}->}[rrr]_-{pr_U^{\sharp}}
         &&&   C^k(U\times V)\ar@{->>}[ulll]^-{\tilde{\varphi}^{\sharp}}
   }
  $$
 in the earlier rings-and-modules discussions.
 
Other parts are translated into the following:\hspace{1em}
Denote ${\cal E}$
  as ${\cal E}$, $_{U_{\varphi}}{\cal E}$,  or $_{U^{\!A\!z}}{\cal E}$,
  depending on whether it is regarded respectively as
    an ${\cal O}_U$-module, an ${\cal O}_{U_{\varphi}}$-module,
	or an ${\cal O}_U^{A\!z}$-module.
 \begin{itemize}
  \item[(1$^{\prime}$)]
    $\;\;\sigma_{\varphi\ast}(_{U^{\!A\!z}}{\cal E})\;
	    =\;   _{U_{\varphi}}{\cal E}\,$
	on $U_{\varphi}$.
    
  \item[(2$^{\prime}$)]
    $\;\;\pi_{U\ast}(_{U^{\!A\!z}}{\cal E})\; =\;  {\cal E}\,$
	on $U$.
  
  \item[(3$^{\prime}$)]
    $\;\; \pi_{\varphi\ast}(_{U_{\varphi}}{\cal E})\; =\; {\cal E}\,$
	on $U$.
  
  \item[(4$^{\prime}$)]
   Denote the sheaf
     $\tilde{\varphi}_{\ast}(_{U_{\varphi}}{\cal E})$ on $U\times V$
	 by $\tilde{\cal E}_{\varphi}$.
   Then there is a canonical exact sequence of ${\cal O}_{U\times V}$-modules:
	$$
	   \pr_U^{\ast}{\cal E}\;\longrightarrow\;
    	   \tilde{\cal E}_{\varphi}\; \longrightarrow \; 0\,.
	$$
  
  \item[(5$^{\prime}$)]
    $\;\; \pr_{U\ast}\tilde{\cal E}_{\varphi}\; =\; {\cal E}\,$
	 on $U$.
  
  \item[(6$^{\prime}$)]
    $\;\; \varphi_{\ast}{\cal E}\;
	   =\; f_{\varphi\ast}(_{U_{\varphi}}{\cal E})\;
	   =\; \pr_{V\ast}\tilde{\cal E}_{\varphi}\,$
	on $V$.
 \end{itemize}
 
\bigskip

\begin{lemma} {\bf [finite generatedness of sheaves].}
 Assume that ${\cal E}$ is a finitely generated ${\cal O}_U$-module. Then,
  \begin{itemize}
   \item[(7$^{\,\prime}$)]
     $\tilde{\cal E}_{\varphi}$ is a finitely generated ${\cal O}_{U\times V}$-module.
	
   \item[(8$^{\,\prime}$)] 	
    In general, $\varphi_{\ast}{\cal E}$
	   may still not be a finitely generated ${\cal O}_V$-module.
    However, if in addition ${\cal O}_{U_{\varphi}}$ is also finitely generated
	      as an ${\cal O}_V$-module
	    through $f_{\varphi}: U_{\varphi}\rightarrow V$,
    then $\varphi_{\ast}{\cal E}$ is a finitely generated ${\cal O}_V$-module.
  \end{itemize}	
\end{lemma}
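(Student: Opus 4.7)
The plan is to translate the rings-and-modules proof of Lemma~5.2.1 to the sheaf-theoretic setting via the dictionary (1$^{\,\prime}$)--(6$^{\,\prime}$) recorded just before the lemma; both assertions then follow by localizing the corresponding module statements on open sets of $U$ over which ${\cal E}$ is trivial.

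For (7$^{\,\prime}$), I would begin with the canonical surjection of ${\cal O}_{U\times V}$-modules from (4$^{\,\prime}$),
\[
\pr_U^{\ast}{\cal E}\;\longrightarrow\;\tilde{\cal E}_{\varphi}\;\longrightarrow\;0.
\]
By hypothesis ${\cal E}$ is finitely generated as an ${\cal O}_U$-module, so its pullback $\pr_U^{\ast}{\cal E}$ -- equivalently, the ${\cal O}_{U\times V}$-module generated by the same finite set of sections via $\pr_U^{\sharp}:{\cal O}_U\hookrightarrow{\cal O}_{U\times V}$ -- is finitely generated over ${\cal O}_{U\times V}$. Passing to the quotient immediately yields (7$^{\,\prime}$).

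For (8$^{\,\prime}$), I would use the identity $\varphi_{\ast}{\cal E}=f_{\varphi\ast}({}_{U_{\varphi}}{\cal E})$ from (6$^{\,\prime}$) and argue in two steps. First, ${}_{U_{\varphi}}{\cal E}$ is finitely generated as an ${\cal O}_{U_{\varphi}}$-module: any finite ${\cal O}_U$-generating set for ${\cal E}$ remains a generating set after extending scalars along the central inclusion ${\cal O}_U\hookrightarrow{\cal O}_{U_{\varphi}}$ recorded by $\pi_{\varphi}^{\sharp}$. Second, under the extra hypothesis that ${\cal O}_{U_{\varphi}}$ is finitely generated over ${\cal O}_V$ via $f_{\varphi}^{\sharp}$, the finite set of products of an ${\cal O}_V$-generating set for ${\cal O}_{U_{\varphi}}$ with an ${\cal O}_{U_{\varphi}}$-generating set for ${}_{U_{\varphi}}{\cal E}$ produces a finite ${\cal O}_V$-generating set for $f_{\varphi\ast}({}_{U_{\varphi}}{\cal E})=\varphi_{\ast}{\cal E}$. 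The "may still not be finitely generated" clause is a cautionary remark: without the added hypothesis, $f_{\varphi}:U_{\varphi}\rightarrow V$ need carry no finiteness property at all (for instance, one may arrange $U_{\varphi}\simeq U$ mapping to $V$ by an essentially arbitrary $C^k$-map), and simple examples of this kind exhibit a $\varphi_{\ast}{\cal E}$ that fails to be finitely generated over ${\cal O}_V$.

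No step here presents a genuine obstacle beyond bookkeeping; the entire content is already encoded in Lemma~5.2.1 together with Properties (4$^{\,\prime}$) and (6$^{\,\prime}$). The only care needed is to remember that ${\cal E}$, ${}_{U_{\varphi}}{\cal E}$, and ${}_{U^{\!A\!z}}{\cal E}$ denote the same underlying sheaf endowed with different module structures, so that finite generation in one of these senses does not automatically coincide with finite generation in another.
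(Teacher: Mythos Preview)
Your proposal is correct and follows exactly the approach of the paper: the sheaf-theoretic lemma is simply the translation of Lemma~5.2.1 through the dictionary (1$^{\,\prime}$)--(6$^{\,\prime}$), with (7$^{\,\prime}$) coming from the surjection in (4$^{\,\prime}$) and (8$^{\,\prime}$) from the identification in (6$^{\,\prime}$). The paper in fact gives no separate proof of the sheaf version, relying on precisely this reduction.
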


\bigskip

Properties
(5$^{\prime}$)
     $\,\pi_{U\ast}\tilde{\cal E}_{\varphi}={\cal E}\,$  on $U$  and
(6$^{\prime}$)
     $\,\pi_{V\ast}\tilde{\cal E}_{\varphi}= \varphi_{\ast}{\cal E}\,$ on $V$
  of $\tilde{\cal E}_{\varphi}$ on $U\times V$	 together motivate the following definition:

\bigskip
  
\begin{definition}  {\bf [graph of $\varphi$].} {\rm
  The sheaf  $\tilde{\cal E}_{\varphi}$ on $U\times V$ is called
    the {\it graph of} the $C^k$-map
	$\varphi:(U^{\!A\!z},{\cal E})\rightarrow V\,$.
}\end{definition}

\bigskip
  
\begin{remark} $[\,$taming/regularization/renormalization of push-forwards$\,]$. {\rm
   The push-forward $\varphi_{\ast}{\cal E}$ may not be even finitely generated in general.
   It has to be ``tamed/regularized/renormalized".
   For the $C^k$-map we are most interested in,
     there exists an open dense subset $V$ of $\Image\varphi\subset Y$,
	 over which $\varphi$ is of finite type.
   In this case, $\varphi_{\ast}{\cal E}$ can be regularized.
 Cf.~ Sec.~6.2.
}\end{remark}

\bigskip

\begin{flushleft}
{\bf Example: Matrix strings and D1-branes in ${\Bbb R}^2$}
\end{flushleft}
Some key points of and geometric visions behind this subsection is illustrated below.

\bigskip
 
\begin{example}
{\bf [matrix strings in ${\Bbb R}^2$ via maps from an Azumaya line
           $({\Bbb R}^{1,A\!z}, {\cal E})$].}
{\rm
 Let
  \begin{itemize}
    \item[{\Large $\cdot$}]
     $E\simeq {\Bbb R}^1\times {\Bbb C^3}$ be a trivialized complex vector bundle of rank $3$
       over the real line ${\Bbb R}^1$  (in coordinate $x$)  as a smooth manifold
    	 with the structure sheaf ${\cal O}_{{\Bbb R}^1}$
	       the sheaf of smooth functions on ${\Bbb R}^1$,
		
	\item[{\Large $\cdot$}]	
     ${\cal E}$ be the associated sheaf of smooth sections of $E$,
	 with ${\cal E}\simeq {\cal L}_1\oplus {\cal L}_2\oplus{\cal L}_3$
	   associated to the trivialization of $E\simeq L_1\oplus L_2\oplus L_3$, and
	
	\item[{\Large $\cdot$}]
     ${\Bbb R}^{1,A\!z}
	      =({\Bbb R}^1,
              {\cal O}_{{\Bbb R}^1}^{A\!z}:= \Endsheaf_{{\cal O}_X}({\cal E}))$
	  be the Azumaya noncommutative line of rank $3$,
			
	\item[{\Large $\cdot$}]
     ${\Bbb R}^2$ be the real plane, in coordinate $(y_1,y_2)$.	
  \end{itemize}	
 Then
  \begin{itemize}
   \item[{\Large $\cdot$}]
     $\End_{{\Bbb R}^1}(E)\simeq {\Bbb R}^1\times\End_{\Bbb C}({\Bbb C}^3)$
      a bundle of complex Azumaya algebras over ${\Bbb R}^1$,
	
   \item[{\Large $\cdot$}]	
     $C^{\infty}(\End_{{\Bbb R}^1}(E))
        \simeq \End_{\Bbb C}({\Bbb C}^3)\otimes_{\Bbb R}C^{\infty}({\Bbb R}^1)$
	an Azumaya algebra over $C^{\infty}({\Bbb R}^1)$, 	
        and
		
   \item[{\Large $\cdot$}]		
     any ring-homomorphism
       $\varphi^{\sharp}:C^{\infty}({\Bbb R}^2)
	     \rightarrow C^{\infty}(\End_{{\Bbb R}^1}(E))$
	    over ${\Bbb R}\hookrightarrow{\Bbb C}$ is $C^{\infty}$-admissible   and, hence,
       defines a smooth map $\varphi:({\Bbb R}^{1,A\!z},{\cal E})\rightarrow {\Bbb R}^2$.
  \end{itemize}	
	
 Consider in particular the following $C^{\infty}$-admissible
   ring-homomorphisms $\varphi^{\sharp}$,
     with  its value at $y_1$, $y_2\in C^{\infty}({\Bbb R}^2)$ indicated.
 Each reveals a different type of image-string structure in ${\Bbb R}^2$
   that can occur.\footnote{String-theorists
                              are highly recommend to first jump to
							     {\sc Figure}~5-2-1 (a), {\sc Figure}~5-2-1 (b),
								 {\sc Figure}~5-2-1 (c), {\sc Figure}~5-2-1 (d), and
								 {\sc Figure}~7-2-1,
							  and ask yourself three questions:
						      {\it Aren't these features/phenomena of a matrix string or a stacked D-string
  							     (either itself or under deformation)?};
							  at this beginning level of seeking a definition for the notion of
							     `maps from a matrix string/brane to a spae-time', 	                                 							  
						      {\it Is something over-done here?}, and {\it Is something under-done here?},
                              before following the mathematical explanations.}
							
 \bigskip

 \noindent{\bf (5.2.6.a) [three simple strings, and their self-or-not crossing or overlapping].}
 Let
    $$
	  \xymatrix @R=1ex{
	    \;\;C^{\infty}(\End_{{\Bbb R}^1}(E))\;\;
		   &&& \;\;C^{\infty}({\Bbb R}^2)\ar[lll]_{\varphi^{\sharp}}\;\;\\
	    \;\;\mbox{$ \left[
	         \begin{array}{ccc}
			   f_1(x)             & 0                           & 0                                \\
			   0                           & f_2(x)             & 0                                \\
			   0                           &  0                          & f_3(x)			
			 \end{array}
	                         \right] $}\;\;
		   &&& \hspace{1em}y_1 \hspace{1em}\ar @{{|}->}  [lll] \\
	    \;\;\mbox{$ \left[
	         \begin{array}{ccc}
			   g_1(x)            & 0                            & 0                                \\
			   0                            & g_2(x)            & 0                                \\
			   0                            &  0                           & g_3(x)			
			 \end{array}
	                         \right] $}\;\;
		   &&& \hspace{1em}y_2 \hspace{1em}\ar @{{|}->} [lll]      	
	  }
    $$
     with $f_1(x)$, $f_2(x)$, $f_3(x)$,
	         $g_1(x)$, $g_2(x)$, $g_3(x)\in C^{\infty}({\Bbb R}^1)$.	
  Then,  $\varphi^{\sharp}$ encodes three independent $C^{\infty}$-ring homomorphisms
    $$
        \xymatrix @R=1ex{
	   & \;\;C^{\infty}(\End_{{\Bbb R}^1 }(L_i))\;\;
		   &&& \;\;C^{\infty}({\Bbb R}^2)\ar[lll]_{\gamma^{\sharp}_i}\;\;\\
	   &\;\; f_i(x)\;\;
		   &&& \hspace{1em}y_1 \hspace{1em}\ar @{{|}->}  [lll] \\
	   & \;\; g_i(x)\;\;
		   &&& \hspace{1em}y_2 \hspace{1em}\ar @{{|}->} [lll]      &,	
	  }
    $$	
    for $i=1,\,2,\,3$.
  Each $\gamma_i^{\sharp}$ defines a smooth map
    $\gamma_i:{\Bbb R}^1\rightarrow {\Bbb R}^2$,
	   which can be expressed in the more traditional way in differential topology and geometry
	   as $x\mapsto (f_i(x), g_i(x))$.	
  They may cross or overlap with each other  and
   $$
     \varphi_{\ast}{\cal E}\;
	  \simeq\;  \gamma_{1\ast}{\cal L}_1
	                     \oplus \gamma_{2\ast}{\cal L}_2 \oplus\gamma_{3\ast}{\cal L}_3.
   $$
  In this case, $\Supp(\varphi_{\ast}{\cal E})$ is reduced.
 Thus, while fibers of the sheaf $\varphi_{\ast}{\cal E}$ over the image string
    remain to get enhanced from ${\Bbb C}$ to ${\Bbb C}^2$
	  (with a decomposition data ${\Bbb C}^2\simeq {\Bbb C}\oplus {\Bbb C}$ ),	
	or ${\Bbb C}^3$ (with a decomposition data
     	${\Bbb C}^3\simeq{\Bbb C}\oplus{\Bbb C}\oplus{\Bbb C}$),
	or even higher ${\Bbb C}^l$
	   (with a decomposition data ${\Bbb C}^l\simeq\oplus_l{\Bbb C}$
            if no open-stringy effects are taken into account to mingle them)
	depending on whether it's a double crossing/overlapping or a triple crossing/overlapping
	or an even higher-order crossing/overlapping,
  there is no nilpotent cloud along the image strings of ${\Bbb R}^{1,A\!z}$ in ${\Bbb R}^2$.
  
  Cf.\ {\sc Figure}~5-2-1 (a) below and {\sc Figure}~7-2-1 in Sec.~7.2.

 \begin{figure} [htbp]
  \bigskip
  \centering
  \includegraphics[width=0.80\textwidth]{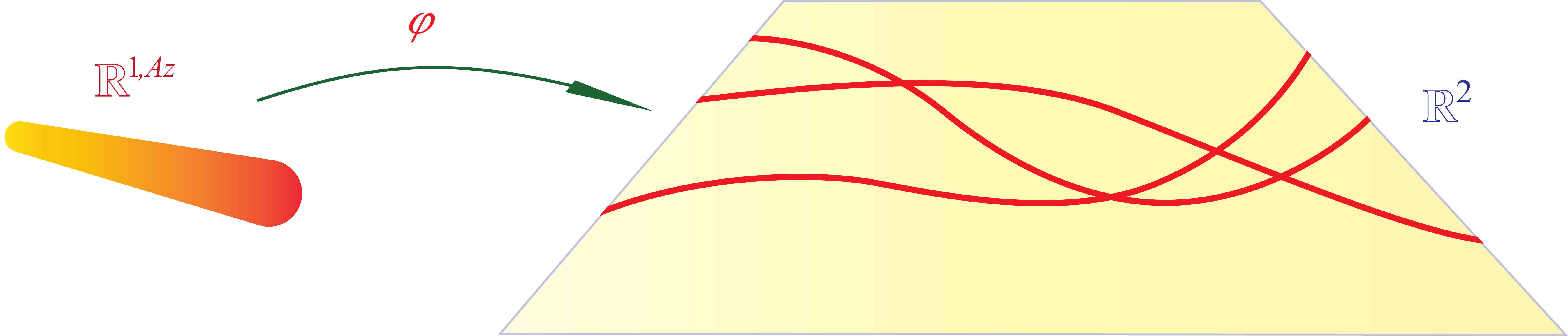}
 
  \bigskip
  \bigskip
   \centerline{\parbox{13cm}{\small\baselineskip 12pt
    {\sc Figure}~5-2-1 (a).
      A smooth map $\varphi:{\Bbb R}^{1,A\!z}\rightarrow {\Bbb R}^2$ in Case (5.2.6.a).
        }}
   \bigskip
  \end{figure}	
    
 \bigskip
 
 \noindent
 {\bf (5.2.6.b) [simple string + string with nilpotent cloud of order $1$,
                                  and their self-or-not crossing or overlapping].}
  Let
    $$
	  \xymatrix @R=1ex{
	    \;\;C^{\infty}(\End_{{\Bbb R}^1}(E))\;\;
		   &&& \;\;C^{\infty}({\Bbb R}^2)\ar[lll]_{\varphi^{\sharp}}\;\;\\
	    \;\;\mbox{$ \left[
	         \begin{array}{ccc}
			   f_1(x)  &   0                         & 0                                \\
			   0                 &   f_2(x)          & 0                                \\
			   0                 &  1                          & f_2(x)			
			 \end{array}
	                         \right] $}\;\;
		   &&& \hspace{1em}y_1 \hspace{1em}\ar @{{|}->}  [lll] \\
	    \;\;\mbox{$ \left[
	         \begin{array}{ccc}
			   g_1(x)  & 0                   & 0                                \\
			   0                  & g_2(x)   & 0                                \\
			   0                  & 0                   & g_2(x)			
			 \end{array}
	                         \right] $}\;\;
		   &&& \hspace{1em}y_2 \hspace{1em}\ar @{{|}->} [lll]
	  }
    $$
	with $f_1(x)$, $f_2(x)$, $g_1(x)$, $g_2(x)\in C^{\infty}({\Bbb R}^1)$.
 Then, $\varphi^{\sharp}$ encodes two independent $C^{\infty}$-ring homomorphisms:
    $$
        \xymatrix @R=1ex{
	      \;\;C^{\infty}(\End_{{\Bbb R}^1}(L_1))\;\;
		   &&& \;\;C^{\infty}({\Bbb R}^2)\ar[lll]_{\gamma_1^{\sharp}}\;\;\\
	      \;\;  f_1(x)\;\;
		   &&& \hspace{1em}y_1 \hspace{1em}\ar @{{|}->}  [lll] \\
	      \;\; g_1(x)\;\;
		   &&& \hspace{1em}y_2 \hspace{1em}\ar @{{|}->} [lll]
	  }
    $$		
   and  	
    $$
	  \xymatrix @R=1ex{
	    & \;\;C^{\infty}(\End_{{\Bbb R}^1}(L_2\oplus L_3))\;\;
		   &&& \;\;C^{\infty}({\Bbb R}^2)\ar[lll]_{\gamma_2^{\sharp}}\;\;\\
	    & \;\;\mbox{$ \left[
	         \begin{array}{cc}
			    f_2(x)           & 0                                \\
			    1                          & f_2(x)			
			 \end{array}
	                         \right] $}\;\;
		   &&& \hspace{1em}y_1 \hspace{1em}\ar @{{|}->}  [lll]     \\
	    & \;\;\mbox{$ \left[
	         \begin{array}{cc}
			   g_2(x)   & 0                                \\
			   0                   & g_2(x)			
			 \end{array}
	                         \right] $}\;\;
		   &&& \hspace{1em}y_2 \hspace{1em}\ar @{{|}->} [lll]        &.
	  }
    $$			
 $\gamma_1^{\sharp}$ defines a smooth map
    $\gamma_1:{\Bbb R}^1\rightarrow {\Bbb R}^2$,
	   which can be expressed in the more traditional way in differential topology and geometry
	   as $x\mapsto (f_1(x), g_1(x))$,
 while $\gamma_i^{\sharp}$ defines a smooth map
    $\gamma_2:
	   ({\Bbb R}^1,
	       {\cal O}_{{\Bbb R}^1}^{A\!z}
		        :=\Endsheaf_{{\cal O}_{{\Bbb R}^1}}({\cal L}_2\oplus{\cal L}_3),
	        {\cal L}_2\oplus {\cal L}_3  )
	     \rightarrow {\Bbb R}^2$.
  It is now a smooth curve ${\Bbb R}^1\rightarrow{\Bbb R}^2$
      expressed in the more traditional way in differential topology and geometry
	   as $x\mapsto (f_2(x), g_2(x))$ {\it together with}
	   a nilpotent cloud,  of order $1$ when no crossing nor overlapping occurs, along it.	 	
  	
  Note that as a $C^{\infty}({\Bbb R}^2)$-module via $\varphi^{\sharp}$,
    the decomposition  $E\simeq L_1\oplus L_2\oplus L_3$ is not preserved,
   a weaker decomposition $E\simeq L_1\oplus E_2$
      (where $E_2=L_2\oplus L_3$  and
     	  whose associated sheaf of sections is denoted by ${\cal E}_2$)
     with a filtration $L_3\subset E_2$ is preserved.
 Such a decomposition-filtration combination passes thus to the push-forward
   $$
      \varphi_{\ast}{\cal E}\;
	    \simeq \; \gamma_{1\ast}{\cal L}_1
		                 \oplus \gamma_{2\ast}{\cal E}_2
      \hspace{2em}
       \mbox{with a filtration
	     $\;\gamma_{2\ast}{\cal L}_3\subset \gamma_{2\ast}{\cal E}_2$}\,.
   $$ 	
  When the two string cross or overlap with themselves or each other,
    this decomposition-filtration of $\varphi_{\ast}{\cal E}$ are respected
	(if no open-stringy effects are taken into account to mingle them).
 
  Cf.\ {\sc Figure}~5-2-1 (b) below and {\sc Figure}~7-2-1 in Sec.~7.2.	
	
 \begin{figure} [htbp]
  \bigskip
  \centering
  \includegraphics[width=0.80\textwidth]{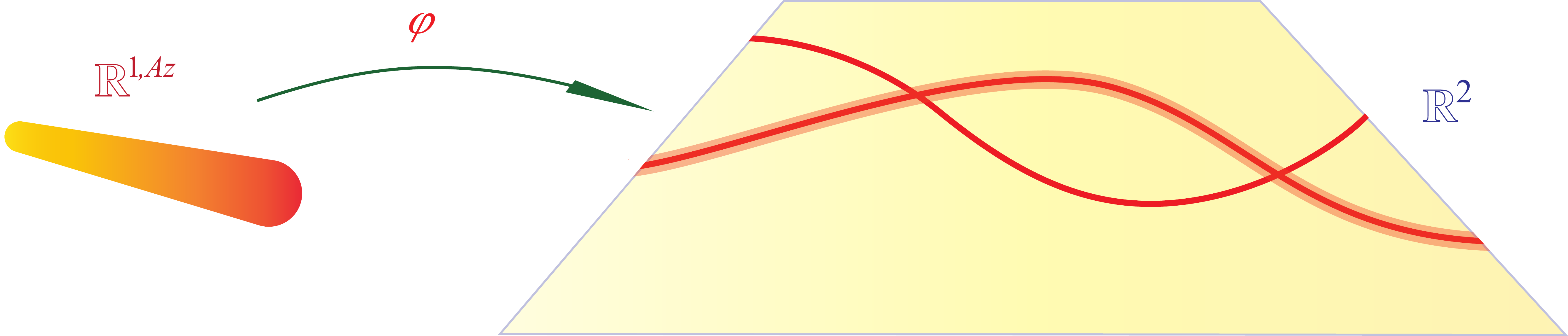}
 
  \bigskip
  \bigskip
   \centerline{\parbox{13cm}{\small\baselineskip 12pt
    {\sc Figure}~5-2-1 (b).
    A smooth map $\varphi:{\Bbb R}^{1,A\!z}\rightarrow {\Bbb R}^2$ in Case (5.2.6.b).
        }}
   \bigskip
  \end{figure}	
  
 \bigskip

 \noindent
 {\bf (5.2.6.c) [string with nilpotent cloud of order $2$, and its self-crossing/overlapping].}
  Let
    $$
	  \xymatrix @R=1ex{
	   \;\;C^{\infty}(\End_{{\Bbb R}^1}(E))\;\;
		   &&& \;\;C^{\infty}({\Bbb R}^2)\ar[lll]_{\varphi^{\sharp}}\;\;\\
	   \;\;\mbox{$ \left[
	         \begin{array}{ccc}
			   f(x)     &  0             & 0                      \\
			   1              &  f(x)    & 0                      \\
			   0              & 1              & f(x)			
			 \end{array}
	                         \right] $}\;\;
		   &&& \hspace{1em}y_1 \hspace{1em}\ar @{{|}->}  [lll] \\
	   \;\;\mbox{$ \left[
	         \begin{array}{ccc}
			   g(x)    & 0            & 0                     \\
			   0              & g(x)  & 0                     \\
			   0              &  0           & g(x)			
			 \end{array}
	                         \right] $}\;\;
		   &&& \hspace{1em}y_2 \hspace{1em}\ar @{{|}->} [lll]
	  }
    $$
 with $f(x)$ and $g(x)\in C^{\infty}({\Bbb R}^1)$.	
 This describes now a smooth curve ${\Bbb R}^1\rightarrow {\Bbb R}^2$,
    expressed in the more traditional way in differential topology and geometry
	   as $x\mapsto (f(x), g(x))$ {\it together with}
	   a nilpotent cloud,  of order $2$ when no crossing nor overlapping occurs, along it.	 	
  
 Similar to Case (5.2.6.b),
  as a $C^{\infty}({\Bbb R}^2)$-module via $\varphi^{\sharp}$,
    the decomposition  $E\simeq L_1\oplus L_2\oplus L_3$ is not preserved;
  only the filtration	
    $$
	   0\; \subset \; L_3\;  \subset\;  E_2 \;(=\,L_2\oplus L_3)\; \subset\; E
	$$
    is preserved.
 The filtration passes then to the push-forward
   $$
       0\; \subset\; \varphi_{\ast}{\cal L}_3\;
	         \subset\; \varphi_{\ast}{\cal E}_2 \;\subset\;   \varphi_{\ast}{\cal E}\,.	
   $$ 	
  When the string crosses or overlaps with itself,
    this filtration of $\varphi_{\ast}{\cal E}$ remains to be respected
	(if no open-stringy effects are taken into account to mingle them).
 
 Cf.\ {\sc Figure}~5-2-1 (c) below and {\sc Figure}~7-2-1 in Sec.~7.2.
 
 \begin{figure} [htbp]
  \bigskip
  \centering
  \includegraphics[width=0.80\textwidth]{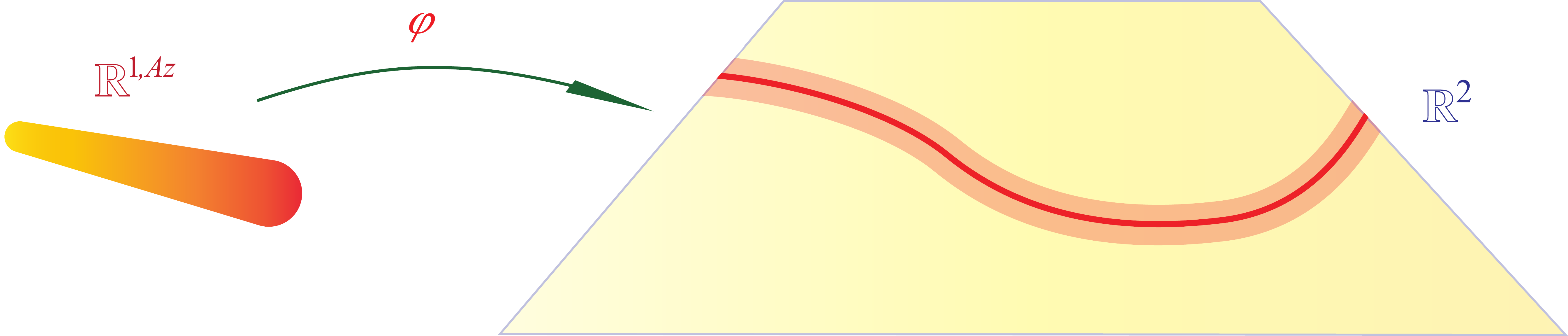}
 
  \bigskip
  \bigskip
   \centerline{\parbox{13cm}{\small\baselineskip 12pt
    {\sc Figure}~5-2-1 (c).
    A smooth map $\varphi:{\Bbb R}^{1,A\!z}\rightarrow {\Bbb R}^2$ in Case (5.2.6.c).
        }}
   \bigskip
  \end{figure}	

  \bigskip
 
  \noindent
 {\bf (5.2.6.d) [general map].}
 For a general smooth map
   $\varphi:({\Bbb R}^{1,A\!z},{\cal E})\rightarrow {\Bbb R}^2$,
 its image $\Image\varphi$  and the push-forward $\varphi_{\ast}{\cal E}$
   has their geometry a mixture of the types shown above.
 When restricted to one interval $I_1^{A\!z}\subset {\Bbb R}^{1,A\!z}$,
  they may look like the situation in, e.g., Case (5.2.6.a),
   while when restricted to another interval $I_2^{A\!z}\subset {\Bbb R}^{1,A\!z}$,
   they may look like the situation in, say, Case (5.2.6.c); and so on.
 Cf.\ {\sc Figure}~5-2-1 (d) below and {\sc Figure}~7-2-1 in Sec.~7.2.
    
 \begin{figure} [htbp]
  \bigskip
  \centering
  \includegraphics[width=0.80\textwidth]{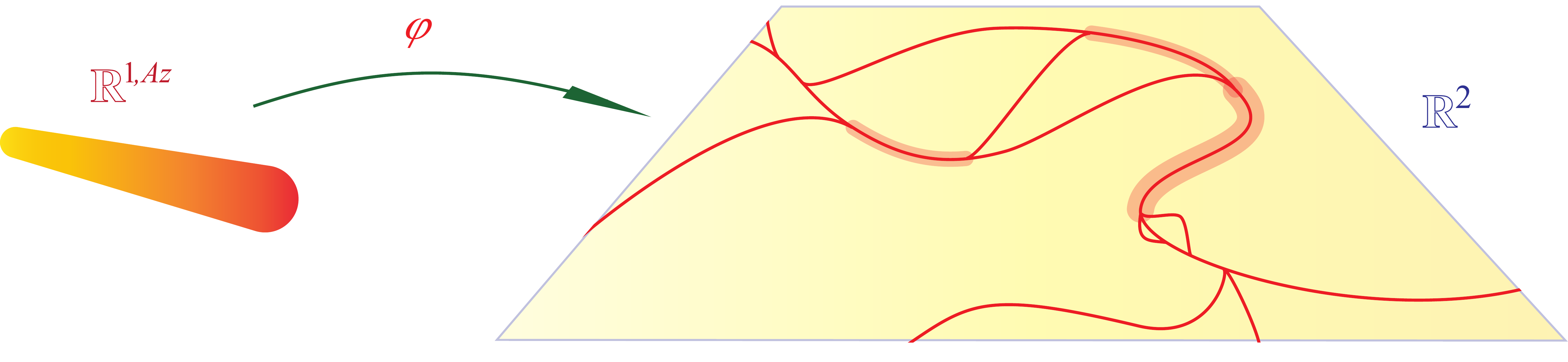}
 
  \bigskip
  \bigskip
   \centerline{\parbox{13cm}{\small\baselineskip 12pt
    {\sc Figure}~5-2-1 (d).
   A general smooth map $\varphi:{\Bbb R}^{1,A\!z}\rightarrow {\Bbb R}^2$.
        }}
   \bigskip
  \end{figure}	
   
\noindent\hspace{15.7cm}$\square$
}\end{example}

\bigskip
 
Before moving on, it deserves to recap a little bit to orient ourselves.
Following Ansatz~1.1 in Sec.~1,
D-branes moving in a space-time are described
  by maps from a matrix manifold with a fundamental vector bundle to that space-time.
Sec.~2, Sec.~3, Sec.~4, Sec.~5.1, and Sec.~5.2 together
 provide
   the basis for a mathematical description of such noncommutative manifolds
 and continuous or differentiable maps from them to the space-time.
This fulfills only half of the work to give a prototypical mathematical definition of D-branes in string theory.
The other half we need to address is
 \begin{itemize}
   \item[{\bf Q.}] {\it
     Does our setting reproduce basic D-brane phenomena in string theory?}
 \end{itemize}
Only when we pass the test of this question can we say that we've provided a prototypical mathematical
 definition of D-branes from the aspect of Azumaya noncommutative differential geometry
 with input from algebraic geometry.
We'll come back to this in Sec.~7.2, through Example~7.2.2,
 after devoting ourselves to some more study of Azumaya manifolds
 in the next section.

\bigskip
		
\subsection{Differentiable maps from an Azumaya manifold with a fundamental\\ module to a real manifold}

In Sec.~3,  it is affirmed that there is no need to add additional points to a real $C^k$-manifold
 to make geometrically compatible sense of a ring-homomorphism of the function ring of the manifold
 to Azumaya algebras.
In Sec.~5.1, a local study is given
  for the notion of $C^k$-maps from an Azumaya manifold with a fundamental module to a $C^k$-manifold.
With these two preparations,
 the notion of differentiable maps from an Azumaya manifold with a fundamental module
  to a $C^k$-manifold can be defined as in the case for algebraic schemes
  in [L-Y1] (D(1)) and [L-L-S-Y] (D(2)).
Similarly to the algebraic case, there are four aspects of such a notion.
The details are given in this subsection.

\bigskip

\subsubsection{Aspect I [fundamental]: Maps as gluing systems of ring-homomorphisms}

The notion of a differentiable map
 $$
   \varphi\;:\; (X,{\cal O}_X^{\!A\!z}:=\Endsheaf_{{\cal O}_X}({\cal E}),{\cal E})\;
     \longrightarrow\;  Y
 $$
 from an Azumaya manifold with a fundamental module to a real manifold 	
 follows from the notion of `morphisms between spaces' studied in
 [L-Y1: Sec.\ 1.2 A noncommutative space as a gluing system of rings] (D(1))
 by adapting the latter to fit in the standard definition of manifolds and fibre/vector bundles
 from gluing local charts and local trivializations respectively (e.g.\ [G-H], [Hus], [K-N], [Ste]).
We give the details below.

\bigskip

\begin{flushleft}
{\bf  The fundamental aspect of $C^k$-maps from Azumaya manifolds}
\end{flushleft}
The following lemma reflects the ``softness, flexibility, and abundanceness" of differentiable functions;
it follows from a partition-of-unity type argument:

\bigskip

\begin{sslemma} {\bf [extension to global differentiable function after shrinking].}
  Let $V_3\subset V_2\subset V_1$ be open subsets of a $C^k$-manifold $M$ such that
   $\overline{V_3}\subset V_2$ and $\overline{V_2}\subset V_1$.
  Then for any $f_2\in C^k(V_2)$, there exists an $f_1\in C^k(V_1)$
     such that $f_2|_{V_3}=f_1|_{V_3}$.
\end{sslemma}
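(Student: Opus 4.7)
The plan is a standard bump-function/partition-of-unity argument, exploiting the hypothesis (stated in the conventions of the paper) that all manifolds here are paracompact, Hausdorff, and admit locally finite $C^k$ partitions of unity. The key is to multiply $f_2$ by a $C^k$ cut-off that equals $1$ on $V_3$ and vanishes outside a set whose closure sits inside $V_2$, so that the product extends by zero to a global $C^k$ function on $V_1$ (in fact on $M$).

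First, I would produce an intermediate open set. Since $\overline{V_3}\subset V_2$, by normality of the paracompact Hausdorff space $M$ (or directly by locally finite coverings) one can find an open set $W\subset M$ with
\[
  \overline{V_3}\;\subset\; W \;\subset\; \overline{W}\;\subset\; V_2.
\]
Next, using a $C^k$ partition of unity on $M$ subordinate to the open cover $\{W,\, M\setminus\overline{V_3}\}$ (both open, the first open by construction and the second because $\overline{V_3}$ is closed), one obtains a $C^k$ function $\rho:M\to[0,1]$ with $\rho\equiv 1$ on $\overline{V_3}$ and $\mathrm{supp}(\rho)\subset W$, hence $\mathrm{supp}(\rho)\subset V_2$.

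Having $\rho$, define
\[
  f_1(x)\;:=\;
   \begin{cases}
     \rho(x)\,f_2(x)\,, & x\in V_2\,,\\[.4ex]
     0\,, & x\in V_1\setminus\mathrm{supp}(\rho)\,.
   \end{cases}
\]
The two defining formulas agree on the overlap $V_2\setminus\mathrm{supp}(\rho)$, where both give $0$. Since $\mathrm{supp}(\rho)$ is a closed subset of $V_2$ (its closure in $M$ lies in $W\subset V_2$), the sets $V_2$ and $V_1\setminus\mathrm{supp}(\rho)$ form an open cover of $V_1$, so $f_1$ is well-defined on $V_1$ and is $C^k$ on each piece; therefore $f_1\in C^k(V_1)$. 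On $V_3$ we have $\rho\equiv 1$, so $f_1|_{V_3}=f_2|_{V_3}$, as required.

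The only substantive point is the existence of the bump function $\rho$, which is the standard consequence of paracompactness plus the existence of $C^k$ partitions of unity (cf.\ the references [B-T], [Hir], [Wa] cited in the Convention of the paper); the rest is bookkeeping about where the cut-off is supported. Thus no genuine obstacle is expected.
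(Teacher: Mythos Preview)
Your proof is correct and is precisely the partition-of-unity argument the paper has in mind; the paper does not spell out any details beyond the remark that the lemma ``follows from a partition-of-unity type argument,'' and your bump-function construction is the standard implementation of that remark.
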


\bigskip

\begin{sslemma} {\bf [determinacy of local from global].}
 Let
   $U_2\subset U_1$ be open subsets of $X$,
   $V_1$ be an open subset of $Y$,    and
   $\phi^{\sharp}_1:   C^k(V_1)
       \rightarrow C^k(\Endsheaf_{{\cal O}_{U_1}}({\cal E}_{U_1}))$
    be a $C^k$-admissible ring-homomorphism over ${\Bbb R}\hookrightarrow{\Bbb C}$.		
 Then
   $\phi^{\sharp}_1$ determines a $C^k$-admissible ring-homomorphism
    $\phi^{\sharp\, \prime}_1: C^k(V_1)
	    \rightarrow  C^k(\Endsheaf_{{\cal O}_{U_2}}({\cal E}_{U_2}))$.  	
 Note that $\phi^{\sharp\, \prime}_1$ renders ${\cal E}_{U_2}$
   a ${\cal O}_{V_1}$-module,
   denoted by $\phi_{1\ast}^{\prime}({\cal E}_{U_2})$.
 Suppose that $V_2$ is an open subset of $V_1$  that contains
    $\Supp(\phi_{1\ast}^{\prime}({\cal E}_{U_2}))$.
 Then $\phi^{\sharp\, \prime}_1$ determines in turn a $C^k$-admissible ring-homomorphism
  $\phi^{\sharp}_2:   C^k(V_2)
       \rightarrow C^k(\Endsheaf_{{\cal O}_{U_2}}({\cal E}_{U_2}))$.
\end{sslemma}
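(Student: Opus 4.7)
The plan is to handle the two determinacy claims separately: the first is a straightforward restriction in the $U$-direction, while the second requires extension in the $V$-direction together with a support-based well-definedness argument. Throughout I will use the fact that $\mathcal{E}$ is the fundamental, hence faithful, module of the Azumaya sheaf, so that annihilation of $\mathcal{E}$ by an element of the image detects vanishing of that element.

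For the first claim, I would use the natural ring-homomorphism
\[
\rho\; :\; C^k(\Endsheaf_{{\cal O}_{U_1}}({\cal E}_{U_1}))\; \longrightarrow\; C^k(\Endsheaf_{{\cal O}_{U_2}}({\cal E}_{U_2}))
\]
induced by restriction of $C^k$-sections and set $\phi_1^{\sharp\,\prime}:=\rho\circ\phi_1^{\sharp}$. Since $\rho$ carries central sections (i.e.\ elements of $C^k(U_1)^{\mathbb{C}}$) to central sections and restricts on centers to the usual $C^k$-ring-homomorphism $C^k(U_1)^{\mathbb{C}}\to C^k(U_2)^{\mathbb{C}}$, the composition $\phi_1^{\sharp\,\prime}$ is a ring-homomorphism over ${\Bbb R}\hookrightarrow{\Bbb C}$. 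For $C^k$-admissibility I would verify each of the three conditions of Definition~5.1.2 by composing the corresponding conditions for $\phi_1^{\sharp}$ with $\rho$; the $C^k$-subring $C^k(U_1)\langle\Image\phi_1^{\sharp}\rangle$ maps to $C^k(U_2)\langle\Image\phi_1^{\sharp\,\prime}\rangle$, inheriting the quotient $C^k$-structure from the canonical extension $\widetilde{\phi_1^{\sharp\,\prime}}$, which is obtained from $\widetilde{\phi_1^{\sharp}}$ by post-composing with $\rho$.

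For the second claim I would invert the direction using Lemma~5.3.1.1. Given $f\in C^k(V_2)$, use paracompactness of $Y$ to choose open sets with $\Supp(\phi_{1\ast}^{\prime}({\cal E}_{U_2}))\subset V_3\subset \overline{V_3}\subset V_2\subset \overline{V_2}\subset V_1$, then apply Lemma~5.3.1.1 (to the pair $V_2\subset V_1$ with shrinking $V_3$) to obtain an extension $\tilde{f}\in C^k(V_1)$ with $\tilde{f}|_{V_3}=f|_{V_3}$. Define
\[
\phi_2^{\sharp}(f)\;:=\;\phi_1^{\sharp\,\prime}(\tilde{f})\;\in\; C^k(\Endsheaf_{{\cal O}_{U_2}}({\cal E}_{U_2}))\,.
\]
The well-definedness of $\phi_2^{\sharp}$ is the main obstacle. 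If $\tilde{f}_1,\tilde{f}_2\in C^k(V_1)$ are two such extensions, then $g:=\tilde{f}_1-\tilde{f}_2$ vanishes on an open neighborhood of $\Supp(\phi_{1\ast}^{\prime}({\cal E}_{U_2}))$. Under the scheme-theoretical-type support of Convention~2.2.9, such a $g$ lies in the ideal
\[
\Ker\!\bigl(C^k(V_1)\;\longrightarrow\;C^k(\Endsheaf_{C^k(V_1)}(\phi_{1\ast}^{\prime}{\cal E}_{U_2}))\bigr)\,,
\]
so $\phi_1^{\sharp\,\prime}(g)$ acts as zero on ${\cal E}_{U_2}$; faithfulness of the fundamental representation of $\Endsheaf_{{\cal O}_{U_2}}({\cal E}_{U_2})$ on ${\cal E}_{U_2}$ then forces $\phi_1^{\sharp\,\prime}(g)=0$, i.e.\ $\phi_1^{\sharp\,\prime}(\tilde{f}_1)=\phi_1^{\sharp\,\prime}(\tilde{f}_2)$.

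Once well-definedness is secured, the ring-homomorphism property of $\phi_2^{\sharp}$ follows because addition and multiplication of extensions again extend the corresponding operations on $V_2$, modulo elements vanishing on $V_3\supset \Supp(\phi_{1\ast}^{\prime}{\cal E}_{U_2})$, which are killed by $\phi_1^{\sharp\,\prime}$ by the same argument. For $C^k$-admissibility of $\phi_2^{\sharp}$ I would verify Conditions~(1)--(3) of Definition~5.1.2 by transporting them from $\phi_1^{\sharp\,\prime}$: the canonical extension $\widetilde{\phi_2^{\sharp}}: C^k(U_2\times V_2)\to C^{-\infty}(\End_{U_2}(E_{U_2}))$ factors through $\Image\widetilde{\phi_1^{\sharp\,\prime}}$ (using the restriction $C^k(U_2\times V_1)\to C^k(U_2\times V_2)$ fibrewise over $U_2$), and $\Image\widetilde{\phi_2^{\sharp}}$ coincides with $\Image\widetilde{\phi_1^{\sharp\,\prime}}$ because every extension $\tilde{f}$ restricts on $V_2$ to its corresponding $f$, while conversely $\phi_1^{\sharp\,\prime}$ is insensitive to the values of $\tilde{f}$ off a neighborhood of the support. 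The $C^k$-normality and the commutativity of the ensuing diagram of $C^k$-ring-homomorphisms then transport along with the equality of images. The main subtlety I anticipate is the scheme-theoretical identification of the support in the $C^k$-algebraic-geometry sense; once that is in hand, everything else is bookkeeping.
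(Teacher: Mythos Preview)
Your approach is correct and matches the paper's own (very terse) proof, which simply invokes Lemma~5.3.1.1 to say that the $C^k(V_1)$- and $C^k(V_2)$-actions on the stalks/germs of $\phi_{1\ast}^{\prime}({\cal E}_{U_2})$ are completely determined by $\phi_1^{\sharp}$; you have spelled out the extension-plus-well-definedness argument behind that sentence in detail, using the faithfulness of the fundamental module exactly where it is needed.

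One minor technical slip: in your application of Lemma~5.3.1.1 you write $\overline{V_2}\subset V_1$, but $V_2$ is given as part of the hypothesis and this closure condition need not hold. The fix is immediate: choose an intermediate open set $W$ with $\overline{V_3}\subset W\subset\overline{W}\subset V_2\subset V_1$ and apply Lemma~5.3.1.1 to the triple $(V_3,W,V_1)$ with $f|_W\in C^k(W)$ in place of $f$; the rest of your argument is unchanged.
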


\begin{proof}
 Lemma~5.3.1.1 implies
  that the $C^k(V_1)$- and the $C^k(V_2)$-actions on the stalk/germs of the sheaf
   $\phi_{1\ast}^{\prime}({\cal E}_{U_2})$
   is completely determined  by $\phi^{\sharp}_1$.
 The lemma follows.
\end{proof}

\bigskip

\begin{ssdefinition}{\bf [(contravariant) gluing system of ring-homomorphisms].} {\rm
 A {\it (contravariant) gluing system of ring-homomorphisms}
      over ${\Bbb R}\hookrightarrow{\Bbb C}$ related to $(X^{\!A\!z},Y)$
  consists of the following data:
  \begin{itemize}
    \item[{\Large $\cdot$}]
	 ({\it local charts on $X^{\!A\!z}$})\hspace{1.1em}
     an open cover ${\cal U}=\{U_{\alpha}\}_{\alpha\in A}$ on $X$,
	
	\item[{\Large $\cdot$}]
	 ({\it local charts on $Y$})\hspace{2em}
     an open cover ${\cal V}=\{V_{\beta}\}_{\beta\in B}$ on $Y$,
	
	\item[{\Large $\cdot$}]
     a gluing system $\Phi^{\sharp}$ of $C^k$-admissible ring-homomorphisms
       from $\{C^k(V_{\beta})\}_{\beta}$\\
       to $\{C^k(\Endsheaf_{{\cal O}_{U_{\alpha}}}
	                                               ({\cal E}_{U_{\alpha}}))\}_{\alpha}$
       over ${\Bbb R}\hookrightarrow {\Bbb C}$, which consists of
	    \begin{itemize}
	      \item[{\Large $\cdot$}]
	      ({\it specification of a target-chart
		               for each local chart on $X^{\!A\!z}$})\hspace{2em}
	       a map $\sigma:A\rightarrow B$,
	
	      \item[{\Large $\cdot$}]
         ({\it  differentiable map from charts on $X^{\!A\!z}$ to charts on $Y$})\\
	       a $C^k$-admissible ring-homomorphism over ${\Bbb R}\hookrightarrow {\Bbb C}$
	       $$
	        \phi^{\sharp}_{\alpha, \sigma(\alpha)}\;:\;
   	       C^k(V_{\sigma(\alpha)}) \;\longrightarrow\;
		   C^k(\Endsheaf_{{\cal O}_U}({\cal E}_{U_{\alpha}}))
	       $$	
	       for each $\alpha\in A$
		\end{itemize}
	   that satisfy (cf.\ Lemma~5.3.1.2)
		 \begin{itemize}
      	  \item[{\Large $\cdot$}]
		  ({\it gluing/identification of maps at overlapped charts on $X^{\!A\!z}$})\\
		   for each pair $(\alpha_1$, $\alpha_2)\in A\times A$,
		   \begin{itemize}
		     \item[(G1)]
		      $(\phi_{\alpha, \sigma(\alpha_1)})_{\ast}
		         ({\cal E}_{U_{\alpha_1}\cap\, U_{\alpha_2}})$
			   is completely supported in
			   $V_{\sigma(\alpha_1)}\cap V_{\sigma(\alpha_2)}
			      \subset V_{\sigma(\alpha_1)}$,
						
             \item[(G2)]		
			 recall the $C^k$-admissible ring-homomorphism over ${\Bbb R}\hookrightarrow {\Bbb C}$	
	          $$
		        \phi_{\alpha_1\alpha_2,\, \sigma(\alpha_1)\sigma(\alpha_2)}\;:\;
				   C^k(V_{\sigma(\alpha_1)}\cap V_{\sigma(\alpha_2)})\;
				     \longrightarrow\;
				   C^k(\Endsheaf_{{\cal O}_{U_{\alpha_1}\cap\, U_{\alpha_2}}}
				              ({\cal E}_{U_{\alpha_1}\cap\,U_{\alpha_2}}))	 					
		      $$		
			  induced by $\phi_{\alpha_1,\sigma(\alpha_1)}$,
			then 			
	            $$
		          \phi_{\alpha_1\alpha_2,\,\sigma(\alpha_1)\sigma(\alpha_2)}\;
				   =\; \phi_{\alpha_2\alpha_1,\,\sigma(\alpha_2)\sigma(\alpha_1)}\,.
		        $$
		   \end{itemize}
         \end{itemize}
     \end{itemize}		
} \end{ssdefinition}	
	
\bigskip

\begin{ssdefinition}{\bf [equivalent systems].} {\rm
 A  gluing system $({\cal U}^{\prime},{\cal V}^{\prime},  \Phi^{\prime\sharp})$
   is said to be a
 {\it refinement} of another gluing system $({\cal U},{\cal V},  \Phi^{\sharp})$,
    in notation
	  $({\cal U}^{\prime},{\cal V}^{\prime},  \Phi^{\prime\sharp})
            \preccurlyeq({\cal U},{\cal V},  \Phi^{\sharp})$,	
  if
     \begin{itemize}	
	  \item[{\Large $\cdot$}]
	   ${\cal U}^{\prime}
	       = \{U^{\prime}_{\alpha^{\prime}}\}_{\alpha^{\prime}\in A^{\prime}}$
		is a refinement of  ${\cal U}= \{U_{\alpha}\}_{\alpha\in A}$,
		with a map $\tau:A^{\prime}\rightarrow A$ that labels inclusions
		  $U^{\prime}_{\alpha^{\prime}}\hookrightarrow U_{\tau(\alpha^{\prime})}$; 		
       similarly,		
	   ${\cal V}^{\prime}
	       = \{V^{\prime}_{\beta^{\prime}}\}_{\beta^{\prime}\in B^{\prime}}$
		is a refinement of ${\cal V}= \{V_{\beta}\}_{\beta\in B}$,
		 with a map $\upsilon:B^{\prime}\rightarrow B$  that labels inclusions
		   $V^{\prime}_{\beta^{\prime}}
		        \hookrightarrow V_{\upsilon(\beta^{\prime})}$;
        the maps between the index sets $A$, $B$, $A^{\prime}$, and $B^{\prime}$ satisfy
		  the commutative diagram
		  $$
            \xymatrix{
			 &  A^{\prime} \ar[rr]^-{\sigma^{\prime}} \ar[d]_-{\tau}
			     && B^{\prime}\ar[d]^-{\upsilon} \\
		     &  A \ar[rr]^-{\sigma}  && B  &. 			
			}
          $$		
							
      \item[{\Large $\cdot$}]		(cf.\ Lemma~5.3.1.2)
	    the $C^k$-admissible ring-homomorphism
		  $$
		    \phi^{\prime\sharp}_{\alpha^{\prime},\, \sigma^{\prime}(\alpha^{\prime})}\; :\;
		     C^k(V_{\sigma^{\prime}(\alpha^{\prime})})\;  \longrightarrow\;
			  C^k(\Endsheaf_{{\cal O}_{U_{\alpha^{\prime}}}}
			           ({\cal E}_{U_{\alpha^{\prime}}})  )
		  $$
			in $\Phi^{\prime\sharp}$
         coincides with the $C^k$-admissible ring-homomorphism
		   $$
    		  C^k(V_{\sigma^{\prime}(\alpha^{\prime})}) \;  \longrightarrow\;
			  C^k(\Endsheaf_{{\cal O}_{U_{\alpha^{\prime}}}}
			           ({\cal E}_{U_{\alpha^{\prime}}})  )
		   $$
		    induced by
	       $$
		     \phi^{\sharp}
		       _{\tau(\alpha^{\prime}),\,\sigma(\tau(\alpha^{\prime}))}  \,=\,
		      \phi^{\sharp}
		       _{\tau(\alpha^{\prime}),\, \upsilon(\sigma^{\prime}(\alpha^{\prime}))}\; :\;
		     C^k(V_{\upsilon(\sigma^{\prime}(\alpha^{\prime}))}) \; \longrightarrow\;
			  C^k(\Endsheaf_{{\cal O}_{U_{\tau(\alpha^{\prime}) }}}
			           ({\cal E}_{U_{\tau(\alpha^{\prime}) }})  )
		   $$
			in $\Phi$ from the inclusions
			 $U_{\alpha^{\prime}}\hookrightarrow U_{\tau(\alpha^{\prime})}$		 and
			 $V_{\sigma^{\prime}(\alpha^{\prime})} \hookrightarrow
			     V_{\upsilon(\sigma^{\prime}(\alpha^{\prime}))  }$.
     \end{itemize}
  Two gluing systems
     $({\cal U}_1,{\cal V}_1,  \Phi_1^{\sharp})$
	  and $({\cal U}_2,{\cal V}_2,  \Phi_2^{\sharp})$	
     are said to be {\it equivalent} if they have a common refinement. 	
}\end{ssdefinition}
  
\bigskip

\begin{ssdefinition}{\bf [differentiable map as equivalence class of gluing systems].}
{\rm
 We denote such an equivalence class compactly as
   $$
     \varphi^{\sharp}\;:\; {\cal O}_Y\; \longrightarrow \;
	   {\cal O}_X^{A\!z}\,:=\, \Endsheaf_{{\cal O}_X}({\cal E})\,.
   $$
  This defines a {\it differentiable map}
   $$
     \varphi\;:\; (X,{\cal O}_X^{\!A\!z}:=\Endsheaf_{{\cal O}_X}({\cal E}),{\cal E})\;
     \longrightarrow\;  Y\,.
   $$
}\end{ssdefinition}

\bigskip

The $C^k$-admissible ring-homomorphism
   $$
	  \phi^{\sharp}_{\alpha, \sigma(\alpha)}\;:\;
   	    C^k(V_{\sigma(\alpha)}) \;\longrightarrow\;
		C^k(\Endsheaf_{{\cal O}_U}({\cal E}_{U_{\alpha}}))
   $$
   renders $C^k({\cal E}_{U_{\alpha}})$  a $C^k(V_{\sigma(\alpha)})$-module.
Passing to germs of $C^k$-sections,
 this defines a  sheaf of ${\cal O}_{V_{\sigma(\alpha)}}$-modules,
 denoted by $(\phi_{\alpha,\,\sigma(\alpha)})_{\ast}({\cal E}_{U_{\alpha}})$.
The following lemma/definition follows by construction:

\bigskip

\begin{sslemma-definition}
 {\bf [push-forward $\varphi_{\ast}({\cal E})$ under $\varphi$].} {\rm
  {\it The collection of sheaves on charts
    $\{\phi_{\alpha,\,\sigma(\alpha)}({\cal E}_{U_{\alpha}})\}_{\alpha\in A}$
	 glue to a sheaf of ${\cal O}_Y$-modules on $Y$.
   It is independent of the contravariant gluing system of
      $C^k$-admissible ring-homomorphisms over ${\Bbb R}\hookrightarrow {\Bbb C}$
     that represents $\varphi$.}
    It is called the {\it push-forward of ${\cal E}$ under $\varphi$} and
	  is denoted by $\varphi_{\ast}({\cal E})$.
}\end{sslemma-definition}

\bigskip

\begin{sslemma-definition}
 {\bf [surrogate of $X^{\!A\!z}$ specified by $\varphi$].} {\rm
   (Cf.~Definition~5.1.4.)
   {\it The collection of local surrogates $U_{\varphi_{\alpha,\sigma(\alpha)}}$
              of $U_{\alpha}^{\!A\!z}$ specified by $\varphi_{\alpha,\sigma(\alpha)}$
	  glue to a $C^k$-scheme over $X$.
   It is independent of the contravariant gluing system of
      $C^k$-admissible ring-homomorphisms over ${\Bbb R}\hookrightarrow {\Bbb C}$
     that represents $\varphi$.}
    It is called the {\it surrogate of $X^{\!A\!z}$ specified by $\varphi$}
	 and is denoted by $X_{\varphi}$.
}\end{sslemma-definition}

\bigskip

\begin{ssremark} {$[$concerning Definition~5.3.1.5$\,]$.}
{\rm
 (1) [{\it fundamental}$\,$] \hspace{1em}
   This definition is fundamental in the sense that from the Grothendieck's view of geometry from local to global
    and in terms of function rings of local charts,
  this is exactly what a ``morphism"  between two ``spaces" should mean conceptually, in disregard of the contexts
  in the algebraic or the differentiable geometry or of commutative or noncommutative geometry, only that technically
  its real contents depend on the notion of localizations of a ring, which can be subtle in the noncommutative case.
  Cf.\ E.g.\ [L-Y1: References [Ga], [Goldm], [Jat], [Or1], [Or2], [Sten]] (D(1)).
  
 (2) [{\it Condition (G1) in Definition~5.3.1.3}$\,$] \hspace{1em}
   This is the key condition that renders our definition of differentiable maps from an Azumaya manfold
     to a real manifold in Definition~5.3.1.5
   essentially the same as that for ordinary manifolds in differential topology,
   only that rings of $C^k$-functions are used contravariantly
    instead of the direct covariant point-sets with $C^k$-topology.
 The reason that one can add this condition to our formulation without losing anything is as follows:
   \begin{itemize}
     \item[$\cdot$]
	  A good notion of map/morphism $\varphi$ from an Azumaya manifold with a fundamental module
	   $(X^{\!A\!z},{\cal E})$ to a target-space $Y$
	   should make the notion of push-forward $\varphi_{\ast}({\cal E})$ immediate
	  as the latter is what an open-string moving in $Y$  would ``see"/interact with.
      Combining with the interpretation of $(X^{\!A\!z},{\cal E})$	as a smearing of Azumaya points along $X$
	   and the lesson learned in Sec.~3 in the  case of D0-branes,
	   one anticipates $\varphi_{\ast}({\cal E})$ to come from an $X$-family of $0$-dimensional
	   ${\cal O}_Y$-modules even before one has the detail of how $\varphi$ should be defined.
	  But when $\varphi_{\ast}({\cal E})$ does exist, Condition (G1) must be always arrangeable.	
	  Thus, it's a {\it non-restriction} to the notion of `maps' we have intended to define for the context of D-branes
	    in string theory that lies in the realm of differential/symplectic topology/geometry.	
   \end{itemize}
  Cf.\  Aspect II (Sec.~5.3.2) and Aspect III (Sec.~5.3.3)
   of the notion of differentiable maps in our context for D-branes.
    
 (3) [{\it Keep function rings; abandon topologies}$\,$] \hspace{1em}
  As in the algebro-geometric situation in [L-Y1]  (D(1)),
    despite the fact that $X^{\!A\!z}:=(X,{\cal O}_X^{A\!z})$ can be thought of as a ringed-space
     (i.e.\ a topological space $X$ with a structure sheaf of local (noncommutative) function rings),
   our notion of a differentiable map $\varphi: X^{\!A\!z}\rightarrow  Y$ is {\it not} that of ringed-spaces
     as usually defined.
  In particular, {\it in general $\varphi$ does not come from nor induce any map from $X$ to $Y$}.	
  As emphasized in [L-Y1] (D(1)), mathematically one is free to make her/his preferable choice.
  However, to correctly describe D-branes in string theory, one has to {\it abandon
    the data of a map between the underlying topologies} $X\rightarrow Y$
	but {\it just keep the contravariant data of ring-homomorphisms of local function/coordinate rings}.
 The case study of D0-branes in Sec.~3, though simple,  should already make this clear.	
} \end{ssremark}

\bigskip

\begin{flushleft}
{\bf The equivalent affine setting}
\end{flushleft}
For a $C^k$-manifold $Y$, unlike in the case of projective schemes,
the existence of  a partition of unity subordinate to any open covering of $Y$
 implies that $C^k(Y)$ not only separates points on $Y$, it actually generates any germ of $C^k$-functions
 on any open subset of $Y$.
In other words, $Y$ is an affine manifold in the sense of $C^k$-algebraic geometry.
Similarly,  $X^{A\!z}$ can be regarded as an affine Azumaya manifold.
It follows that Definition~5.3.1.5 can be pruned to the following equivalent form:
 
\bigskip

\begin{sslemma-definition}
{\bf [$C^k$-map in affine setting].}
 The equivalence class
  $$
    \varphi^{\sharp}\; :\;  {\cal O}_Y\;
	    \longrightarrow\;   {\cal O}_X^{A\!z}:=  \Endsheaf_{{\cal O}_X}({\cal E})
  $$
   of gluing systems of
      $C^k$-admissible ring-homomorphisms over ${\Bbb R}\hookrightarrow{\Bbb C}$
   in Definition~5.3.1.5 defines a $C^k$-admissible ring-homomorphism, still denoted by $\varphi^{\sharp}$,
   $$
     \varphi^{\sharp}\;:\; C^k(Y)\; \longrightarrow\;
	      C^k(X^{\!A\!z}):= C^k(\Endsheaf_{{\cal O}_X}({\cal E}))\,
		                                         =\,  C^k(\End_X(E))
   $$
   over ${\Bbb R}\hookrightarrow{\Bbb C}$.
  Conversely, any $C^k$-admissible ring-homomorphism
      $\varphi^{\sharp}: C^k(Y)\rightarrow  C^k(X^{\!A\!z})$
	     over ${\Bbb R}\rightarrow {\Bbb C}$
	 defines an equivalence class
	 $\varphi^{\sharp}: {\cal O}_Y \rightarrow  {\cal O}_X^{A\!z}$.
  It follows that the notion of a differentiable map
	$$
     \varphi\;:\; (X,{\cal O}_X^{A\!z}:=\Endsheaf_{{\cal O}_X}({\cal E}),{\cal E})\;
     \longrightarrow\;  Y\,.
    $$
     in Definition~5.3.1.5
     can be equivalently defined by a $C^k$-admissible ring-homomorphism
      $\varphi^{\sharp}: C^k(Y)\rightarrow  C^k(X^{\!A\!z})$
     over ${\Bbb R}\hookrightarrow{\Bbb C}$.	
\end{sslemma-definition}

\begin{proof}
 A given $C^k$-admissible $\varphi^{\sharp}:{\cal O}_Y \rightarrow {\cal O}_X^{A\!z}$
   induces a $C^k$-admissible ring-homomorphism
     $\varphi^{\sharp}:{\cal O}_Y(Y)\rightarrow {\cal O}_X^{A\!z}(X)$
	  over ${\Bbb R}\hookrightarrow {\Bbb C}$,
	which is exactly a $C^k$-admissible ring-homomorphism
     $\varphi^{\sharp}:C^k(Y)\rightarrow  C^k(X^{\!A\!z})$
	  over ${\Bbb R}\hookrightarrow {\Bbb C}$.
 Conversely, a given $C^k$-admissible ring-homomorphism
     $\varphi^{\sharp}:C^k(Y)\rightarrow  C^k(X^{\!A\!z})$
	  over ${\Bbb R}\hookrightarrow {\Bbb C}$
	 renders $C^k({\cal E})$ a $C^k(Y)$-module.
  By passing to germs, this defines a sheaf ${\cal F}$ of ${\cal O}_Y$-modules on $Y$,
   which can be used to recover a gluing system
   $\varphi^{\sharp}:{\cal O}_Y \rightarrow {\cal O}_X^{A\!z}$
   of $C^k$-admissible ring-homomorphisms over ${\Bbb R}\hookrightarrow{\Bbb C}$.
 The corresponding differentiable map $\varphi:(X^{\!A\!z},{\cal E})\rightarrow Y$
   renders ${\cal F}$ the push-forward $\varphi_{\ast}({\cal E})$.
     
\end{proof}

%
%
%
%

\bigskip

\subsubsection{Aspect II: The graph of a differentiable map}

Similar to the studies [L-L-S-Y: Sec.\ 2.2] (D(2)) and [L-Y5:  Sec.\ 2.2]  in the algebro-geometric setting,
the graph of a differentiable map
 $\varphi:(X,{\cal O}_X^{A\!z}:=\Endsheaf_{{\cal O}_X}({\cal E}),{\cal E})
     \rightarrow Y$
  is a sheaf $\tilde{\cal E}_{\varphi}$ of ${\cal O}_{X\times Y}^{\,\Bbb C}$-modules
  on the $C^k$-manifold $X\times Y$ with special properties.	
And $\varphi$ can be recovered from its graph.
We explain the details below in the current differential-topology setting.

\bigskip

\begin{flushleft}
{\bf Graph of a differentiable map $\varphi:(X^{\!A\!z},{\cal E})\rightarrow Y$}
\end{flushleft}
It follows from Sec.~5.1 that
 an equivalence class
  $$
    \varphi^{\sharp}\; :\;  {\cal O}_Y\;
	    \longrightarrow\;   {\cal O}_X^{A\!z}:=  \Endsheaf_{{\cal O}_X}({\cal E})
  $$
   of gluing systems of
      $C^k$-admissible ring-homomorphisms over ${\Bbb R}\hookrightarrow{\Bbb C}$
 extends canonically to an equivalence class
  $$
    \tilde{\varphi}^{\sharp}\; :\;  {\cal O}_{X\times Y}\;
	    \longrightarrow\;   {\cal O}_X^{A\!z}:=  \Endsheaf_{{\cal O}_X}({\cal E})
  $$
   of gluing systems of
      $C^k$-admissible ring-homomorphisms over ${\Bbb R}\hookrightarrow{\Bbb C}$.
In other words,
 a $C^k$-map
  $$
    \varphi\; :\;
    (X,{\cal O}_X^{A\!z}:=\Endsheaf_{{\cal O}_X^{\,\Bbb C}}({\cal E}),{\cal E})\;
      \longrightarrow\;  Y
  $$
 lifts canonically to a $C^k$-map
  $$
    \tilde{\varphi}\; :\;
    (X,{\cal O}_X^{A\!z}:=\Endsheaf_{{\cal O}_X^{\,\Bbb C}}({\cal E}),{\cal E})\;
      \longrightarrow\;  X\times Y\,,
  $$
 making the following diagram commute:
  $$
   \xymatrix{
    & (X,{\cal O}_X^{A\!z}:=\Endsheaf_{{\cal O}_X^{\,\Bbb C}}({\cal E}),{\cal E})
        \ar[rr]^-{\tilde{\varphi}} \ar[rrd]_-{\varphi}
	    && X\times Y \ar[d]^-{pr_Y} \\
	& && Y  &.
	}
  $$
Here $\pr_Y:X\times Y\rightarrow Y$ is the projection map to $Y$.

\bigskip

\begin{ssdefinition} {\bf [graph of $\varphi$].}  {\rm
 The {\it graph} of $\varphi$ is
  a sheaf $\tilde{\cal E}_{\varphi}$ of ${\cal O}_{X\times Y}^{\,\Bbb C}$-modules,
  defined by
  $$
    \tilde{\cal E}_{\varphi}\;:=\; \tilde{\varphi}_{\ast}({\cal E})\,.
  $$
}\end{ssdefinition}

\bigskip

The following basic properties of $\tilde{\cal E}_{\varphi}$ follow directly
  from the local study in Sec.~5.2:
 
\bigskip

\begin{sslemma} {\bf [basic property of graph of $\varphi$].}
 Continuing the above notation and
  let $\pr_X:X\times Y\rightarrow X$ be the projection map to $X$.
 The graph $\tilde{\cal E}_{\varphi}$ of $\varphi$ has the following properties:
 \begin{itemize}
  \item[$(1)$]
    There is a canonical isomorphism of ${\cal O}_X^{\,\Bbb C}$-modules:
     $$
       {\cal E}\; \longrightarrow\;  {\pr_X}_{\ast}(\tilde{\cal E}_{\varphi})\,.
     $$
	
  \item[$(2)$]	
   $\tilde{\cal E}_{\varphi}$ is of relative-dimension $0$ over $X$.
   
  \item[$(3)$]
   Its scheme-theoretical-like  support $\Supp(\tilde{\cal E}_{\varphi})$
    is a sheaf of ${\cal O}_{X\times Y}$-algebras that in general can have nilpotent elements.
 \end{itemize}
\end{sslemma}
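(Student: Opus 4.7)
The plan is to reduce each of the three assertions to the affine/local computations carried out in Sec.~5.2 (where the analogous statements were made explicit as properties (5$^\prime$) and (7$^\prime$) and via the characteristic $C^k$-ideal $I_0$), and then to patch via the gluing data of Definition~5.3.1.3. Throughout, fix a representative of $\varphi^{\sharp}$ as a gluing system $({\cal U}=\{U_{\alpha}\}_{\alpha\in A},\, {\cal V}=\{V_{\beta}\}_{\beta\in B},\, \Phi^{\sharp})$ with $\sigma:A\to B$, refined so that each $U_{\alpha}$ trivializes ${\cal E}$ and is $C^k$-diffeomorphic to an open subset of ${\Bbb R}^m$ (and each $V_{\sigma(\alpha)}$ to an open subset of ${\Bbb R}^n$). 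On the open $U_{\alpha}\times V_{\sigma(\alpha)}\subset X\times Y$ the sheaf $\tilde{\cal E}_{\varphi}$ coincides tautologically with the local graph $\tilde{\cal E}_{\varphi_{\alpha,\sigma(\alpha)}}$ of the $C^k$-admissible ring-homomorphism $\tilde{\varphi}^{\sharp}_{\alpha,\sigma(\alpha)}:C^k(U_{\alpha}\times V_{\sigma(\alpha)})\rightarrow C^k(\Endsheaf_{{\cal O}_{U_{\alpha}}}({\cal E}_{U_{\alpha}}))$ discussed in Sec.~5.2.

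For (1), the local property (5$^\prime$) gives a canonical isomorphism $\pr_{X*}(\tilde{\cal E}_{\varphi_{\alpha,\sigma(\alpha)}}) \simeq {\cal E}|_{U_{\alpha}}$ of ${\cal O}_{U_{\alpha}}^{\,\Bbb C}$-modules. The compatibility condition (G2) in Definition~5.3.1.3 (together with the support condition (G1), which ensures that $\tilde{\cal E}_{\varphi_{\alpha,\sigma(\alpha)}}$ extends by zero across $U_{\alpha}\times (Y\setminus V_{\sigma(\alpha)})$) forces these local isomorphisms to agree on overlaps, so they patch to a canonical global isomorphism ${\cal E}\to \pr_{X*}(\tilde{\cal E}_{\varphi})$. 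For (2), on each $U_{\alpha}\times V_{\sigma(\alpha)}$ the sheaf $\tilde{\cal E}_{\varphi_{\alpha,\sigma(\alpha)}}$ is annihilated by the characteristic $C^k$-ideal $I_0$ generated by $\det(\Id_{r\times r}\otimes y^i-\varphi^{\sharp}(y^i)\otimes 1)$, $i=1,\ldots,n$ (notation of Definition~5.2.2 applied to the chart); each generator is monic of degree $r$ in a single coordinate $y^i$ with coefficients in $C^k(U_{\alpha})^{\Bbb C}$, so $V(I_0)$ is finite over $U_{\alpha}$. Hence $\Supp(\tilde{\cal E}_{\varphi})|_{U_{\alpha}\times V_{\sigma(\alpha)}}$ is affine and finite over $U_{\alpha}$, which is the relative-dimension-zero statement on the cover; the property is local on the base, so this suffices.

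For (3), the scheme-theoretical-like support is, on each chart, the quotient $C^k(U_{\alpha}\times V_{\sigma(\alpha)})/\Ker(\tilde{\varphi}^{\sharp}_{\alpha,\sigma(\alpha)})$, which by construction is isomorphic to $C^k(U_{\alpha})\langle \Image\varphi^{\sharp}_{\alpha,\sigma(\alpha)}\rangle={\cal A}_{\varphi}|_{U_{\alpha}}$ -- a commutative $C^k$-subalgebra of the Azumaya algebra $C^k(\Endsheaf_{{\cal O}_{U_{\alpha}}}({\cal E}_{U_{\alpha}}))$. This equips $\Supp(\tilde{\cal E}_{\varphi})$ with the structure of a sheaf of ${\cal O}_{X\times Y}$-algebras; the gluing data of Lemma/Definition~5.3.1.7 ensures independence of the chosen representative. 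That this structure sheaf can carry nilpotent elements is already exhibited in the Azumaya-point analysis of Sec.~3.2--3.4 (any $\varphi^{\sharp}(y^i)$ with a Jordan block of size $\geq 2$ produces a nilpotent in $\mathcal{A}_{\varphi}$) and is displayed concretely in Cases (5.2.6.b)--(5.2.6.c) of Example~5.2.6.

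The routine work is entirely local and essentially restates Sec.~5.2; the only delicate point I foresee is the patching in (1) and (3). One must verify that the local graph $\tilde{\cal E}_{\varphi_{\alpha,\sigma(\alpha)}}$, built on $U_{\alpha}\times V_{\sigma(\alpha)}$, extends canonically by zero to $U_{\alpha}\times Y$ and is independent of the enlargement $V_{\sigma(\alpha)}\subset Y$ of the target chart chosen. This is precisely what Lemma~5.3.1.1 (extension of $C^k$-functions after shrinking) and Lemma~5.3.1.2 (determinacy of the local homomorphism by the global one) were proved for; invoking them while carefully tracking condition (G1) on supports is the one point that demands attention, but it does not introduce any new difficulty beyond what Definition~5.3.1.3 was designed to handle.
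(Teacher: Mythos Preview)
Your proposal is correct and follows the same approach as the paper: the paper simply states that these properties ``follow directly from the local study in Sec.~5.2'' without further elaboration, and your argument spells out exactly that reduction---matching each assertion to the corresponding local property ((5$'$), the characteristic $C^k$-ideal discussion after Definition~5.2.2, and the surrogate/support description) and then invoking the gluing machinery of Definition~5.3.1.3 and Lemma/Definition~5.3.1.7 to pass to the global statement. Your added care about the patching via conditions (G1)--(G2) and Lemmas~5.3.1.1--5.3.1.2 is a reasonable elaboration of what the paper leaves implicit.
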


\bigskip

It follows from either the local study in Sec.~5.2 or Property (1) above
 that there is a canonical homomorphism
 $$
   \pr_X^{\ast}({\cal E})\; \longrightarrow\; \tilde{\cal E}_{\varphi}
 $$
  of ${\cal O}_{X\times Y}$-modules.
By construction, this is surjective and its kernel can be read off readily:
 
\bigskip

\begin{sslemma} {\bf [presentation of graph of $\varphi$].}
 Continuing the notation.
 The graph $\tilde{\cal E}_{\varphi}$ of $\varphi$
   admits a presentation given by a natural isomorphism
  $$
    \tilde{\cal E}_{\varphi}\;
     \simeq\;
    	 \pr_X^{\ast}({\cal E})
                 \left/  \left(
				      (\pr_Y^{\ast}(f)-\pr_X^{\ast}(\varphi^{\sharp}(f))\,:\, f\in C^k(Y))
					       \cdot \pr_X^{\ast}(\cal E)				
				           \right) \right.\,.
  $$
 Here $\pr_X:X\times Y\rightarrow X$ and $\pr_Y:X\times Y\rightarrow Y$ are the projection maps.
\end{sslemma}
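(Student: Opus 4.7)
The plan is to produce the asserted isomorphism in three steps: first construct a canonical surjection $\pi\colon \pr_X^{\ast}({\cal E}) \twoheadrightarrow \tilde{\cal E}_{\varphi}$; then verify that the submodule $N := \bigl(\pr_Y^{\ast}(f)-\pr_X^{\ast}(\varphi^{\sharp}(f)) \,:\, f\in C^k(Y)\bigr)\cdot \pr_X^{\ast}({\cal E})$ sits inside $\ker(\pi)$; and finally check the reverse inclusion by a local computation using Sec.~5.2.

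For the first step, the canonical extension construction in Sec.~5.1 ensures that $\tilde{\varphi}^{\sharp}\circ \pr_X^{\sharp}$ equals the tautological inclusion ${\cal O}_X \hookrightarrow {\cal O}_X^{A\!z}$, so the $\tilde{\varphi}_{\ast}$-induced ${\cal O}_{X\times Y}$-action on ${\cal E}$ restricts along $\pr_X^{\sharp}$ to the original ${\cal O}_X$-action on ${\cal E}$. The $(\pr_X^{\ast},\pr_{X\ast})$-adjunction then supplies the required ${\cal O}_{X\times Y}$-linear map $\pi$, given by $e\otimes h \mapsto \tilde{\varphi}^{\sharp}(h)\cdot e$. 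Surjectivity follows because local sections of ${\cal E}$ already generate $\tilde{\cal E}_{\varphi}$ as an ${\cal O}_{X\times Y}$-module; this is a reformulation of Property~(1) of Lemma~5.3.2.2.

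For the second step, for any $f\in C^k(Y)$ and section $e$ of $\pr_X^{\ast}({\cal E})$ one has $\pi(\pr_Y^{\ast}(f)\cdot e) = \tilde{\varphi}^{\sharp}(\pr_Y^{\ast}(f))\cdot \pi(e) = \varphi^{\sharp}(f)\cdot\pi(e)$ and likewise $\pi(\pr_X^{\ast}(\varphi^{\sharp}(f))\cdot e) = \varphi^{\sharp}(f)\cdot \pi(e)$, since $\tilde{\varphi}^{\sharp}\circ\pr_X^{\sharp}$ is the tautological inclusion and $\varphi^{\sharp}(f)$ already lies in ${\cal O}_X^{A\!z}$. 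Thus $N\subset \ker(\pi)$ and $\pi$ descends to a map $\bar{\pi}\colon \pr_X^{\ast}({\cal E})/N \to \tilde{\cal E}_{\varphi}$. The main step is to show that $\bar{\pi}$ is an isomorphism, which I would verify on a chart $U\times V\subset X\times Y$ over which $E|_U$ is trivialized so that Sec.~5.2 applies verbatim. Writing $\pr_X^{\ast}({\cal E})|_{U\times V} \simeq {\Bbb C}^{\oplus r}\otimes_{\Bbb R} C^k(U\times V)$, the quotient by the local image of $N$ is by construction the largest quotient on which $\pr_Y^{\ast}(f)$ and $\pr_X^{\ast}(\varphi^{\sharp}(f))$ act identically for every $f\in C^k(V)$; equivalently, its $C^k(U\times V)$-action factors through $\tilde{\varphi}^{\sharp}\colon C^k(U\times V)\to C^k(\End_U(E|_U))$ followed by the fundamental action on $C^k(E|_U)$. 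But this is precisely the definition of the local model $_{C^k(U\times V)}C^k(E|_U)$ of $\tilde{\cal E}_{\varphi}$ in Sec.~5.2, so $\bar{\pi}$ is a local isomorphism.

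The hard part will be ensuring that the sheaf-theoretic submodule $N$, defined globally from relations indexed by all $f\in C^k(Y)$, agrees with its local avatar and glues compatibly across charts. The key input is the $C^k$-admissibility of $\tilde{\varphi}^{\sharp}$, which guarantees that the quotient inherits a canonical $C^k$-structure compatible with the natural one on $\tilde{\cal E}_{\varphi}$; together with the existence of $C^k$ partitions of unity on $X\times Y$ (Lemma~5.3.1.1), this lets one localize and delocalize the generators of $N$ without altering the submodule they cut out. A subsidiary check is that, on each trivialized chart, the finite characteristic-ideal presentation of Sec.~5.2 (built from the determinants $\determinant(\Id_{r\times r}\otimes y^i-\varphi^{\sharp}(y^i)\otimes 1)$) is recovered from our infinite set of generators via the Cayley--Hamilton identity applied to the commuting matrices $\varphi^{\sharp}(y^i)$; this confirms that the local quotient really is the $0$-dimensional sheaf along $\Supp(\tilde{\cal E}_{\varphi})$ as required by Lemma~5.3.2.2. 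Sheafifying and gluing then produces the asserted natural isomorphism.
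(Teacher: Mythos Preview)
Your outline is exactly the paper's approach, only spelled out: the paper merely notes that the canonical surjection $\pr_X^{\ast}({\cal E})\to\tilde{\cal E}_\varphi$ arises from Sec.~5.2 or Property~(1) of Lemma~5.3.2.2 and that ``its kernel can be read off readily'', giving no further argument.

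That said, one step in your local computation is asserted too quickly. You claim that on $M/N$ the full $C^k(U\times V)$-action ``equivalently'' factors through $\tilde\varphi^{\sharp}$, but this does not follow formally from knowing only that each $\pr_V^{\ast}(f)$ and $\varphi^{\sharp}(f)$ agree there: $C^k(U\times V)$ is not \emph{algebraically} generated by $\pr_U^{\ast}C^k(U)\cup\pr_V^{\ast}C^k(V)$, so compatibility on those two subrings does not automatically force compatibility for a general $h(x,y)$. Concretely, what is needed is that $e_i\otimes h-\tilde\varphi^{\sharp}(h)\,e_i\otimes 1\in N$ for every $h\in C^k(U\times V)$, and this requires the Taylor-with-remainder formula in the $V$-variables (Lemma~3.2.0.1 / Lemma~3.4.3) to express that difference as a $C^k(U\times V)$-combination of the generators $\pr_Y^{\ast}(f)-\pr_X^{\ast}\varphi^{\sharp}(f)$. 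This analytic input is exactly what the paper hides behind ``read off readily''; your partition-of-unity and Cayley--Hamilton remarks handle gluing and the support bound respectively, but do not supply this step.
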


\bigskip

\begin{ssremark}
{$[$presentation of $\tilde{\cal E}_{\varphi}$ in local trivialization of ${\cal E}$$\,]$.} {\rm
Note that
 with respect to a local trivialization ${\Bbb C}^{\oplus r}\otimes_{\Bbb R}C^k(U)$
   of ${\cal E}$ and,
   hence, a local trivialization ${\Bbb C}^{\oplus r}\otimes_{\Bbb R}C^k(U\times Y)$ of
   $\pr_X^{\ast}({\cal E})$ over $X$,
 the subsheaf
 $(\pr_Y^{\ast}(f)-\pr_X^{\ast}(\varphi^{\sharp}(f))\,:\, f\in C^k(Y))
					       \cdot \pr_X^{\ast}(\cal E)$
  in the above lemma is generated (as an ${\cal O}_{X\times Y}^{\,\Bbb C}$-module)
  by elements of the form
  $$
     v\otimes f \; -\; (\varphi^{\sharp}(f)(v))\otimes 1\,, \hspace{2em}f\in C^k(Y)\,.
  $$
}\end{ssremark}

\bigskip

\begin{flushleft}
{\bf Recovering $\varphi:(X^{\!A\!z},{\cal E})\rightarrow Y$
          from an ${\cal O}_{X\times Y}^{\,\Bbb C}$-module}
\end{flushleft}
Conversely, let
 $X$ and $Y$ be $C^k$-manifolds   and
 $\tilde{\cal E}$ be a sheaf of  ${\cal O}_{X\times Y}^{\,\Bbb C}$-modules \
  with the following properties:
   \begin{itemize}
    \item[]
	  \begin{itemize}
        \item[(M1)]
		 The annihilator ideal sheaf
		   $\Ker({\cal O}_{X\times Y}\rightarrow
		                          \Endsheaf_{{\cal O}_{X\times Y}}(\tilde{\cal E}))$
		   is $C^k$-normal in ${\cal O}_{X\times Y}$;
          thus, $\Supp(\tilde{\cal E})$ is a $C^k$-subscheme of the $C^k$-manifold $X\times Y$.
		
 	    \item[(M2)]
         The push-forward ${\cal E}:={\pr_X}_{\ast}(\tilde{\cal E})$
	       is a locally free $C^k$ ${\cal O}_X^{\,\Bbb C}$-module,
	      say, of rank $r$.		   		
	 %
	 \end{itemize}	
   \end{itemize}
Consider the noncommutative space
 $(X,{\cal O}_X^{A\!z}:=\Endsheaf_{{\cal O}_X^{\Bbb C}}({\cal E}))$.
Then
 $$
    (X,{\cal O}_X^{A\!z}:=\Endsheaf_{{\cal O}_X^{\Bbb C}}({\cal E}),{\cal E})
 $$
   is a $C^k$ Azumaya manifold with a fundamental module  and
 $\tilde{\cal E}$ defines an equivalence class
  $$
     \varphi^{\sharp}:{\cal O}_Y\;    \longrightarrow\;
          \Endsheaf_{{\cal O}_X^{\,\Bbb C}}({\cal E})=:{\cal O}_X^{A\!z}
  $$
  of gluing systems of $C^k$-admissible ring-homomorphisms over ${\Bbb R}\hookrightarrow{\Bbb C}$
  as follows:
  \begin{itemize}
      \item[(1)]
       Let $U\subset X$ be an open set that lies in an atlas of $X$ such that
   	   $\pr_Y(\Supp(\tilde{\cal E}_U))$ is contained
	      in an open set $V\subset Y$ that lies in an atlas of $Y$.
	    Here, we treat $\tilde{\cal E}$ also as a sheaf over $X$    and
	      $\tilde{\cal E}_U:=\tilde{\cal E}|_{U\times Y}$ is the restriction of $\tilde{\cal E}$
             to over $U$.		
		
      \item[(2)]
        Let $f\in C^k(V)$.
	    Then the multiplication by $\pr_Y^{\ast}(f)\in C^k(U\times V)$
	       induces an endomorphism
		   $\tilde{\alpha}_f:\tilde{\cal E}_U\rightarrow \tilde{\cal E}_U$
		     as an ${\cal O}_{U\times V}^{\,\Bbb C}$-module.
        Since $U\times V\supset \Supp(\tilde{\cal E}_U)$,
          $\alpha_f:={\pr_X}_{\ast}(\tilde{\alpha}_f)$ defines in turn
	      a $C^k$-endomorphism of the ${\cal O}_U^{\,\Bbb C}$-module ${\cal E}_U$;
	      i.e.\ $\alpha_f\in{\cal O}_X^{A\!z}(U)$.
        This defines a ring-homomorphism
	      $\varphi^{\sharp}:C^k(V)\rightarrow {\cal O}_X^{A\!z}(U)$
	      over ${\Bbb R}\hookrightarrow{\Bbb C}$,
	      with $f\mapsto \alpha_f$.
        By construction, $\varphi^{\sharp}$ is $C^k$-admissible.		
		
      \item[(3)]
	   Compatibility of the system of $C^k$-admissible ring-homomorphisms
  	    $\varphi^{\sharp}:C^k(V)\rightarrow {\cal O}_X^{A\!z}(U)$
	     over ${\Bbb R}\hookrightarrow{\Bbb C}$
        with gluings follows directly from the construction.		
  \end{itemize}
In this way, $\tilde{\cal E}$  defines a $C^k$-map
  $\varphi:(X^{\!A\!z},{\cal E})\rightarrow Y$.
  
By construction, the graph $\tilde{\cal E}_{\varphi}$ of the $C^k$-map $\varphi$
 associated to $\tilde{\cal E}$  is canonically isomorphic to $\tilde{\cal E}$.	
This gives an equivalence of the two notions/categories:
 $$
  \begin{array}{c} \\[-1.2ex]
    \fbox{$C^k$-maps $\varphi:(X^{\!A\!z},{\cal E})\rightarrow Y$  }\;
	   \Longleftrightarrow\;
    \fbox{${\cal O}_{X\times Y}^{\,\Bbb C}$-modules $\tilde{\cal E}$
	              that satisfy (M1) and (M2)}	   \\[2ex]
  \end{array}	
 $$
({\sc Figure}~5-3-2-1.)
\begin{figure} [htbp]
 \bigskip
 \centering
 \includegraphics[width=0.80\textwidth]{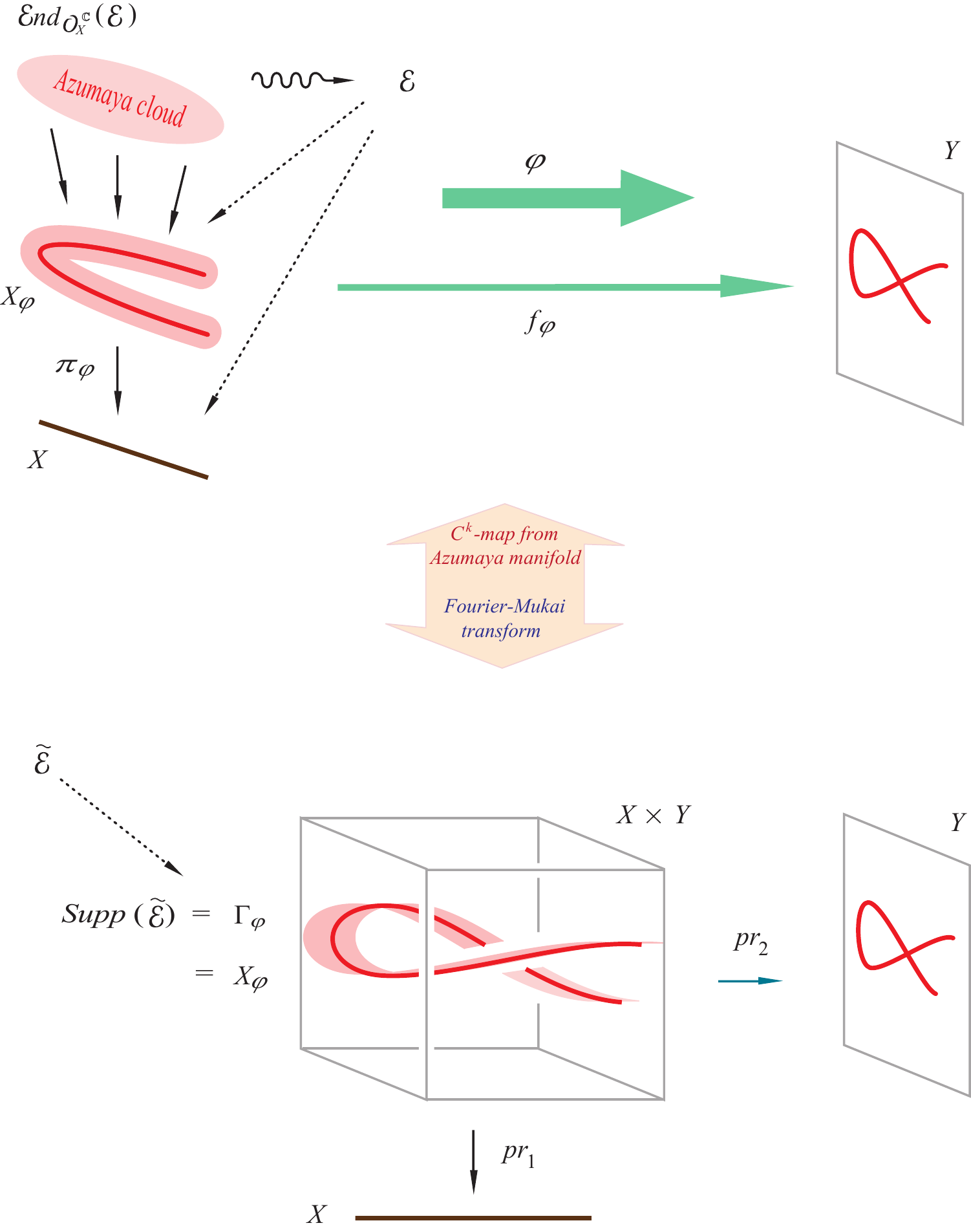}
 
 \vspace{4em}
 \centerline{\parbox{13cm}{\small\baselineskip 12pt
  {\sc Figure}~5-3-2-1.
    The equivalence between
      a $C^k$-map $\varphi$ from an Azumaya $C^k$-manifold with a fundamental module
	    $(X,{\cal O}_X^{Az}:=\Endsheaf_{{\cal O}_X^{\,\Bbb C}}({\cal E}), 
		          {\cal E})$
	    to a $C^k$-manifold $Y$    and
	  a special kind of Fourier-Mukai transform 
	    $\tilde{\cal E}\in \ModCategory^{\Bbb C}(X\times Y)$ from $X$ to $Y$. 		
   Here, $\ModCategory^{\Bbb C}(X\times Y)$
        is the category of ${\cal O}_{X\times Y}^{\,\Bbb C}$-modules. 		
      }}
 \bigskip
\end{figure}

\bigskip

\subsubsection{Aspect III: From maps to the stack of D0-branes}

Aspect II of a $C^k$-map $\varphi:(X^{\!A\!z},{\cal E})\rightarrow Y$
   discussed in Sec.~5.3.2
 brings out a third aspect of $\varphi$, which we now explain.

\bigskip
 
\begin{flushleft}
{\bf An Azumaya manifold  $X^{\!A\!z}$ as a smearing of unfixed Azumaya points over $X$.}
\end{flushleft}
{To} illuminate our point, consider first a case that appears often in complex analysis:

\bigskip

\begin{ssexample} {\bf [real contour $\gamma$ in complex line ${\Bbb C}^1$].} {\rm
 A  differentiable contour in the complex line ${\Bbb C}^1$ with complex coordiate $z=x+\sqrt{-1}y$
   is a differentiable map
   $$
     \gamma\; =\; \gamma_x + \sqrt{-1}\gamma_y\; :\; [0,1]\; \longrightarrow\;  {\Bbb C}^1\,.
   $$
 There is no issue about this, if $\gamma$ is treated as  a map between point-sets with a manifold structure:
    from the interval $[0,1]$ to the underlying real $2$-space ${\Bbb R}^2$ of ${\Bbb C}^1$
	with coordinates $(x,y)$.
 However,  in terms of function rings, some care needs to be taken.
 While there is a built-in ring-homomorphism ${\Bbb R}\hookrightarrow{\Bbb C}$ over ${\Bbb R}$,
   there exists no ring-homomorphism ${\Bbb C}\rightarrow{\Bbb R}$ with $0\mapsto 0$ and $1\mapsto 1$.
 If follows that there is no ring-homomorphism
   $\gamma^{\sharp}: C^k({\Bbb C}^1)^{\Bbb C}\rightarrow C^k([0,1])$,
   where $C^k({\Bbb C}^1)^{\Bbb C}$ is the algebra of complex-valued $C^k$-functions on ${\Bbb C}^1$.
 {To} remedy this, one should first complexify $C^k([0,1])$ to
   $$
      C^k([0,1])^{\Bbb C}\;  :=\;   C^k([0,1])\otimes_{\Bbb R}{\Bbb C}\,;
   $$
   then there is a well-defined algebra-homorphism over ${\Bbb C}$
   $$
	  \gamma^{\sharp}\;:\; C^k({\Bbb C}^1)^{\Bbb C}\;
	       \longrightarrow\;    C^k([0,1])^{\Bbb C}
   $$
   by the pull-back of functions via $\gamma$.
 Here comes the guiding question:
    \begin{itemize}
	  \item[{\bf Q.}]   {\it What
	    is the geometric meaning of the above algebraic operation?}	
	\end{itemize}
 The answer comes from an input to differential topology from algebraic geometry.
 
 By definition, a point with function field ${\Bbb R}$ is an {\it ${\Bbb R}$-point}
  while a point with function field ${\Bbb C}$ is a {\it ${\Bbb C}$-point}.
 Topologically they are the same but algebraically they are different,  as already indicated by
   $$
     {\Bbb R}\; \hookrightarrow\;  {\Bbb C}\,, \hspace{1em}\mbox{while}\hspace{1em}
	   {\Bbb C}\; \hspace{2ex}/\hspace{-3ex}\longrightarrow\;  {\Bbb R}\,,
   $$
   which means algebrao-geometrically, concerning the existence of a map from one to the other,
   $$
       \mbox{\it ${\Bbb C}$-point}\; \longrightarrow\;  \mbox{\it ${\Bbb R}$-point}\,,
	    \hspace{1em}\mbox{while}\hspace{1em}
	   \mbox{\it ${\Bbb R}$-point}\; \hspace{2ex}/\hspace{-3ex}
	     \longrightarrow\;  \mbox{\it ${\Bbb C}$-point}\,.
   $$
  By replacing $C^k([0,1])$ by its complexification $C^k([0,1])^{\Bbb C}$,
   we promote each original ${\Bbb R}$-points on $[0,1]$ to a ${\Bbb C}$-point.
  In other words, we smear ${\Bbb C}$-points along the interval $[0,1]$.
 The map $\gamma$ now simply specifies a $C^k$ $[0,1]$-family of ${\Bbb C}$-points on ${\Bbb C}^1$
    by associating to each ${\Bbb C}$-point on $[0,1]$ a ${\Bbb C}$-point on ${\Bbb C}^1$,
	which is now allowed algebro-geometrically.
 This concludes the example
 
\noindent\hspace{40.7em}$\square$
}\end{ssexample}

\bigskip

Let $p^{A\!z}$ be a point with function ring isomorphic to the endomorphism algebra
 $\End_{\Bbb C}({\Bbb C}^{\oplus r})$
Then, by exactly the same reasoning and geometric pictures as in Example~5.3.3.1,
   with $(\,\cdots\,)\otimes_{\Bbb R}{\Bbb C}$
            replaced by $(\,\cdots\,)\otimes_{{\Bbb R}}\End_{\Bbb C}(E)$
			  locally\footnote{Here,
	                                  we remain in the case when the class in the Brauer group $\Br(X)$ of $X$
									     associated to $X^{\!A\!z}$ is zero.
		                              See [L-Y4] (D(5)) and references therein for related discussion.},
   where $E$ is a ${\Bbb C}$-vector space,
 one has	{\small
 $$
  \begin{array}{lcl} \\[-1.2ex]
    \fbox{Azumayanized manifold
	  $(X, {\cal O}_X
	                \otimes_{\Bbb R}\End_{\Bbb C}({\Bbb C}^{\oplus r}))$  }\;
	   & \Longleftrightarrow\;
       & \fbox{the smearing of {\it fixed} $p^{A\!z}$'s along $X$}	   \\[2ex]
    \fbox{general Azumaya manifold $(X^{\!A\!z},{\cal E})$}\;
	   & \Longleftrightarrow\;
       & \fbox{a smearing of {\it unfixed} $p^{A\!z}$'s along $X$}   \\[2ex]	
  \end{array}	
 $$   }

\bigskip

\begin{flushleft}
{\bf A $C^k$-map $\varphi:(X^{\!A\!z},{\cal E})\rightarrow Y$
       as smearing D0-branes on $Y$ along $X$}
\end{flushleft}
{To} press on along this line, we have to list two objects that are studied in algebraic geometry and
 yet their counter-objects are much less known/studied in differential topology/geometry:
\begin{itemize}
 \item[(1)] [{\it Quot-schemes}$\,$] \hspace{1em}
   Grothendieck's $\Quot$-scheme $\Quot_Y^{r}(({\cal O}_Y^{\,\Bbb C})^{\oplus r})$
    of $0$-dimensional quotient sheaves of $({\cal O}_Y^{\,{\Bbb C}})^{\oplus r}$
	of complex-length $r$.
  It follows from Sec.~3, Lemma/Definition~5.3.1.9, and Sec.~5.3.2 that
    this is the parameter space of differentiable maps from the fixed Azumaya point
	  $(p^{A\!z},{\Bbb C}^{\oplus r})$  to $Y$.
   In other words, it parameterizes D0-branes ${\cal F}$ on $Y$
     (where ${\cal F}$ is a complex $0$-dimensional sheaf on $Y$ of complex length $r$)
     that is decorated with an isomorphism
	  ${\Bbb C}^{\oplus r}\stackrel{\sim}{\rightarrow}C^k({\cal F})$ over ${\Bbb C}$. 	
    	
 \item[(2)] [{\it Quotient stacks}$\,$] \hspace{1em}
  The general linear group $\GL_r({\Bbb C})$
    acts on  $\Quot_Y^{r}(({\cal O}_Y)^{\Bbb C})^{\oplus r})$
	by its tautological action on the ${\Bbb C}^{\oplus r}$-factor in the canonical isomorphism	
   $({\cal O}_Y^{\,{\Bbb C}})^{\oplus r}
       \simeq {\cal O}_Y\otimes_{\Bbb R}{\Bbb C}^{\oplus r}$.
 This defines a quotient stack
   $[ \Quot_Y^{r}(({\cal O}_Y^{\,\Bbb C})^{\oplus r})/\GL_r({\Bbb C})  ]$,
   which now parameterizes differentiable maps $\varphi$ from unfixed Azumaya points
	  $(p^{A\!z}, E)$, where $E$ is a ${\Bbb C}$-vector space of rank $r$,  to $Y$.
   In other words,  $[ \Quot_Y^{r}(({\cal O}_Y^{\,\Bbb C})^{\oplus r})/\GL_r({\Bbb C})  ]$
     is precisely the moduli stack ${\frak M}_r^{\,0^{A\!z^{\!f}}}\!\!(Y)$
	 of D0-branes of complex length $r$ on $Y$,
    realized as complex $0$-dimensional sheaves on $Y$ of complex length $r$ via push-forwards
	$\varphi_{\ast}(E)$, from Definition~5.3.1.5.
\end{itemize}

Recall from Sec.~5.3.2 that a differentiable map $\varphi:(X^{\!A\!z},{\cal E})\rightarrow Y$
  is completely encoded by its graph $\tilde{\cal E}_{\varphi}$ on $X\times Y$.
Over each point $x\in X$, $\tilde{\cal E}_{\varphi}|_{\{x\}\times Y}$
  is simply a $0$-dimensional ${\cal O}_Y^{\,{\Bbb C}}$-module of complex length $r$.
Despite missing the details of these parameter ``spaces", it follows from their definition
 as functors or sheaves of groupoids over the category of $C^k$-manifolds,
   with suitable Grothendieck topology, that
 $$
  \begin{array}{c} \\[-1.2ex]
    \fbox{$\,C^k$-maps $\varphi:(X^{\!A\!z},{\cal E})\rightarrow Y$}\;
	   \Longleftrightarrow\;
    \fbox{admissible maps $X\rightarrow {\frak M}_r^{\,0^{A\!z^{\!f}}}\!\!(Y)$}	   \\[2ex]
  \end{array}	
 $$
This matches perfectly with
   the picture of an Azumaya manifold $(X^{\!A\!z},{\cal E})$
      as a smearing of unfixed Azumaya points $p^{A\!z}$ along $X$
 since, then, a map $X^{\!A\!z}\rightarrow Y$ is nothing but an $X$-family of maps
  $p^{A\!z}\rightarrow Y$, which is exactly the map
  $X\rightarrow {\frak M}_r^{\,0^{A\!z^{\!f}}}\!\!(Y)$.
 Cf.~{\sc Figure}~5-3-3-1.
%

\begin{figure}[htbp]
 \bigskip
  \centering
  \includegraphics[width=0.80\textwidth]{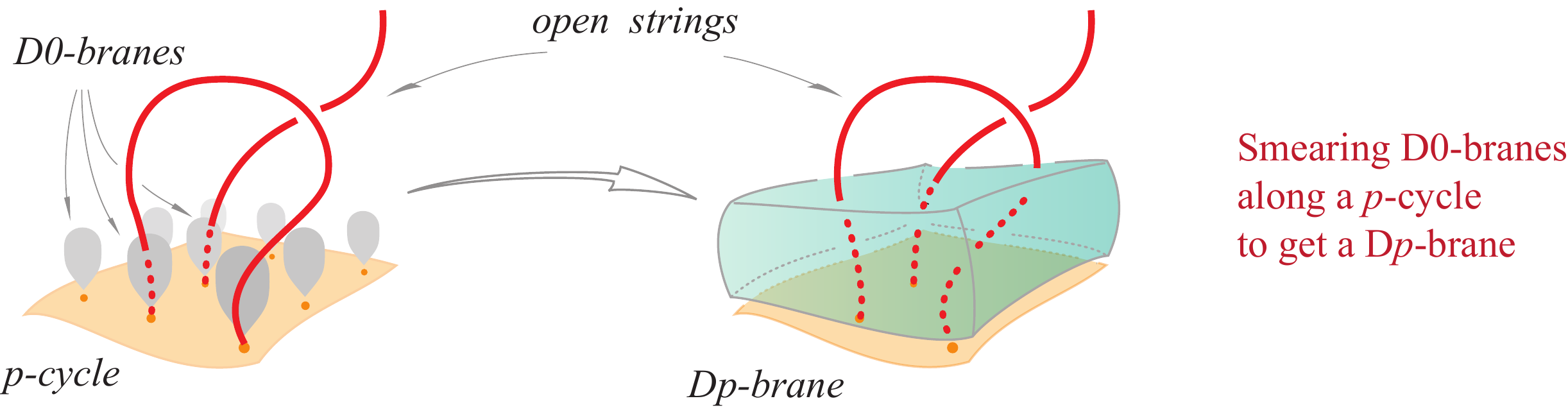}
 
  \bigskip
  \bigskip
 \centerline{\parbox{13cm}{\small\baselineskip 12pt
  {\sc Figure}~5-3-3-1. ([L-L-S-Y: {\sc Figure}~3-1-1].)
  The original stringy operational definition of D-branes as objects
     in the target space(-time) $Y$ of fundamental strings
   where end-points of open-strings can and have to stay
  suggests that smearing D$0$-branes along
    a (real) $p$-dimensional submanifold $X$ in $Y$ renders $X$ a D$p$-brane.
 Such a smearing in our case is realized as
   a map from the manifold $X$ to the stack
   ${\frak M}^{D0}(Y)$ of D$0$-branes on $Y$.
 In the figure, the Chan-Paton sheaf ${\cal E}$ that carries
   the index information on the end-points of open strings
   is indicated by a shaded cloud.
 Its endomorphism sheaf $\Endsheaf_{{\cal O}_X^{\,\Bbb C}}{\cal E}$
  carries the information of the gauge group of the quantum field theory
  on the D-brane world-volume.
  }}
\end{figure}

\bigskip

\subsubsection{Aspect IV: From associated $\GL_r({\Bbb C})$-equivariant maps}

Again, we list a counter-issue in differential topology that remains to be understood:
\begin{itemize}
 \item[(1)] [{\it Fibered product}$\,$] \hspace{1em}
  The notion of the fibered product of stratified singular spaces with a structure sheaf
  and its generalization to stacks needs to be developed.
\end{itemize}
Subject to this missing detail, from the very meaning of a quotient stack, it's natural to anticipate that
 any natural definition of the notion of fibered product should make a map
  $$
    X\;   \longrightarrow\;
   	{\frak M}_r^{\,0^{A\!z^{\!f}}}\!\!(Y)\,
       =\, [ \Quot_Y^{r}(({\cal O}_Y^{\,\Bbb C})^{\oplus r})/\GL_r({\Bbb C})  ]
  $$
 lift to a $\GL_r({\Bbb C})$-equivariant map
  $$
     P_X\;  \longrightarrow\;  \Quot_Y^r(({\cal O}_Y^{\,{\Bbb C}})^{\oplus r})\,,
   $$
 where $P_X$ is a principal $\GL_r({\Bbb C})$-bundle over $X$ from the fibered product
 $$
   P_X\; =\;
    X  \times_{{\frak M}_r^{\,0^{A\!z^{\!f}}}\!\!(Y)}
	   \Quot_Y^{r}(({\cal O}_Y^{\,\Bbb C})^{\oplus r})
 $$
Conversely, any latter map should define a former map.
Together with Aspect III in Sec.~5.3.3, this gives
 $$
  \begin{array}{l} \\[-1.2ex]
    \fbox{$\,C^k$-maps $\varphi:(X^{\!A\!z},{\cal E})\rightarrow Y$} \\[2ex]
	   \hspace{6em}\Longleftrightarrow\;
    \fbox{admissible $\GL_r({\Bbb C})$-equivariant  maps
	           $P_X\rightarrow \Quot_Y^r(({\cal O}_Y^{\,{\Bbb C}})^{\oplus r})$}	   \\[2ex]
  \end{array}	
 $$

%
%
%
%
%
%
  
\bigskip

Before leaving Sec.~5, we should remark that Aspect III and Aspect IV
 of a $C^k$-map in our contents, though as yet cannot be used as a solid tool mathematically,
 remain conceptually important.
They distinguish special roles of D0-branes in string theory.
In view of the related development on the string-theory side, let us pose a guiding question for the future:
 \begin{itemize}
   \item[{\bf Q.}]  {\it Does
    Aspect III have any link to the aspect of M-theory and branes from matrices
	in the revised string theory, e.g.\ [B-F-S-S], [B-S-S], and [B-S]?}
 \end{itemize}

\bigskip

\section{Push-pulls and differentiable maps adapted to additional structures on the target-manifold}	

In this section, we address some basic issues on push-pulls
   that are tailored to the notion of differentiable maps given in Sec.~5.3
 and introduce the notion of various adapted classes of differentiable maps in our setting
  when the target-manifold of the map is equipped with an additional structure.

\bigskip

\subsection{The induced map on derivations, differentials, and tensors}

Let
 $\varphi:
     (X^{\!A\!z},{\cal E})
	     :=(X,{\cal O}_X^{A\!z}:=\Endsheaf_{{\cal O}_X^{\,\Bbb C}}({\cal E}),
		 {\cal E})
       \rightarrow {\cal O}_Y$
 be a $C^k$-differentiable, specified by a $C^k$-admissible
 $\varphi^{\sharp}:{\cal O}_Y\rightarrow{\cal O}_X^{A\!z}$,
 $k\ge 1$.
We address one positive and some negative results concerning the push-pull of various tensors
 under $\varphi$.

\bigskip

\begin{flushleft}
{\bf The induced map on derivations and tangent sheaves}
\end{flushleft}
Exactly as in Example~4.1.17,
let $\Theta\in {\cal T}_{\ast}X^{\!A\!z}$ be a derivation on ${\cal O}_Y$.
Then $\Theta$ acts on ${\cal O}_Y$ via
   $$
     (\varphi_{\ast}\Theta)(f)\; :=\; \Theta(\varphi^{\sharp}(f))
   $$
  for $f\in{\cal O}_Y$.
The ${\Bbb R}$-linear map
 $\varphi_{\ast}\Theta:{\cal O}_Y\rightarrow {\cal O}_X^{A\!z}$
 satisfies the Leibniz rule of the form
  $$
   (\varphi_{\ast}\Theta)(fg)\;
     =\;  (\varphi_{\ast}\Theta)(f)\,\varphi^{\sharp}(g)\,
	          +\, \varphi^{\sharp}(f)\,(\varphi_{\ast}\Theta)(g)
  $$
 for $f,\,g\in {\cal O}_Y$.
 
\bigskip

\begin{definition} {\bf [push-forward derivation and $\varphi^{\ast}{\cal T}_{\ast}Y$]. } {\rm
 The $\varphi_{\ast}\Theta$ defined above is called the {\it push-forward} of $\Theta$ under $\varphi$.
 An ${\Bbb R}$-linear map
  $\Xi:{\cal O}_Y\rightarrow {\cal O}_X^{A\!z}$
    that satisfies the Leibniz rule
    $$
      \Xi(fg)\;
       =\;  \Xi(f)\,\varphi^{\sharp}(g)\,
	          +\, \varphi^{\sharp}(f)\,\Xi(g)
    $$
    for $f,\,g\in {\cal O}_Y$
   is called  an {\it ${\cal O}_X^{A\!z}$-valued derivation on ${\cal O}_Y$ through $\varphi$}.
 The set of all such derivations form a sheaf of ${\cal O}_X$-modules,
  denoted by $\varphi^{\ast}{\cal T}_{\ast}Y$.
 The correspondence $\Theta\mapsto \varphi_{\ast}\Theta$
   defines a ${\cal O}_X$-module-homomorphism
  $$
   \varphi_{\ast}\;:\; {\cal T}_{\ast}X^{\!A\!z}\;
     \longrightarrow\; \varphi^{\ast}{\cal T}_{\ast}Y\,.
  $$
}\end{definition}

\bigskip

\begin{remark}
 $[{\cal O}_X^{A\!z}
            \otimes_{\varphi^{\sharp},{\cal O}_Y^{\,\Bbb C}}{\cal T}_{\ast}Y]$.  {\rm
 Caution	that, unlike in the commutative case, in general and as ${\cal O}_X^{\,\Bbb C}$-modules,
  $$
    \varphi^{\ast}{\cal T}_{\ast}Y\;
     \not\simeq\;	
     {\cal O}_X^{A\!z}
	     \otimes_{\varphi^{\sharp}, {\cal O}_Y}{\cal T}_{\ast}Y
  $$
  for $\varphi$ with $\Image\varphi^{\sharp}$ not contained in the center
  ${\cal O}_X^{\,\Bbb C}$ of ${\cal O}_X^{A\!z}$.
Cf.~Remark~4.1.18.
}\end{remark}

\bigskip

\begin{flushleft}
{\bf The induced map on differentials and cotangent sheaves}
\end{flushleft}
As explained in Example~4.1.20 and footnote~4 in Sec.~4.1,
 in general there is no notion of the (canonical)  `pull-back of differentials' from $Y$ to $X^{\!A\!z}$ 					
 for a fundamental reason.

\bigskip

\begin{flushleft}
{\bf The induced map on tensors}
\end{flushleft}
Accordingly,
 there is no notion of the (canonical)  `pull-back of differential forms' from $Y$ to $X^{\!A\!z}$
 for general $\varphi$.

\bigskip

\subsection{Remarks on stringy regularizations of the push-forward of a sheaf under\\  a differentiable map}

The push-forward $\varphi_{\ast}{\cal E }$ of the built-in fundamental mudule ${\cal E}$ on $X^{\!A\!z}$
 has already occurred in this note.
For completeness, let us introduce by the same token:

\bigskip

\begin{definition} {\bf [push-forward of ${\cal O}_X^{A\!z}$-module].} {\rm
 Let ${\cal F}$ be an ${\cal O}_X^{A\!z}$-module on $X^{\!A\!z}$.
 Then, ${\cal F}$ is rendered naturally
   as an ${\cal O}_Y^{\,\Bbb C}$-module, denoted by $\varphi_{\ast}{\cal F}$,
    through $\varphi^{\sharp}:{\cal O}_Y\rightarrow{\cal O}_X^{A\!z}$.
 $\varphi_{\ast}{\cal F}$	is called the {\it push-forward} of ${\cal F}$ under $\varphi$.
}\end{definition}

\bigskip
	
While $\varphi_{\ast}{\cal F}$ is well-defined from above,
 in general it doesn't bahave well, as an ${\cal O}_Y^{\,\Bbb C}$-module, under $\varphi$.
In particular,
this already happens on $\varphi_{\ast}{\cal E}$
 when the underling maps from the $\varphi$-specified surrogate
  $   X \stackrel{\pi_{\varphi}}{\leftarrow} X_{\varphi}\stackrel{f_{\varphi}}{\rightarrow} Y$
  has some value $y\in Y$ with $\pi_{\varphi}(f_{\varphi}^{-1}(y))$
  containing a $C^k$-submanifold of $X$ of positive dimension.
For general ${\cal F}$, we have nothing to say.
However,
  for an ${\cal O}_X^{A\!z}$-module ${\cal F}$ that has a meaning in string theorty,
    especially ${\cal E}$,
 one expects there to be a notion of `{\it regularization}' of $\varphi_{\ast}{\cal F}$ 	
  that renders $\varphi_{\ast}{\cal F}$
  a string-theoretically more reasonable ${\cal O}_Y^{\,\Bbb C}$-module.

\bigskip

\subsection{Differentiable maps adapted to additional structures on the target-\\ manifold}

Let us begin with a guiding question:
 \begin{itemize}
  \item[{\bf Q.}]
   {\bf [map adapted to calibration, ..., on target-manifold] } \hspace{1em}
   In the study of symplectic or calibrated geometry,
    one considers the target $Y$ with an additional structure specified by a differential form $\alpha$ on $Y$
    and a map $f:X \rightarrow Y$ is said to be adapted to $\alpha$ if $f^{\ast}\alpha=0$
     (plus some minor conditions on $X$).
   For example, when $(Y,\alpha)$ is a symplectic manifold then such $f$ with $\dimm X=\frac{1}{2}\dimm Y$
   defines a Lagrangian submanifold of $(Y,\alpha)$.
  When trying to generalize this notion to maps $\varphi:(X^{\!A\!z},{\cal E})\rightarrow (Y,\alpha)$
     from Azumaya manifolds,
   one immediately runs into the technical difficulty
     that the notion of the pull-back $\varphi^{\ast}\alpha$ of $\alpha$ is fundamentally undefinable.
   Yet such a notion of adapted maps is required for D-branes in string theory, e.g.\ A-branes and B-branes.
    {\it How shall we deal with this?}
 \end{itemize}
In this section, we present a first and weakest answer to this question and
  bring out a related set of definitions of adapted maps in the related context.
Their possible refinements, further mathematical details and test on string theory are the focus of separate works.

\bigskip

Recall
  the $C^k$-map $\varphi:(X^{\!A\!z},{\cal E})\rightarrow Y$
     from an Azumaya manifold with a fundamental module to a manifold $Y$,
 $X_{\varphi}$ the surrogate of $X$ specified by $\varphi$, and
 $X_{\varphi,\redscriptsize}$ the reduced subscheme of $X_{\varphi}$
 in the sense of $C^k$ algebraic geometry.
Recall also that the $C^k$ map
  $$
    (\pi_{\varphi},f_{\varphi})\; :\; X_{\varphi}\; \longrightarrow\;  X\times Y
  $$
  is an embedding, whose image coincides with the scheme-theoretical support
     $\Supp(\tilde{\cal E}_{\varphi})$
   of the graph $\tilde{\cal E}_{\varphi}$ of $\varphi$.
While $\alpha^{\ast}\alpha$ in the above question is generally undefinable,
 $f_{\varphi}^{\ast}\alpha$ is always defined.
This gives us the basis to the solution.
However, in general $X_{\varphi}$ may be nonreduced.
Mathematically, choices like
 \begin{itemize}
   \item[$\cdot$] {\bf [strong vs.\ weak answer]}\hspace{2em}
   $f_{\varphi}^{\ast}\alpha\;=\; 0\;\;\;\;$ vs.\
   $\;\;\; \left.f_{\varphi}^{\ast}\alpha\right|_{X_{\varphi, \redtiny}}\;=\;0\,$,
 \end{itemize}
 or some conditions in-between,
  where $X_{\varphi,\redscriptsize}$ is the reduced subscheme of $X_{\varphi}$,
 distinguish whether the answer is strong, weak, or intermediate.
In the end, it is fitting in string theory that selects the final correct answer.
For the purpose the current note D(11.1),
 we introduce the weakest answer, leaving room for strengthening in the future.

Definition~6.3.3 below
 is phrased more conveniently in terms of Aspect II [graph] of a $C^k$-map $\varphi$,
  in which $X_{\varphi}$ is embedded in $X\times Y$ by $(\pi_{\varphi},f_{\varphi})$.

\bigskip

\begin{definition}  {\bf [map of relative dimension 0].} {\rm
 A $C^k$-map $\varphi:(X^{\!A\!z},{\cal E})\rightarrow Y$
    is said to be {\it of relative dimension $0$}
 if $f_{\varphi}:X_{\varphi}\rightarrow Y$  is of relative dimension $0$;
 i.e.\ for all $y\in Y$,  $f_{\varphi}^{-1}(y)$ is $0$-dimensional if non-empty.
}\end{definition}
 
%
%
%
%
%
%
%
%
%
%
%
  
\bigskip

\begin{definition} {\bf [Lagrangian map to symplectic manifold].} {\rm
 Let $(Y,\omega)$ be a symplectic manifold.
  A $C^k$-map $\varphi:(X^{\!A\!z},{\cal E})\rightarrow Y$
    is {\it Lagrangian} ({\it in the weakest sense})
 if
  $$
    \mbox{$\dimm X\; =\; \frac{1}{2}\,\dimm Y\,$,
	 $\;\;\; \varphi$ is of relative dimension $0$\,,   $\;\;\;$ and	
     $\;\;\; \left.f_{\varphi}^{\ast}\,\omega\,\right|_{X_{\varphi,\redtiny}}\; =\; 0$}\,.
  $$	
}\end{definition}

%
  
\bigskip
 
\begin{definition}   {\bf [$(j,J)$-holomorphic map to almost complex manifold $(Y,J)$].}
{\rm  Let $(X,j)$, $(Y,J)$ be almost complex manifolds.
  A $C^k$-map $\varphi:(X^{\!A\!z},{\cal E})\rightarrow Y$
    is {\it $(j,J)$-holomorphic}
 if $$
      \mbox{$X_{\varphi}$ is an almost complex subscheme of $(X\times Y, j\times J)$}\,,
   $$
 that is,
    ${\cal T}_{\ast}X_{\varphi}\,
      \subset\, {\cal T}_{\ast}(X\times Y)|_{X_{\varphi}}$
     is invariant under the product almost complex structure $j\times J$ on $X\times Y$.
 When $j$ is understood from the context, $\varphi$ is simply called a {\it $J$-holomorphic map}.
	
 If both $j$ and $J$ are integrable, i.e.\
     $X$ and $Y$ are holomorphic manifolds
      (with respective holomorphic structure sheaves ${\cal O}_X$ and ${\cal O}_Y$),
   ${\cal E}$ is a coherent locally-free holomorphic ${\cal O }_X$-module,  and
   $X_{\varphi}$ holomorphic subscheme of $X\times Y$,
 then $\varphi$ is called a {\it holomorphic map}.
}\end{definition}
  
\bigskip

The following two examples indicate a closed-open pair of new theories
 that serve as the D-string counter theory to the symplectic Gromov-Witten theory
  (cf. [Gr] and [Wi1], [Wi2];
          see [McD-S2] for an exposition in the closed case and
                 [F-O-O-O], [Liu$_{CC}$], [Ye] for the open case),
    which is related to either closed fundamental strings or open fundamental strings.
Their algebraic theory was brought out and studied in [L-Y9] (D(10.1)) and [L-Y10] (D(10.2)).
     
\bigskip
 
\begin{example} {\bf [closed $J$-holomorphic D-curve].} {\rm
 A $J$-holomorphic map $\varphi:(\Sigma^{\!A\!z},{\cal E})\rightarrow (Y,J)$
  from an Azumaya Riemann surface without boundary, with a fundamental module,
  to an almost complex manifold $(Y,J)$ is called a ({\it closed}$\,$) {\it $J$-holomorphic D-curve}.
}\end{example}

\bigskip

\begin{example} {\bf [open $J$-holomorphic D-curve].} {\rm
 Let $Y:= (Y, \omega,J ;L,V_L,\nabla^{V_L})$ be a symplectic manifold $(Y,\omega)$
   with an $\omega$-tame almost complex structure $J$ that is endowed with
   an embedded Lagrangian submanifold $L$ together with a complex vector bundle $V_L$ over $L$
     with a flat connection $\nabla^{V_L}$ on $V_L$.
(Denote the associated sheaf of smooth sections of $V_L$ by ${\cal V}_L$.)
 The notion of {\it open $J$-holomorphic D-curves} on $Y$ is more involved than in the closed case.
 The simplest form of the notion is given by
   \begin{itemize}
    \item[$\cdot$]
     a {\it $J$-holomorphic map}
      $\varphi:(\Sigma^{A\!z},{\cal E})\rightarrow Y$
       from an Azumaya bordered Riemann surface $(\Sigma^{A\!z},{\cal E})$ to $Y$
 	    with $\varphi(\partial\Sigma^{A\!z})_{\redscriptsize}\subset L$   and
	    $\varphi_{\ast}({\cal E}|_{\partial\Sigma})$
	     independent of ${\cal V}_L$ (the {\it free boundary-sheaf condition})
	     or mapped completely into ${\cal V}_L$ (the {\it total-inclusion boundary-sheaf condition})
	     or satisfying a condition between these two extreme conditions
 	    	 (a {\it partial-inclusion boundary-sheaf condition}). 		
   \end{itemize}
 With the D1-D3 brane-systems in Type IIB superstring theory in mind,
 a full version of the notion should involve in addition
   \begin{itemize}
    \item[$\cdot$]
    a {\it connection $\nabla$ on ${\cal E}$} that is a solution to a stringy equation (cf.\ [M-M-M-S])
      with its restriction to the boundary $\nabla|_{\partial\Sigma}$
      intertwined with the connection $\nabla^{V_L}$ on ${\cal V}_L$ through $\varphi$
       in a way that is compatible with the specified boundary-sheaf condition.
   \end{itemize}
 The detail should be studied in its own right.
}\end{example}

\bigskip

\begin{definition} {\bf [special Lagrangian map to Calabi-Yau manifold].}  {\rm
 Let $(Y,J, \omega,\Omega)$ be a Calabi-Yau $n$-fold
   with complex structure $J$,   K\"{a}hler $2$-form $\omega$, and holomorphic $n$-form $\Omega$
   such that $\omega^n/n!=(-1)^{n(n-1)/2}(\sqrt{-1}/2)^n\Omega\wedge\bar{\Omega}$.
 A $C^k$-map $\varphi:(X^{\!A\!z},{\cal E})\rightarrow Y$
    is {\it special Lagrangian} ({\it in the weakest sense})
 if
  $$
   \mbox{$\varphi$ is Lagrangian to $(Y,\omega)$ such that}\;\;
	 \left.f_{\varphi}^{\ast}\,
	               \Real(e^{-\sqrt{-1}\,\theta}\,\Omega)\,\right|_{X_{\varphi,\redtiny}}\; =\; 0
  $$	
  for some locally constant function $\theta$ defined on a (possibly disconnected)  open set $U\subset Y$
    such that $U\cap f_{\varphi}(X_{\varphi})$ is open dense in $f_{\varphi}(X_{\varphi})$.
}\end{definition}

\bigskip

\begin{remark} $[$phase function of special Lagrangian map$\,]$.  {\rm
 Note that we require the phase function $\theta$, after being pulled back to $X_{\varphi}$ via $f_{\varphi}$,
  be only locally constant, rather than constant,    and
  be defined only on an open dense subset of $X_{\varphi}$, rather than the whole $X_{\varphi}$.
}\end{remark}

\bigskip

See Example~7.2.2 for examples of special Lagrangian maps to the Calabi-Yau $1$-fold ${\Bbb C}^1$.
   
\bigskip

\begin{definition}
{\bf [associative map \& coassociative map to 7-manifold with $G_2$ holonomy $(Y,\eta,g)$].} {\rm
 Let $(Y,\eta,g)$ with positive $3$-form $\eta$ and associated metric $g$
  be a $7$-manifold with $G_2$ holonomy.
 A $C^k$-map $\varphi:(X^{\!A\!z},{\cal E})\rightarrow Y$
    is {\it associative} ({\it in the weakest sense})
 if
  $$
    \dimm X=3\,,\;\;\;
	\mbox{$\varphi$ is of relative dimension $0$}\,,\;\;\;  \mbox{and}\;\;\;
    \left.f_{\varphi}^{\ast}\eta\,\right|_{X_{\varphi,\redtiny}}=0\,.
  $$
 A $C^k$-map $\varphi:(X^{\!A\!z},{\cal E})\rightarrow Y$
    is {\it coassociative} ({\it in the weakest sense})
 if
  $$
    \dimm X=4\,,\;\;\;
	\mbox{$\varphi$ is of relative dimension $0$}\,,\;\;\;  \mbox{and}\;\;\;
    \left. f_{\varphi}^{\ast}(\ast\eta)\,\right|_{X_{\varphi,\redtiny}}=0\,.
  $$
Here, $\ast\eta$ is the Hodge dual $4$-form of $\eta$ with respect to the metric $g$.	
}\end{definition}

\bigskip

\section{Examples of differentiable maps from Azumaya manifolds with a fundamental module}

Two more sets of examples of differentiable maps from Azumaya manifolds with a fundamental module
 are given in this last section of the current note D(11.1).

\bigskip
 
\subsection{Examples generated from branched coverings of manifolds}

This is the class of examples studied in [L-Y5: Sec.~3.2] (D(6)), [L-Y6] (D(7)), and  [L-Y7] (D(8.1)).
Let $X$ and $Y$ be $C^k$-manifolds.
Consider the following data
  $((\hat{X}, \hat{\cal E}), (\hat{c},\hat{f}))\,$:
  $$
   \xymatrix{
     \hat{\cal E}\ar@{.>}[d]   & & & & & \\
    \hat{X} \ar[rd]^{(\hat{c},\hat{f})} \ar@/^2ex/[rrrrrd]^-{\hat{f}}
            \ar@/_/[rddd]_-{\hat{c}}               & &       \\
     & X\times Y \ar[rrrr]_-{pr_2}  \ar[dd]^-{pr_1} & & & & Y\;, \\ \\
     & X
   }
  $$
  where
   \begin{itemize}
    \item[$\cdot$]
	 $\hat{X}$ is  a $C^k$-manifold
    	 with a locally free ${\cal O}_X$-module $\hat{\cal E}$ (of finite rank $\hat{r}$),
	 	
    \item[$\cdot$]
     $\hat{c}:\hat{X}\rightarrow X$ is
       a $C^k$-branched-covering map (of finite degree $\hat{d}$ )
	 such that ${\cal E}:=\hat{c}_{\ast}\hat{\cal E}$  is locally free (of rank $r=\hat{r}\hat{d}$),

    \item[$\cdot$]
     $\hat{f}:\hat{X}\rightarrow Y$ is a $C^k$-map.	
   \end{itemize}
 Denote the associated vector bundle to $\hat{\cal E}$ and ${\cal E}$
   by $\hat{E}$ and $E$ respectively.
Then, associated to this data is a $C^k$-map $\varphi:(X^{\!A\!z},{\cal E})\rightarrow Y$
  defined as follows:
  \begin{itemize}
   \item[$\cdot$]
   $(X^{\!A\!z}, {\cal E})
     =(X,{\cal O}_X^{A\!z}:=\Endsheaf_{{\cal O}_X^{\,\Bbb C}}({\cal E}),
       {\cal E}:= \hat{c}_{\ast}\hat{\cal E} )$
    is the Azumaya $C^k$-manifold with a fundamental module
	associated to $((\hat{X},\hat{\cal E}), \hat{c})$,

  \item[$\cdot$]
   Observe that there are embeddings
    ${\cal O}_X  \subset \hat{c}_{\ast}{\cal O}_{\hat{X}}\subset {\cal O}_X^{A\!z}$
     induced by $\hat{c}$.
   Through this and the fact that $\hat{c}_{\ast}\hat{\cal E}=:{\cal E}$ is tautologically
      a $\hat{c}_{\ast}{\cal O}_{\hat{X}}$-module,
    the ring-homomorphism $\hat{f}^{\sharp}: C^k(Y)\rightarrow C^k(\hat{X}) $
      is pushed forward under $\hat{c}$  to a $C^k$-admissible ring-homomorphism
	    $\varphi^{\sharp}:C^k(Y)\rightarrow C^k(\End_X(E))$
		over ${\Bbb R}\hookrightarrow {\Bbb C}$.
   This defines $\varphi$.	
  \end{itemize}
  
 The graph $\tilde{\cal E}_{\varphi}$ of $\varphi$ on $X\times Y$
   is given by $(\hat{c},\hat{f})_{\ast}\hat{\cal E}$,
   whose scheme-theoretical support $\Supp(\tilde{\cal E}_{\varphi})$ is given by
     the submanifold-with-singularities $(\hat{c},\hat{f})(\hat{X})$ in $X\times Y$.
 The surrogate $X_{\varphi}$ of $X^{\!A\!z}$ specified by $\varphi$
   is isomorphic to  $\Supp(\tilde{\cal E}_{\varphi})=(c,f)(\hat{X})$ and
 the diagram
  $$
   \xymatrix{
    {\cal E}_{\varphi}\ar@{.>}[d]     \\
     \;\Supp({\cal E}_{\varphi})\; \ar@{^{(}->}[r]
     & X\times Y \ar[rrr]^-{pr_2}  \ar[d]^-{pr_1} & &  & Y\;, \\
     & X
   }
  $$
  is translated to the diagram
  $$
   \xymatrix{
    & {\cal E} \ar@{.>}[rd]     \ar@{.>}@/_1ex/[rdd]      \\
    &  & X_{\varphi}\ar[rrr]^-{f_{\varphi}}
                                	    \ar@{->>}[d]^-{\pi_{\varphi}} &&& V \\
	&  & X
    }
  $$
  that underlies $\varphi$.

\bigskip

\begin{remark} {$[$Azumaya sphere and D2- and D3-brane$]$.} {\rm
 Together with the study of knots and links, and low-dimensional topology,
  cf.\ [Al], [He], [Hil], [H-L-M], [Mon], [Ro], [Th],
 one has the following special feature in $2$- and $3$-dimensions:
 \begin{itemize}
  \item[$\cdot$]  {\bf [Azumaya sphere and D2- and D3-brane].} {\it
   Let $Z\subset Y$ be an embedded smooth $2$- (resp.\ $3$-)dimensional submanifold.
   Then there exists a smooth-map  $\varphi:S^{2,A\!z}\rightarrow Y$
      (resp.\ $\varphi:S^{3,A\!z}\rightarrow Y$ )
        from an Azumaya $2$-sphere (resp.\ Azumaya $3$-sphere) to $Y$
      such that the image $\varphi(S^{3,A\!z})$ of $\varphi$ is exactly $Z$.}
 \end{itemize}
 Which may have implications to the construction of a perturbative D2-brane theory
  (resp.\ perturbative D3-brane theory)
  that mimics the construction of the perturbative string theory, cf.\ [B-P].
 See [L-Y6: Sec.~2.4.2] (D(7)) for more explanations.
 Cf.~{\sc Figure}~7-1-1.
 %
  
 \begin{figure} [htbp]
 \bigskip
 \centering
 \includegraphics[width=0.80\textwidth]{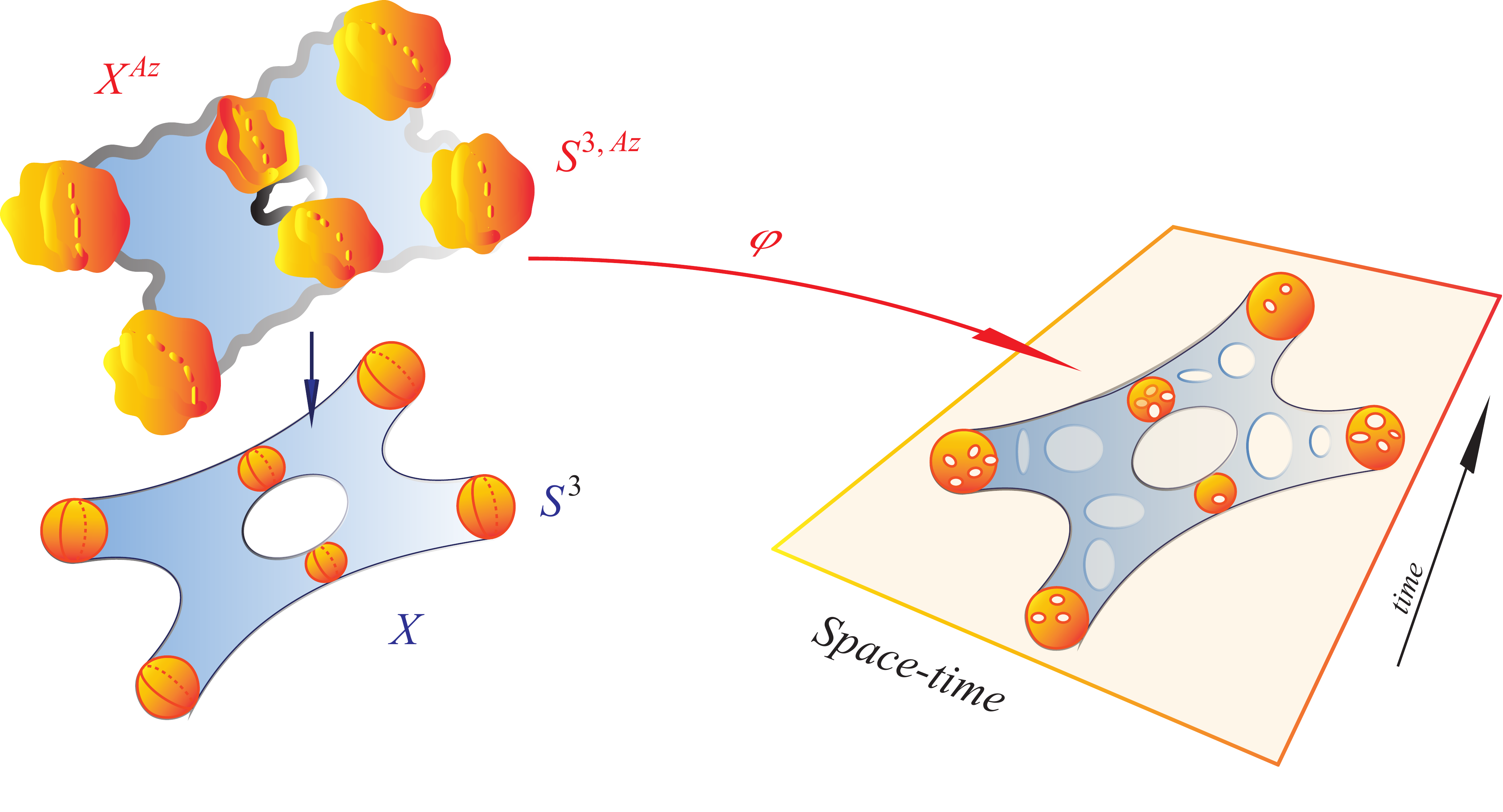}
 
 \centerline{\parbox{13cm}{\small\baselineskip 12pt
  {\sc Figure}~7-1-1.
    Here, the world-volume $X$ of a (multiple) D3-brane in a space-time.
	 is a $4$-dimensional graph-manifold that arises from a connected sum of $S^3\times S^1$'s.
    D3-brane theory based only on all such maps $\varphi$
      resembles the perturbative string theory, which is based on the genus expansion of string world-sheets. 		
      }}
 \bigskip
\end{figure}	
}\end{remark}

\bigskip

\subsection{From an immersed special Lagrangian brane with a flat bundle to a fuzzy special Lagrangian brane
           with a flat bundle in the Calabi-Yau 1-fold ${\Bbb C}^1$}

We give in this subsection a local example of
 deformations of immersed special Lagrangian branes with a flat complex line bundle
  to a (possibly fuzzy) special Lagrangian brane with a flat bundle of higher rank.
  
\bigskip

\begin{remark}   $[$Fukaya category of A-branes on Calabi-Yau space$\,]$. {\rm
 While a study of such branes and their deformations\footnote{C.-H.L.\
                             would like to thank (time-ordered)
                               {\it Katrin Wehrheim}
                                  for discussions on the possibility of a notion of scheme-theoretic-like deformations
							 	  of Lagrangian submanifolds in a symplectic manifold,
	                              spring 2007 at Massachusetts Institute of Technology, 	
                               {\it Yng-Ing Lee}
                                  for discussions on wrapping-'n-unwrapping of special Lagrangian submanifolds
  								  in a Calabi-Yau space,
	                              spring 2013 at National Taiwan University,   and    	
                               {\it Siu-Cheong Lau}
							      for discussions on deformations of A-branes and the minimal set of objects that should
								  be  included in the Fukaya category of Lagrangian branes on a Calabi-Yau space
								  to reflect correctly D-branes in string theory
								  during and after his lectures on the work [Joy4] of {\it Dominic Joyce} on
                                    `Conjectures on Bridgeland stability for Fukaya categories of Calabi-Yau manifolds,
                                     special Lagrangians, and Lagrangian mean curvature flow'
                                     (arXiv:1401.4949 [math.DG]), spring 2014 at Harvard University. 	
                                    } 
   along the line of the current note
  is the focus of another work,
 the current pedagogical example also means to illustrate
  what should be anticipated to be included in the Fukaya category of $A$-branes on a Calabi-Yau manifold
   from the viewpoint of wrapping-and-unwrapping of immersed special Lagrangian
   submanifolds-with-a-flat-complex-vector-bundle on a Calabi-Yau space.
 See [Joy4] for related discussions.
}\end{remark}

\bigskip

\begin{example} {\bf
 [deformations of special Lagrangian branes with a flat bundle in the Calabi-Yau 1-fold ${\Bbb C}^1$].}
 {\rm
 Let
   $ Y={\Bbb C}^1=({\Bbb R}^2,  J, \omega, \Omega)$
   be a Calabi-Yau $1$-fold,
  with
  \begin{itemize}
   \item[$\cdot$]
    \parbox[t]{3em}{$J\,:$}
	\parbox[t]{13cm}{the complex structure on ${\Bbb R}^2$, with coordinates $(y^1,y^2)$,
	   that renders $({\Bbb R}^2,J)$ the complex line ${\Bbb C}^1$,
	   with coordinates $z=y^1+\sqrt{-1}\,y^2$,}

   \item[$\cdot$]	
    \parbox[t]{3em}{$\omega\,:$}
	\parbox[t]{13cm}{the standard K\:{a}hler form
	    $\frac{\sqrt{-1}}{2}\,dz\wedge d\bar{z}=dy^1\wedge dy^2$
	  on ${\Bbb C}^1$,}
	
   \item[$\cdot$]   	
    \parbox[t]{3em}{$\Omega\,:$}
	\parbox[t]{13cm}{the standard holomorphic $1$-form $dz=dy^1+\sqrt{-1}\, dy^2$
	    on ${\Bbb C}^1$.}
 \end{itemize}
 $(\omega,\Omega)$ defines the phase of a special Lagrangian submanifold of ${\Bbb C}^1$.
 Let $X={\Bbb R}^1$ be the real line, as a $C^{\infty}$ $1$-manifold, with coordinate $x$, and
 \begin{itemize}
  \item[$\cdot$]
   ${\cal E}={\cal O}_{{\Bbb R}^1}\otimes_{\Bbb R}{\Bbb C}^{\oplus 3}$
    be the free sheaf of complex rank $3$ on ${\Bbb R}^1$ and
   
  \item[$\cdot$]
   $\nabla$ be the flat connection on
       ${\cal O}_{{\Bbb R}^1}\otimes_{\Bbb R}{\Bbb C}^{\oplus 3}$
     that coincides with $d$, that is,
	  $\nabla (s^1,s^2,s^3) = (ds^1,ds^2,ds^3)$	
    	for $s=(s^1,s^2,s^3)$ a smooth section of
 	        ${\cal O}_{{\Bbb R}^1}\otimes_{\Bbb R}{\Bbb C}^{\oplus 3}$.
 \end{itemize}
 We will consider smooth special Lagrangian maps
   $$
	 \varphi\; :\; ({\Bbb R^1},{\cal O }_{{\Bbb R}^1}^{A\!z}, {\cal E})\;
 	   \longrightarrow\; {\Bbb C}^1
   $$
   from the Azumay real line with a fundamental module
   $({\Bbb R}^{1,A\!z},{\cal E})$ to ${\Bbb C}^1$
   in the sense of Definition~6.3.6,
  their deformations,  and
  how the flat connection $\nabla$ on ${\cal E}$	is pushed-forward to
    a flat connection with singularities on $\varphi_{\ast}{\cal E}$.			
   
 Let $t\in [0,1]$ be a real parameter and
 $\varphi_1$ be the special Lagrangian map at $t=1$ defined by the ring-homomorphism
  $$
   \begin{array}{cccccl}
    \varphi^{\sharp}_1 & :
	  &  C^{\infty}({\Bbb C}^1)
      &  \longrightarrow      &  M_{3\times 3}(C^{\infty}({\Bbb R}^1)^{\Bbb C})\\[1.2ex]
	 &&  y^1  & \longmapsto  &  x \cdot  \Id_{3\times 3}\\[1.2ex]
	 && y^2  & \longmapsto
	    & \left[  \begin{array}{ccc} -x & 0 & 0 \\ 1 & 1  & 0 \\ 0 & 1 & x  \end{array} \right]		   & .	 
   \end{array}
  $$
 Then, over $U:={\Bbb R}^1-\{-1,0,1\}$,
  $\varphi_1$ induces a splitting of the restriction ${\cal E}_U$ of
   the fundamental module ${\cal E}$ to $U$
   into a direct sum of locally free ${\cal O}_U^{\,\Bbb C}$-modules of rank $1$,
   $$
    {\cal E}_U\;=\;
      {\cal O}_U^{\,\Bbb C}\cdot e_1\,
	      \oplus\, {\cal O}_U^{\,\Bbb C}\cdot e_2\,
		  \oplus\, {\cal O}_U^{\,\Bbb C}\cdot e_3 \;
	      =:\;{\cal L}_1 \oplus {\cal L}_2 \oplus {\cal L}_3\,
   $$
   where
  $$
    e_1\;
	=\; \left(
	       \begin{array}{c}1 \\[.6ex]  -\,\frac{1}{x+1}\\[1.2ex]  \frac{1}{2x(x+1)}\end{array}
		  \right)\,,
	 \hspace{2em}	
   e_2\;
	=\; \left(
	       \begin{array}{c}0 \\[.6ex]    1 \\[.6ex]    -\,\frac{1}{x-1}\end{array}
		  \right)\,,
	 \hspace{2em}	
  e_3\;
	=\; \left(
	       \begin{array}{c}0  \\[.6ex]   0 \\[.6ex]    1  \end{array}
		  \right)\,,
  $$
 in the sense that
  \begin{itemize}
   \item[$\cdot$] {\it As
    a singular decomposition of ${\cal E}$,
	the decompotion ${\cal L}_1 \oplus {\cal L}_2 \oplus {\cal L}_3$
	is invariant under the $C^{\infty}({\Bbb C}^1)$-action through $\varphi_1^{\sharp}$.}
  \end{itemize}
 Thus, $\varphi_{1\ast}{\cal L}_i$, $i=1,\,2,\,3$,  make sense and
  $$
   \varphi_{1\ast}{\cal E}_U\;
      =\;  \varphi_{1\ast}{\cal L}_1  \,\oplus\,
                \varphi_{1\ast}{\cal L}_2 \, \oplus\, \varphi_{1\ast}{\cal L}_3\,.
  $$
 Furthermore,
  the eigen-value-function of $f(y^1,y^2)\in C^{\infty}({\Bbb C}^1)$
  on $e_1$, $e_2$, $e_3$ under $\varphi_1^{\sharp}$ are
  $$
    f(x, -x)\,,\hspace{2em}f(x, 1)\,,\hspace{2em}  f(x, x)
  $$
  respectively.
 It follows that
  \begin{itemize}
   \item[$\cdot$]  {\it Let
   $$
    L_{-\frac{\pi}{4}}\;=\; \{(x,-x)\,|\, x\in{\Bbb R}^1 \}\,, \hspace{1.6em}
	L_0\; =\; \{(x,1)\,|\, x\in{\Bbb R}^1 \}\,, \hspace{1.6em}
	L_{\frac{\pi}{4}}\;=\; \{(x,x)\,|\, x\in{\Bbb R}^1 \}
   $$
   be special Lagrangian lines in ${\Bbb C}^1$,
	  of phases $-\pi/4$, $0$, $\pi/4$ respectively as indicated.
  Then,
    the smooth locus $Z_{\smoothscriptsize}$ of the image $Z:=\Image\varphi_1$ of $\varphi_1$
      lies in the union $L_{-\frac{\pi}{4}}\cup L_0\cup L_{\frac{\pi}{4}}$
	and $\varphi_{1\ast}{\cal E}$	is an ${\cal O}_Z^{\,\Bbb C}$-module of rank $1$,
    whose restriction to $Z_{\smoothscriptsize}$ is
	$\varphi_{1\ast}{\cal E}_U
      = \varphi_{1\ast}{\cal L}_1  \oplus
                \varphi_{1\ast}{\cal L}_2  \oplus \varphi_{1\ast}{\cal L}_3$
    with $\varphi_{1\ast}{\cal L}_1$
	(resp.\
        $\varphi_{1\ast}{\cal L}_2$, $\,\varphi_{1\ast}{\cal L}_3$)			
     supported in $L_{-\frac{\pi}{4}}$
	  (resp.\  $L_0$, $\,L_{\frac{\pi}{4}}$).
  }
 \end{itemize}
 
 The restriction of the flat connection $\nabla$ on ${\cal E}$ to ${\cal E}_U$ does not preserve
   the decomposition ${\cal E}_U={\cal L}_1\oplus{\cal L}_2\oplus {\cal L}_3$.
 However, by post-composition of $\nabla$ with the projection maps
   ${\cal E}_U\rightarrow {\cal L}_1$,  ${\cal E}_U\rightarrow {\cal L}_2$,
   ${\cal E}_U\rightarrow {\cal L}_3$,
 there is a canonically induced  flat connection $\hat{\nabla}$ on ${\cal E}_U$
   that preserves the decomposition.\footnote{{\it String-theoretical remark.}
                                                              This process clearly resembles a Higgs mechanism or a symmetry breaking
  															     induced by $\varphi$.
                                                              The connection $\nabla$ as a field on $X$ splits generically into
								                                 massless components with respect to $\varphi$ and
								                                 massive components with respect to $\varphi$.
								                              $\hat{\nabla}$ keeps the massless part and throw away the massive part.
															  The process can be made consistent
															    only over an open dense subset of ${\Bbb R}^1$.
															  This renders $\hat{\nabla}$	with singularities.
                                                              }
 \begin{itemize}
  \item[$\cdot$] {\it
   As a flat connection with singularities on ${\cal E}$,
   the $\varphi$-induced connetion $\hat{\nabla}$ from $\nabla$
   is pushed forward to a flat connection with singularities, in notation $\varphi_{1\ast}\nabla$,
     on the rank-1 ${\cal O}_Z^{\,\Bbb C}$-module $\varphi_{1\ast}{\cal E}$.}
 \end{itemize}
  	
 Consider now the following four $1$-parameter families $\varphi_t$, $t\in [0,1]$,
   of deformations of the special Lagrangian map $\varphi_1\,$:
  \begin{itemize}
    \item[(1)]
     {\it Family} $\varphi^{(1)}_t$, $t\in [0,1]\;$:
     the $1$-parameter family of special Lagrangian maps defined by the ring-homomorphism
     $$
      \begin{array}{cccccl}
       \varphi^{(1)\, \sharp}_1 & :
 	      &  C^{\infty}({\Bbb C}^1)
          &  \longrightarrow      &  M_{3\times 3}(C^{\infty}({\Bbb R}^1)^{\Bbb C})\\[1.2ex]
	    &&  y^1  & \longmapsto  &  x \cdot  \Id_{3\times 3}\\[1.2ex]
	    && y^2  & \longmapsto
	       & \left[  \begin{array}{ccc} -tx & 0 & 0 \\ t & t  & 0 \\ 0 & t & tx  \end{array} \right]		   & .	
      \end{array}
     $$

    \item[(2)]
     {\it Family} $\varphi^{(2)}_t$, $t\in [0,1]\;$:
     the $1$-parameter family of special Lagrangian maps defined by the ring-homomorphism
     $$
      \begin{array}{cccccl}
       \varphi^{(2)\, \sharp}_1 & :
 	      &  C^{\infty}({\Bbb C}^1)
          &  \longrightarrow      &  M_{3\times 3}(C^{\infty}({\Bbb R}^1)^{\Bbb C})\\[1.2ex]
	    &&  y^1  & \longmapsto  &  x \cdot  \Id_{3\times 3}\\[1.2ex]
	    && y^2  & \longmapsto
	       & \left[  \begin{array}{ccc} -tx & 0 & 0 \\ t & t  & 0 \\ 0 & 1 & tx  \end{array} \right]		   & ,
      \end{array}
     $$
	
   \item[(3)]
     {\it Family} $\varphi^{(3)}_t$, $t\in [0,1]\;$:
     the $1$-parameter family of special Lagrangian maps defined by the ring-homomorphism
     $$
      \begin{array}{cccccl}
       \varphi^{(3)\, \sharp}_1 & :
 	      &  C^{\infty}({\Bbb C}^1)
          &  \longrightarrow      &  M_{3\times 3}(C^{\infty}({\Bbb R}^1)^{\Bbb C})\\[1.2ex]
	    &&  y^1  & \longmapsto  &  x \cdot  \Id_{3\times 3}\\[1.2ex]
	    && y^2  & \longmapsto
	       & \left[  \begin{array}{ccc} -tx & 0 & 0 \\ 1 & t  & 0 \\ 0 & t & tx  \end{array} \right]		   & ,
      \end{array}
     $$
	
  \item[(4)]
     {\it Family} $\varphi^{(4)}_t$, $t\in [0,1]\;$:
     the $1$-parameter family of special Lagrangian maps defined by the ring-homomorphism
     $$
      \begin{array}{cccccl}
       \varphi^{(4)\, \sharp}_1 & :
 	      &  C^{\infty}({\Bbb C}^1)
          &  \longrightarrow      &  M_{3\times 3}(C^{\infty}({\Bbb R}^1)^{\Bbb C})\\[1.2ex]
	    &&  y^1  & \longmapsto  &  x \cdot  \Id_{3\times 3}\\[1.2ex]
	    && y^2  & \longmapsto
	       & \left[  \begin{array}{ccc} -tx & 0 & 0 \\ 1 & t  & 0 \\ 0 & 1 & tx  \end{array} \right]		   & .	
      \end{array}
     $$	 		
 \end{itemize}
 Their individual details follow from a similar discussion as for $\varphi_1$
  and are summarized below.

 \bigskip

 \noindent
 {\it $(1)$ Family $\varphi^{(1)}_t$}, $t\in [0,1]\;$:
  
 \medskip

 \noindent
 Over $U:={\Bbb R}^1-\{-1,0,1\}$ and for $t\in (0,1]$,
 $\varphi^{(1)}_t$ induces a splitting
  ${\cal E}_U
      ={\cal L}^{(1)}_{1,t}\oplus {\cal L}^{(1)}_{2,t}\oplus{\cal L}^{(1)}_{3,t}$
  into a direct sum of ${\cal O}_U^{\,\Bbb C}$-modules of rank $1$,
   generated respectively by the three sections
  $$
    e^{(1)}_{1,t}\;
	=\; \left(
	       \begin{array}{c}1 \\[.6ex]  -\,\frac{1}{x+1}\\[1.2ex]  \frac{1}{2x(x+1)}\end{array}
		  \right)\,,
	 \hspace{2em}	
   e^{(1)}_{2,t}\;
	=\; \left(
	       \begin{array}{c}0 \\[.6ex]    1 \\[.6ex]    -\,\frac{1}{x-1}\end{array}
		  \right)\,,
	 \hspace{2em}	
  e^{(1)}_{3,t}\;
	=\; \left(
	       \begin{array}{c}0  \\[.6ex]   0 \\[.6ex]    1  \end{array}
		  \right)
  $$
  of ${\cal E}_U$.
 The smooth locus $Z^{(1)}_{t,\smoothscriptsize}$ of
   $Z^{(1)}_t := \Image\varphi^{(1)}_t$
   is now supported in the union
    $$
	  L_{t, -\arctan t}\, \cup\,  L_{t,0}\, \cup\,  L_{t,\arctan t}
	$$
	of special Lagrangian lines in ${\Bbb C}^1$,
 where 	
   $$
    L_{t, -\arctan t}\;=\; \{(x,-tx)\,|\, x\in{\Bbb R}^1 \}\,, \hspace{1em}
	L_{t, 0}\; =\; \{(x,t)\,|\, x\in{\Bbb R}^1 \}\,, \hspace{1em}
	L_{t, \arctan t}\;=\; \{(x,tx)\,|\, x\in{\Bbb R}^1 \}
   $$
  with phases $-\arctan t$, $0$, $\arctan t$ respectively.
 The push-forward $\varphi^{(1)}_{t\,\ast}({\cal E},\nabla)$ is a rank-$1$
  ${\cal O}_{Z^{(1)}_t}^{\,\Bbb C}$-module with a flat connection with singularities.
  
 When $t=0$,
  $\varphi^{(1)}_t$ is deformed to $\varphi^{(1)}_0$ defined by the ring-homomorphism
    $$
      \begin{array}{cccccl}
       \varphi^{(1)\, \sharp}_0 & :
 	      &  C^{\infty}({\Bbb C}^1)
          &  \longrightarrow      &  M_{3\times 3}(C^{\infty}({\Bbb R}^1)^{\Bbb C})\\[1.2ex]
	    &&  y^1  & \longmapsto  &  x \cdot  \Id_{3\times 3}\\[1.2ex]
	    && y^2  & \longmapsto
	       &   0_{\,3\times 3}  & ,	
      \end{array}
     $$
  where $0_{\,3\times 3}$ is the $3\times 3$ zero-matrix.	
  $Z^{(1)}_0 := \Image\varphi^{(1)}_0$
   is now reduced and coincides with the real axis
   $$
      L_{0,0}\; =\;  \{(y^1,0)\, |\,  y^1\in{\Bbb R}\}
   $$
   of ${\Bbb C}^1$,   a special Lagrangian line in ${\Bbb C}^1$ of phase $0$.
 The push-forward $\varphi^{(1)}_{0\,\ast}({\cal E},\nabla)$
    is isomorphic to the free ${\cal O}_{L_{0,0}}^{\,\Bbb C}$-module
	${\cal O}_{L_{0,0}}^{\,\Bbb C}\otimes_{\Bbb R}{\Bbb C}^{\oplus 3}$
	of rank $3$ with the flat connection defined by $d$.

 \bigskip

 \noindent
 {\it $(2)$ Family $\varphi^{(2)}_t$}, $t\in [0,1]\;$:
 
 \medskip

 \noindent
 Over $U:={\Bbb R}^1-\{-1,0,1\}$ and for $t\in (0,1]$,
 $\varphi^{(2)}_t$ induces a splitting
  ${\cal E}_U
      ={\cal L}^{(2)}_{1,t}\oplus {\cal L}^{(2)}_{2,t}\oplus{\cal L}^{(2)}_{3,t}$
  into a direct sum of ${\cal O}_U^{\,\Bbb C}$-modules of rank $1$,
   generated respectively by the three sections
  $$
    e^{(2)}_{1,t}\;
	=\; \left(
	       \begin{array}{c}1 \\[.6ex]  -\,\frac{1}{x+1}\\[1.2ex]  \frac{1}{2tx(x+1)}\end{array}
		  \right)\,,
	 \hspace{2em}	
   e^{(2)}_{2,t}\;
	=\; \left(
	       \begin{array}{c}0 \\[.6ex]    1 \\[.6ex]    -\,\frac{1}{t(x-1)}\end{array}
		  \right)\,,
	 \hspace{2em}	
  e^{(2)}_{3,t}\;
	=\; \left(
	       \begin{array}{c}0  \\[.6ex]   0 \\[.6ex]    1  \end{array}
		  \right)
  $$
  of ${\cal E}_U$.
 The smooth locus $Z^{(2)}_{t,\smoothscriptsize}$ of
   $Z^{(2)}_t := \Image\varphi^{(2)}_t$
   is supported also in
    $L_{t, -\arctan t}\cup L_{t,0}\cup L_{t,\arctan t}$.
 The push-forward $\varphi^{(2)}_{t\,\ast}({\cal E},\nabla)$ is a rank-$1$
  ${\cal O}_{Z^{(2)}_t}^{\,\Bbb C}$-module with a flat connection with singularites.
   
 When $t=0$,
   $\varphi^{(2)}_t$ is deformed to $\varphi^{(2)}_0$ defined by the ring-homomorphism
    $$
      \begin{array}{cccccl}
       \varphi^{(2)\, \sharp}_0 & :
 	      &  C^{\infty}({\Bbb C}^1)
          &  \longrightarrow      &  M_{3\times 3}(C^{\infty}({\Bbb R}^1)^{\Bbb C})\\[1.2ex]
	    &&  y^1  & \longmapsto  &  x \cdot  \Id_{3\times 3}\\[1.2ex]
	    && y^2  & \longmapsto
           & \left[  \begin{array}{ccc} 0 & 0 & 0 \\  0 & 0  & 0 \\ 0 & 1 & 0  \end{array} \right]		 & .
      \end{array}
     $$
  It follows that
   ${\cal E}$ splits to a direct sum of
     $C^{\infty}({\Bbb C}^1)$-invariant free ${\cal O}_{{\Bbb R}^1}^{\,\Bbb C}$-modules,
    in notation ${\cal E}= {\cal L}\oplus {\cal F}$, with ${\cal L}$ of rank $1$ and ${\cal F}$ of rank $2$.
  Furthermore,  ${\cal F}$ admits a filtration ${\cal F}^{\bullet}:{\cal F}^1\subset {\cal F}^2={\cal F}$
    with both ${\cal F}^1$ and ${\cal F}/{\cal F}^1$
	   free ${\cal O}_{{\Bbb R}^1}^{\,\Bbb C}$-modules of rank $1$.
  By construction,  both ${\cal L}$ and ${\cal F}^{\bullet}$ are invariant under $\nabla$ as well;
  thus, in particular, one has
   $$
     ({\cal E},\nabla)\;
         =\; ({\cal L},\nabla^{\cal L})\oplus ({\cal F},\nabla^{\cal F})\,.
   $$		

 With respect to this decomposition
   $\varphi^{(2)\,\sharp}_0$ factors through
   $$
     \xymatrix{
      C^{\infty}({\Bbb C}^1)
                   \ar[rr]^-{(\varphi^{(2)\,\sharp}_{0,1}\,,\,\varphi^{(2)\,\sharp}_{0,2})}
			   \ar @/_4ex/[rrrr]_-{\varphi^{(2)\,\sharp}_0}
        &&  M_{1\times 1}(C^{\infty}({\Bbb R}^1)^{\Bbb C})
		        \times M_{2\times 2}(C^{\infty}({\Bbb R}^1)^{\Bbb C})\,
                        \ar @{^{(}->}[rr]^-{\iota}
		&&   M_{3\times 3}(C^{\infty}({\Bbb R}^1)^{\Bbb C})\,,
	  }
	$$
    where
	 $$
      \begin{array}{cccccl}
       \varphi^{(2)\, \sharp}_{0,1} & :
 	      &  C^{\infty}({\Bbb C}^1)
          &  \longrightarrow      &  M_{1\times 1}(C^{\infty}({\Bbb R}^1)^{\Bbb C})\\[1.2ex]
	    &&  y^1  & \longmapsto  &  x \cdot  \Id_{1\times 1}\\[1.2ex]
	    && y^2  & \longmapsto   &  0_{1\times 1}  & ,
      \end{array}
     $$
     $$
      \begin{array}{cccccl}
       \varphi^{(2)\, \sharp}_{0,2} & :
 	      &  C^{\infty}({\Bbb C}^1)
          &  \longrightarrow      &  M_{2\times 2}(C^{\infty}({\Bbb R}^1)^{\Bbb C})\\[1.2ex]
	    &&  y^1  & \longmapsto  &  x \cdot  \Id_{2\times 2}\\[1.2ex]
	    &&  y^2  & \longmapsto
           & \left[  \begin{array}{cc} 0  & 0 \\ 1 & 0  \end{array} \right]		 & ,
      \end{array}
     $$ 		
     $M_{1\times 1}(C^{\infty}({\Bbb R}^1)^{\Bbb C})
		        \times  M_{2\times 2}(C^{\infty}({\Bbb R}^1)^{\Bbb C})$
	    is the product ring, and 		
     $\iota$ is the inclusion of the 1-2 block-diagonal subring.
 As a consequence, one has a direct-sum decomposition
  $$
    \varphi^{(2)}_{0\,\ast}({\cal E},\nabla)\;
    =\; \varphi^{(2)}_{0,1\, \ast}({\cal L},\nabla^{\cal L})\,
        	\oplus\,   \varphi^{(2)}_{0,2\, \ast}({\cal F},\nabla^{\cal F})\,.
  $$

 By construction,  $\varphi^{(2)}_{0,1\,\ast}({\cal L},\nabla^{\cal L})$
    is a free ${\cal O}_{L_{0,0}}^{\,\Bbb C}$-module of rank $1$ with a flat connection.
  
 As for the $\varphi^{(2)}_{0,2\, \ast}({\cal F},\nabla^{\cal F})$-component,
  $Z^{(2)}_{0,2} := \Image\varphi^{(2)}_{0,2}$
    is now non-reduced, with multiplicity $2$, along the special Lagrangian line $L_{0,0}$.
 The push-forward connection $\varphi^{(2)}_{0,2\,\ast}\nabla^{\cal F}$
   on $\varphi^{(2)}_{0,2\,\ast}{\cal E}$ is defined by the application of the following:
   \begin{itemize}
    \item[$\cdot$] {\bf [push-forward connection on push-forward sheaf -- special case]}\footnote{This
              	                                                      is a notion that is required to address A-branes in string theory.
										                             So far in this project, we only discussed and constructed
                                                                      the push-forward connection case by case --
																	   cf. [L-Y6] (D(7)) and the push-forward $\varphi_{1\ast}\nabla$
																	  for the case of $\varphi_1$ in this example --
	 																   in a mathematically most natural way and
																	      partly guided by the behavior of gauge field on a D-brane
																		  as generated by open string end-points,
																	    without a full general theory of it.
																	 It should be developed more thoroughly in the future.}\\[.6ex]
	 Let
	   $\varphi:
	      (X,{\cal O}_X^{A\!z}:=\Endsheaf_{{\cal O}_X^{\,\Bbb C}}({\cal E}),{\cal E})
		    \rightarrow   Y$ be $C^k$-map, $k\ge 1$,    and
	   $$      			
		 \xymatrix{	
		   \;X_{\varphi}\;\ar[rr]^-{f_{\varphi}} \ar[d]^-{\pi_{\varphi}} && Y \\
		   \;X\; 	
		  }	
       $$
	   be the underlying maps from the surrogate $X_{\varphi}$ of $X^{\!A\!z}$ specified by $\varphi$.
     Assume that
	  \begin{itemize}
	    \item[(1)]
    	   $f_{\varphi}:X_{\varphi}\rightarrow Y$ is an embedding,
		
	    \item[(2)]
           the restriction $\pi_{\varphi}: (X_{\varphi})_{\redscriptsize}\rightarrow X$
           is a $C^k$-diffeomorphism.		
      \end{itemize}		
     Let ${\cal E}$ be a connection on ${\cal E}$.
	 Then the push-forward connection $\varphi_{\ast}\nabla$ on $\varphi_{\ast}{\cal E}$
	   is defined by
	   $$
	     (\varphi_{\ast}\nabla)_ \xi\, s\;
     		 :=\;    \nabla_{\pi_{\ast}(f_{\varphi}^{\ast}\xi) }\,  s
	   $$
	   for all $\xi\in {\cal T}_{\ast}\Image\varphi$ and $s\in \varphi_{\ast}{\cal E}$.
	 Here, we identify a local section $s$ of $\varphi_{\ast}{\cal E}$	
	   canonically as a local section, also denoted by $s$, of ${\cal E}$.			
     Furthermore,
	  assuming in addition that
	  $X_{\varphi}$ is a product space with $\pi_{\varphi}$ a projection map,
	  then:
        if $\nabla$ is a flat connection on ${\cal E}$,
 	      then $\varphi_{\ast}\nabla$ is a flat connection on $\varphi_{\ast}{\cal E}$.	
   \end{itemize}
  
 Let ${\cal I}_{L_{0,0}}\subset {\cal O}_{Z^{(2)}_{(0,2)}}$
    be the nilpotent ideal sheaf of ${\cal O}_{Z^{(2)}_{0,2}}$.
 It is generated by $y^2$ and	has the property that ${\cal I}_{L_{0,0}}^2=0$.
 Then,
   \begin{itemize}
    \item[$\cdot$]
	 The filtration
	   ${\cal I}_{L_{0,0}}\varphi^{(2)}_{0,2\,\ast}{\cal F}
	       \subset \varphi^{(2)}_{0,2\,\ast}{\cal F} $
       coincides with the filtration $\varphi_{\ast}{\cal F}^{\bullet}$.

    \item[$\cdot$]	
	 The above filtration is invariant under $\varphi^{(2)}_{0,2\,\ast}\nabla$.
   \end{itemize}

 \bigskip

 \noindent
 {\it $(3)$ Family $\varphi^{(3)}_t$}, $t\in [0,1]\;$:
 
 \medskip

 \noindent
 Over $U:={\Bbb R}^1-\{-1,0,1\}$ and for $t\in (0,1]$,
 $\varphi^{(3)}_t$ induces a splitting
  ${\cal E}_U
      ={\cal L}^{(3)}_{1,t}\oplus {\cal L}^{(3)}_{2,t}\oplus{\cal L}^{(3)}_{3,t}$
  into a direct sum of ${\cal O}_U^{\,\Bbb C}$-modules of rank $1$,
   generated respectively by the three sections
  $$
    e^{(3)}_{1,t}\;
	=\; \left(
	       \begin{array}{c}1 \\[.6ex]  -\,\frac{1}{t(x+1)}\\[1.2ex]  \frac{1}{2tx(x+1)}\end{array}
		  \right)\,,
	 \hspace{2em}	
   e^{(3)}_{2,t}\;
	=\; \left(
	       \begin{array}{c}0 \\[.6ex]    1 \\[.6ex]    -\,\frac{1}{x-1}\end{array}
		  \right)\,,
	 \hspace{2em}	
  e^{(3)}_{3,t}\;
	=\; \left(
	       \begin{array}{c}0  \\[.6ex]   0 \\[.6ex]    1  \end{array}
		  \right)
  $$
  of ${\cal E}_U$.
 The smooth locus $Z^{(3)}_{t,\smoothscriptsize}$ of
   $Z^{(3)}_t := \Image\varphi^{(3)}_t$
   is supported also in
    $L_{t, -\arctan t}\cup L_{t,0}\cup L_{t,\arctan t}$.
 The push-forward $\varphi^{(3)}_{t\,\ast}({\cal E},\nabla)$ is a rank-$1$
  ${\cal O}_{Z^{(3)}_t}^{\,\Bbb C}$-module with a flat connection with singularities.
    
 When $t=0$,
   $\varphi^{(3)}_t$ is deformed to $\varphi^{(3)}_0$ defined by the ring-homomorphism
    $$
      \begin{array}{cccccl}
       \varphi^{(3)\, \sharp}_0 & :
 	      &  C^{\infty}({\Bbb C}^1)
          &  \longrightarrow      &  M_{3\times 3}(C^{\infty}({\Bbb R}^1)^{\Bbb C})\\[1.2ex]
	    &&  y^1  & \longmapsto  &  x \cdot  \Id_{3\times 3}\\[1.2ex]
	    && y^2  & \longmapsto
           & \left[  \begin{array}{ccc} 0 & 0 & 0 \\  1 & 0  & 0 \\ 0 & 0 & 0  \end{array} \right]		 & .
      \end{array}
     $$
 Up to a relabling, this is the same situation as $\varphi^{(2)}_0$.
 Thus,  we have in particular a decomposition
  $$
    \varphi^{(3)}_{0\,\ast}({\cal E},\nabla)
     = \varphi^{(3)}_{0,2\,\ast}({\cal F},\nabla^{\cal F})
	     \oplus \varphi^{(3)}_{0,1\,\ast}({\cal L}, \nabla^{\cal L})
  $$
   with the summand $\varphi^{(3)}_{0,2\,\ast}{\cal F}\,$
           $\varphi^{(3)}_{0,2\,\ast}\nabla^{\cal F}$-invariantly  filtered.

 \bigskip

 \noindent
 {\it $(4)$ Family $\varphi^{(4)}_t$}, $t\in [0,1]\;$:

 \medskip

 \noindent
 Over $U:={\Bbb R}^1-\{-1,0,1\}$ and for $t\in (0,1]$,
 $\varphi^{(4)}_t$ induces a splitting
  ${\cal E}_U
      ={\cal L}^{(4)}_{1,t}\oplus {\cal L}^{(4)}_{2,t}\oplus{\cal L}^{(4)}_{3,t}$
  into a direct sum of ${\cal O}_U^{\,\Bbb C}$-modules of rank $1$,
   generated respectively by the three sections
  $$
    e^{(4)}_{1,t}\;
	=\; \left(
	       \begin{array}{c}1 \\[.6ex]  -\,\frac{1}{t(x+1)}\\[1.2ex]  \frac{1}{2t^2x(x+1)}\end{array}
		  \right)\,,
	 \hspace{2em}	
   e^{(4)}_{2,t}\;
	=\; \left(
	       \begin{array}{c}0 \\[.6ex]    1 \\[.6ex]    -\,\frac{1}{t(x-1)}\end{array}
		  \right)\,,
	 \hspace{2em}	
  e^{(4)}_{3,t}\;
	=\; \left(
	       \begin{array}{c}0  \\[.6ex]   0 \\[.6ex]    1  \end{array}
		  \right)
  $$
  of ${\cal E}_U$.
 The smooth locus $Z^{(4)}_{t,\smoothscriptsize}$ of
   $Z^{(4)}_t := \Image\varphi^{(4)}_t$
   is supported also in $L_{t, -\arctan t}\cup L_{t,0}\cup L_{t,\arctan t}$.
 The push-forward $\varphi^{(4)}_{t\,\ast}({\cal E},\nabla)$ is a rank-$1$
  ${\cal O}_{Z^{(4)}_t}^{\,\Bbb C}$-module with a flat connection with singularites.
    
 When $t=0$,
   $\varphi^{(4)}_t$ is deformed to $\varphi^{(4)}_0$ defined by the ring-homomorphism
    $$
      \begin{array}{cccccl}
       \varphi^{(4)\, \sharp}_0 & :
 	      &  C^{\infty}({\Bbb C}^1)
          &  \longrightarrow      &  M_{3\times 3}(C^{\infty}({\Bbb R}^1)^{\Bbb C})\\[1.2ex]
	    &&  y^1  & \longmapsto  &  x \cdot  \Id_{3\times 3}\\[1.2ex]
	    && y^2  & \longmapsto
           & \left[  \begin{array}{ccc} 0 & 0 & 0 \\  1 & 0  & 0 \\ 0 & 1 & 0  \end{array} \right]		 & .
      \end{array}
     $$
 It follows from the same construction as that for $\varphi^{(2)}_{0,2}$
  that the push-forward $\varphi^{(4)}_{0\,\ast}{\cal E}$
   is supported  on $Z^{(4)}_0 := \Image\varphi^{(4)}_0$,
    which is non-reduced with multiplicity $3$ along the special Lagrangian line
	$L_{0,0}\subset {\Bbb C}^1$.
 The push-forward connection $\varphi^{(4)}_{0\,\ast}\nabla$
   is defined  on $\varphi^{(4)}_{0\,\ast}{\cal E}$  and
   is flat.
 The nilpotent ideal sheaf ${\cal I}_{L_{0,0}}\subset {\cal O}_{Z^{(4)}_0}$
    is generated by $y^2$ with ${\cal I}_{L_{0,0}}^3=0$.
 The induced filtration
   $$
     0\; \subset\;
    {\cal I}_{L_{0,0}}^2\cdot \varphi^{(4)}_{0\,\ast}{\cal E}\;
      \subset\;  {\cal I}_{L_{0,0}}\cdot \varphi^{(4)}_{0\,\ast}{\cal E}
      \subset\;  \varphi^{(4)}_{0\,\ast}{\cal E}
   $$
   is invariant under $\varphi^{(4)}_{0\,\ast}\nabla$
   in the sense that
   $\,(\varphi^{(4)}_{0\,\ast}\nabla)
        \left(  
		 {\cal I}_{L_{0,0}}^i\cdot \varphi^{(4)}_{0\,\ast}{\cal E}
		  \right)\,
		 \subset\, {\cal I}_{L_{0,0}}^i\cdot \varphi^{(4)}_{0\,\ast}{\cal E}\,$
	 for $i=1,\,2$.
 
 \bigskip
  
 {To} conclude, we see that
   the special Lagrangian map
       $\varphi_1$ can be deformed in various ways through special Lagrangian maps
    to create $\varphi_0$'s of the same reduced image $L_{0,0}$
	but of different nature: simple, or nilpotently fuzzy, ..., etc.
 	
 Cf.~{\sc Figure}~7-2-1.
 %

 %
  
 \noindent\hspace{15.7cm}$\square$
}\end{example}

\bigskip

\begin{remark} $[$scheme-theoretic-like deformation of Lagrangian submanifolds$\,]$.  {\rm
 {From} the viewpoint of the target symplectic manifold $Y$,
   Example~7.2.2 gives a scheme-theoretic-like deformations of Lagrangian cycles
   with a generically flat sheaf.
 It would be very interesting to see if such a notion can be formulated directly on $Y$,
  without resuming to Lagrangian morphisms from Azumaya manifolds with a fundamental module to $Y$.
}\end{remark}

\bigskip

\begin{remark} {$[$T-brane$\,]$.} {\rm
 In Example~7.2.2, the smooth maps
  $\varphi^{(2)}_{0,2}$, $\varphi^{(3)_{0,2}}$, and $\varphi^{(3)}_0$
  are examples of {\it T-branes},
  that is D-branes with a triangulation/filtration structure on its Chan-Paton module, in our setting.
 In all these cases, $y^2\in C^{\infty}({\Bbb C}^1)$ acts on the push-forward
  $\varphi_{\ast}{\cal F}$ and $\varphi_{\ast}{\cal E}$ in question
  as a nilpotent operator that gives a filtration/triangulation of  $\varphi_{\ast}{\cal F}$
  and  $\varphi_{\ast}{\cal E}$.
 See, for example, [A-H-K].
}\end{remark}

\bigskip
\bigskip
\bigskip
 
 \begin{figure} [htbp]
  \bigskip
  \centering
  \includegraphics[width=0.80\textwidth]{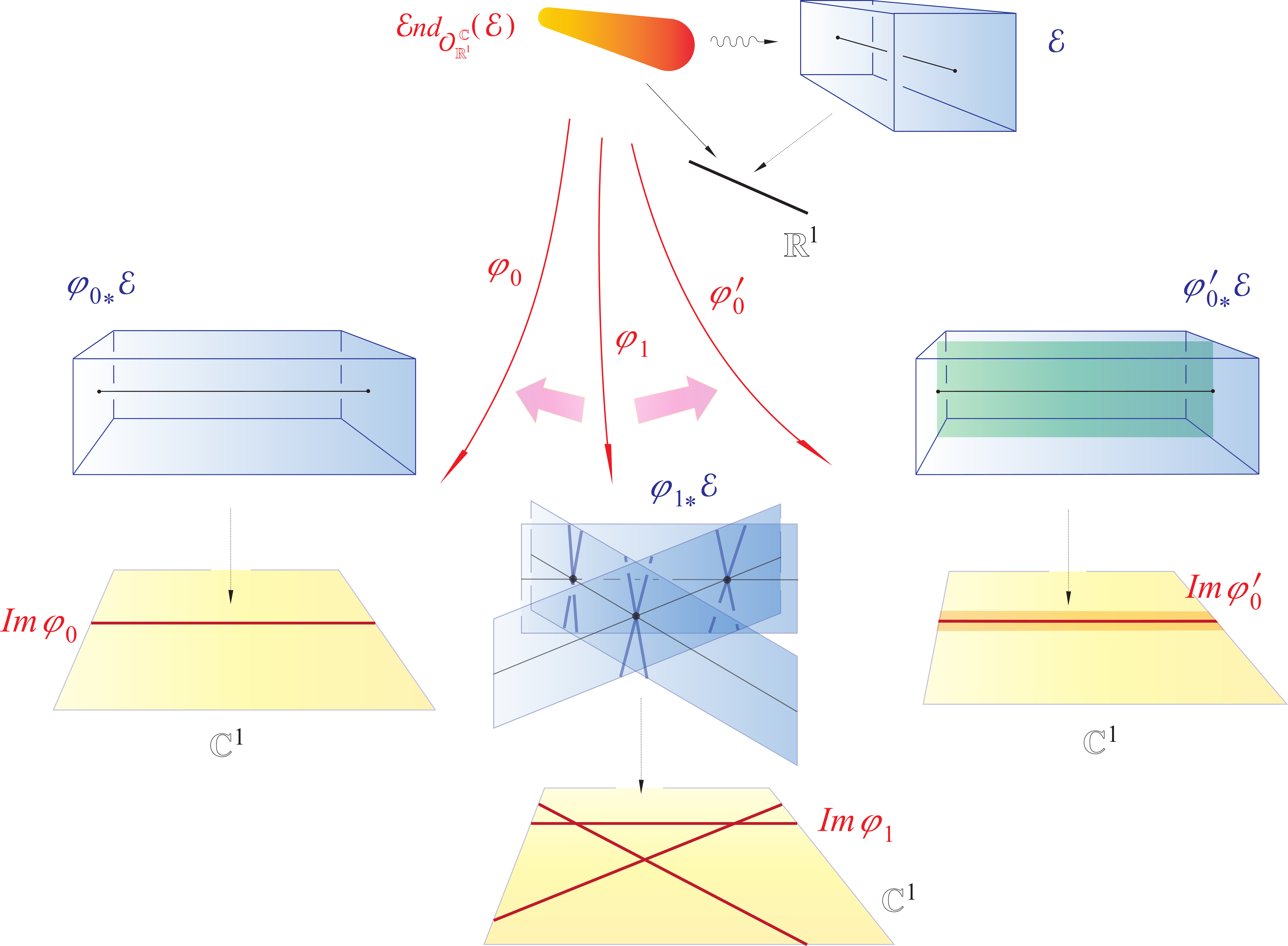}
 
  \bigskip
  \bigskip
  \centerline{\parbox{13cm}{\small\baselineskip 12pt
   {\sc Figure}~7-2-1.
    Various deformations of a special Lagrangian map $\varphi_1$
    	from an Azumaya real line with a fundamental module
 	      $({\Bbb R}^{1,A\!z},{\cal E})$  to the Calabi-Yau $1$-fold ${\Bbb C}^1$
	  are indicated.
   For one deformation $\varphi_1\Rightarrow \varphi_0$,
    the limit special Lagrangian map $\varphi_0$	has a reduced image, which supports $\varphi_{0\ast}{\cal E}$.
   For another deformation $\varphi_1\Rightarrow\varphi_0^{\prime}$,
    the limit special Lagrangian map $\varphi_0^{\prime}$ has a nonreduced image, carrying a nilpotent cloud.
   The corresponding $\varphi_{0\ast}^{\prime}{\cal E}$ has an associate filtration.
   {From} the target-space aspect, this suggests also a notion of scheme-theoretic-like deformations
     of Lagrangian cycles with a generically flat sheaf/local system with singularities.  	
   Note that $\varphi^{\prime}_{0\ast}{\cal E}$	is an example of `{\it T-branes}'
    in our setting.
       }}
  \bigskip
 \end{figure}

\newpage
\baselineskip 13pt
{\footnotesize

\vspace{1em}

\noindent
chienhao.liu@gmail.com, chienliu@math.harvard.edu; \\
yau@math.harvard.edu

}

\end{document}